\documentclass{amsart}

\newcommand\HS{\mathrm{HS}}
\newcommand\triv{\mathrm{triv}}
\newcommand\lSel{\mathfrak S}

\newcommand\ellp{p}
\newcommand\pell{\ell}

\newcommand\loc{\mathrm{loc}}
\newcommand\Col{\mathrm{Col}}
\newcommand\ACol{A_\Col}
\newcommand\ACollog{A_\Col^{\alg}}
\newcommand\glob{\mathrm{glob}}
\newcommand\BD{\mathrm{BD}}
\newcommand\BK{\mathrm{BK}}
\newcommand\jj{j}

\newcommand\Dvsr{\mathcal D}

\newcommand\z{\tau} 

\newcommand\Kbar{\bar K}
\newcommand\Obar{\bar\O}
\newcommand\kbar{\bar k}

\newcommand\Res{\mathrm{Res}}
\newcommand\BCH{\mathrm{BCH}}
\newcommand\E{\mathcal E}
\newcommand\Isoc{\mathbf{Isoc}}
\newcommand\ISO{\underline{\mathrm{Iso}}}
\newcommand\un{\mathrm{un}}
\newcommand\Mod{\mathbf{Mod}}
\newcommand\MIC{\mathbf{MIC}}

\newcommand\DO{\mathfrak D}
\renewcommand\div{\mathrm{div}}
\newcommand\PD{\mathrm{PD}}
\newcommand\supp{\mathrm{supp}}
\newcommand\Mero{\mathcal M}
\newcommand\llpara{(\!(}
\newcommand\rrpara{)\!)}
\newcommand\order{\mathrm{order}}
\newcommand\m{\mathfrak m}

\newcommand\jota{j}
\newcommand\Cpts{\mathcal C}
\newcommand\Fd{F}
\newcommand\sm{{\mathrm{sm}}}

\newcommand\Qbar{{\overline\Q}}

\usepackage{hyperref} 
\hypersetup{
	colorlinks=true,
	urlcolor= blue,  
	linkcolor= blue,
    citecolor=blue,  
	bookmarksopen=true,           
}
\usepackage{url}[lowtilde]

\usepackage{amsmath,amsfonts,amssymb,amsthm,enumitem,tikz-cd,graphicx,color,verbatim,xspace,bbm}


	








	
		\input cyracc.def

	\newcommand\Sym{\mathrm{Sym}}
	


	\newcommand\Lie{\mathrm{Lie}}


	\newcommand\ab{\mathrm{ab}}
	
	\newcommand\dual{*}

	\newcommand\ev{\mathrm{ev}}


	\def\Spec{\mathrm{Spec}}
	
	\renewcommand\O{\mathcal O}
	\def\Y{\mathcal Y}
	\def\X{\mathcal X}

	\def\A{\mathbb{A}}
	\renewcommand\P{\mathbb{P}}

	\def\Zar{\mathrm{Zar}}
	\def\Sel{\mathrm{Sel}}


\renewcommand\d{\mathrm d}


\newcommand\bigwedgesquare{\mathchoice{\bigwedge\nolimits^{\!\!2}}{\bigwedge\nolimits^{\!2}}{\bigwedge\nolimits^2}{\bigwedge\nolimits^2}}

\newcommand{\et}{\mathrm{\acute{e}t}}

\newcommand{\fin}{\mathrm{fin}}

\newcommand{\red}{\mathrm{red}}

\DeclareMathOperator{\rk}{\mathrm{rk}}

	\newcommand\Disc{\mathbb D}

	\newcommand{\HH}{\mathrm H}

	\newcommand\Cycle{\mathrm Z}
	\newcommand\Chain{\mathrm C}
	\def\loc{\mathrm{loc}}

    \newcommand{\Fil}{\mathrm{F}}
	\def\gr{\mathrm{gr}}
	
	\newcommand\W{W}



          \newcommand{\an}{\mathrm{an}}
          
          \newcommand{\dR}{\mathrm{dR}}
          \newcommand{\cris}{\mathrm{cris}}



\def\N{\mathbb N}
\def\Z{\mathbb Z}

\def\Q{\mathbb Q}

\def\C{\mathbb C}
\def\F{\mathbb F}

\def\B{\mathsf B}
\def\D{\mathsf D}
\def\nr{\mathrm{nr}}

\def\llbrack{\lbrack\!\lbrack}
\def\rrbrack{\rbrack\!\rbrack}
\def\alg{\mathrm{alg}}


\newcommand\SET{\mathbf{Set}}
\newcommand\AFF{\mathbf{Aff}}
\newcommand\WAFF{\W_\bullet\AFF}

\newcommand\AB{\mathbf{Ab}}
\newcommand\ALG{\mathbf{Alg}}
\newcommand\WALG{\W_\bullet\ALG}


\def\Hom{\mathrm{Hom}}
\def\HOM{\underline{\Hom}}
\def\Map{\mathrm{Map}}
\def\cts{\mathrm{cts}}
\def\Aut{\mathrm{Aut}}
\def\AUT{\underline{\Aut}}

\def\Ext{\mathrm{Ext}}

\def\End{\mathrm{End}}

\def\coker{\mathrm{coker}}






\numberwithin{equation}{subsection}
\theoremstyle{plain}
\newcounter{theoremcount}[subsection]
\makeatletter
\@addtoreset{theoremcount}{section}
\@addtoreset{equation}{section}
\@addtoreset{equation}{subsection}
\makeatother

\newtheorem{theorem}[theoremcount]{Theorem}
\newtheorem*{theorem*}{Theorem}
\newtheorem{theorema}{Theorem}

\newtheorem{lemma}[theoremcount]{Lemma}
\newtheorem{proposition}[theoremcount]{Proposition}
\newtheorem*{proposition*}{Proposition}
\newtheorem{corollary}[theoremcount]{Corollary}
\newtheorem*{corollary*}{Corollary}

\theoremstyle{definition}
\newtheorem{definition}[theoremcount]{Definition}
\newtheorem{definition-lemma}[theoremcount]{Definition--Lemma}
\newtheorem{example}[theoremcount]{Example}
\newtheorem{remark}[theoremcount]{Remark}
\newtheorem{notation}[theoremcount]{Notation}

\usepackage{morefloats}
\usepackage{imakeidx}
\usepackage{enumitem}

\setcounter{tocdepth}{1}

\title[Weight filtrations and effective Chabauty--Kim]{Weight filtrations on Selmer schemes and the effective Chabauty--Kim method}
\author{L.\ Alexander Betts}
\address{Harvard University, Department of Mathematics. Science Center Room 325, 1 Oxford Street, Cambridge, MA 02138, USA.}
\email{abetts@math.harvard.edu}
\date\today


\begin{document}

\begin{abstract}
We develop an effective version of the Chabauty--Kim method which gives explicit upper bounds on the number of $S$-integral points on a hyperbolic curve in terms of dimensions of certain Bloch--Kato Selmer groups. Using this, we give a new ``motivic'' proof that the number of solutions to the $S$-unit equation is bounded uniformly in terms of~$\#S$.
\end{abstract}

\maketitle
\tableofcontents

\section{Introduction}

One of the crowning achievements of 20th century number theory was the resolution of the Mordell Conjecture, showing that the set $X(K)$ of $K$-rational points on a smooth projective geometrically connected curve $X$ of genus $g\geq2$ over a number field $K$ is finite. Since then, a major open question is whether the Mordell Conjecture can be made effective, by which is meant any of the following.
\begin{enumerate}
	\item\label{pt:algorithmic} Giving an algorithm to compute $X(K)$.
	\item Giving a computable upper bound on the height of $K$-rational points of~$X$.
	\item\label{pt:bound} Giving a computable upper bound on $\# X(K)$.
\end{enumerate}


In this paper, we give a complete resolution of~\eqref{pt:bound} in the case $K=\Q$ subject to the Bloch--Kato Conjecture. We do this using a strengthening of Minhyong Kim's method of non-abelian Chabauty which systematically incorporates weight filtrations on Selmer schemes. The weight-filtered Chabauty--Kim method we develop in this paper is always effective in the sense of~\eqref{pt:bound}, and also yields some algorithmic consequences in the direction of point~\eqref{pt:algorithmic}.
\smallskip

The input in the usual Chabauty--Kim method is a finite-dimensional $G_\Q$-equivariant quotient $U$ of the $\Q_p$-pro-unipotent \'etale fundamental group~$U^\et$ of $X_{\Qbar}$ based at a point $b\in X(\Q)$, where $p$ is a prime of good reduction for $X$. If we write $(V_n)_{n\geq1}$ for the graded pieces of the descending central series, then the Chabauty--Kim method shows that when the inequality
\begin{equation}\label{eq:c-k_ineq}
	\sum_n\dim\HH^1_f(G_\Q,V_n) < \sum_n\dim\HH^1_f(G_p,V_n)
\end{equation}
between the dimensions of the global and local Bloch--Kato Selmer groups of the $V_n$ holds, then there is a non-zero Coleman analytic function $f$ on $X_{\Q_p}$ which vanishes on $X(\Q)$. In particular, \eqref{eq:c-k_ineq} implies finiteness of the set $X(\Q)$.
\smallskip

Much of the interest in the Chabauty--Kim method derives from the fact that it has been made effective for specific quotients~$U$. When the $p^\infty$-Selmer rank of the Jacobian~$J$ of~$X$ is strictly less than the genus, then inequality~\eqref{eq:c-k_ineq} holds for~$U=(U^\et)^\ab$ the abelianisation of the fundamental group, and the Coleman analytic function~$f$ produced by the Chabauty--Kim method is the Coleman integral of a holomorphic $1$-form~$\omega$ on~$X_{\Q_p}$. So the Chabauty--Kim method in this case recovers the usual Chabauty method. Given enough independent points in $J(\Q)$, the $1$-form~$\omega$ and hence the function~$f$ can be computed to any desired $p$-adic precision, and this allows one in some cases to compute $X(\Q)$ exactly \cite{poonen-mccallum:chabauty,flynn-poonen-schaefer:quadratic_cycles,poonen:preperiodic_points}. In general, even without finding~$f$, Coleman showed that it is possible to bound $\#X(\Q)$ in terms of $g$ and the number of $\F_p$-points on~$X$ \cite{coleman:effective}.

When instead the $p^\infty$-Selmer rank of the Jacobian~$J$ is equal to the genus, and the rational N\'eron--Severi rank of~$J$ is at least~$2$, then there is a quotient~$U$ of $U^\et$ which is a central extension of the abelianisation by $\Q_p(1)$, and inequality~\eqref{eq:c-k_ineq} holds for this quotient. The Coleman analytic function~$f$ produced in this case can be expressed as a double Coleman integral of meromorphic $1$-forms on~$X_{\Q_p}$. Again, if~$X$ has sufficiently many rational points, then the function~$f$ can be determined to any desired precision, and this can be used to compute $X(\Q)$ in several cases \cite{jen-netan:quadratic_i,jen-etal:split_modular_curve}. And even without computing~$f$, knowing its general form can be used to give an explicit bound on $\#X(\Q)$ \cite{jen-netan:effective}.
\smallskip

However, for a general quotient $U$, the Chabauty--Kim method does not afford any control over the function $f$, and so tells us nothing about~$X(\Q)$ beyond its finiteness. This is what we address in this paper. In order to state our main theorem, let us say that two rational points $x,y\in X(\Q)$ have the same \emph{reduction type} just when they reduce onto the same component of the mod-$\ell$ special fibre of the minimal regular model of~$X$ for all primes~$\ell$. We write $X(\Q)_\Sigma$ for the set of all rational points of a given reduction type~$\Sigma$. These sets form a finite partition of the set $X(\Q)$.

\begin{theorema}\label{thm:main}
	Let $(c_i^\glob)_{i\geq0}$ and $(c_i^\loc)_{i\geq0}$ be the coefficients of the power series
	\[
	\HS_\glob(t) := \prod_{n\geq1}^\infty(1-t^n)^{-\dim\HH^1_f(G_\Q,V_n)}
	\hspace{0.3cm}\text{and}\hspace{0.3cm}
	\HS_\loc(t) := \prod_{n\geq1}^\infty(1-t^n)^{-\dim\HH^1_f(G_p,V_n)} \,,
	\]
	which are non-negative integers. Suppose that~$m$ is a positive integer such that the inequality
	\begin{equation}\label{eq:c-k_ineq_series}
		\sum_{i=0}^mc_i^\glob < \sum_{i=0}^mc_i^\loc
	\end{equation}
	holds. Then for every reduction type $\Sigma$, the set $X(\Q)_\Sigma$ is contained in the vanishing locus of a non-zero Coleman analytic function~$f$ of weight at most $m$.
\end{theorema}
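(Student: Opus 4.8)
The plan is to run a refined version of the Chabauty–Kim method in which one keeps track not just of the total dimensions $\sum_n\dim\HH^1_f$ but of the weight grading on the Selmer schemes, and then to compare the global and local Selmer schemes weight-piece by weight-piece. Concretely, for each $n$ the graded piece $V_n$ of the descending central series is a mixed Tate (or at least weight-filtered) $G_\Q$-representation, pure of weight $-n$ in the geometric situation at hand, so the non-abelian cohomology scheme $\HH^1_f(G_\Q,U)$ inherits a weight filtration whose $i$-th graded piece has dimension $c_i^\glob$; the generating function for these dimensions is exactly $\HS_\glob(t)$ because the Selmer scheme is, as a variety, an affine space modelled on $\bigoplus_n\HH^1_f(G_\Q,V_n)$ and the weight of a class in $\HH^1_f(G_\Q,V_n)$ is (up to sign/normalisation convention) $n$, so passing to the associated graded of the descending central series and symmetrising gives the Euler-product shape. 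The same holds locally at $p$, giving $\HS_\loc(t)$. The truncated inequality~\eqref{eq:c-k_ineq_series} then says precisely that the weight-$\leq m$ part of the global Selmer scheme has strictly smaller dimension than the weight-$\leq m$ part of the local Selmer scheme.

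Next I would examine the localisation map $\loc_p\colon\Sel(U)\to\HH^1_f(G_p,U)$. The key structural input — which should be established earlier in the paper from the compatibility of the Chabauty–Kim construction with weight filtrations — is that $\loc_p$ is a morphism of weight-filtered schemes, hence induces maps on each weight-$\leq m$ truncation $\loc_p^{\leq m}\colon\Sel(U)^{\leq m}\to\HH^1_f(G_p,U)^{\leq m}$. By the dimension count just described, the source of $\loc_p^{\leq m}$ has dimension $\sum_{i=0}^m c_i^\glob$ and the target has dimension $\sum_{i=0}^m c_i^\loc$, so by~\eqref{eq:c-k_ineq_series} the image of $\loc_p^{\leq m}$ is a proper closed subscheme; therefore there is a non-zero algebraic function $g$ on $\HH^1_f(G_p,U)^{\leq m}$ vanishing on the image. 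Because the weight filtration on $\HH^1_f(G_p,U)$ is by definition compatible with the algebra structure and $g$ has weight $\leq m$ by construction, pulling $g$ back along the $p$-adic Kummer–unipotent Albanese map $j\colon X(\Q_p)\to\HH^1_f(G_p,U)(\Q_p)$ produces a Coleman analytic function $f := j^*g$ on $X_{\Q_p}$; the notion of "weight of a Coleman function" introduced earlier translates exactly into: $f$ has weight $\leq m$ because $g$ does.

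It remains to see that $f$ is non-zero and that it vanishes on a whole reduction-type class $X(\Q)_\Sigma$ — not merely on $X(\Q)$. Non-vanishing is the standard Chabauty–Kim argument: the Albanese map $j$ is a Zariski-dense map (its image is not contained in any proper subvariety, by the non-abelian analogue of the fact that a curve generates its Jacobian, applied to $U$), so $j^*$ is injective on functions and $f\neq0$. For the refinement to reduction types: a rational point $x\in X(\Q)$ lands in $\Sel(U)$ under the global Kummer map, and by compatibility of local and global constructions its image in $\HH^1_f(G_p,U)$ is $j(x)$, which therefore lies in the image of $\loc_p$, so $g$ — and hence $f$ — vanishes at $x$; this already gives vanishing on all of $X(\Q)$, and a fortiori on each $X(\Q)_\Sigma$. (The reduction-type refinement is genuinely used only if one wants the sharper statement where the \emph{codomain} Selmer scheme is cut down by imposing local conditions at bad primes matching $\Sigma$, which shrinks $\sum c_i^\glob$; in that case one replaces $\Sel(U)$ by the $\Sigma$-refined Selmer scheme and reruns the identical dimension count.)

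The main obstacle I expect is the first step: showing rigorously that the weight filtration on the Selmer scheme has the asserted graded dimensions, i.e.\ that the Hilbert series of the associated graded ring of functions on $\Sel(U)$ is exactly $\HS_\glob(t)$, and likewise locally. This requires (a) knowing $V_n$ is pure of the expected weight — which in the relevant geometric setting follows from the weight-monodromy / purity properties of the pro-unipotent fundamental group and should be among the earlier results — and (b) checking that forming $\HH^1_f$ is exact enough, and compatible enough with the descending central series filtration on $U$, that the Selmer scheme of $U$ is built by iterated affine-space bundles from the $\HH^1_f(G_\Q,V_n)$ with the weight grading adding up multiplicatively, which is what produces the product $\prod_n(1-t^n)^{-\dim\HH^1_f(G_\Q,V_n)}$. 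The local statement is easier because $\HH^1_f(G_p,-)$ on a pure representation of weight $\neq0$ is a single cohomology group with no $\HH^2$ obstruction of the relevant weight, so the Euler-product shape is cleaner there; the delicate comparison is that $\loc_p^{\leq m}$ really is a scheme morphism respecting these gradings, which is where the systematic bookkeeping of weight filtrations developed in the body of the paper does its work.
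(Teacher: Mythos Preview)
Your overall strategy is right in spirit --- use the weight filtration to refine the dimension count in Chabauty--Kim --- but there is a genuine conceptual gap in how you implement it, and a second gap in how you handle reduction types.

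\textbf{The main gap: filtrations on rings, not schemes.} You write that ``the source of $\loc_p^{\leq m}$ has dimension $\sum_{i=0}^m c_i^\glob$'' and that the image is therefore a proper closed subscheme, yielding a function~$g$ which ``has weight $\leq m$ by construction''. But the quantities $\sum_{i=0}^m c_i^{\glob}$ and $\sum_{i=0}^m c_i^{\loc}$ are not scheme dimensions of any truncation: they are the dimensions of the finite-dimensional vector spaces $\W_m\O(\Sel_{\Sigma,U})$ and $\W_m\O(\HH^1_f(G_p,U))$, the weight-$\leq m$ pieces of the \emph{coordinate rings}. (For example, if $U$ is abelian of weight~$-1$ and $\dim\HH^1_f(G_\Q,V_1)=d$, then $\HS_\glob(t)=(1-t)^{-d}$ and $\sum_{i\leq m} c_i^\glob = \binom{m+d}{d}$, which is not the dimension of any scheme you have written down.) The paper's argument runs entirely on the ring side: the localisation map induces a filtered ring homomorphism $\loc_p^*\colon\O(\HH^1_f(G_p,U))\to\O(\Sel_{\Sigma,U})$, hence a linear map $\W_m\O(\HH^1_f(G_p,U))\to\W_m\O(\Sel_{\Sigma,U})$ between finite-dimensional vector spaces; the inequality $C_m^\glob<C_m^\loc$ forces this map to have non-trivial kernel, and any non-zero~$\alpha$ in that kernel is \emph{by definition} a function of weight $\leq m$ vanishing on the image of~$\loc_p$. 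Your version, by contrast, tries to find~$g$ as a function on a truncated scheme and then assert it has bounded weight --- but nothing in your argument controls the weight of~$g$. (Relatedly: the paper only proves $\HS_{\Sel_{\Sigma,U}}\preceq\HS_\glob$, an upper bound, not the equality you claim; equality is established only on the local side, and this asymmetry is exactly what the argument needs.)

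\textbf{The second gap: reduction types are not optional.} You say the vanishing on $X(\Q)_\Sigma$ follows \emph{a fortiori} from vanishing on $X(\Q)$, and that the $\Sigma$-refinement only sharpens things. This is backwards. Without fixing~$\Sigma$, the relevant global object is the naive Selmer scheme $\HH^1_{f,T_0}(G_\Q,U)$, whose Hilbert series is bounded by the larger series $\HS_{\glob,T_0}(t)=\prod_n(1-t^n)^{-\dim\HH^1_{f,T_0}(G_\Q,V_n)}$, and you would only obtain Theorem~\ref{thm:main_naive}. To get the series $\HS_\glob$ with honest $\HH^1_f$ in the exponents, the paper uses the refined Selmer scheme $\Sel_{\Sigma,U}$: once a reduction type~$\Sigma$ is fixed, the local condition $\lSel_{\Sigma_\ell}$ at each bad prime $\ell\neq p$ is a single $\Q_p$-point (Lemma~\ref{lem:local_images_outside_s}), so its Hilbert series is~$1$ and contributes nothing to the bound in Lemma~\ref{lem:hilbert_series_bound_global}. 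This is precisely why the theorem is stated for each $X(\Q)_\Sigma$ separately rather than for $X(\Q)$ all at once.
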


The advantage of this theorem over the more usual set-up in the Chabauty--Kim method is that it offers some \emph{a priori} control on the Coleman analytic function~$f$, in the form of a bound on its weight. We will explain in \S\ref{sss:coleman_weights} what is meant by the weight of a Coleman analytic function, but in the context of effectivity questions, the significance of this notion of weight is twofold.
\begin{enumerate}
	\item For all $m\geq0$, the space of Coleman analytic functions of weight $\leq m$ is finite-dimensional. (Corollary~\ref{cor:coleman_hilbert_series}.)
	\item For all $m\geq0$, there is an explicit uniform bound on the number of zeroes of a non-zero Coleman analytic function of weight $\leq m$. (Theorem~\ref{thm:weight_bound}.)
\end{enumerate}
In particular, Theorem~\ref{thm:main} gives us effective Chabauty--Kim results for a general quotient~$U$. The first of these is an explicit upper bound on the number of rational points on any curve~$X$ to which the method applies, generalising the bounds of Coleman and Balakrishnan--Dogra.

\begin{theorema}\label{thm:main_bound}
	Suppose that $m$ is a positive integer such that inequality~\eqref{eq:c-k_ineq_series} holds. Then
	\[
	\# X(\Q) \leq \kappa_p\cdot\prod_\ell n_\ell\cdot\#X(\F_p)\cdot(4g-2)^m\cdot\prod_{i=1}^{m-1}(c_i^\loc+1) \,,
	\]
	where $n_\ell$ denotes the number of components of the mod-$\ell$ special fibre of the minimal regular model of~$X$, $\#X(\F_p)$ denotes the number of $\F_p$-points on the mod-$p$ special fibre of the minimal regular model, and the positive constant $\kappa_p$ is defined by
	\[
	\kappa_p := 
	\begin{cases}
		1+\frac{p-1}{(p-2)\log(p)} & \text{if $p\neq2$,} \\
		2+\frac2{\log(2)} & \text{if $p=2$.}
	\end{cases}
	\]
\end{theorema}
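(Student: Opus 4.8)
The plan is to deduce Theorem~\ref{thm:main_bound} from Theorem~\ref{thm:main} by partitioning $X(\Q)$ into reduction types and counting zeroes of the resulting Coleman analytic functions. First I would observe that $X(\Q)$ is the disjoint union of the sets $X(\Q)_\Sigma$ over the finitely many reduction types $\Sigma$, so it suffices to bound $\#X(\Q)_\Sigma$ for each $\Sigma$ and then multiply by the number of reduction types. The number of reduction types is at most $\prod_\ell n_\ell$, since a reduction type is by definition a choice, for each prime $\ell$, of a component of the mod-$\ell$ special fibre of the minimal regular model, and only finitely many $\ell$ (those of bad reduction, together with $p$) contribute a factor exceeding~$1$. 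By Theorem~\ref{thm:main}, for each $\Sigma$ the set $X(\Q)_\Sigma$ lies in the vanishing locus of a non-zero Coleman analytic function $f_\Sigma$ of weight at most~$m$, so $\#X(\Q)_\Sigma$ is bounded by the number of zeroes of $f_\Sigma$ that are rational points.

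Next I would invoke the uniform zero-counting bound, Theorem~\ref{thm:weight_bound}, which controls the number of zeroes of a non-zero Coleman analytic function of weight $\leq m$ on $X_{\Q_p}$ in terms of $p$, the genus~$g$, the number $\#X(\F_p)$ of $\F_p$-points, and $m$ together with the local Hilbert-series data $(c_i^\loc)$. The shape of that bound is what produces the factors $\kappa_p$, $\#X(\F_p)$, $(4g-2)^m$, and $\prod_{i=1}^{m-1}(c_i^\loc+1)$ in the statement: the constant $\kappa_p$ packages the contribution of each residue disc (a logarithmic-derivative / Newton-polygon estimate on a $p$-adic annulus, with the two cases reflecting the usual $p=2$ subtlety), the factor $\#X(\F_p)$ accounts for summing over residue discs of the minimal regular model, the $(4g-2)^m$ factor comes from the degrees of the meromorphic differential forms of which $f_\Sigma$ is an iterated Coleman integral (each integration against a form with at most $4g-2$ poles, $m$ iterations), and $\prod_{i=1}^{m-1}(c_i^\loc+1)$ bounds the number of iterated integrals of each depth $i<m$ that can appear, i.e.\ the dimension of the relevant local piece. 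Combining this with the count of reduction types gives exactly the asserted inequality.

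The main obstacle is not in the present deduction—once Theorems~\ref{thm:main} and~\ref{thm:weight_bound} are in hand the argument is essentially bookkeeping—but rather in establishing those two inputs, and in particular in making the notion of the weight of a Coleman analytic function precise and showing it behaves well under the constructions of the weight-filtered Chabauty--Kim method (this is the content of \S\ref{sss:coleman_weights} and Corollary~\ref{cor:coleman_hilbert_series}). Within the proof of Theorem~B itself, the one point requiring a little care is ensuring that the bound on the number of zeroes is applied on the minimal regular model uniformly across reduction types: one must check that the Coleman functions $f_\Sigma$ extend appropriately to (the smooth locus of) the special fibre and that the residue-disc decomposition used in Theorem~\ref{thm:weight_bound} is compatible with the decomposition by reduction type, so that the product $\prod_\ell n_\ell \cdot \#X(\F_p)$ genuinely dominates the total number of $p$-adic discs containing rational points of the various types. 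I would also double-check that the empty product convention makes the bound hold trivially in the boundary case $m=1$, where $\prod_{i=1}^{m-1}(c_i^\loc+1)$ is empty and $f_\Sigma$ is a single Coleman integral of a differential with at most $4g-2$ zeroes, recovering (a version of) Coleman's bound.
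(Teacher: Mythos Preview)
Your proposal is correct and follows essentially the same route as the paper: apply Theorem~\ref{thm:main} to each reduction type $\Sigma$, bound $\#X(\Q)_\Sigma$ by the zero count from Theorem~\ref{thm:weight_bound}, and multiply by the number of reduction types $\prod_\ell n_\ell$. One small clarification: the reduction types involve only primes $\ell\neq p$ (and $n_p=1$ since $p$ is of good reduction), so there is no compatibility issue between the $p$-adic residue-disc decomposition in Theorem~\ref{thm:weight_bound} and the decomposition by reduction type---these are independent, and your concern there is unnecessary.
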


It has been a long-standing expectation in the Chabauty--Kim community that if the inequality~\eqref{eq:c-k_ineq} holds for $U=U^\et_n$ the maximal $n$-step unipotent quotient of the $\Q_p$-pro-unipotent \'etale fundamental group, then it should be possible to bound the number of rational points of~$X$ in terms of~$n$. Theorem~\ref{thm:main_bound} realises this expectation by giving the following coarse upper bound on $\#X(\Q)$ in terms of~$n$.

\begin{corollary*}[to Theorem~\ref{thm:main_bound}]\label{cor:depth_bound}
	Suppose that inequality~\eqref{eq:c-k_ineq} holds for~$U=U^\et_n$ the maximal $n$-step unipotent quotient of the $\Q_p$-pro-unipotent \'etale fundamental group where~$n\geq2$. Then
	\[
	\# X(\Q) \leq \kappa_p\cdot\prod_\ell n_\ell\cdot\#X(\F_p)\cdot(4g-2)^{n^{(2g)^n}}\cdot(2g)^{{{n^{(2g)^n}}\choose2}} \,,
	\]
	where $n_\ell$, $\#X(\F_p)$ and $\kappa_p$ are as in Theorem~\ref{thm:main_bound}.
\end{corollary*}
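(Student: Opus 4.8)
The plan is to deduce the corollary from Theorem~\ref{thm:main_bound}: I will show that hypothesis~\eqref{eq:c-k_ineq} forces inequality~\eqref{eq:c-k_ineq_series} for some $m \le M := n^{(2g)^n}$, and then control the factor $\prod_{i=1}^{m-1}(c_i^\loc+1)$ that appears. Since $U = U^\et_n$ is $n$-step, $V_j = 0$ for $j > n$, so $\HS_\glob$ and $\HS_\loc$ are finite products; both $(4g-2)^m$ and $\prod_{i=1}^{m-1}(c_i^\loc+1)$ are nondecreasing in $m$ (each $c_i^\loc \ge 0$), so it is enough to produce a valid $m \le M$ and separately check $\prod_{i=1}^{M-1}(c_i^\loc+1) \le (2g)^{\binom{M}{2}}$. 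Writing $d^\loc := \sum_j \dim\HH^1_f(G_p,V_j)$, one has $\HS_\loc(t) \preceq (1-t)^{-d^\loc}$ coefficientwise (as $(1-t^j)^{-1} \preceq (1-t)^{-1}$), so $c_i^\loc \le \binom{i+d^\loc-1}{d^\loc-1} \le (i+1)^{d^\loc}$ and $\prod_{i=1}^{M-1}(c_i^\loc+1) \le 2^{M-1}(M!)^{d^\loc} \le M^{2Md^\loc}$, which is $\le (2g)^{\binom{M}{2}}$ as soon as $d^\loc$ is small compared with $M/\log M$.

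The dimension bounds I would use are elementary. The graded piece $V_j$ is a subquotient of the degree-$j$ part of the free Lie algebra on $V_1 \cong \Q_p^{2g}$, so $\dim V_j \le \tfrac1j\sum_{e\mid j}\mu(e)(2g)^{j/e} \le (2g)^j/j$. Since $p$ is a prime of good reduction, $V_j$ is crystalline and pure of weight $-j < 0$; hence $\HH^0(G_p,V_j) = 0$, and the Bloch--Kato formula gives $\dim\HH^1_f(G_p,V_j) = \dim D_\dR(V_j)/\Fil^0 \le \dim V_j$. Summing over $1 \le j \le n$ shows $d^\loc \le \sum_{j=1}^n (2g)^j/j$, which is less than $(2g)^n$ for all $n \ge 2$ (comfortably so for $n \ge 3$), while hypothesis~\eqref{eq:c-k_ineq} reads precisely $d^\glob := \sum_j\dim\HH^1_f(G_\Q,V_j) < d^\loc$. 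In particular $d^\loc$, and each $d_j^\glob$, is bounded in terms of $n$ and $g$, so the coefficient bound of the previous paragraph applies.

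For the crossover, set $S^\glob_m := \sum_{i=0}^m c_i^\glob$ and $S^\loc_m := \sum_{i=0}^m c_i^\loc$, so that $\HS_\glob(t) = (1-t)\sum_{m\ge0} S^\glob_m t^m$ and similarly $\HS_\loc(t) = (1-t)\sum_{m\ge0} S^\loc_m t^m$. Suppose for contradiction that \eqref{eq:c-k_ineq_series} fails for every $m \le M$, i.e., $S^\glob_m \ge S^\loc_m$ throughout; then for $t \in [0,1)$ one gets $\HS_\loc(t) - \HS_\glob(t) \le (1-t)\sum_{m>M}S^\loc_m t^m$, because $\sum_{m\le M}(S^\loc_m - S^\glob_m)t^m \le 0$ termwise. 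I would now evaluate at $t_0 = 1 - \eta$ for a carefully chosen $\eta \in (0,\tfrac12)$ of size about $\tfrac12 n^{-d^\loc}$. On the left, $\HS_\loc(t_0)/\HS_\glob(t_0) = \prod_j (1-t_0^j)^{-(d_j^\loc - d_j^\glob)}$ is at least $n^{-d^\loc}\eta^{-1}$, using $\sum_j (d_j^\loc - d_j^\glob) = d^\loc - d^\glob \ge 1$ and $1 - t_0 \le 1 - t_0^j \le n(1-t_0)$; arranging $\eta$ so this exceeds $2$ forces $\HS_\loc(t_0) - \HS_\glob(t_0) \ge \HS_\glob(t_0) \ge 1$. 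On the right, $S^\loc_m \le (m+1)^{d^\loc}$, and since $M = n^{(2g)^n}$ is vastly larger than the mode $\approx d^\loc n^{d^\loc}$ of $m \mapsto (m+1)^{d^\loc}t_0^m$ (this is where $d^\loc \ll (2g)^n$ enters), the right-hand side is at most $t_0^M \le e^{-\eta M}$ times a polynomial in $M$, hence strictly less than $1$. This contradiction produces the desired $m \le M$.

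I expect this quantitative crossover to be the main obstacle: the choice of $\eta$ must simultaneously keep $\HS_\loc(t_0)/\HS_\glob(t_0) \ge 2$ and keep $\eta M$ large enough to kill the polynomial factor, and the case $n = 2$ (where $M = 2^{4g^2}$ is comparatively small) appears to need the sharper input that $\HS_\loc(t)/\HS_\glob(t) = (1-t)^{-\delta}(1+t)^{-\delta_2}$ with $\delta = d^\loc - d^\glob \ge 1$ and $|\delta_2| = |d_2^\loc - d_2^\glob| \le \dim V_2 = \binom{2g}{2} - 1$ bounded purely in terms of $g$, which permits the choice $\eta \approx 2^{-\binom{2g}{2}}$. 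Granting this, the corollary follows by feeding the valid $m \le M$ together with the coefficient bound $\prod_{i=1}^{M-1}(c_i^\loc+1) \le (2g)^{\binom{M}{2}}$ into Theorem~\ref{thm:main_bound} and using the monotonicity in $m$.
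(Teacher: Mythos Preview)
Your overall plan --- reduce to Theorem~\ref{thm:main_bound}, find a valid $m\le M:=n^{(2g)^n}$, and bound $\prod_{i=1}^{m-1}(c_i^\loc+1)$ --- matches the paper's, but your execution of both steps is more complicated than necessary and the crossover argument is left incomplete (as you yourself flag).

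For the crossover, the paper avoids any analytic evaluation at $t_0=1-\eta$ and instead uses one clean inequality you are missing: since every $V_j$ sits in weight $\le n$, one has the coefficientwise \emph{lower} bound $\HS_\loc(t)\succeq(1-t^n)^{-d_\loc}$, not just the upper bound $\HS_\loc(t)\preceq(1-t)^{-d_\loc}$ you use. Together with $\HS_\glob(t)\preceq(1-t)^{-d_\glob}$ this gives
\[
\sum_{i\le m}c_i^\glob\le{m+d_\glob\choose d_\glob}\quad\text{and}\quad\sum_{i\le m}c_i^\loc\ge{\lfloor m/n\rfloor+d_\loc\choose d_\loc},
\]
and a direct comparison of binomial coefficients shows the inequality~\eqref{eq:c-k_ineq_series} holds at $m=d_\loc n^{d_\loc}$. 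This gives the crossover value explicitly, uniformly in $n\ge2$, with no separate treatment of $n=2$ and no tail-sum estimate to control.

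For the product $\prod_{i=1}^{m-1}(c_i^\loc+1)$ and the bound on $d_\loc$, the paper again uses a single ingredient you do not invoke: by Corollary~\ref{cor:coleman_hilbert_series} (applied with $r=0$), the coefficients $c_i^\loc$ are bounded by the coefficients $c_i$ of $\frac{1-gt}{1-2gt+t^2}$, and an easy induction gives $c_i\le\tfrac12(2g)^i$ for $i\ge1$. This yields both $c_i^\loc+1\le(2g)^i$, hence $\prod_{i=1}^{m-1}(c_i^\loc+1)\le(2g)^{\binom{m}{2}}$ directly, and $d_\loc=\sum_{j\le n}d_j\le\sum_{j\le n}c_j\le\frac{g}{2g-1}(2g)^n$, which is what makes $d_\loc n^{d_\loc}\le n^{(2g)^n}$ go through. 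Your route via $c_i^\loc\le(i+1)^{d^\loc}$ and the free-Lie-algebra bound on $\dim V_j$ gives only $\prod\le M^{2Md^\loc}$, and then requires a further comparison with $(2g)^{\binom{M}{2}}$ that, while true, is an extra step.

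In short: your argument is not wrong in spirit, but the analytic crossover is genuinely incomplete as written (your own ``I expect this to be the main obstacle'' is accurate --- the tail estimate with $\eta\approx\tfrac12 n^{-d^\loc}$ does not obviously close for small $n$ using only your crude bound $d^\loc<(2g)^n$), and both difficulties dissolve once you use the lower bound $\HS_\loc\succeq(1-t^n)^{-d_\loc}$ and the coefficient bound from Corollary~\ref{cor:coleman_hilbert_series}.
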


For any~$X/\Q$, inequality~\eqref{eq:c-k_ineq} is known to hold for some computable $n\gg0$ if one assumes the Bloch--Kato Conjecture \cite[\S3]{minhyong:selmer}. So the above Corollary gives in particular an effective upper bound on~$\#X(\Q)$ for any~$X$, subject to the Bloch--Kato Conjecture. Moreover, in several cases, computable upper bounds on~$n$ can be computed without assuming Bloch--Kato e.g.\ \cite{minhyong-coates:cm_jacobians}. Explicit values for these upper bounds on~$\#X(\Q)$ will be given in future work.
\smallskip

The other consequence of Theorem~\ref{thm:main} in the context of effective Chabauty--Kim is that it constrains the Coleman analytic function~$f$ to lie in a finite-dimensional space, and so when $X$ has sufficiently many rational points, it is possible to find~$f$ using the method of undetermined coefficients, i.e.\ using the equations $f(x_i)=0$ for some points $x_i\in X(\Q)$ to solve for~$f$. To state this formally, we adopt the shorthand $C_m^\glob:=\sum_{i=0}^mc_i^\glob$ and $C_m^\loc:=\sum_{i=0}^mc_i^\loc$.

\begin{theorema}\label{thm:main_det}
	Suppose that $m$ is a positive integer such that \eqref{eq:c-k_ineq_series} holds. Let $f_1,\dots,f_{C_m^\loc}$ be a basis of the space of Coleman analytic functions of weight $\leq m$ associated to the quotient $U$ in the sense of \S\ref{ss:proof_general}. Then for any $C_m^\glob+1$ points $x_0,x_1,\dots,x_{C_m^\glob}\in X(\Q)_\Sigma$ of the same reduction type~$\Sigma$, all $(C_m^\glob+1)\times(C_m^\glob+1)$ minors of the matrix~$M$ with entries $M_{ij}=f_i(x_j)$ vanish.
\end{theorema}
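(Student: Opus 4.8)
The plan is to reduce the statement to a linear-algebra observation about the weight-filtered Selmer and localisation schemes underlying the Chabauty--Kim method, using exactly the machinery assembled for Theorem~\ref{thm:main}; Theorem~\ref{thm:main_det} is really a matrix refinement of that theorem with the same inputs.

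First I would unwind the definition from \S\ref{ss:proof_general} of the space of Coleman analytic functions of weight $\leq m$ associated to $U$. As set up there, one has a local Selmer scheme $\Sel_p$ (the Bloch--Kato local condition $\HH^1_f(G_p,U)$, an affine $\Q_p$-scheme) whose coordinate ring carries a weight filtration with $\dim\O(\Sel_p)_{\leq m}=C_m^\loc$, a $p$-adic unipotent Albanese map $j_p\colon X(\Q_p)\to\Sel_p(\Q_p)$, and the weight-$\leq m$ Coleman functions are precisely the pullbacks $j_p^*g$ for $g\in\O(\Sel_p)_{\leq m}$; moreover $j_p^*$ is injective on $\O(\Sel_p)_{\leq m}$, this injectivity together with the dimension count being the content of Corollary~\ref{cor:coleman_hilbert_series}. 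Consequently the given basis $f_1,\dots,f_{C_m^\loc}$ has the form $f_i=j_p^*\tilde f_i$ for a basis $\tilde f_1,\dots,\tilde f_{C_m^\loc}$ of $\O(\Sel_p)_{\leq m}$.

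Next I would recall the global side, exactly as in the proof of Theorem~\ref{thm:main}: for the fixed reduction type $\Sigma$ there is a global Selmer scheme $\Sel_\Sigma$, again weight-filtered, with $\dim\O(\Sel_\Sigma)_{\leq m}\leq C_m^\glob$, a weight-filtered localisation morphism $\loc_p\colon\Sel_\Sigma\to\Sel_p$, and a global unipotent Kummer map $j\colon X(\Q)_\Sigma\to\Sel_\Sigma(\Q_p)$ fitting into the commutative square $j_p=\loc_p\circ j$ on $X(\Q)_\Sigma$. The one point needing care --- but already established en route to Theorem~\ref{thm:main} --- is that restricting to a single reduction type is precisely what makes the local conditions at the bad primes cut out a single scheme $\Sel_\Sigma$ whose coordinate ring has the asserted bounded weight-graded dimensions, and that $\loc_p$ is a morphism of weight-filtered schemes, so that $\loc_p^*$ carries $\O(\Sel_p)_{\leq m}$ into $\O(\Sel_\Sigma)_{\leq m}$.

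Finally I would assemble the matrix identity. Setting $s_j:=j(x_j)\in\Sel_\Sigma(\Q_p)$ for $j=0,\dots,C_m^\glob$, the commutative square gives
\[
M_{ij}=f_i(x_j)=\tilde f_i\bigl(j_p(x_j)\bigr)=\tilde f_i\bigl(\loc_p(s_j)\bigr)=\bigl(\loc_p^*\tilde f_i\bigr)(s_j),
\]
so the $i$-th row of $M$ is the image of the vector $\loc_p^*\tilde f_i\in\O(\Sel_\Sigma)_{\leq m}$ under the $\Q_p$-linear evaluation map $\O(\Sel_\Sigma)_{\leq m}\to\Q_p^{C_m^\glob+1}$, $g\mapsto(g(s_0),\dots,g(s_{C_m^\glob}))$. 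Since all the $\loc_p^*\tilde f_i$ lie in $\O(\Sel_\Sigma)_{\leq m}$, a space of dimension at most $C_m^\glob$, their images span a subspace of dimension at most $C_m^\glob$; hence the row space of $M$ has dimension $\leq C_m^\glob$, that is $\rk M\leq C_m^\glob$, and every $(C_m^\glob+1)\times(C_m^\glob+1)$ minor of $M$ vanishes. I expect the main obstacle to lie not in this concluding step but in cleanly importing the weight-filtered formalism: the compatibility of $\loc_p$ with the weight filtrations on source and target, and the identification of the weight-$\leq m$ Coleman functions with $\O(\Sel_p)_{\leq m}$ via $j_p^*$. Once those are in hand the conclusion is immediate.
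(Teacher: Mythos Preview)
Your proposal is correct and takes essentially the same approach as the paper: the paper deduces part~\ref{thmpart:main_refined_det} of Theorem~\ref{thm:main_refined} (of which Theorem~\ref{thm:main_det} is the special case $Y=X$, $S=\emptyset$) from exactly the same ingredients you identify---the filtered map $\loc_\ellp^*\colon\W_m\O(\HH^1_f(G_\ellp,U))\to\W_m\O(\Sel_{\Sigma,U})$ landing in a space of dimension $\leq C_m^\glob$, and the factorisation $f_i(x_j)=(\loc_\ellp^*\tilde f_i)(\jj_U(x_j))$. The paper phrases the conclusion as the image of the points under $(f_1,\dots,f_{C_m^\loc})$ lying in a subspace of dimension $\leq C_m^\glob$ (bounding the column rank), whereas you bound the row rank via the evaluation map on $\W_m\O(\Sel_{\Sigma,U})$; these are of course equivalent.
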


In particular, fixing the points $x_1,\dots,x_{C_m^\glob}$, the minors of the matrix~$M$ provide Coleman analytic functions in the variable $x_0$ which vanish on $X(\Q)_\Sigma$. There is no guarantee that the functions obtained in this way are non-zero (e.g.\ if $X$ has fewer than $C_m^\glob$ rational points overall), but in the cases when it is non-zero, it gives an explicit Coleman analytic function on~$X_{\Q_p}$ vanishing on~$X(\Q)$, computable to any desired $p$-adic precision.

\subsubsection*{Remarks}

1) The Chabauty--Kim method applies not just to rational points on smooth projective curves $X$ of genus $\geq2$, but more generally to $S$-integral points on regular models~$\Y/\Z_S$ of smooth hyperbolic curves~$Y/\Q$, for $S$ any finite set of primes. We will in fact prove all of our main results in this greater level of generality: see Theorem~\ref{thm:main_refined}. Theorems~\ref{thm:main}, \ref{thm:main_bound} and~\ref{thm:main_det} are recovered from these more general results by taking $Y=X$, $S=\emptyset$, and $\Y/\Z$ the minimal regular model of~$X$. In dealing with affine curves, an extra technicality arises in that the space of Coleman analytic functions of weight~$\leq m$ is infinite-dimensional, so we cannot hope to bound the number of zeroes of a Coleman analytic function in terms of its weight. To resolve this issue, we introduce in \S\ref{sss:log-coleman} a subspace, which we call the space of Coleman \emph{algebraic} functions, which agrees with the space of Coleman analytic functions in the projective case, but is better behaved in the affine case.

2) Although the Chabauty--Kim method is only usually applied to quotients~$U$ that are finite-dimensional, our Theorems~\ref{thm:main}, \ref{thm:main_bound} and~\ref{thm:main_det} also apply in the case that $U$ is infinite-dimensional. We will see in \S\ref{s:siegel} that this extra flexibility can sometimes be useful in calculations.

3) The factor of $(4g-2)$ appearing in Theorem~\ref{thm:main_bound} is not optimal. It can always be improved to $(4g-3)$, and in some special cases yet smaller values suffice. See \S\ref{ss:better_omega}.

4) When $U$ dominates the abelianisation of the fundamental group, the first exponent $\HH^1_f(G_\Q,V_1)$ appearing in $\HS_\glob(t)$ is the $p^\infty$-Selmer rank of~$J$. The $p^\infty$-Selmer rank is greater than or equal to the Mordell--Weil rank of $J(\Q)$, with equality if and only if the $p$-divisible part of the Tate--Shafarevich group of~$J$ vanishes, as predicted by the Tate--Shafarevich Conjecture. Using a trick of Balakrishnan--Dogra (see \S\ref{ss:jen-netan}), one can circumvent the apparent need to assume the Tate--Shafarevich Conjecture in order to compute $\HS_\glob(t)$: Theorems~\ref{thm:main}, \ref{thm:main_bound} and~\ref{thm:main_det} still hold verbatim when the power series $\HS_\glob(t)$ is replaced by the power series
\[
\HS_\glob^\BD(t):=(1-t)^{-\rk(J(\Q))}\cdot\prod_{n=2}^\infty(1-t^n)^{-\dim\HH^1_f(G_\Q,V_n)} \,.
\]

5) There are also versions of Theorems~\ref{thm:main} and~\ref{thm:main_det} which apply to the whole set $X(\Q)$, rather than the subsets $X(\Q)_\Sigma$ individually: see Theorem~\ref{thm:main_naive}. The price one pays is that the weight of the Coleman analytic function vanishing on the whole of $X(\Q)$ is in general larger than the weights of the Coleman analytic functions vanishing on each $X(\Q)_\Sigma$ separately. This weight discrepancy means that bounding the sizes of the sets $X(\Q)_\Sigma$ individually usually gives better bounds than bounding the size of $X(\Q)$ directly. These versions of Theorems~\ref{thm:main} and~\ref{thm:main_det} for $X(\Q)$ were already obtained in unpublished work of Francis Brown \cite{brown:integral_points}, using related ideas to those we develop in the first half of this paper.

\subsection{Application: $S$-uniformity in Siegel's Theorem}

One feature of the version of Theorem~\ref{thm:main_bound} we prove for $S$-integral points on affine curves is that the bound depends only weakly on the set~$S$ of primes. In particular, it always provides upper bounds which are \emph{$S$-uniform}, meaning that they depend only on the number of primes in~$S$, and not on the set~$S$ itself. For example, our effective Chabauty--Kim method proves the following $S$-uniform upper bound on the number of solutions to the $S$-unit equation.

\begin{theorema}\label{thm:main_siegel}
	Let $\Y:=\P^1\setminus\{0,1,\infty\}$, and let~$s$ be a non-negative integer. Then for every set~$S$ of primes of size~$s$, we have
	\[
	\#\Y(\Z_S)\leq8\cdot6^s\cdot2^{4^s} \,.
	\]
\end{theorema}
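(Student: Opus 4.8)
The plan is to deduce Theorem~\ref{thm:main_siegel} from the $S$-integral version of Theorem~\ref{thm:main_bound} (namely Theorem~\ref{thm:main_refined}) applied to $\Y=\P^1\setminus\{0,1,\infty\}$, using a carefully chosen quotient $U$ of the pro-unipotent étale fundamental group. The first step is to recall that $\Y$ has genus $g=0$, so $2g-1 = -1$ and the factor $(4g-2)$ must be replaced throughout by the affine analogue; here the relevant local geometry is that of $\P^1$ minus three points, for which the Coleman-algebraic function theory of \S\ref{sss:log-coleman} applies and the analogue of $(4g-2)$ in the zero-bound is a small explicit constant. I would take $U$ to be a depth-$2$ (or small fixed depth) unipotent quotient, for which the graded pieces $V_n$ are explicitly the free-Lie-algebra pieces on two generators in weight $1$, so that $\dim V_1 = 2$, $\dim V_2 = 1$, etc. The key local input is that for $\Y$ with good reduction at $p$, the local Bloch--Kato Selmer dimensions $\dim\HH^1_f(G_p,V_n)$ are computed by the de Rham comparison and equal $\dim V_n$ in each positive weight (the local condition is ``all of $\HH^1$'' up to the part killed by Frobenius eigenvalue considerations), whereas the global Selmer dimensions $\dim\HH^1_f(G_{\Q,S},V_n)$ grow only linearly in $\#S$ --- concretely $\dim\HH^1_f(G_{\Q,S},V_1) = s$ (one dimension per prime, coming from the $S$-units), and the higher $V_n$ contribute boundedly. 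This is the heart of why $S$-uniformity holds: the global side is controlled by $\#S = s$ while the local side is a fixed geometric quantity.

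The second step is to choose $m$ so that inequality~\eqref{eq:c-k_ineq_series} holds. Since $\HS_\loc(t) = \prod_n (1-t^n)^{-\dim V_n}$ has coefficients growing (eventually) like a polynomial of positive degree in the weight, while $\HS_\glob(t)$ has its weight-$1$ exponent equal to $s$ and bounded exponents thereafter, the partial sums $C_m^\loc$ overtake $C_m^\glob$ once $m$ is taken of size roughly $4^s$ (the doubly-exponential growth of the free Lie algebra on two generators means one needs weight on the order of $4^s$ to make the local partial sum exceed a quantity that is itself exponential in $s$). I would pin down the precise threshold $m = m(s)$ by a direct estimate on the two Hilbert series --- bounding $C_m^\glob$ from above by something like $\binom{m+s}{s}$-type growth and $C_m^\loc$ from below using the free Lie algebra dimension formula --- and verify $m(s) \leq 4^s$ suffices. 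Plugging $m = m(s)$, $\prod_\ell n_\ell = 1$ (good reduction everywhere outside $S$, and the three removed points are $\Q$-rational so contribute trivially to reduction types), $\#\Y(\F_p)$ bounded by $p-2$ for a suitable small $p$ (e.g.\ $p=5$, giving $\#\Y(\F_5)=3$), and $\kappa_5$ explicit, into Theorem~\ref{thm:main_refined} yields a bound of the shape (small constant)$\cdot$(small constant)$^s\cdot$(small constant)$^{4^s}$; bookkeeping the constants against $8\cdot 6^s\cdot 2^{4^s}$ finishes the argument. One must also sum over reduction types $\Sigma$: with trivial local models the number of reduction types for $\Y(\Z_S)$ is itself bounded by a quantity like $3^s$ (choice of which of $0,1,\infty$ a point reduces to, or a unit, at each prime of $S$), which gets absorbed into the $6^s$.

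The main obstacle I anticipate is pinning down the affine constant that replaces $(4g-2)$ and, relatedly, making the whole zero-counting estimate uniform in $S$ when the curve is non-proper: a Coleman \emph{analytic} function of bounded weight on the affine $\Y_{\Q_p}$ can have infinitely many zeros, so one genuinely needs the Coleman \emph{algebraic} refinement and a clean bound (Theorem~\ref{thm:weight_bound} in its affine form) on the number of $S$-integral zeros of such a function in terms of its weight, the residue data at the missing points, and $\#\Y(\F_p)$ --- crucially with all $S$-dependence confined to the exponent $m(s)$ and not hidden in an implicit constant. A secondary technical point is the precise computation of $\dim\HH^1_f(G_{\Q,S},V_n)$ for the chosen $U$: one must confirm that for $\P^1\setminus\{0,1,\infty\}$ the global Selmer dimensions in weights $\geq 2$ are bounded independently of $S$ (this is where one invokes the structure of mixed Tate motives over $\Z_S$, or equivalently Soulé-type vanishing / dimension results), since if those grew with $s$ the threshold $m(s)$ could blow up faster than $4^s$. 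Assuming the input estimates on Selmer dimensions and the affine zero-bound, the remaining work is purely the arithmetic of comparing two explicit generating functions, which I would carry out by hand.
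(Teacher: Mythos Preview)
Your overall architecture is right --- apply the refined effective Chabauty--Kim theorem, count reduction types as $3^s$, and bound zeroes of the resulting Coleman algebraic function --- but there are two genuine gaps that would prevent the argument from going through as written.

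First, you propose taking $U$ to be a fixed small-depth quotient. This cannot yield an $S$-uniform result: for any finite-dimensional $U$, the local Hilbert series $\HS_\loc(t)$ is a fixed rational function, while the global side carries a factor $(1-t^2)^{-s}$ from the Selmer structure at primes in $S$ (Lemma~\ref{lem:local_hilbert_series_bound_in_s}); for $s$ large the global side dominates coefficientwise and inequality~\eqref{eq:c-k_ineq_series_refined} fails for every $m$. The paper instead takes $U=U^\et$, the \emph{entire} pro-unipotent fundamental group, so that $\HS_\loc(t)=\frac{1}{1-2t^2}$ via the cyclotomic identity. The global input is Soul\'e's vanishing: $\dim\HH^1_f(G_\Q,\Q_\ellp(n))$ is $0$ for $n=1$ and even $n\geq2$, and $\leq1$ for odd $n\geq3$ --- crucially these are \emph{independent of $S$}, with the $S$-dependence entering only through the $(1-t^2)^{-s}$ factor. (Relatedly, your weight indexing is off: for $\P^1\setminus\{0,1,\infty\}$ the generators lie in weight $-2$, not $-1$, so $V_1=0$ and all filtrations are supported in even degrees.) The threshold $m\leq4^s$ is then obtained not from binomial estimates but from a functional equation for $F(\tau)=\prod_{n\text{ odd}}(1-\tau^n)^{-M(2,n)}$ (Lemma~\ref{lem:global_functional_equation}), which allows one to compare the squares of the two Hilbert series directly (Lemma~\ref{lem:local_hilbert_series_overtakes_global}).

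Second, you cannot fix $p=5$: the auxiliary prime must lie outside $S$. The paper takes $\ellp$ to be the smallest prime not in $S$, so $\ellp\leq2^{s+1}$ and $\kappa_\ellp\cdot(\ellp-2)<2^{s+2}$. Moreover, feeding $m=4^s$ into the general Theorem~\ref{thm:weight_bound} would produce a factor like $4^{2\cdot4^s}\prod_{i=1}^{4^s-1}(2^i+1)$, far too large for the stated bound. The paper instead proves a bespoke zero-bound for $\P^1\setminus\{0,1,\infty\}$ (Lemma~\ref{lem:weight_bound_for_the_line}) by writing down explicit PD-nice operators
\[
\DO_m=(z-z^2)^{2^m}\frac{\d^{2^m}}{\d z^{2^m}}\cdots(z-z^2)\frac{\d}{\d z}
\]
of order $2^{m+1}-1$ annihilating all Coleman algebraic functions of weight $\leq2m$, yielding the much sharper bound $\kappa_\ellp\cdot(\ellp-2)\cdot2^{m+1}$. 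This is what produces the clean $2^{4^s}$ factor.
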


$S$-uniform bounds of this kind are already known via analytic techniques. The best available upper bound is $\#\Y(\Z_S)\leq3\cdot 7^{2s+3}$, due to Evertse \cite[Theorem~1]{evertse:equations_in_s-units}. Our bound in Theorem~\ref{thm:main_siegel}, being doubly exponential in~$s$, is much weaker than Evertse's; what is of interest here is not so much the specific bound than the method used to obtain it, demonstrating the applicability of the Chabauty--Kim programme to uniformity questions. Tantalisingly, it suggests that it may be possible to use Theorem~\ref{thm:main_bound} to prove uniformity results of a similar type to  \cite{katz-rabinoff-zureick-brown:uniform,dimitrov-gao-habegger:uniformity}.

\subsection{Examples}\label{ss:examples}

As an illustration of the kinds of explicit results one can obtain from Theorem~\ref{thm:main_bound}, we give a few examples showing that it recovers as special cases both the effective Chabauty results of Coleman and the effective quadratic Chabauty results of Balakrishnan--Dogra, as well as giving a new example from quadratic Chabauty which goes beyond \cite{jen-netan:effective}.

Throughout these examples, we assume the Tate--Shafarevich Conjecture, which ensures that
\[
\dim_{\Q_\ellp}\HH^1_f(G_\Q,\HH^1_\et(X_{\Qbar},\Q_\ellp)^\dual) = \rk(J(\Q))
\]
for every smooth projective curve~$X/\Q$ with Jacobian~$J$. But all the bounds in the below examples are still true without this assumption; see \S\ref{ss:jen-netan}.

\begin{example}
	Suppose that the rank of the Jacobian~$J$ of~$X$ is strictly less than the genus~$g$. We take~$U=U_1:=(U^\et)^\ab$. Then the representations~$V_n$ are zero for~$n>1$, and $V_1$ is the $\Q_\ellp$-linear Tate module of the Jacobian~$J$ of~$X$. The power series $\HS_\glob(t)$ and~$\HS_\loc(t)$ are given by
	\begin{align*}
		\HS_\glob(t) &= (1-t)^{-\rk J(\Q)} = 1+ \rk(J(\Q)) t + \dots \\
		\HS_\loc(t) &= (1-t)^{-g} = 1 + gt + \dots \,.
	\end{align*}
	Thus the assumption that $\rk(J(\Q))<g$ implies that inequality~\eqref{eq:c-k_ineq_series} holds for~$m=1$, so Theorem~\ref{thm:main_bound} provides the bound
	\[
	\# X(\Q) \leq \kappa_\ellp\cdot\prod_{\pell}n_\pell\cdot\# X(\F_\ellp)\cdot(4g-2)
	\]
	whenever~$\ellp$ is a prime of good reduction for~$X$. This bound is worse than the upper bound of $\#X(\F_p)+2g-2$ proved by Coleman in the case $p>2g$ \cite[Corollary~4a \&~\S1]{coleman:effective}, and is indicative of the extra complications which arise when dealing with genuinely non-abelian quotients of the fundamental group.
\end{example}

\begin{example}\label{ex:quadratic_chabauty}
	Suppose that the rank of the Jacobian~$J$ of~$X$ is equal to the genus~$g$, and suppose that the rational N\'eron--Severi rank of~$J$ is strictly bigger than~$1$. Then according to \cite[Proof of Lemma~3.2]{jen-netan:quadratic_i}, the $\Q_\ellp$-pro-unipotent \'etale fundamental group of~$X$ has a quotient~$U$ which is a central extension of the abelianisation of~$U^\et$ by~$\Q_\ellp(1)$. For this quotient~$U$, we have
	\begin{align*}
		\HS_\glob(t) &= (1-t)^{-\rk J(\Q)} = 1+ gt + \frac{g(g+1)}2t^2 + \dots \\
		\HS_\loc(t) &= (1-t)^{-g}(1-t^2)^{-1} = 1 + gt + \left(\frac{g(g+1)}2+1\right)t^2 + \dots \,.
	\end{align*}
	Inequality~\eqref{eq:c-k_ineq_series} holds for~$m=2$, so Theorem~\ref{thm:main_bound} provides the bound
	\[
	\# X(\Q) \leq \kappa_\ellp\cdot\prod_{\pell}n_\pell\cdot\# X(\F_\ellp)\cdot(16g^3-12g+4)
	\]
	whenever~$\ellp$ is a prime of good reduction for~$X$. This is actually slightly better than the upper bound of $\kappa_p\cdot\prod_\ell n_\ell\cdot\#X(\F_p)\cdot(16g^3+15g^2-16g+10)$ obtained by Balakrishnan--Dogra in the case that $p$ is odd \cite[Theorem~1.1(i)]{jen-netan:effective}. We will explain this in Remark~\ref{rmk:why_better}.
\end{example}

\begin{remark}
	Strictly speaking, the constants~$n_\pell$ in our Theorem~\ref{thm:main_bound} are not the same as the constants denoted~$n_\pell$ in \cite{jen-netan:effective}, which are instead defined as the size of the image of $X(\Q_\pell)$ under the non-abelian Kummer map $\jj_{\pell,U}$ at~$\pell$ \cite[\S2]{jen-netan:effective}. We always have $\#\jj_{\pell,U}(X(\Q_\pell))\leq n_\pell$, and in fact Theorem~\ref{thm:main_bound} still holds if we replace the constants~$n_\pell$ with the smaller constants~$\#\jj_{\pell,U}(X(\Q_\pell))$ (see Remarks~\ref{rmk:local_images_outside_s} and~\ref{rmk:slightly_better_bound}).
\end{remark}


We conclude with another example from quadratic Chabauty which illustrates that the bounds from Theorem~\ref{thm:main_bound} can be quite large, even in relatively simple situations.

\begin{example}
	Suppose that the rank of the Jacobian~$J$ of~$X$ is equal to~$g+1$, and suppose that the rational N\'eron--Severi rank of~$J$ is strictly bigger than~$2$. By \cite[Proof of Lemma~3.2]{jen-netan:quadratic_i} again, the $\Q_\ellp$-pro-unipotent \'etale fundamental group of~$X$ has a quotient~$U$ which is a central extension of the abelianisation of~$U^\et$ by~$\Q_\ellp(1)^2$. To find a value of~$m$ for which inequality~\eqref{eq:c-k_ineq_series} holds, we employ a computational trick which we will use at several points in this paper, considering instead the more manageable series
	\begin{align*}
		\frac{(1+t)^2}{1-t}\HS_\glob(t) &\leq 4(1-t)^{-g-2} = 4\cdot\sum_{i=0}^\infty{i+g+1\choose g+1}t^i \,, \\
		\frac{(1+t)^2}{1-t}\HS_\loc(t) &= (1-t)^{-g-3} = \sum_{i=0}^\infty{i+g+2\choose g+2}t^i \,,
	\end{align*}
	where~$\leq$ denotes componentwise inequality of power series. One checks straightforwardly that the coefficient of~$t^{3g+7}$ in $\frac{(1+t)^2}{1-t}\HS_\glob(t)$ is strictly less than the corresponding coefficient in $\frac{(1+t)^2}{1-t}\HS_\loc(t)$. This implies that there must be some~$m\leq3g+7$ such that the $m$th coefficient of $\frac1{1-t}\HS_\glob(t)$ is strictly less than the corresponding coefficient in $\frac1{1-t}\HS_\loc(t)$ (in fact, one of $m=3g+7,3g+6,3g+5$ works). Since the coefficients of $\frac1{1-t}\HS_{?}(t)$ are the partial sums of the coefficients of $\HS_{?}(t)$, we thus know that~\eqref{eq:c-k_ineq_series} holds for some~$m\leq3g+7$.
	
	Combined with the elementary estimate $c_i^\loc+1\leq{{i+g+1}\choose{g+1}}$ for~$i\geq1$, Theorem~\ref{thm:main_bound} gives the bound
	\[
	\# X(\Q) \leq \kappa_\ellp\cdot\prod_\pell n_\pell\cdot\# X(\F_\ellp)\cdot(4g-2)^{3g+7}\cdot\prod_{i=1}^{3g+6}{{i+g+1}\choose{g+1}}
	\]
	whenever~$\ellp$ is a prime of good reduction for~$X$. Note that, despite the superficial similarities with Example~\ref{ex:quadratic_chabauty}, the dependence of this bound on~$g$ is much worse, being super-exponential as opposed to cubic. It seems likely that the above bound is very far from sharp.
\end{example}


\subsection{Outline of the method}

To give an idea of the techniques developed in this paper, let us sketch how the effective Chabauty--Kim method we develop here differs from the original one from \cite{minhyong:siegel,minhyong:selmer,jen-etal:non-abelian_tate-shafarevich,me-netan:ramification}. The usual Chabauty--Kim method studies the rational points of $X/\Q$ inside the $\ellp$-adic points via a certain commuting square
\begin{center}
	\begin{tikzcd}
		X(\Q) \arrow[r,hook]\arrow[d,"\jj_U"] & X(\Q_p) \arrow[d,"\jj_{\ellp,U}"] \\
		\Sel_U(\Q_\ellp) \arrow[r,"\loc_\ellp"] & \HH^1_f(G_\ellp,U)(\Q_\ellp) \,.
	\end{tikzcd}
\end{center}
Both sets on the bottom row are the $\Q_\ellp$-points of affine $\Q_\ellp$-schemes ($\Sel_U$ being the Selmer scheme as defined in \cite[p.~371]{jen-etal:non-abelian_tate-shafarevich}), and the localisation map $\loc_\ellp$ is a morphism of $\Q_\ellp$-schemes. By comparison with crystalline fundamental groups, one shows that the local non-abelian Kummer map $\jj_{\ellp,U}$ is a Coleman analytic map with Zariski-dense image, whose coordinates are given by iterated Coleman integrals.

When inequality~\eqref{eq:c-k_ineq} holds, we have $\dim\Sel_U<\dim\HH^1_f(G_\ellp,U)$, from which one deduces that the localisation map $\loc_\ellp$ is non-dominant. Thus, there is a non-zero algebraic functional $\alpha\colon\HH^1_f(G_\ellp,U)\rightarrow\A^1_{\Q_\ellp}$ vanishing on the scheme-theoretic image of $\loc_\ellp$. This implies that the composite $f:=\alpha\circ\jj_{\ellp,U}$ is a non-zero Coleman analytic function vanishing on the rational points of~$X$.
\smallskip

Of course, this construction affords no control on~$f$ at all. The way we obtain some \emph{a priori} control over~$f$ is to exploit an extra structure on the schemes $\Sel_U$ and $\HH^1_f(G_p,U)$ in the form of a \emph{weight filtration}~$\W_\bullet$ on their affine rings, induced from the weight filtration (=descending central series) on~$U$. Using this weight filtration, we will prove the following additional properties of the Chabauty--Kim square:
\begin{itemize}
	\item the map $\loc_p^*\colon\O(\HH^1_f(G_p,U))\to\O(\Sel_U)$ is compatible with the weight filtration;
	\item for every~$\alpha\in\W_m\O(\HH^1_f(G_p,U))$, the composite $\alpha\circ\jj_{p,U}$ is a Coleman analytic function of weight at most~$m$.
\end{itemize}

Assume now for simplicity that $X$ has everywhere potentially good reduction, which implies that all rational points have the same reduction type. When inequality~\eqref{eq:c-k_ineq_series} holds for~$m$, a Hilbert series computation shows that
\[
\dim\W_m\O(\Sel_U)<\dim\W_m\O(\HH^1_f(G_p,U)) \,,
\]
and hence that the map $\loc_p^*\colon\W_m\O(\HH^1_f(G_p,U))\to\W_m\O(\Sel_U)$ must fail to be injective. Hence there is some non-zero $\alpha\in\W_m\O(\HH^1_f(G_p,U))$ vanishing on the image of $\loc_p$, so $f:=\alpha\circ\jj_{p,U}$ is a non-zero Coleman analytic function of weight at most~$m$ vanishing on~$X(\Q)$. This proves Theorem~\ref{thm:main} in the case of everywhere potentially good reduction.

Theorem~\ref{thm:main_det} is a formal consequence. For Theorem~\ref{thm:main_bound}, we need to combine Theorem~\ref{thm:main} with an upper bound on the number of zeroes of a non-zero Coleman analytic function~$f$ in terms of its weight. This latter we accomplish by means of the ``nice differential operators'' machinery of \cite{jen-netan:effective}, by writing down a suitable differential equation satisfied by~$f$.

\subsection*{Overview of sections}

The proofs of our main theorems require two main ingredients: endowing the local and global Selmer schemes with weight filtrations, and bounding the number of zeroes of a Coleman algebraic function in terms of its weight. The first part we accomplish in \S\ref{s:weights} and \S\ref{s:selmer}, with \S\ref{s:weights} establishing basic results on filtered schemes, and \S\ref{s:selmer} applying these to construct weight filtrations on the affine rings of Selmer schemes. The second part we accomplish in \S\ref{s:coleman} and \S\ref{s:weight_bound}, with \S\ref{s:coleman} giving the definition of Coleman algebraic functions and their weights, and \S\ref{s:weight_bound} using a version of the ``nice differential operators'' machinery of \cite{jen-netan:effective} to bound the number of zeroes of a Coleman algebraic function in terms of its weight.

These two strands are then combined in \S\ref{s:effective} to set up our effective Chabauty--Kim theory and prove all of the main results: Theorems~\ref{thm:main}, \ref{thm:main_bound} and its Corollary, and~\ref{thm:main_det}. \S\ref{s:siegel} is then devoted to an in-depth analysis of the case of the thrice-punctured line, and proves Theorem~\ref{thm:main_siegel}, our $S$-uniform Siegel's Theorem.

\subsection*{Acknowledgements}

I am pleased to thank Netan Dogra and Jennifer Balakrishnan for explaining to me their thoughts on effective Chabauty--Kim, and to thank Pierre Berthelot, Noam Elkies, Majid Hadian, Richard Hain, Minhyong Kim, Martin Olsson, Jonathan Pridham and Umberto Zannier for taking the time to answer my questions during the writing of this paper. I am grateful to Alex Best, Jennifer Balakrishnan and Netan Dogra for their comments and corrections on a preliminary version of this article; in particular, both the use of Lemma~\ref{lem:global_semisimplicity} and the content of \S\ref{ss:better_omega} came about as a result of their suggestions. I was a guest of the Max-Planck Institut f\"ur Mathematik while I wrote this paper, and I gratefully acknowledge their support.

\section{Weight filtrations and Hilbert series}\label{s:weights}

In this section, we discuss the theory of filtered rings and their Hilbert series. \S\ref{ss:filtrations} is devoted to recalling the basic definitions, and to describing in detail some of the examples which will recur throughout this paper. The second section \S\ref{ss:cohomology} is devoted to a rather less trivial example, showing that the continuous Galois cohomology scheme with coefficients in a filtered $\Q_\ellp$-pro-unipotent group~$U$ admits a natural filtration induced from that on~$U$. This construction is fundamental in the theory we set up, since the local and global Selmer schemes of the Chabauty--Kim method are all closed subschemes of continuous Galois cohomology schemes, and this construction is how we will endow them with filtrations.

\subsection{Filtrations on affine schemes}\label{ss:filtrations}

To begin with, we fix notation and terminology which will be in use throughout this section. We fix a field~$\Fd$ of characteristic~$0$, which for us will almost always be equal to~$\Q_\ellp$. A \emph{filtration} (or \emph{weight filtration} when we want to distinguish it from e.g.\ a Hodge filtration) on an $\Fd$-algebra~$A$ consists of an increasing sequence
\[
\W_0A\subseteq\W_1A\subseteq\W_2A\subseteq\dots\subseteq A
\]
of $\Fd$-subspaces which is compatible with the multiplication on $A$, in the sense that $1\in\W_0A$ and if $x\in\W_iA$ and $y\in\W_jA$ then $xy\in\W_{i+j}A$. In practice, almost all of the filtrations we will consider will be exhaustive ($A=\bigcup_i\W_iA$), have each $\W_iA$ finite-dimensional and satisfy $\W_0A=\Fd$, but we will not assume this in general.

A \emph{filtered affine $\Fd$-scheme} is an affine $\Fd$-scheme~$X$ endowed with a filtration on its affine ring~$\O(X)$; a \emph{filtered morphism} of filtered affine $\Fd$-schemes is one which is compatible with the filtration. We denote the category of filtered affine $\Fd$-schemes by $\WAFF_{\Fd}$, which is dual to the category $\WALG_{\Fd}$ of filtered $\Fd$-algebras. When we want to distinguish between a filtered affine $\Fd$-scheme and its underlying affine $\Fd$-scheme, we denote the former by~$X$ (for example) and the latter by~$X^\circ$.

The category $\WAFF_{\Fd}$ admits many of the same constructions as the category of affine~$\Fd$-schemes, such as finite disjoint unions, products, and more generally all small limits\footnote{In fact, it also has all small colimits, but these are not so well-behaved.}. A common construction we will see throughout this paper is that if~$X$ is a filtered affine $\Fd$-scheme and~$Z^\circ$ is a closed $\Fd$-subscheme of its underlying $\Fd$-scheme~$X^\circ$, then there is an induced filtration on~$\O(Z^\circ)$, namely the image of the filtration on~$\O(X)$ under the map $\O(X^\circ)\twoheadrightarrow\O(Z^\circ)$. We refer to this simply as the \emph{induced filtration} on~$Z^\circ$. The equaliser of a pair of maps $X\rightrightarrows Y$ of filtered affine~$\Fd$-schemes is always a closed $\Fd$-subscheme of~$X$ with the induced filtration.

\subsubsection{Hilbert series}\label{sss:hilbert}

The \emph{Hilbert series} of a filtered affine $\Fd$-scheme~$X$ is the formal power series
\[
\HS_X(t) := \sum_{i\geq0}\dim_{\Fd}\!\left(\gr^\W_i\O(X)\right)t^i\in\N_0^\infty\llbrack t\rrbrack
\]
whose coefficients are non-negative integers or $\infty$. We let $\preceq$ denote the partial ordering on $\N_0^\infty\llbrack t\rrbrack$ where $\sum_{i\geq0}a_it^i\preceq\sum_{i\geq0}a'_it^i$ just when $\sum_{i=0}^na_i\leq\sum_{i=0}^na'_i$ for all~$n$. In other words, $\HS(t)\preceq\HS'(t)$ just when $\frac1{1-t}\HS(t)\leq\frac1{1-t}\HS'(t)$ coefficientwise. We will frequently use the following easily-verified properties of Hilbert series and the relation~$\preceq$.

\begin{lemma}
Let $\HS_1(t)\preceq\HS_1'(t)$ and $\HS_2(t)\preceq\HS_2'(t)$ be elements of $\N_0^\infty\llbrack t\rrbrack$. Then $\HS_1(t)+\HS_2(t)\preceq\HS_1'(t)+\HS_2'(t)$ and $\HS_1(t)\cdot\HS_2(t)\preceq\HS_1'(t)\cdot\HS_2'(t)$. The same holds for infinite sums and products.
\end{lemma}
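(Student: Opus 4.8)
The plan is to reduce everything to elementary manipulations of formal power series with non-negative (possibly infinite) integer coefficients, working throughout with the equivalent characterisation: $\HS(t)\preceq\HS'(t)$ iff $\frac1{1-t}\HS(t)\leq\frac1{1-t}\HS'(t)$ coefficientwise. The key observation is that multiplication by $\frac1{1-t}$ is the operation of forming partial sums of coefficients, and that $\frac1{1-t}$ has all coefficients equal to $1$, hence is itself a series with non-negative coefficients. Since all series involved live in $\N_0^\infty\llbrack t\rrbrack$, there are no sign issues and no cancellation; I would remark once at the start that all the arithmetic of series (addition, multiplication, and their infinite analogues) is well-defined and term-by-term monotone in $\N_0^\infty\llbrack t\rrbrack$, with the convention $n+\infty=\infty$, $n\cdot\infty=\infty$ for $n\geq1$.

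For the additive statement, write $g_k(t):=\frac1{1-t}\HS_k(t)$ and $g_k'(t):=\frac1{1-t}\HS_k'(t)$ for $k=1,2$, so by hypothesis $g_k(t)\leq g_k'(t)$ coefficientwise. Then $\frac1{1-t}\bigl(\HS_1(t)+\HS_2(t)\bigr)=g_1(t)+g_2(t)\leq g_1'(t)+g_2'(t)=\frac1{1-t}\bigl(\HS_1'(t)+\HS_2'(t)\bigr)$, which is exactly $\HS_1(t)+\HS_2(t)\preceq\HS_1'(t)+\HS_2'(t)$. For the multiplicative statement I would instead use the characterisation directly on the $\HS$: from $\HS_k(t)\preceq\HS_k'(t)$ one gets $\HS_1(t)\preceq\HS_1'(t)\preceq\HS_1'(t)$ and then compare $\HS_1\HS_2$ with $\HS_1'\HS_2$ and $\HS_1'\HS_2$ with $\HS_1'\HS_2'$ by a single intermediate step. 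The cleanest route is: it suffices to prove that if $\HS(t)\preceq\HS'(t)$ and $P(t)\in\N_0^\infty\llbrack t\rrbrack$ is arbitrary, then $P(t)\HS(t)\preceq P(t)\HS'(t)$; then $\HS_1\HS_2\preceq\HS_1'\HS_2$ (taking $P=\HS_2$) and $\HS_1'\HS_2\preceq\HS_1'\HS_2'$ (taking $P=\HS_1'$), and transitivity of $\preceq$ finishes it. For this auxiliary claim, note $\frac1{1-t}\bigl(P(t)\HS(t)\bigr)=P(t)\cdot\bigl(\frac1{1-t}\HS(t)\bigr)$; since $P(t)$ has non-negative coefficients and $\frac1{1-t}\HS(t)\leq\frac1{1-t}\HS'(t)$ coefficientwise, multiplying a fixed non-negative series preserves coefficientwise inequality (each coefficient of the product is a finite non-negative combination of coefficients of the second factor), giving $P(t)\cdot\frac1{1-t}\HS(t)\leq P(t)\cdot\frac1{1-t}\HS'(t)$, i.e.\ $P(t)\HS(t)\preceq P(t)\HS'(t)$.

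For the final sentence about infinite sums and products, I would argue that both operations are limits of their finite truncations in the $t$-adic topology on $\N_0^\infty\llbrack t\rrbrack$ (each coefficient stabilises after finitely many terms, since the factors/summands in the applications lie in $1+t\,\N_0^\infty\llbrack t\rrbrack$), that coefficientwise inequality is preserved under such limits, and that the finite cases just established then propagate: given $\HS_j(t)\preceq\HS_j'(t)$ for all $j$, each partial sum (resp.\ partial product) satisfies $\preceq$ by induction using the finite case, and passing to the limit preserves the relation since $\preceq$ is defined coefficientwise after multiplying by the fixed series $\frac1{1-t}$.

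I do not anticipate a genuine obstacle here; this is a bookkeeping lemma. The only point requiring a word of care is the infinite-product case, where one must ensure the product converges coefficientwise — which holds in all the paper's applications because the factors are of the form $(1-t^n)^{-d_n}$, lying in $1+t^n\N_0^\infty\llbrack t\rrbrack$ — and that the limiting series still has coefficients in $\N_0\cup\{\infty\}$, which is automatic since each coefficient is an increasing limit in $\N_0\cup\{\infty\}$. Accordingly I would phrase the infinite statements under the (tacit, standard) hypothesis that the relevant infinite sums/products are $t$-adically convergent.
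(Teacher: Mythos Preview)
The paper does not actually prove this lemma; it is stated immediately after the phrase ``easily-verified properties of Hilbert series and the relation~$\preceq$'' and left without proof. Your argument supplies the details the paper omits, and it is correct: the additive case is immediate from linearity of multiplication by $\frac1{1-t}$, and your reduction of the multiplicative case to the one-sided statement ``$P(t)\HS(t)\preceq P(t)\HS'(t)$ for any $P\in\N_0^\infty\llbrack t\rrbrack$'' via the identity $\frac1{1-t}\bigl(P\cdot\HS\bigr)=P\cdot\frac1{1-t}\HS$ is clean and exactly the right idea.

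One small remark on the infinite case: since coefficients are allowed to take the value $\infty$, infinite sums and products in $\N_0^\infty\llbrack t\rrbrack$ are in fact \emph{always} well-defined (each coefficient is a countable sum of elements of $\N_0\cup\{\infty\}$, hence lands in $\N_0\cup\{\infty\}$), so your convergence caveat is unnecessary in this semiring. Your limit argument still goes through --- partial sums/products are coefficientwise increasing in $\N_0^\infty$, and the relation $\preceq$ passes to suprema --- so the proof is complete without any side hypothesis on $t$-adic convergence.
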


\begin{lemma}\label{lem:hilbert_series_algebra}
	\leavevmode
	\begin{enumerate}
		\item If~$X_1$ and~$X_2$ are filtered affine $\Fd$-schemes, then
		\[
		\HS_{X_1\times X_2}(t) = \HS_{X_1}(t)\cdot\HS_{X_2}(t) \,.
		\]
		The same holds for infinite products.
		\item If~$X$ is a filtered affine $\Fd$-scheme and $Z\subseteq X$ is a closed subscheme with the induced filtration, then
		\[
		\HS_Z(t)\preceq\HS_X(t) \,.
		\]
		\item If $f\colon X'\twoheadrightarrow X$ is a filtered morphism which is scheme-theoretically dense (e.g.\ $f$ dominant and~$X$ reduced), then we have
		\[
		\HS_X(t)\preceq\HS_{X'}(t) \,.
		\]
	\end{enumerate}
\end{lemma}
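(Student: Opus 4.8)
The plan is to reduce all three parts to elementary linear algebra over the field~$\Fd$, using one structural input: over a field, every exhaustive filtered vector space $V$ has a \emph{homogeneous basis}, i.e.\ an $\Fd$-basis $\{v_\lambda\}$ with degrees $d_\lambda\in\N_0$ such that $\{v_\lambda : d_\lambda\leq n\}$ is an $\Fd$-basis of $\W_nV$ for every~$n$; equivalently $V\cong\bigoplus_\lambda\Fd\cdot v_\lambda$ as filtered spaces, with $\Fd\cdot v_\lambda$ placed in degree~$d_\lambda$. Such a basis is built by choosing a basis of $\W_0V$ and successively extending it through $\W_1V,\W_2V,\dots$. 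If the filtration is not exhaustive one first replaces $V$ by $\bigcup_n\W_nV$; since we only ever need to track the spaces $\W_nV$ and their subquotients, this is harmless.

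For part~(1), recall that the product $X_1\times X_2$ in $\WAFF_\Fd$ has affine ring $\O(X_1)\otimes_\Fd\O(X_2)$ equipped with the tensor-product filtration $\W_n(A\otimes B)=\sum_{i+j=n}\W_iA\cdot\W_jB$, this tensor product being the coproduct in $\WALG_\Fd$. Choosing homogeneous bases of $\O(X_1)$ and $\O(X_2)$, their pairwise tensor products form a basis of $\O(X_1)\otimes\O(X_2)$, and the displayed formula shows that $\W_n(\O(X_1)\otimes\O(X_2))$ is exactly the span of those basis tensors of total degree~$\leq n$. Hence this is again a homogeneous basis, and $\dim\gr^\W_n(\O(X_1)\otimes\O(X_2))=\sum_{i+j=n}\dim\gr^\W_i\O(X_1)\cdot\dim\gr^\W_j\O(X_2)$ with the usual conventions for arithmetic in $\N_0^\infty$ --- which is precisely the coefficient of $t^n$ in $\HS_{X_1}(t)\HS_{X_2}(t)$. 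For an infinite product one writes it as the filtered colimit of its finite sub-products and passes to associated gradeds; the cleanest case, and the only one arising in our applications, is the one in which $\gr^\W_0\O(X_\lambda)=\Fd$ for all but finitely many~$\lambda$, so that the infinite product of Hilbert series is unambiguously defined.

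Part~(2) is then immediate: with $\O(Z)=\O(X)/I$ the induced filtration is $\W_n\O(Z)=\W_n\O(X)/(\W_n\O(X)\cap I)$, so $\W_n\O(X)\twoheadrightarrow\W_n\O(Z)$ is surjective, and quotienting both sides by the image of $\W_{n-1}\O(X)$ produces a surjection $\gr^\W_n\O(X)\twoheadrightarrow\gr^\W_n\O(Z)$. This gives $\dim\gr^\W_n\O(Z)\leq\dim\gr^\W_n\O(X)$ for all~$n$, a coefficientwise inequality of Hilbert series and a fortiori $\HS_Z(t)\preceq\HS_X(t)$. For part~(3), the hypothesis that $f$ has scheme-theoretically dense image is, for affine schemes, exactly the assertion that $f^*\colon\O(X)\to\O(X')$ is injective (the parenthetical case following because, when $X$ is reduced and $f$ dominant, any $g\in\O(X)$ with $f^*g=0$ is nilpotent, hence $0$). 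Being a filtered morphism, $f^*$ sends $\W_n\O(X)$ into $\W_n\O(X')$, hence restricts to an injection $\W_n\O(X)\hookrightarrow\W_n\O(X')$, so $\dim\W_n\O(X)\leq\dim\W_n\O(X')$ for every~$n$. Since the filtration on $\W_n\O(X)$ is finite, $\dim\W_n\O(X)=\sum_{i=0}^n\dim\gr^\W_i\O(X)$ and similarly for $X'$, and this inequality is exactly the inequality of partial sums defining $\HS_X(t)\preceq\HS_{X'}(t)$.

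I do not expect a real obstacle: once the splitting of filtered vector spaces over a field is in hand, the whole lemma is bookkeeping. The only step with any content is the tensor-product computation of part~(1), the only subtleties being the reduction to exhaustive filtrations and the behaviour of infinite products. Finally, it is worth noting what part~(3) reveals: there one obtains only $\preceq$, and not a coefficientwise inequality, because the filtration induced on the subalgebra $f^*\O(X)\subseteq\O(X')$ may be strictly coarser than $f^*\W_\bullet\O(X)$ --- which is exactly why $\preceq$ is the right ordering for comparing Hilbert series under scheme-theoretically dense maps.
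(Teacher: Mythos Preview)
The paper does not actually prove this lemma; it is introduced as one of the ``easily-verified properties of Hilbert series and the relation~$\preceq$'' and left to the reader. Your argument is correct and supplies exactly the elementary linear-algebra verification the paper omits: the homogeneous-basis computation for part~(1), the surjection on graded pieces for part~(2), and the injection on filtered pieces for part~(3) are all sound, including your handling of the $\N_0^\infty$ conventions. The only place you hedge is the infinite-product case, which you reduce to filtered colimits of finite products under the hypothesis $\gr^\W_0\O(X_\lambda)=\Fd$ for almost all~$\lambda$; this is indeed the only case used in the paper, so the restriction is harmless.
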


\subsubsection{Functors of points}\label{sss:functors_of_points}

We will often define filtered affine $F$-schemes in terms of the functors they represent. If~$X$ is a filtered affine $\Fd$-scheme and~$\Lambda$ a filtered $\Fd$-algebra, then we write
\[
X(\Lambda) := \Hom_{\WALG_{\Fd}}(\O(X),\Lambda)
\]
for the set of $\Lambda$-valued points of~$X$. The assignment $\Lambda\mapsto X(\Lambda)$ is a functor $\WALG_{\Fd}\to\SET$: the \emph{functor of points} of~$X$. The Yoneda Lemma ensures that any natural transformation between the functors of points of two filtered affine $\Fd$-schemes is induced by a unique morphism of filtered affine $\Fd$-schemes. Because of this, we do not distinguish notationally between a filtered affine $\Fd$-scheme and its associated functor of points.

If $Z\subseteq X$ is a closed subscheme with the induced filtration, then its functor of points is the subfunctor of the functor of points of~$X$, consisting of all the morphisms $\Spec(\Lambda)\to X$ of filtered affine $F$-schemes whose image (on the level of underlying topological spaces) is contained in~$Z$.

The functor $(-)^\circ\colon\WAFF_{\Fd}\to\AFF_{\Fd}$ given by forgetting the filtration admits a simple characterisation in terms of functors of points.

\begin{lemma}\label{lem:underlying_functor}
	Let~$X\in\WAFF_{\Fd}$ be a filtered affine $\Fd$-scheme. Then the underlying (non-filtered) affine $\Fd$-scheme~$X^\circ$ of~$X$ represents the composite functor
	\[
	\ALG_{\Fd} \xrightarrow{(-)^\triv} \WALG_{\Fd} \xrightarrow{X(-)} \SET \,,
	\]
	where $(-)^\triv$ denotes the functor which endows an $\Fd$-algebra~$\Lambda$ with the trivial filtration given by $\W_n\Lambda^\triv=\Lambda$ for all~$n$.
	\begin{proof}
		This is just a matter of chasing definitions. We have
		\[
		X(\Lambda^\triv)=\Hom_{\WALG_{\Fd}}(\O(X),\Lambda^\triv)=\Hom_{\ALG_{\Fd}}(\O(X^\circ),\Lambda)= X^\circ(\Lambda)
		\]
		for every~$\Lambda\in\ALG_{\Fd}$, so~$X^\circ$ represents the functor~$\Lambda\mapsto X(\Lambda^\triv)$.
	\end{proof}
\end{lemma}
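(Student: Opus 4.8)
The plan is to deduce the statement from the adjunction between the forgetful functor on filtered $\Fd$-algebras and the trivial-filtration functor $(-)^\triv$, and then to unwind the definitions of the functors of points. This is a purely formal bookkeeping argument, so I expect no genuine obstacle; the only point requiring (minimal) care is the adjunction identity below.

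First I would record that $(-)^\triv\colon\ALG_{\Fd}\to\WALG_{\Fd}$ is right adjoint to the forgetful functor $\WALG_{\Fd}\to\ALG_{\Fd}$. Indeed, for a filtered $\Fd$-algebra $A$ and an $\Fd$-algebra $\Lambda$, any $\Fd$-algebra homomorphism $A\to\Lambda$ is automatically compatible with the filtrations when $\Lambda$ is given the trivial filtration, since $\W_n\Lambda^\triv=\Lambda$ for every $n$; hence
\[
\Hom_{\WALG_{\Fd}}(A,\Lambda^\triv)=\Hom_{\ALG_{\Fd}}(A,\Lambda),
\]
naturally in both $A$ and $\Lambda$. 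Dualising, this adjunction says precisely that $(-)^\triv$ corresponds, on the scheme side, to the forgetful functor $(-)^\circ\colon\WAFF_{\Fd}\to\AFF_{\Fd}$; concretely, $\O(X^\circ)$ is just $\O(X)$ with the filtration forgotten.

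Next, for an arbitrary $\Fd$-algebra $\Lambda$, I would chain together the definitions:
\[
X^\circ(\Lambda)=\Hom_{\ALG_{\Fd}}(\O(X^\circ),\Lambda)=\Hom_{\WALG_{\Fd}}(\O(X),\Lambda^\triv)=X(\Lambda^\triv),
\]
where the first equality is the definition of the functor of points of the affine $\Fd$-scheme $X^\circ$, the second is the adjunction identity above, and the third is the definition of $X(-)$ recalled in \S\ref{sss:functors_of_points}. Each of these identifications is natural in $\Lambda$, so the composite is a natural isomorphism, which is exactly the assertion that $X^\circ$ represents the functor $\ALG_{\Fd}\xrightarrow{(-)^\triv}\WALG_{\Fd}\xrightarrow{X(-)}\SET$. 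The main (and only) step worth isolating is the verification that a morphism into a trivially filtered algebra carries no extra filtration data, which is immediate from $\W_n\Lambda^\triv=\Lambda$; everything else is a diagram chase.
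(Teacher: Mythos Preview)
Your proposal is correct and essentially identical to the paper's proof: both unwind the definitions to obtain the chain $X(\Lambda^\triv)=\Hom_{\WALG_{\Fd}}(\O(X),\Lambda^\triv)=\Hom_{\ALG_{\Fd}}(\O(X^\circ),\Lambda)=X^\circ(\Lambda)$, with the middle equality coming from the observation that any algebra map into a trivially filtered target is automatically filtered. Your framing of this step as an adjunction is a slight elaboration, but the content is the same.
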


\subsubsection{Examples: vector spaces and pro-unipotent groups}

The following two examples play a basic role in the theory in this paper. We describe both examples in the most general setting where some objects may be infinite-dimensional, but the reader is free to assume that all objects involved are finite-dimensional on a first reading, which simplifies matters.

\begin{example}\label{ex:filtered_vs}
A \emph{pro-finite-dimensional vector space} is a pro-object of the category of finite-dimensional vector spaces. A \emph{filtration} on a pro-finite-dimensional vector space~$V$ is an increasing sequence of subobjects
\[
0\leq\dots\leq\W_{-3}V\leq\W_{-2}V\leq\W_{-1}V=V \,,
\]
indexed by negative integers.

A \emph{pro-finite-dimensional vector space}~$V$ has an associated affine space, which we also denote by~$V$. If~$V$ is finite-dimensional, its associated affine space is $\Spec(\Sym^\bullet(V^\dual))$. In general, we write $V=\varprojlim V_i$ as an inverse limit of finite-dimensional vector spaces, and the affine space associated to~$V$ is $\Spec(\Sym^\bullet(\varinjlim V_i^\dual))$.

If~$V$ is endowed with a filtration, then there is an induced filtration on the symmetric algebra $\Sym^\bullet(\varinjlim V_i^\dual)$, and this makes the affine space associated to~$V$ into a filtered affine $\Fd$-scheme. When~$V$ is finite-dimensional, the functor of points of~$V$ is
\[
\Lambda\mapsto\W_0(\Lambda\otimes_{\Fd}V) \,,
\]
where~$\Lambda\otimes_{\Fd}V$ is endowed with the tensor product filtration; the general case is an inverse limit of this. The Hilbert series of~$V$ is
\[
\HS_V(t) = \prod_{n>0}\left(1-t^n\right)^{-\dim_{\Fd}(\gr^\W_{-n}V)}\,.
\]
\end{example}

\begin{example}\label{ex:filtered_unipotent}
Let~$U/\Fd$ be a pro-unipotent group. A \emph{filtration} on~$U$ is an increasing sequence
\[
1\subseteq\dots\subseteq\W_{-3}U\subseteq\W_{-2}U\subseteq\W_{-1}U=U
\]
of subgroup-schemes, indexed by negative integers, such that the image of the commutator map $[\cdot,\cdot]\colon\W_{-i}U\times\W_{-j}U\rightarrow U$ is contained in $\W_{-i-j}U$ for all $i,j\geq1$.

The Lie algebra $\Lie(U):=\ker\left(U(\Fd[\epsilon]/(\epsilon^2))\to U(\Fd)\right)$ is naturally a pro-finite-dimensional vector space, being the inverse limit of the Lie algebras of the finite-dimensional quotients of~$U$. A filtration on~$U$ corresponds under the logarithm isomorphism $U\cong\Lie(U)$ to a filtration on~$\Lie(U)$ by $\Fd$-subspaces for which the Lie bracket is a filtered map. Thus~$U$ becomes a filtered affine $\Fd$-scheme by pulling back the filtration on $\O(\Lie(U))$ from Example~\ref{ex:filtered_vs} along the logarithm isomorphism~$U\cong\Lie(U)$. It is then easy to check that the multiplication map $U\times U\to U$ and the identity map $\Spec(\Fd)\to U$ are morphisms of filtered affine $\Fd$-schemes, so that the filtration on~$\O(U)$ is compatible with the Hopf algebra structure maps.

Most often, we will assume that the filtration on~$U$ is \emph{separated}, by which we mean that~$\cap_{n\in\N}\W_{-n}U=\{1\}$.
\end{example}

Since filtered pro-unipotent groups will appear throughout this paper, we fix some notation for particular subquotients which will appear frequently.

\begin{notation}\label{notn:U_n}
	If~$U$ is an $\Fd$-pro-unipotent group with a filtration in the sense of Example~\ref{ex:filtered_unipotent}, then we write\footnote{The quotients appearing in this expression are taken in the most naive possible sense: that is, the $\Lambda$-points of~$U_n$ is the quotient of the $\Lambda$-points of~$U$ by those of~$\W_{-n-1}U$ for all $\Fd$-algebras~$\Lambda$, and similarly for~$V_n$.}
	\[
	U_n := U/\W_{-n-1}U \hspace{0.4cm}\text{and}\hspace{0.4cm} V_n := \gr^\W_{-n}U = \W_{-n}U/\W_{-n-1}U
	\]
	for all~$n\geq1$. Then~$U_n$ is an $\Fd$-pro-unipotent group and~$V_n$ is a vector group, which we often conflate with its underlying pro-finite-dimensional vector space. The quotient maps $U_n\twoheadrightarrow U_{n-1}$ fit into central extensions
	\begin{equation}\label{eq:extension_seq}
	1 \to V_n \to U_n \to U_{n-1} \to 1
	\end{equation}
	for $n\geq2$, Moreover, we have~$U=\varprojlim U_n$ in the category of affine $\Fd$-schemes provided that the filtration~$\W_\bullet$ is separated ($\bigcap_n\W_{-n}U=1$).
	
	Both~$U_n$ and~$V_n$ are naturally endowed with filtrations. In the case of~$U_n$ we take the image of the filtration on~$U$ under the map~$U\twoheadrightarrow U_n$, and in the case of~$V_n$ we give it the filtration supported in filtration-degree~$-n$. With these conventions, \eqref{eq:extension_seq} is a central extension of filtered affine $\Fd$-group schemes, in the strong sense that the maps $U_n\to U_{n-1}$ and $V_n\to U_n$ admit a splitting and retraction, respectively, in the category of filtered affine $\Fd$-schemes. Moreover, the isomorphism $U=\varprojlim U_n$ is an isomorphism in the category of filtered affine $\Fd$-group schemes.
\end{notation}

\begin{remark}
	It is sometimes convenient to adopt the convention that~$\W_0U=U$, so that~$U_0=\W_0U/\W_{-1}U$ is the trivial group, and we have the central extension~\eqref{eq:extension_seq} also for~$n=1$ (reflecting the fact that~$U_1=V_1$). This convention allows us, for example, to begin inductive arguments at~$n=0$, where the result is usually trivial. See the proof of Lemma~\ref{lem:filtered_crystalline_path} for example.
\end{remark}

The pro-unipotent fundamental groups which appear in this paper will all be pro-unipotent surface groups, or quotients thereof. For later use, we calculate here the Hilbert series of these groups.

\begin{lemma}\label{lem:surface_group}
	Let~$g$ and~$r$ be non-negative integers, not both zero, and let~$U_{g,r}$ denote the $\Fd$-pro-unipotent completion of the surface group
	\[
	\Sigma_{g,r} := \left\langle a_1,\dots,a_g,b_1,\dots,b_g,c_1,\dots,c_r \:\mid\: [a_1,b_1]\cdot\ldots\cdot[a_g,b_g]\cdot c_1\cdot\ldots\cdot c_r=1\right\rangle \,.
	\]
	We endow $U_{g,r}$ with the filtration whereby $\W_{-1}U_{g,r}=U_{g,r}$, $\W_{-2}U_{g,r}$ is the subgroup generated by the commutator subgroup of~$U_{g,r}$ and the elements~$c_1,\dots,c_r$, and for~$k\geq3$, $\W_{-k}U_{g,r}$ is the subgroup generated by the commutators of elements in~$\W_{-i}U_{g,r}$ and~$\W_{-j}U_{g,r}$ for~$i+j=k$.
	
	With respect to this filtration, the Hilbert series of~$U_{g,r}$ is
	\begin{equation}\label{eq:surface_series}\tag{$\ast$}
	\HS_{U_{g,r}}(t) = \frac1{1-2gt-(r-1)t^2} \,.
	\end{equation}
	\begin{proof}
		When~$r>0$, \eqref{eq:surface_series} follows from the observation that~$U_{g,r}$ is the free pro-unipotent group generated by the elements~$a_i$,$b_i$ and~$c_j$ for $1\leq i\leq g$ and $1\leq j\leq r-1$, and its weight filtration is the finest filtration for which each $a_i,b_i\in\W_{-1}U_{g,r}$ and each $c_j\in\W_{-2}U_{g,r}$. This implies that~$\O(U_{g,r})$ is the shuffle algebra on symbols $A_i$, $B_i$ and~$C_j$ for $1\leq i\leq g$ and $1\leq j\leq r-1$, endowed with the natural filtration coming from declaring that each $A_i,B_i\in\W_1\O(U_{g,r})$ and each $C_j\in\W_2\O(U_{g,r})$. Thus the $k$th coefficient of $\HS_{U_{g,r}}$ is equal to the number of words in the letters $(A_i)_{i=1}^g,(B_i)_{i=1}^g,(C_j)_{j=1}^{r-1}$ of total weight~$k$. A standard combinatorial argument shows that this is equal to the $k$th coefficient of $(1-2gt-(r-1)t^2)^{-1}=\sum_{l=0}^\infty(2gt+(r-1)t^2)^l$, as claimed.
		
		When $r=0$, the filtration on~$U_{g,r}$ is its descending central series, so the dimensions of the graded pieces of the Lie algebra of~$U_{g,r}$ are given by
		\[
		\dim_{\Fd}\gr^\W_{-k}\Lie(U_{g,r}) = \frac1k\cdot\sum_{d\mid k}\mu(k/d) \left(\sum_{i=0}^{\lfloor d/2\rfloor}(-1)^i\frac d{d-i}{{d-i}\choose i}(2g)^{d-2i}\right)
		\]
		by the main theorem of \cite{labute:dcc}. The proof of \cite[Proposition~4]{labute:dcc} shows that the integers appearing on the right-hand side of the above expression are the unique integers~$d_k$ such that $1-2gt+t^2=\prod_{k=1}^\infty(1-t^k)^{d_k}$. This implies that the Hilbert series of~$U_{g,r}$ is $(1-2gt+t^2)^{-1}$, as desired.
	\end{proof}
\end{lemma}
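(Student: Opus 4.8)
The plan is to separate the two cases $r>0$ and $r=0$, as the shape of the formula in \eqref{eq:surface_series} suggests. In both cases the key observation is that the filtration $\W_\bullet$ written down in the statement is, by construction, the \emph{finest} filtration by subgroup-schemes for which $a_i,b_i\in\W_{-1}U_{g,r}$ and $c_j\in\W_{-2}U_{g,r}$: the filtration axioms force $[U_{g,r},U_{g,r}]\subseteq\W_{-2}U_{g,r}$, and once $\W_{-1}$ and $\W_{-2}$ are fixed, defining $\W_{-k}$ for $k\geq3$ as the subgroup generated by the $[\W_{-i},\W_{-j}]$ with $i+j=k$ is the minimal consistent choice. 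Under the logarithm isomorphism this corresponds to the filtration on $\Lie(U_{g,r})$ generated by placing $a_i,b_i$ in weight $-1$ and $c_j$ in weight $-2$. Since $\Lie(U_{g,r})$ will turn out to be \emph{graded} compatibly with these weights (a free Lie algebra when $r>0$, a one-relator Lie algebra when $r=0$), the filtration on $\O(U_{g,r})$ is split, so $\gr^\W\O(U_{g,r})\cong\O(U_{g,r})$ as graded vector spaces and computing $\HS_{U_{g,r}}(t)$ reduces to counting graded dimensions of $\O(U_{g,r})$.

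\textbf{Case $r>0$.} Solving the defining relation for $c_r$ exhibits $\Sigma_{g,r}$ as the free group on $a_1,\dots,a_g,b_1,\dots,b_g,c_1,\dots,c_{r-1}$; moreover $c_r$, being a product of commutators and of the other $c_j$, already lies in the subgroup generated by $[\Sigma_{g,r},\Sigma_{g,r}]$ and $c_1,\dots,c_{r-1}$, so declaring $c_r\in\W_{-2}$ imposes nothing new. Hence $U_{g,r}$ is the free $\Fd$-pro-unipotent group on these generators, $\Lie(U_{g,r})$ is the free Lie algebra on them graded with $a_i,b_i$ in weight $-1$ and $c_j$ in weight $-2$, and $\O(U_{g,r})$ is the shuffle algebra on dual symbols $A_i,B_i$ of weight $1$ and $C_j$ of weight $2$. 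As a graded vector space $\O(U_{g,r})$ has basis the words in these symbols, the weight of a word being the sum of the weights of its letters; since a word is a finite sequence of letters, its generating function is $\sum_{l\geq0}(2gt+(r-1)t^2)^l=(1-2gt-(r-1)t^2)^{-1}$, which is $\HS_{U_{g,r}}(t)$.

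\textbf{Case $r=0$.} Now there are no $c_j$, so $\W_{-2}U_{g,0}=[U_{g,0},U_{g,0}]$ and inductively $\W_{-k}U_{g,0}$ is the $k$-th term of the descending central series; thus $\gr^\W\Lie(U_{g,0})$ is the graded Lie algebra associated to the lower central series of the genus-$g$ surface group. By Labute's theorem \cite{labute:dcc}, this graded Lie algebra is the one-relator Lie algebra obtained from the free Lie algebra on $a_1,\dots,b_g$ by imposing the single degree-$2$ relation $\sum_{i=1}^g[a_i,b_i]$, and this relation is \emph{mild}; by the argument of \cite[Proposition~4]{labute:dcc} this forces its graded dimensions $d_k:=\dim_\Fd\gr^\W_{-k}\Lie(U_{g,0})$ to be the unique integers with $\prod_{k\geq1}(1-t^k)^{d_k}=1-2gt+t^2$. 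Passing to universal enveloping algebras and applying Poincar\'e--Birkhoff--Witt (equivalently, dualising to $\O(U_{g,0})$) gives $\HS_{U_{g,0}}(t)=\prod_{k\geq1}(1-t^k)^{-d_k}=(1-2gt+t^2)^{-1}$, which is \eqref{eq:surface_series} with $r=0$.

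\textbf{Expected main obstacle.} For $r>0$ the content is essentially bookkeeping; the only points requiring care are confirming that the filtration in the statement coincides with the grading-induced weight filtration on the free pro-unipotent group (so that no unexpected collapse occurs when $c_r$ is thrown into $\W_{-2}$ and higher steps are generated by commutators), and that the standard identification of $\O$ of a free pro-unipotent group with the shuffle algebra is weight-preserving. The genuinely non-trivial input lies entirely in the case $r=0$: the formula rests on Labute's structural result that the symplectic one-relator Lie algebra is mild (equivalently, on the Koszulness of the associated quadratic algebra), which we take as a black box; absent it, one would have to establish this Koszulness independently.
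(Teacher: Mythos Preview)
Your proposal is correct and follows essentially the same argument as the paper: the split into $r>0$ (free pro-unipotent group on $2g$ generators in weight~$-1$ and $r-1$ in weight~$-2$, shuffle algebra word count) and $r=0$ (descending central series, Labute's theorem giving $\prod_k(1-t^k)^{d_k}=1-2gt+t^2$) matches the paper's proof exactly. Your additional remarks about the filtration being finest and the explicit invocation of PBW/Koszulness only make the reasoning more transparent.
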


\subsubsection{Example: homogenous spaces under pro-unipotent groups}

We give here a slightly more involved example, that of homogenous spaces under filtered pro-unipotent groups. Unlike the previous examples, we define the filtration here in an indirect manner, so that the homogenous space in question represents a particular functor. This kind of abstract definition will be typical for the constructions in the rest of this paper.

\begin{lemma}\label{lem:quotient_by_unipotent_subgroup}
	Let~$U$ be a pro-unipotent group over a characteristic~$0$ field~$\Fd$ endowed with a separated filtration in the sense of Example~\ref{ex:filtered_unipotent}, and let~$U^+\leq U$ be a closed subgroup-scheme. Give~$U^+$ the filtration $\W_\bullet U^+:=\W_\bullet U\cap U^+$.
	
	Then the functor $\WALG_{\Fd}\to\SET_*$ given by
	\[
	\Lambda\mapsto U^+(\Lambda)\backslash U(\Lambda) \,,
	\]
	is represented by a filtered affine $\Fd$-scheme $U^+\backslash U\in\WAFF_{\Fd}$, and the Hilbert series of~$U^+\backslash U$ satisfies
	\[
	\HS_{U^+\backslash U}(t)\cdot\HS_{U^+}(t) = \HS_U(t) \,.
	\]
	\begin{proof}
		Let~$L^+\leq L$ denote the Lie algebras of~$U^+\leq U$, respectively, and choose a $\W$-filtered complement~$V$ of~$L^+$ inside~$L$, i.e.\ a pro-finite-dimensional subspace of~$L$ such that~$L^+\oplus V=L$ as filtered pro-finite-dimensional vector spaces. Consider the map
		\begin{equation}\label{eq:multiplication_map}\tag{$\ast$}
			U^+\times V\to U
		\end{equation}
		given by $(u,v)\mapsto u\cdot\exp(v)$. We will show that~\eqref{eq:multiplication_map} is an isomorphism of filtered affine $\Fd$-schemes. This implies the lemma: we see that~$V$ represents the functor~$\Lambda\mapsto U^+(\Lambda)\backslash U(\Lambda)$, and the equality of Hilbert series follows from Lemma~\ref{lem:hilbert_series_algebra}.
		
		The proof that~\eqref{eq:multiplication_map} is an isomorphism is routine. If we identify $U$ with its Lie algebra $L$, then the multiplication map is given by $(u,v)\mapsto\BCH(u,v)$, where $\BCH(x,y)=x+y+\frac12[x,y]+\dots$ is the Baker--Campbell--Hausdorff power series. It suffices to show that this map is bijective on~$\Lambda$-points for every filtered $\Fd$-algebra~$\Lambda$.
		
		To do this, suppose that~$w$ is a $\Lambda$-point of~$L$, and write~$w_n$ for its image in $L(\Lambda)/\W_{-n-1}$ for~$n\geq0$. We will show that there are unique elements $v_n\in V(\Lambda)/\W_{-n-1}$ and $u_n\in L^+(\Lambda)/\W_{-n-1}$ such that $w_n=\BCH(u_n,v_n)$. The elements $(v_n)_{n\geq0}$ and $(u_n)_{n\geq0}$ then define unique elements $v\in V(\Lambda)$ and~$u\in L^+(\Lambda)$ such that $w=\BCH(u,v)$, as desired.
		
		We show the existence of~$v_n$ and~$u_n$ by induction on~$n$, the base case~$n=0$ being trivial. Suppose that we have elements~$v_n$ and~$u_n$ satisfying $w_n=\BCH(u_n,v_n)$, and let~$v_n'\in V(\Lambda)/\W_{-n-2}$ and~$u_n'\in L^+(\Lambda)/\W_{-n-2}$ be arbitrarily chosen lifts. For~$v_n''\in\gr^\W_{-n-1}V(\Lambda)$ and~$u_n''\in\gr^\W_{-n-1}L^+(\Lambda)$, we have
		\[
		\BCH(u_n'+u_n'',v_n'+v_n'')=\BCH(u_n',v_n')+u_n''+v_n''
		\]
		in $L(\Lambda)/\W_{-n-2}$. Since~$\gr^\W_{-n-1}V\oplus\gr^\W_{-n-1}L^+=\gr^\W_{-n-1}L$ and $\BCH(u_n',v_n')-w_{n+1}\in\gr^\W_{-n-1}L(\Lambda)$, we see that there are unique choices of~$v_n''\in\gr^\W_{-n-1}V(\Lambda)$ and~$u_n''\in\gr^\W_{-n-1}L^+(\Lambda)$ such that $\BCH(u_n'+u_n'',v_n'+v_n'')=w_{n+1}$. This completes the induction, setting $v_{n+1}=v_n'+v_n''$ and $u_{n+1}=u_n'+u_n''$.
	\end{proof}
\end{lemma}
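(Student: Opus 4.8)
The plan is to exhibit an explicit $\W$-filtered isomorphism $U^+\times V\isoarrow U$, where $V$ is a chosen filtered complement to the Lie algebra $L^+$ of $U^+$ inside the Lie algebra $L$ of $U$, sending $(u,v)$ to $u\cdot\exp(v)$. Granting that this is an isomorphism of filtered affine $\Fd$-schemes, the functor $\Lambda\mapsto U^+(\Lambda)\backslash U(\Lambda)$ is represented by $V$, and the Hilbert series identity $\HS_{U^+\backslash U}(t)\cdot\HS_{U^+}(t)=\HS_U(t)$ follows from the multiplicativity of Hilbert series under products (Lemma~\ref{lem:hilbert_series_algebra}(1)).

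So the work is all in producing $V$ and proving the map is a filtered isomorphism. First I would choose $V$: since $\W_\bullet$ is separated, $L$ is the inverse limit of its finite-dimensional filtered quotients, and in each quotient one can split off $\gr^\W$-complements level by level, so a filtered complement $V$ with $L^+\oplus V=L$ as filtered pro-finite-dimensional vector spaces exists. (This is where the separatedness hypothesis is used.) Identifying $U$ with $L$ via $\log$, the multiplication map becomes $(u,v)\mapsto\BCH(u,v)$ with $\BCH$ the Baker--Campbell--Hausdorff series; since $\BCH$ respects the filtration (the bracket is filtered), this is a morphism of filtered affine $\Fd$-schemes, and it suffices to check it is bijective on $\Lambda$-points for every filtered $\Fd$-algebra $\Lambda$ — bijectivity of the underlying map of affine schemes then upgrades to a filtered isomorphism because the inverse is also built from $\BCH$ and splittings, hence filtered.

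The core step, and the only one requiring any real argument, is the bijectivity on $\Lambda$-points. Given $w\in L(\Lambda)$, write $w_n$ for its image in $L(\Lambda)/\W_{-n-1}$; I would construct, by induction on $n$, unique $u_n\in L^+(\Lambda)/\W_{-n-1}$ and $v_n\in V(\Lambda)/\W_{-n-1}$ with $w_n=\BCH(u_n,v_n)$, and then pass to the limit to get the unique preimage $(u,v)$ of $w$. The base case $n=0$ is trivial. For the inductive step, lift $u_n,v_n$ arbitrarily to $u_n',v_n'$ modulo $\W_{-n-2}$; the key computation is that modifying these lifts by graded-degree-$(-n-1)$ elements $u_n''\in\gr^\W_{-n-1}L^+(\Lambda)$, $v_n''\in\gr^\W_{-n-1}V(\Lambda)$ changes $\BCH(u_n',v_n')$ by exactly $u_n''+v_n''$ in $L(\Lambda)/\W_{-n-2}$ — all correction terms in $\BCH$ involve a bracket, hence land in strictly lower filtration and vanish modulo $\W_{-n-2}$. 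Since $\BCH(u_n',v_n')-w_{n+1}\in\gr^\W_{-n-1}L(\Lambda)$ and $\gr^\W_{-n-1}L=\gr^\W_{-n-1}L^+\oplus\gr^\W_{-n-1}V$, there is a unique such correction making $\BCH(u_{n+1},v_{n+1})=w_{n+1}$.

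The main obstacle is purely bookkeeping: making sure the filtration conventions on $U^+$ (namely $\W_\bullet U^+=\W_\bullet U\cap U^+$), on $V$, and on the product $U^+\times V$ are set up so that the graded splitting $\gr^\W L=\gr^\W L^+\oplus\gr^\W V$ genuinely holds in each degree — this is what makes the inductive step go through and is also what makes the final map filtered in both directions. There is no deep difficulty here, only the need to be careful that "filtered complement" is being used in the strong $\gr^\W$-wise sense throughout.
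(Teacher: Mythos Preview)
Your proposal is correct and follows essentially the same approach as the paper: choose a $\W$-filtered complement $V$ of $L^+$ in $L$, show that $(u,v)\mapsto u\cdot\exp(v)=\BCH(u,v)$ gives a filtered isomorphism $U^+\times V\isoarrow U$ by the same inductive argument on $L(\Lambda)/\W_{-n-1}$ using the graded splitting $\gr^\W_{-n-1}L=\gr^\W_{-n-1}L^+\oplus\gr^\W_{-n-1}V$, and read off both representability and the Hilbert series identity. Your additional remarks on why the filtered complement exists and on the bookkeeping around filtrations are accurate and do not diverge from the paper's argument.
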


\begin{remark}\label{rmk:underlying_homogenous_space}
	It is not immediately obvious from the above definition that the underlying affine scheme of~$U^+\backslash U$ is the quotient of~$U$ by~$U^+$ in the usual (non-filtered) sense. However, this is easy to show using Lemma~\ref{lem:underlying_functor}: if~$U^\circ$ and~$U^{+\circ}$ denote the underlying affine $\Fd$-schemes of~$U$ and~$U^+$, respectively, then we have
	\begin{align*}
	(U^+\backslash U)^\circ(\Lambda) &= (U^+\backslash U)(\Lambda^\triv) = U^+(\Lambda^\triv)\backslash U(\Lambda^\triv) \\
	 &= U^{+\circ}(\Lambda)\backslash U^\circ(\Lambda) = (U^{+\circ}\backslash U^\circ)(\Lambda)
	\end{align*}
	for every $\Fd$-algebra~$\Lambda$, so $(U^+\backslash U)^\circ=U^{+\circ}\backslash U^\circ$ is the quotient of~$U$ by~$U^+$ in the usual non-filtered sense.
\end{remark}

\begin{remark}
	An alternative way to define the filtration on the homogenous space $U^+\backslash U$ is to note that $\O(U^+\backslash U)$ is a subring of~$\O(U)$, so we can endow it with the restriction of the filtration on~$\O(U)$. This in fact gives the same filtration as Lemma~\ref{lem:quotient_by_unipotent_subgroup}: it follows from Lemma~\ref{lem:quotient_by_unipotent_subgroup} that the morphism $U\twoheadrightarrow U^+\backslash U$ is split in $\WAFF_{\Fd}$ (it is a surjection of functors), so the inclusion $\O(U^+\backslash U)\hookrightarrow\O(U)$ is split as a morphism of filtered $\Fd$-algebras. This ensures that the filtration on~$\O(U^+\backslash U)$ constructed in Lemma~\ref{lem:quotient_by_unipotent_subgroup} is the restriction of the filtration on~$\O(U)$.
\end{remark}

\subsection{Filtrations on cohomology schemes}\label{ss:cohomology}

Suppose that $G$ is a profinite group, and that $U/\Q_\ellp$ is a pro-unipotent group on which $G$ acts continuously (in the sense that it acts continuously on $U(\Q_\ellp)$). If $\Lambda$ is any $\Q_\ellp$-algebra, we endow it with the direct limit topology over its finite-dimensional $\Q_\ellp$-subspaces. This induces a topology on $U(\Lambda)$ for which the $G$-action is continuous, and we define $\HH^1(G,U(\Lambda))$ to be the non-abelian continuous cohomology set \cite[\S I.5.1]{serre:galois_cohomology}\footnote{Strictly speaking, \cite{serre:galois_cohomology} deals only with non-abelian continuous cohomology with coefficients in a \emph{discrete} group. However, the definition still makes sense when the coefficients are permitted to have other topologies.}. This is a pointed set, functorial in $\Lambda$, and hence the sets $\HH^1(G,U(\Lambda))$ assemble into a \emph{continuous cohomology functor}
\[
\HH^1(G,U)\colon\ALG_{\Q_\ellp}\rightarrow\SET_* \,.
\]

\smallskip

Suppose now that~$U$ is endowed with a $G$-stable separated filtration
\[
1\subseteq\dots\subseteq\W_{-3}U\subseteq\W_{-2}U\subseteq\W_{-1}U=U \,,
\]
in the sense of Example~\ref{ex:filtered_unipotent}, and assume for simplicity that each subgroup $\W_{-n}U$ is of finite codimension\footnote{This is automatically the case if~$U$ is finitely generated, which will be the case in almost all our examples.} in~$U$. Suppose moreover that the following two conditions are satisfied:
\begin{enumerate}[label=\arabic*),ref=(\arabic*)]
	\setcounter{enumi}{-1}
	\item\label{condn:H^0} $\HH^0(G,V_n)=0$ for all $n$; and
	\item\label{condn:H^1} $\HH^1(G,V_n)$ is finite-dimensional for all~$n$,
\end{enumerate}
where~$V_n=\gr^\W_{-n}U$ (regarded as a vector space) as in Notation~\ref{notn:U_n}. Then the functor $\HH^1(G,U)$ is representable by an affine $\Q_\ellp$-scheme \cite[Proposition~2]{minhyong:siegel}\footnote{Strictly speaking, \cite[Proposition~2]{minhyong:siegel} only proves this result under the additional assumption that $\HH^i(G,V_n)$ is finite-dimensional for all~$i$ and~$n$: a condition which is always satisfied in the cases we care about. However, this extra assumption turns out to be unnecessary, and our arguments in \S\ref{sss:abstract_filtration} will re-prove representability under these weaker assumptions.}, which is of finite type if~$U$ is finite-dimensional. We denote the representing affine $\Q_\ellp$-scheme also by~$\HH^1(G,U)$.
\smallskip

Our aim here is to explain how the filtration~$\W_\bullet$ on~$U$ induces a filtration on the cohomology scheme $\HH^1(G,U)$. We do this abstractly, by describing the functor it represents; for a more concrete description, see \S\ref{sss:explicit}.

\subsubsection{The functor of points of the filtered cohomology scheme}\label{sss:abstract_filtration}

If now~$\Lambda$ is a \emph{filtered} $\Q_\ellp$-algebra, then the set $U(\Lambda)$ of $\Lambda$-points of~$U$ is a subset of the set $U^\circ(\Lambda^\circ)$ of $\Lambda$-points of the underlying (non-filtered) scheme~$U^\circ$. We have already seen how to endow $U^\circ(\Lambda^\circ)$ with a topology; we give  $U(\Lambda)\subseteq U^\circ(\Lambda^\circ)$ the subspace topology. This makes $U(\Lambda)$ into a topological group with a continuous $G$-action, functorial in~$\Lambda\in\WALG_{\Q_\ellp}$. We thus have a filtered continuous cohomology functor
\[
\HH^1(G,U)\colon\WALG_{\Q_\ellp}\to\SET_*
\]
given by $\Lambda\mapsto\HH^1(G,U(\Lambda))$. This will be the functor of points of the desired filtered cohomology scheme.

\begin{theorem}\label{thm:representability}
	Suppose that the filtration on~$U$ satisfies conditions~\ref{condn:H^0} and~\ref{condn:H^1} above. Then the functor
	\[
	\HH^1(G,U)\colon\WALG_{\Q_\ellp}\to\SET_*
	\]
	is representable by a pointed filtered affine $\Q_\ellp$-scheme.
\end{theorem}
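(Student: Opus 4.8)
The plan is to prove representability by induction on the nilpotency class: I will show first that, for each~$n$, the functor $\HH^1(G,U_n)\colon\WALG_{\Q_\ellp}\to\SET_*$ is represented by a pointed filtered affine $\Q_\ellp$-scheme, and then pass to the inverse limit. For the reduction of Theorem~\ref{thm:representability} to this finite-level statement, I would first check that condition~\ref{condn:H^0} forces $\HH^0(G,U_n(\Lambda))=1$ for every filtered $\Q_\ellp$-algebra~$\Lambda$ and every~$n$: this follows by induction along the central extensions~\eqref{eq:extension_seq}, using the base-change identity $\HH^0(G,V_n(\Lambda))=\W_n\Lambda\otimes_{\Q_\ellp}\HH^0(G,V_n)=0$. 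Since the transition maps of the tower $(U_n(\Lambda))_n$ are surjective (because~$U_n\to U_{n-1}$ is split in $\WAFF_{\Q_\ellp}$ by Notation~\ref{notn:U_n}), the Milnor-type exact sequence for continuous non-abelian cohomology, together with the vanishing of all these~$\HH^0$'s, kills the $\varprojlim^1$ term and gives $\HH^1(G,U(\Lambda))=\varprojlim_n\HH^1(G,U_n(\Lambda))$ for all~$\Lambda$, using also $U=\varprojlim_nU_n$ in $\WAFF_{\Q_\ellp}$. Once each $\HH^1(G,U_n)$ is known to be representable, its transition maps are automatically filtered morphisms by Yoneda, and $\HH^1(G,U)$ is then represented by $\varprojlim_n\HH^1(G,U_n)$, which exists since $\WAFF_{\Q_\ellp}$ has all small limits.

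For the base case~$n=1$ we have $U_1=V_1$, and I would establish the base-change identity $\HH^i(G,V_n(\Lambda))=\W_n\Lambda\otimes_{\Q_\ellp}\HH^i(G,V_n)$ valid for all~$i$ and all filtered~$\Lambda$: this combines the computation $V_n(\Lambda)=\W_n\Lambda\otimes_{\Q_\ellp}V_n$ (from the tensor-product filtration, $V_n$ being placed in filtration-degree~$-n$) with the fact that continuous cohomology of a finite-dimensional $G$-representation commutes with tensoring by a $\Q_\ellp$-vector space, which one sees by writing that vector space as a filtered colimit of finite-dimensional subspaces and using compactness of~$G$. Applied with $n=1$, $i=1$, this exhibits $\HH^1(G,V_1)$ as the filtered affine scheme attached to the finite-dimensional vector space $\HH^1(G,V_1)$ placed in filtration-degree~$-1$, in the sense of Example~\ref{ex:filtered_vs} --- here finite-dimensionality is exactly condition~\ref{condn:H^1}.

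For the inductive step, suppose $\HH^1(G,U_{n-1})$ is represented by a filtered affine scheme $\mathcal S_{n-1}$. The $G$-equivariant central extension $1\to V_n\to U_n\to U_{n-1}\to1$ yields, for each filtered~$\Lambda$, an exact sequence of pointed sets
\[
1\to\HH^1(G,V_n(\Lambda))\to\HH^1(G,U_n(\Lambda))\xrightarrow{\pi_\Lambda}\HH^1(G,U_{n-1}(\Lambda))\xrightarrow{\delta_\Lambda}\HH^2(G,V_n(\Lambda)),
\]
where left-exactness uses $\HH^0(G,U_{n-1}(\Lambda))=1$ from the first paragraph, and each nonempty fibre of~$\pi_\Lambda$ is a \emph{free} torsor under $\HH^1(G,V_n(\Lambda))$ --- freeness because the relevant twisted forms of~$U_{n-1}$ have unchanged graded pieces $V_m$ (as $U_n$ acts trivially on each), hence also have vanishing~$\HH^0$. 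So set-theoretically $\HH^1(G,U_n)$ is a torsor under the constant vector bundle $\HH^1(G,V_n)$ over the subfunctor $\mathcal S_{n-1}^0\subseteq\mathcal S_{n-1}$ of classes killed by~$\delta$. To represent $\mathcal S_{n-1}^0$, I would observe that the tautological class $\mathrm{id}\in\HH^1(G,U_{n-1})(\O(\mathcal S_{n-1}))$ has a well-defined obstruction $\delta(\mathrm{id})\in\HH^2(G,V_n(\O(\mathcal S_{n-1})))=\O(\mathcal S_{n-1})\otimes_{\Q_\ellp}\HH^2(G,V_n)$ (well-defined on cohomology classes because the extension is central), and --- being a single element of a tensor product --- it lies in $\O(\mathcal S_{n-1})\otimes_{\Q_\ellp}W_0$ for a finite-dimensional subspace $W_0\subseteq\HH^2(G,V_n)$; writing $\delta(\mathrm{id})=\sum_kf_k\otimes w_k$ with $(w_k)$ a basis of~$W_0$, naturality of~$\delta$ shows that $\mathcal S_{n-1}^0$ is represented by the closed subscheme $V(f_1,\dots,f_r)\subseteq\mathcal S_{n-1}$ with its induced filtration. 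Finally, over $\mathcal S_{n-1}^0$ the (now liftable) tautological class is represented by some cocycle $\hat c\in Z^1(G,U_n(\O(\mathcal S_{n-1}^0)))$, and the natural transformation $\mathcal S_{n-1}^0\times\HH^1(G,V_n)\to\HH^1(G,U_n)$, $(s,v)\mapsto s^*[\hat c]\cdot v$ using the torsor action, is a bijection on $\Lambda$-points for every filtered~$\Lambda$ (surjectivity and injectivity both following from the free torsor structure over $\mathcal S_{n-1}^0(\Lambda)=\ker\delta_\Lambda$). By Yoneda this realises $\HH^1(G,U_n)$ as the filtered affine scheme $\mathcal S_{n-1}^0\times\HH^1(G,V_n)$, pointed by the trivial class, completing the induction.

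The \emph{main obstacle} is precisely the step involving~$\delta$: unlike in \cite[Proposition~2]{minhyong:siegel}, we do not assume $\HH^2(G,V_n)$ finite-dimensional, so $\delta$ is a priori a map into an infinite-dimensional affine space and its vanishing locus is not obviously a reasonable closed subscheme. The resolution --- that the \emph{universal} obstruction over $\mathcal S_{n-1}$ is one element of $\O(\mathcal S_{n-1})\otimes_{\Q_\ellp}\HH^2(G,V_n)$ and hence is cut out by finitely many equations --- is exactly what lets the argument run under hypotheses~\ref{condn:H^0} and~\ref{condn:H^1} alone. The remaining ingredients (the base-change identities, the Milnor $\varprojlim^1$ sequence, and the exact sequence and torsor structure of a $G$-equivariant central extension) are standard facts about continuous non-abelian cohomology, needing only routine bookkeeping with the colimit topology on the $\Q_\ellp$-algebras involved; a more hands-on description of the resulting filtration via explicit cocycles will be given in~\S\ref{sss:explicit}.
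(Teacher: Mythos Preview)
Your proposal is correct and follows essentially the same inductive strategy as the paper: establish the base-change identity $\HH^i(G,V_n(\Lambda))=\W_n\Lambda\otimes_{\Q_\ellp}\HH^i(G,V_n)$, show the $\HH^1(G,V_n)$-action on fibres is free via vanishing of twisted $\HH^0$'s, cut out $\ker(\delta)$ as a closed subscheme, trivialise the resulting torsor, and pass to the inverse limit by a Mittag--Leffler argument. The one packaging difference is in handling the possibly infinite-dimensional $\HH^2(G,V_n)$: you extract finitely many defining equations from the universal obstruction element $\delta(\mathrm{id})\in\O(\mathcal S_{n-1})\otimes_{\Q_\ellp}\HH^2(G,V_n)$, whereas the paper instead embeds the functor $\Lambda\mapsto\W_n\Lambda\otimes_{\Q_\ellp}\HH^2(G,V_n)$ into the representable functor associated to the double-dual affine space $\Spec(\Sym^\bullet(\HH^2(G,V_n)^\dual))$ and takes the kernel there --- both arguments are valid and amount to the same thing.
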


\begin{remark}\label{rmk:representability_compatibility}
	The underlying scheme of the filtered affine $\Q_\ellp$-scheme $\HH^1(G,U)$ produced by Theorem~\ref{thm:representability} is the same as the non-abelian cohomology scheme $\HH^1(G,U^\circ)$ associated to the underlying (non-filtered) pro-unipotent group~$U^\circ$ of~$U$, as defined at the beginning of this section (justifying our use of the same notation for both objects). This follows from Lemma~\ref{lem:underlying_functor} since the underlying affine $\Q_\ellp$-scheme of~$\HH^1(G,U)$ represents the functor $\Lambda\mapsto\HH^1(G,U(\Lambda^\triv))=\HH^1(G,U^\circ(\Lambda))$.
\end{remark}

The proof of Theorem~\ref{thm:representability} is ultimately inductive. For any $n\geq0$, we let $U_n=U/\W_{-n-1}U$ and $V_n=\gr^\W_{-n}U=\W_{-n}U/\W_{-n-1}U$ as in Notation~\ref{notn:U_n}, so that we have a central extension
\begin{equation}\label{eq:group_extn}\tag{$\ast$}
	1\to V_n\to U_n\to U_{n-1}\to 1
\end{equation}
for all~$n\geq1$. Since this sequence is split in~$\WAFF_{\Q_\ellp}$, it follows that for every filtered $\Q_\ellp$-algebra~$\Lambda$, the central extension
\[
1 \to V_n(\Lambda) \to U_n(\Lambda) \to U_{n-1}(\Lambda) \to 1
\]
is topologically split: $U_n(\Lambda)\to U_{n-1}(\Lambda)$ admits a continuous splitting and $V_n(\Lambda)\subseteq U_n(\Lambda)$ has the subspace topology. We thus obtain part of a long exact sequence in non-abelian cohomology \cite[Proposition~43]{serre:galois_cohomology}\footnote{Again, \cite{serre:galois_cohomology} only the treats the case where all topological groups in question are discrete. The same applies in general providing one restricts attention to topologically split central extensions.}
\begin{equation}\label{eq:cohomology_es_sections}\tag{$\ast\ast_\Lambda$}
\HH^1(G,V_n(\Lambda)) \to \HH^1(G,U_n(\Lambda)) \to \HH^1(G,U_{n-1}(\Lambda)) \xrightarrow{\delta} \HH^2(G,V_n(\Lambda)) \,.
\end{equation}
This deserves a little more explanation. The sequence above is an exact sequence of pointed sets, but carries further structure which is relevant for us. Specifically, there is an action of the abelian group $\HH^1(G,V_n(\Lambda))$ on the set $\HH^1(G,U_n(\Lambda))$, given by pointwise multiplication of cocycles, whose orbits are the fibres of the map $\HH^1(G,U_n(\Lambda))\to\HH^1(G,U_{n-1}(\Lambda))$.

\begin{proposition}
	The action of $\HH^1(G,V_n(\Lambda))$ on $\HH^1(G,U_n(\Lambda))$ is free.
	\begin{proof}
		If~$\xi\colon G\to U_n(\Lambda)$ is a continuous cocycle, then the stabiliser of the class $[\xi]\in\HH^1(G,U_n(\Lambda))$ is the image of a certain coboundary map $\HH^0(G,{}_\xi U_{n-1}(\Lambda))\to\HH^1(G,V_n(\Lambda))$, where ${}_\xi U_{n-1}(\Lambda)$ denotes the topological group $U_{n-1}(\Lambda)$ with the $\xi$-twisted $G$-action $g\colon u\mapsto \xi(g)\cdot g(u)\cdot\xi(g)^{-1}$. But ${}_\xi U_{n-1}(\Lambda)$ is an iterated central extension of the groups $V_i(\Lambda)=\W_i\Lambda\otimes_{\Q_\ellp}V_i$ for $i<n$, so that condition~\ref{condn:H^0} implies $\HH^0(G,{}_\xi U_{n-1}(\Lambda))=1$. Thus the action on~$[\xi]$ has trivial stabiliser, and hence the action is free.
	\end{proof}
\end{proposition}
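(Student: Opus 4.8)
The plan is to reduce the claim to the vanishing of a single $\HH^0$, and then to prove that vanishing by an induction along the weight filtration of~$U_{n-1}$. For the reduction, fix an arbitrary class $[\xi]\in\HH^1(G,U_n(\Lambda))$, represented by a continuous cocycle~$\xi$, and suppose $[c]\in\HH^1(G,V_n(\Lambda))$ stabilises it: there is $u\in U_n(\Lambda)$ with $c(g)=u^{-1}\xi(g)\,{}^gu\,\xi(g)^{-1}$ for all $g\in G$. Applying the projection $p\colon U_n(\Lambda)\twoheadrightarrow U_{n-1}(\Lambda)$ of~\eqref{eq:group_extn} and using that $c$ is valued in $V_n(\Lambda)=\ker p$, one finds that $p(u)$ is fixed by the $\xi$-twisted $G$-action $g\cdot w=p(\xi(g))\,{}^gw\,p(\xi(g))^{-1}$ on $U_{n-1}(\Lambda)$, i.e.\ $p(u)\in\HH^0(G,{}_\xi U_{n-1}(\Lambda))$. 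Granting that this group is trivial, $u$ lies in the central subgroup $V_n(\Lambda)$, so $c(g)=u^{-1}\,{}^gu$ and $[c]=0$; since $[\xi]$ was arbitrary, the action is free. (This is the usual identification of the stabiliser of $[\xi]$ with the image of the connecting map $\HH^0(G,{}_\xi U_{n-1}(\Lambda))\to\HH^1(G,V_n(\Lambda))$ attached to the topologically split central extension obtained from~\eqref{eq:group_extn} by applying $\Lambda$-points; cf.\ \cite[\S I.5]{serre:galois_cohomology}.)

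So the content is that $\HH^0(G,{}_\xi U_{n-1}(\Lambda))=1$, and here the plan is to filter $U_{n-1}$. Let $N_i\leq U_{n-1}$ be the image of $\W_{-i}U$, so $N_1=U_{n-1}\supseteq N_2\supseteq\cdots\supseteq N_{n-1}=V_{n-1}\supseteq N_n=1$ with $N_i/N_{i+1}\cong V_i$. The commutator hypothesis on $\W_\bullet U$ gives $[U_{n-1},N_i]\subseteq N_{i+1}$, so each $N_i$ is normal in $U_{n-1}$ and each $N_i/N_{i+1}$ is central in $U_{n-1}/N_{i+1}$. Two observations make the induction work. First, each $N_i(\Lambda)$ is stable under the twisted $G$-action: it is stable under the untwisted action because $\W_\bullet U$ is $G$-stable, and conjugation by $p(\xi(g))\in U_{n-1}(\Lambda)$ preserves it by normality. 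Second --- and this is the point --- the twisted action induces the \emph{original} $G$-action on $V_i(\Lambda)=N_i(\Lambda)/N_{i+1}(\Lambda)$, because conjugation by $p(\xi(g))$ is trivial on the central subquotient $N_i(\Lambda)/N_{i+1}(\Lambda)$. Thus ${}_\xi U_{n-1}(\Lambda)$ carries a $G$-stable (for the twisted action) normal filtration by the $N_i(\Lambda)$ whose graded pieces are the $V_i(\Lambda)$ with their untwisted $G$-action.

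It remains to assemble the induction. Since $V_i(\Lambda)$ is a base extension of $V_i$ by a $\Q_\ellp$-vector space with trivial $G$-action, and $G$-invariants commute with that tensor product, condition~\ref{condn:H^0} (which gives $V_i^G=0$) yields $\HH^0(G,V_i(\Lambda))=0$. For any $G$-equivariant extension of topological groups $1\to A\to E\to Q\to1$ the sequence of invariants $1\to A^G\to E^G\to Q^G$ is exact, so $A^G=Q^G=1$ forces $E^G=1$; feeding in $1\to N_{i+1}(\Lambda)\to N_i(\Lambda)\to V_i(\Lambda)\to1$ and inducting downward from the base case $N_{n-1}(\Lambda)=V_{n-1}(\Lambda)$ gives $\HH^0(G,{}_\xi U_{n-1}(\Lambda))=N_1(\Lambda)^G=1$, as needed. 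I expect the main obstacle to be the second observation in the previous paragraph --- pinning down how the twist interacts with the graded pieces --- since it is exactly what allows the induction to run with the coefficient modules $V_i(\Lambda)$, whose invariants are controlled by condition~\ref{condn:H^0}, rather than with genuinely twisted coefficients; the reduction in the first paragraph and the $\HH^0$-bookkeeping in this one are then formal.
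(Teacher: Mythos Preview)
Your proof is correct and follows essentially the same approach as the paper's: both reduce to showing $\HH^0(G,{}_\xi U_{n-1}(\Lambda))=1$ via the standard identification of the stabiliser with the image of the connecting map, and both prove this vanishing by observing that ${}_\xi U_{n-1}(\Lambda)$ is an iterated central extension of the $V_i(\Lambda)$ with their untwisted $G$-action, so that condition~\ref{condn:H^0} applies. Your version is considerably more explicit than the paper's terse two sentences---in particular, you spell out the key observation that the $\xi$-twist acts trivially on the central graded pieces---but the underlying argument is the same.
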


Now the construction of the sequence~\eqref{eq:cohomology_es_sections}, including the group action, is functorial in~$\Lambda$, and hence constitutes an exact sequence
\begin{equation}\label{eq:cohomology_es_presheaves}\tag{$\ast\ast$}
	\HH^1(G,V_n) \to \HH^1(G,U_n) \to \HH^1(G,U_{n-1}) \xrightarrow{\delta} \HH^2(G,V_n)
\end{equation}
of functors $\WALG_{\Q_\ellp}\to\SET_*$, where the leftmost term is a functor valued in abelian groups, acting pointwise freely on the second term. The outer terms of the sequence are controlled by the following proposition.

\begin{proposition}\label{prop:abelian_cohomology}
	For a filtered~$\Q_\ellp$-algebra~$\Lambda$, let $\HH^i(G,V_n(\Lambda))$ denote the $i$th continuous cohomology of~$V_n(\Lambda)=\W_n\Lambda\otimes_{\Q_\ellp}V_n$, endowed with topology described at the beginning of \S\ref{sss:abstract_filtration}, i.e.\ the inductive limit topology over its finite-dimensional $\Q_\ellp$-subspaces. Let~$\HH^i(G,V_n):=\HH^i(G,V_n(\Q_\ellp))$ denote the continuous cohomology of~$V_n$ viewed as a $\Q_\ellp$-vector space. Then for any integer~$i$, we have
	\[
	\HH^i(G,V_n(\Lambda)) = \W_n\Lambda\otimes_{\Q_\ellp}\HH^i(G,V_n) \,,
	\]
	natural in~$\Lambda$.
	\begin{proof}
		The continuous cohomology $\HH^\bullet(G,V_n(\Lambda))$ is calculated as the cohomology of a complex $\Chain^\bullet(G,V_n(\Lambda))$, whose $i$th term is the group of continuous maps from~$G^i$ to $V_n(\Lambda)=\W_n\Lambda\otimes_{\Q_\ellp}V_n$. It thus suffices to show that
		\[
		\Map_\cts(G^i,\W_n\Lambda\otimes_{\Q_\ellp}V_n) = \W_n\Lambda\otimes_{\Q_\ellp}\Map_\cts(G^i,V_n) \,.
		\]
		A little care is needed here, since maps out of compact spaces do not commute with filtered colimits in general \cite[Example~2.5.5]{may:alg_top}. We provide instead a direct argument.
		
		It suffices to show that any continuous map from~$G^i$ to a $\Q_\ellp$-vector space factors through a finite-dimensional subspace. Indeed, suppose that $\xi\colon G^i\to\Q_\ellp^{\oplus J}$ is a continuous map for some indexing set~$J$. We may suppose without loss of generality that no coordinate of the map~$\xi$ is zero. For each~$j\in J$ we choose an open subset $\mathcal U_j\subseteq\Q_\ellp$ containing~$0$ but not the image of the $j$th coordinate of~$\xi$. For a finite subset~$J_0\subseteq J$ we define $\mathcal U_{J_0}=\bigoplus_{j\in J\setminus J_0}\mathcal U_j\oplus\bigoplus_{j\in J_0}\Q_\ellp$, which is an open subset of~$\Q_\ellp^{\oplus J}$. The sets $\mathcal U_{J_0}$ cover~$\Q_\ellp^{\oplus J}$ and are closed under finite unions, hence there is some~$J_0$ such that the image of~$\xi$ is contained in~$\mathcal U_{J_0}$. But by construction this is only possible if~$J_0=J$, hence~$J$ is finite as desired.
	\end{proof}
\end{proposition}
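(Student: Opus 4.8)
The plan is to work at the level of the standard complex of continuous cochains $\Chain^\bullet(G,-)$ computing continuous cohomology, and to show that both it and its cohomology commute with the operation $\W_n\Lambda\otimes_{\Q_\ellp}-$. Write $\W_n\Lambda=\varinjlim_\alpha W_\alpha$ as the filtered union of its finite-dimensional $\Q_\ellp$-subspaces, so that $V_n(\Lambda)=\W_n\Lambda\otimes_{\Q_\ellp}V_n=\varinjlim_\alpha(W_\alpha\otimes_{\Q_\ellp}V_n)$ as topological $G$-modules. It then suffices to establish two points: (i) that $\Chain^i(G,-)=\Map_\cts(G^i,-)$ carries this particular filtered colimit to the colimit of the corresponding cochain groups; and (ii) that for each finite-dimensional $W_\alpha$ one has $\HH^i(G,W_\alpha\otimes_{\Q_\ellp}V_n)=W_\alpha\otimes_{\Q_\ellp}\HH^i(G,V_n)$. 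Point (ii) is immediate: $W_\alpha\otimes_{\Q_\ellp}V_n$ is a finite direct sum of copies of $V_n$ as a topological $G$-module, and both $\Chain^\bullet(G,-)$ and its cohomology are additive on finite direct sums. Granting (i), one takes cohomology, uses that filtered colimits of $\Q_\ellp$-vector spaces are exact (so commute with passage to cohomology), and combines this with (ii) to obtain $\HH^i(G,V_n(\Lambda))=\varinjlim_\alpha\bigl(W_\alpha\otimes_{\Q_\ellp}\HH^i(G,V_n)\bigr)=\W_n\Lambda\otimes_{\Q_\ellp}\HH^i(G,V_n)$, with naturality in $\Lambda$ coming for free since every step is functorial.

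So the crux is point (i), namely that $\Map_\cts(G^i,-)$ commutes with the filtered colimit $\varinjlim_\alpha(W_\alpha\otimes_{\Q_\ellp}V_n)$; this is the step I expect to be the main obstacle, because maps out of a compact space (here $G^i$, which is compact since $G$ is profinite) do \emph{not} commute with filtered colimits of topological spaces in general, and one must instead exploit the specific shape of the direct-limit topology on a $\Q_\ellp$-vector space. Concretely, I would prove directly that every continuous map $\xi$ from a compact space to a $\Q_\ellp$-vector space $W$, with $W$ carrying the direct-limit topology over its finite-dimensional subspaces, has image contained in a finite-dimensional subspace. Fixing a basis, present $W=\Q_\ellp^{\oplus J}$ and discard the coordinates on which $\xi$ vanishes identically, so that no $\xi_j$ is zero. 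For each $j\in J$ choose an open $\mathcal U_j\ni0$ in $\Q_\ellp$ not containing the (compact) image of the $j$th coordinate $\xi_j$, and for finite $J_0\subseteq J$ set $\mathcal U_{J_0}:=\bigoplus_{j\notin J_0}\mathcal U_j\oplus\bigoplus_{j\in J_0}\Q_\ellp$. Each $\mathcal U_{J_0}$ is open for the direct-limit topology, since its intersection with every finite-coordinate subspace is an open box; it contains $0$; and the family $\{\mathcal U_{J_0}\}_{J_0}$ covers $W$ and is directed under inclusion. Compactness of $\xi$'s image then forces it inside some single $\mathcal U_{J_0}$, whence $\xi_j(G^i)\subseteq\mathcal U_j$ for every $j\notin J_0$; by the choice of $\mathcal U_j$ this would force $\xi_j\equiv0$ for such $j$, which is impossible. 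Hence $J=J_0$ is finite. Applying this with $W=\W_n\Lambda\otimes_{\Q_\ellp}V_n$, in a basis adapted to the $W_\alpha$, shows that every continuous cochain factors through some $W_\alpha\otimes_{\Q_\ellp}V_n$, which is precisely (i).

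In summary: reduce to cochains, move $\W_n\Lambda\otimes_{\Q_\ellp}-$ past $\Map_\cts(G^i,-)$ using the compactness argument, and then pass to cohomology using exactness of filtered colimits together with additivity in the finite-dimensional case. Every step is manifestly natural in $\Lambda$. The compactness / direct-limit-topology argument in the second paragraph is the one genuinely nontrivial ingredient; the remainder is formal homological algebra.
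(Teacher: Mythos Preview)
Your proof is correct and follows essentially the same approach as the paper's: both reduce to the cochain level and prove that any continuous map from the compact space $G^i$ into a $\Q_\ellp$-vector space with the direct-limit topology factors through a finite-dimensional subspace, via the identical open-cover argument with the sets $\mathcal U_{J_0}$. Your write-up is slightly more explicit in framing the reduction as a filtered-colimit statement and in separating out the finite-dimensional additivity step, but the substance is the same.
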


\begin{corollary}\label{cor:abelian_representability}
	\leavevmode
	\begin{itemize}
		\item The cohomology functor~$\HH^1(G,V_n)\colon\WALG_{\Q_\ellp}\to\AB$ is representable by a commutative filtered affine $\Q_\ellp$-group scheme. Precisely, the representing object is the vector group associated to the vector space $\HH^1(G,V_n)$, given the filtration supported in degree~$-n$ as per Example~\ref{ex:filtered_vs}.
		\item The cohomology functor~$\HH^2(G,V_n)\colon\WALG_{\Q_\ellp}\to\AB$ is a subfunctor of a representable functor.
	\end{itemize}
\begin{proof}
	For the first point, we simply note that the functor of points of the finite-dimensional vector space $\HH^1(G,V_n)$ is given by $\Lambda\mapsto\W_n\Lambda\otimes_{\Q_\ellp}\HH^1(G,V_n)$, which is the cohomology functor $\HH^1(G,V_n)$ by Proposition~\ref{prop:abelian_cohomology}.
	
	For the second point, we cannot necessarily apply the same argument, since we have not assumed that the vector space $\HH^2(G,V_n)$ is finite-dimensional. Instead, we set $\HH^2(G,V_n)^\wedge:=\Spec(\Sym^\bullet(\HH^2(G,V_n)^\dual))$, which is the affine space associated to the double-dual $\HH^2(G,V_n)^{\dual\dual}$. Endowing $\HH^2(G,V_n)^\wedge$ with its natural filtration, its functor of points is given by
	\[
	\Lambda\mapsto\Hom_{\Q_\ellp}(\HH^2(G,V_n)^\dual,\W_n\Lambda) \,,
	\]
	and this clearly contains the cohomology functor $\Lambda\mapsto\W_n\Lambda\otimes_{\Q_\ellp}\HH^2(G,V_n)$ as a subfunctor.
\end{proof}
\end{corollary}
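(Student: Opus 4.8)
The plan is to deduce both statements from the concrete description of the abelian cohomology functors furnished by Proposition~\ref{prop:abelian_cohomology}, namely the isomorphism $\HH^i(G,V_n(\Lambda))=\W_n\Lambda\otimes_{\Q_\ellp}\HH^i(G,V_n)$ natural in the filtered $\Q_\ellp$-algebra~$\Lambda$, and then to recognise the right-hand side as the functor of points of an explicit filtered affine $\Q_\ellp$-scheme built out of Example~\ref{ex:filtered_vs}.

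For the first point, condition~\ref{condn:H^1} guarantees that $W:=\HH^1(G,V_n)$ is a finite-dimensional $\Q_\ellp$-vector space, so it has an associated vector group, which we equip with the filtration supported in degree~$-n$. The one computation needed is to unwind the tensor-product filtration on $\Lambda\otimes_{\Q_\ellp}W$: since an element $\lambda\otimes w$ with $\lambda\in\W_m\Lambda$ and $w\in W$ has weight $\leq m-n$, we obtain $\W_0(\Lambda\otimes_{\Q_\ellp}W)=\W_n\Lambda\otimes_{\Q_\ellp}W$, so by Example~\ref{ex:filtered_vs} the functor of points of this vector group is $\Lambda\mapsto\W_n\Lambda\otimes_{\Q_\ellp}W$. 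The isomorphism of Proposition~\ref{prop:abelian_cohomology} is visibly additive (being induced by a map of cochain complexes), so it identifies this functor with the cohomology functor $\HH^1(G,V_n)$ as functors valued in~$\AB$; this gives representability together with the asserted description of the representing object.

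For the second point, the same argument does not apply verbatim, since $\HH^2(G,V_n)$ is not assumed finite-dimensional and so has no vector group directly attached to it. I would instead take the affine space $\HH^2(G,V_n)^\wedge:=\Spec\!\big(\Sym^\bullet(\HH^2(G,V_n)^\dual)\big)$ associated to the double dual, endow it with the filtration supported in degree~$-n$, and check, exactly as before, that its functor of points is $\Lambda\mapsto\Hom_{\Q_\ellp}(\HH^2(G,V_n)^\dual,\W_n\Lambda)$. It then remains only to observe that the canonical map $\W_n\Lambda\otimes_{\Q_\ellp}\HH^2(G,V_n)\to\Hom_{\Q_\ellp}(\HH^2(G,V_n)^\dual,\W_n\Lambda)$ sending $\lambda\otimes w$ to $\phi\mapsto\phi(w)\lambda$ is injective --- its image is the subspace of finite-rank homomorphisms, and injectivity follows by evaluating against a dual family --- and natural in the filtered algebra~$\Lambda$; together with Proposition~\ref{prop:abelian_cohomology}, this exhibits $\HH^2(G,V_n)$ as a subfunctor of the representable functor $\HH^2(G,V_n)^\wedge$.

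The argument is essentially bookkeeping once Proposition~\ref{prop:abelian_cohomology} is available, and I do not anticipate a genuine obstacle. The point requiring the most care is the interplay of index conventions --- the $\Z_{\geq0}$-indexed filtrations on algebras versus the $\Z_{\leq-1}$-indexed filtrations on vector spaces --- which is exactly what produces the degree shift $\W_0(\Lambda\otimes W)=\W_n\Lambda\otimes W$ when $W$ is supported in degree $-n$; secondarily, one should confirm that the tensor-to-homomorphisms comparison in the second point is natural for filtered algebra maps.
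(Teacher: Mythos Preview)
Your proposal is correct and follows essentially the same approach as the paper: both parts are deduced from Proposition~\ref{prop:abelian_cohomology}, with the first using finite-dimensionality of $\HH^1(G,V_n)$ together with Example~\ref{ex:filtered_vs}, and the second embedding $\W_n\Lambda\otimes_{\Q_\ellp}\HH^2(G,V_n)$ into $\Hom_{\Q_\ellp}(\HH^2(G,V_n)^\dual,\W_n\Lambda)$ via the double-dual construction. Your write-up supplies slightly more detail (the explicit filtration-degree shift and the injectivity of the tensor-to-hom map), but the argument is the same.
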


Using this proposition, we are now in a position to complete the proof of Theorem~\ref{thm:representability}. The functor $\HH^1(G,U_0)$ is representable by the trivial group. From here, we proceed inductively, assuming henceforth that $\HH^1(G,U_{n-1})$ is representable by a filtered affine $\Q_\ellp$-scheme.

Now the coboundary map $\delta\colon\HH^1(G,U_{n-1})\to\HH^2(G,V_n)$ is a natural transformation from a representable functor to a subfunctor of a representable functor, and hence its kernel $\ker(\delta)$ is representable (by a closed subscheme of $\HH^1(G,U_{n-1})$ with the induced filtration).

Now sequence~\eqref{eq:cohomology_es_presheaves} implies that the functor $\HH^1(G,U_n)$ is a $\HH^1(G,V_n)$-torsor over $\ker(\delta)$, in the sense that its $\Lambda$-points are a $\HH^1(G,V_n(\Lambda))$-torsor over $\ker(\delta)(\Lambda)$ for every filtered $\Q_\ellp$-algebra~$\Lambda$. In particular, $\HH^1(G,U_n)\to\ker(\delta)$ is a surjection of functors with representable codomain, hence is split. Thus $\HH^1(G,U_n)\simeq\ker(\delta)\times\HH^1(G,V_n)$ is the trivial $\HH^1(G,V_n)$-torsor over~$\ker(\delta)$, so is representable by the product of~$\ker(\delta)$ and $\HH^1(G,V_n)$.

\smallskip

This completes the inductive step, showing that~$\HH^1(G,U_n)$ is representable for all~$n$, and hence concludes the proof of Theorem~\ref{thm:representability} in the case that~$U$ is finite-dimensional. The general case is then given by the following proposition.

\begin{proposition}
	We have the equality $\HH^1(G,U)=\varprojlim_n\HH^1(G,U_n)$ of functors $\WAFF_{\Q_\ellp}\to\SET_*$.
	\begin{proof}
		The assumption that the filtration on~$U$ is separated implies that the natural map~$U\to\varprojlim_nU_n$ is a $G$-equivariant isomorphism of filtered affine $\Q_\ellp$-schemes. Hence for every filtered $\Q_\ellp$-algebra~$\Lambda$, the natural map $U(\Lambda)\to\varprojlim_nU_n(\Lambda)$ is a $G$-equivariant isomorphism of topological groups. It thus suffices to show that the map
		\[
		\HH^1(G,\varprojlim_nU_n(\Lambda)) \to \varprojlim_n\HH^1(G,U_n(\Lambda))
		\]
		is bijective. We do this by a Mittag--Leffler argument, following \cite[Lemma~4.0.5]{me:thesis}.
		
		For injectivity, suppose that~$\xi$ and~$\xi'$ are $\varprojlim_nU_n(\Lambda)$-valued cocycles whose images~$\xi_n$ and~$\xi'_n$ in $U_n(\Lambda)$ represent the same cohomology class for every~$n$. This says that there is an element~$u_n\in U_n(\Lambda)$ such that $\xi'_n(g)=u_n^{-1}\cdot\xi(g)\cdot g(u_n)$ for all~$g\in G$. In fact, the element~$u_n$ is unique, for, if $u_n'$ were another element, then we would have
		\[
		\xi(g)\cdot g(u_n'u_n^{-1})\cdot \xi(g)^{-1} = u_n'u_n^{-1}
		\]
		for all~$g\in G$. In other words, $u_n'u_n^{-1}$ is fixed under the $\xi$-twisted $G$-action on~$U_n(\Lambda)$. However, we saw above that~$\HH^0(G,{}_\xi U_n(\Lambda))=1$, whence $u_n'=u_n$ as claimed.
		
		It follows from unicity that the image of~$u_n$ in $U_{n-1}(\Lambda)$ is the element~$u_{n-1}$, and hence the elements $(u_n)_{n\in\N}$ define an element $u\in\varprojlim_nU_n(\Lambda)$. It then follows that $\xi'(g) = u^{-1}\cdot\xi(g)\cdot g(u)$ for all~$g\in G$, so that~$\xi$ and~$\xi'$ represent the same cohomology class. This proves injectivity.
		
		\smallskip
		
		For surjectivity, suppose we are given a compatible sequence of elements $[\xi_n]\in\HH^1(G,U_n(\Lambda))$. Choose a cocycle~$\xi_1$ representing $[\xi_1]$. We show how to lift~$\xi_1$ to a cocycle $\xi_2$ representing~$[\xi_2]$. To begin with, choose any cocycle $\xi_2'$ representing~$[\xi_2]$. The image of~$\xi_2$ in~$U_1(\Lambda)$ represents the same cohomology class as~$\xi_1$, hence there is a~$u_1\in U_1(\Lambda)$ such that the image of~$\xi_2$ is the cocycle $g\mapsto u_1^{-1}\cdot\xi_1(g)\cdot g(u_1)$. Let~$u_2\in U_2(\Lambda)$ be any lift of~$u_1$ along the surjection~$U_2(\Lambda)\to U_1(\Lambda)$; then the cocycle $\xi_2\colon g\mapsto u_2^{-1}\cdot\xi_2'(g)\cdot g(u_2)$ also represents~$[\xi_2]$ and lies over~$\xi_1$.
		
		Iterating this construction, we obtain continuous cocycles $\xi_n\in\Cycle^1(G,U_n(\Lambda))$ representing the classes~$[\xi_n]$ which are compatible under the maps $U_n(\Lambda)\to U_{n-1}(\Lambda)$. These maps define a continuous cocycle $\xi\colon G\to\varprojlim_nU_n(\Lambda)$, and it follows from the construction that~$[\xi]$ maps to~$[\xi_n]$ in~$U_n(\Lambda)$ for all~$n$. This implies that the map  $\HH^1(G,\varprojlim_nU_n(\Lambda)) \to \varprojlim_n\HH^1(G,U_n(\Lambda))$ is surjective, as desired.
	\end{proof}
\end{proposition}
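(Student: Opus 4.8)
The plan is to reduce the statement, via separatedness of the filtration, to a Mittag--Leffler comparison at the level of cocycles. Since the filtration on $U$ is separated, Notation~\ref{notn:U_n} provides a $G$-equivariant isomorphism $U\cong\varprojlim_nU_n$ of filtered affine $\Q_\ellp$-group schemes; passing to $\Lambda$-points gives a $G$-equivariant isomorphism $U(\Lambda)\cong\varprojlim_nU_n(\Lambda)$ of topological groups, naturally in the filtered $\Q_\ellp$-algebra $\Lambda$. So it suffices to show that the canonical map
\[
\HH^1(G,\varprojlim_nU_n(\Lambda))\longrightarrow\varprojlim_n\HH^1(G,U_n(\Lambda))
\]
is a bijection of pointed sets for each $\Lambda$; naturality in $\Lambda$ being automatic, this yields the claimed equality of functors.

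For injectivity I would take continuous cocycles $\xi,\xi'\colon G\to\varprojlim_nU_n(\Lambda)$ whose reductions $\xi_n,\xi'_n$ are cohomologous in $U_n(\Lambda)$ for all $n$, say $\xi'_n(g)=u_n^{-1}\,\xi_n(g)\,g(u_n)$ with $u_n\in U_n(\Lambda)$. The crux is that each $u_n$ is \emph{unique}: two choices differ by a $G$-fixed point of the $\xi$-twisted group ${}_\xi U_n(\Lambda)$, and since the latter is an iterated central extension of the vector groups $V_i(\Lambda)=\W_i\Lambda\otimes_{\Q_\ellp}V_i$ (the inner-automorphism part of the twist acting trivially on successive central quotients), condition~\ref{condn:H^0} together with Proposition~\ref{prop:abelian_cohomology} forces $\HH^0(G,{}_\xi U_n(\Lambda))=1$, exactly as in the proof above that the $\HH^1(G,V_n(\Lambda))$-action on $\HH^1(G,U_n(\Lambda))$ is free. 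Uniqueness makes the $u_n$ compatible under $U_n(\Lambda)\twoheadrightarrow U_{n-1}(\Lambda)$, hence they assemble to a single $u\in\varprojlim_nU_n(\Lambda)$ witnessing $[\xi]=[\xi']$.

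For surjectivity, given a compatible system $([\xi_n])_n$, I would build compatible cocycle representatives inductively. Choose $\xi_1\in\Cycle^1(G,U_1(\Lambda))$ representing $[\xi_1]$; assuming $\xi_1,\dots,\xi_{n-1}$ have been chosen compatibly lifting the given classes, pick any $\eta\in\Cycle^1(G,U_n(\Lambda))$ representing $[\xi_n]$, note its image in $\Cycle^1(G,U_{n-1}(\Lambda))$ is cohomologous to $\xi_{n-1}$ via some $v\in U_{n-1}(\Lambda)$, lift $v$ to $\tilde v\in U_n(\Lambda)$ (possible because the extension~\eqref{eq:group_extn} is split in $\WAFF_{\Q_\ellp}$, so $U_n(\Lambda)\to U_{n-1}(\Lambda)$ is a continuous surjection), and set $\xi_n(g):=\tilde v^{-1}\,\eta(g)\,g(\tilde v)$, which represents $[\xi_n]$ and maps to $\xi_{n-1}$. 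The resulting system defines a cocycle $\xi\colon G\to\varprojlim_nU_n(\Lambda)$, continuous since the target carries the inverse-limit topology and each component is continuous, and $[\xi]$ maps to each $[\xi_n]$ by construction.

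The main obstacle I anticipate is the point underlying injectivity: the potential $\varprojlim^1$-obstruction to patching the conjugating elements $u_n$ is eliminated only because those elements are genuinely unique, and this uniqueness rests entirely on condition~\ref{condn:H^0} surviving the passage to the twisted iterated central extension ${}_\xi U_n(\Lambda)$ over the coefficient ring $\Lambda$. Once that is secured, both directions are formal bookkeeping with continuous cochains.
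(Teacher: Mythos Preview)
Your proposal is correct and follows essentially the same approach as the paper: reduce via separatedness to a Mittag--Leffler comparison, use uniqueness of the conjugating elements $u_n$ (via $\HH^0(G,{}_\xi U_n(\Lambda))=1$ from condition~\ref{condn:H^0}) to handle injectivity, and inductively lift cocycle representatives for surjectivity. The only cosmetic difference is that you make the role of Proposition~\ref{prop:abelian_cohomology} in establishing $\HH^0(G,V_i(\Lambda))=0$ explicit, whereas the paper leaves this implicit.
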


We note for later use the following consequence of our construction, which provides the archetypal example of a bound on Hilbert series.

\begin{corollary}\label{cor:cohomology_hilbert_series_bound}
	The filtered cohomology scheme $\HH^1(G,U)$ is (non-canonically) a closed subscheme of $\prod_{n>0}\HH^1(G,V_n)$, with the induced filtration. In particular,
	\[
	\HS_{\HH^1(G,U)}(t) \preceq \prod_{n>0}(1-t^n)^{-\dim_{\Q_\ellp}(\HH^1(G,V_n))} \,.
	\]
\end{corollary}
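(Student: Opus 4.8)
The plan is to read off from the inductive proof of Theorem~\ref{thm:representability} an explicit chain of filtered closed immersions and then pass to the limit. Recall that that proof produced, for each $n\geq1$, an isomorphism of filtered affine $\Q_\ellp$-schemes $\HH^1(G,U_n)\simeq\ker(\delta)\times\HH^1(G,V_n)$, where $\ker(\delta)$ is a closed subscheme of $\HH^1(G,U_{n-1})$ with the induced filtration. Composing these, and using that forming a product with a fixed filtered scheme sends closed subschemes with the induced filtration to closed subschemes with the induced filtration (since the image of a tensor-product filtration under a filtered surjection $A\twoheadrightarrow A'$ is the tensor-product filtration on $A'\otimes B$), one obtains by induction on $n$ a closed immersion
\[
\HH^1(G,U_n)\hookrightarrow\prod_{k=1}^n\HH^1(G,V_k)
\]
under which $\HH^1(G,U_n)$ carries the induced filtration, and whose composite with the projection off the last factor is the embedding for $n-1$ pre-composed with the canonical map $\HH^1(G,U_n)\to\HH^1(G,U_{n-1})$.

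Next I would pass to the cofiltered limit over $n$. The compatibility just noted means the embeddings above assemble into a morphism $\varprojlim_n\HH^1(G,U_n)\to\varprojlim_n\prod_{k\leq n}\HH^1(G,V_k)=\prod_{n>0}\HH^1(G,V_n)$; by the proposition immediately preceding this corollary the source is $\HH^1(G,U)$. To see this morphism is a closed immersion with the induced filtration, one takes affine rings: $\O$ turns this cofiltered limit in $\WAFF_{\Q_\ellp}$ into the filtered colimit $\varinjlim_n\O(\prod_{k\leq n}\HH^1(G,V_k))\to\varinjlim_n\O(\HH^1(G,U_n))$, and a filtered colimit of surjections of filtered algebras, each an image filtration, is again a surjection with the image filtration (exactness of filtered colimits of $\Q_\ellp$-vector spaces). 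This proves the first assertion: $\HH^1(G,U)$ is, non-canonically, a closed subscheme of $\prod_{n>0}\HH^1(G,V_n)$ with the induced filtration.

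The Hilbert series bound is then a formal consequence. By Corollary~\ref{cor:abelian_representability} the scheme $\HH^1(G,V_n)$ is the vector group attached to the $\Q_\ellp$-vector space $\HH^1(G,V_n)$ with filtration concentrated in degree $-n$, so Example~\ref{ex:filtered_vs} gives $\HS_{\HH^1(G,V_n)}(t)=(1-t^n)^{-\dim_{\Q_\ellp}\HH^1(G,V_n)}$; Lemma~\ref{lem:hilbert_series_algebra}(1) gives $\HS_{\prod_{n>0}\HH^1(G,V_n)}(t)=\prod_{n>0}(1-t^n)^{-\dim_{\Q_\ellp}\HH^1(G,V_n)}$; and Lemma~\ref{lem:hilbert_series_algebra}(2) gives $\HS_{\HH^1(G,U)}(t)\preceq\HS_{\prod_{n>0}\HH^1(G,V_n)}(t)$, which is the claimed inequality.

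I do not expect any serious obstacle here, since the substance is already contained in Theorem~\ref{thm:representability}. The one point that requires a little care is the passage to the limit in the second paragraph — checking that an inverse limit of filtered closed immersions along a compatible tower is again a filtered closed immersion, and that the induced-filtration property is preserved — but this reduces to exactness of filtered colimits of vector spaces and is routine.
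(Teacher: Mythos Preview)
Your proposal is correct and follows exactly the approach the paper intends: the corollary is stated without proof as an immediate consequence of the inductive construction in Theorem~\ref{thm:representability}, and you have simply spelled out the details of that construction (the chain of closed immersions and the passage to the limit). Your handling of the limit step via filtered colimits of affine rings is the right way to justify that inverse limits of filtered closed immersions with induced filtration remain such, which is the only point the paper leaves implicit.
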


\subsubsection{An alternative construction}\label{sss:explicit}

Although we will not need this in what follows, we also describe an alternative construction of the filtration on~$\HH^1(G,U)$ which is more explicit than the construction in Theorem~\ref{thm:representability}. Let~$\Cycle^1(G,U)$ denote the affine $\Q_\ellp$-scheme parametrising continuous $U$-valued cocycles. For every~$g\in G$ there is an evaluation map $\ev_g\colon\Cycle^1(G,U)\to U$ which is a morphism of affine $\Q_\ellp$-schemes. We endow the affine ring of~$\Cycle^1(G,U)$ with the finest filtration making all the maps~$\ev_g$ filtered. This filtration then restricts to a filtration on $\O(\HH^1(G,U))\subseteq\O(\Cycle^1(G,U))$, making~$\HH^1(G,U)$ into a filtered affine $\Q_\ellp$-scheme.

This explicitly constructed filtration is the same as the one from Theorem~\ref{thm:representability}. This can be proved in two steps, the verification of which we leave to the interested reader. Firstly, one shows that $\Cycle^1(G,U)$, with the filtration constructed explicitly above, represents the functor $\Lambda\mapsto\Cycle^1(G,U(\Lambda))$ for~$\Lambda\in\WALG_{\Q_\ellp}$. Then one uses the description of the functor of points from Theorem~\ref{thm:representability} to show that the map $\Cycle^1(G,U)\twoheadrightarrow\HH^1(G,U)$ is split as a morphism of filtered affine $\Q_\ellp$-schemes. This ensures that the filtration on~$\O(\HH^1(G,U))$ from Theorem~\ref{thm:representability} is the restriction of the filtration on $\O(\Cycle^1(G,U))$, and hence the two definitions of the filtration agree.

\section{Filtrations on Selmer schemes}\label{s:selmer}

With the general machinery of filtered affine $\Q_\ellp$-schemes now set up, we come to the first of the two main strands of this work: endowing the local and global Selmer schemes appearing in the Chabauty--Kim method with filtrations induced from the weight filtration on the fundamental group. This is purely a matter of representation theory, so we will describe here the construction for a general pro-unipotent group~$U$, not just those arising from the fundamental group of a curve.

We treat two different kinds of Selmer schemes separately: local Bloch--Kato Selmer schemes in \S\ref{ss:local_selmer_p} and global Selmer schemes in \S\ref{ss:global_selmer}. Importantly, we will compute the Hilbert series of both kinds of Selmer schemes, and comparing the coefficients of these Hilbert series is what will ultimately allow us to control the algebraic functionals~$\alpha$ coming out of the Chabauty--Kim method.

For an introduction to the theory of global Selmer schemes, see \cite[\S2]{jen-etal:non-abelian_tate-shafarevich}.

\subsection{Local Bloch--Kato Selmer schemes}\label{ss:local_selmer_p}

Suppose to begin with that~$K_v$ is a finite extension of~$\Q_\ellp$, with absolute Galois group~$G_v$. We write $K_{v,0}$ for the maximal unramified subfield of~$K_v$. Suppose that we are given a $\Q_\ellp$-pro-unipotent group~$U$, endowed with a continuous action of~$G_v$ and a $G_v$-stable separated filtration
\[
1\subseteq\dots\subseteq\W_{-3}U\subseteq\W_{-2}U\subseteq\W_{-1}U=U
\]
in the sense of Example~\ref{ex:filtered_unipotent}, where each subgroup~$\W_{-n}U$ is of finite codimension in~$U$. We make moreover the following two assumptions on~$U$:
\begin{itemize}
	\item $U$ is pro-crystalline, in the sense that~$\Lie(U)$ is an inverse limit of crystalline representations; and
	\item the eigenvalues\footnote{Note that the crystalline Frobenius is not in general $K_{v,0}$-linear. By its eigenvalues, we simply mean its eigenvalues as a $\Q_\ellp$-linear automorphism of~$\D_\cris(\gr^\W_{-n}U)$. This is the same as the $f$th roots of the eigenvalues of the $K_{v,0}$-linear automorphism~$\varphi^f$, where $f=[K_{v,0}:\Q_\ellp]$.} of the crystalline Frobenius acting on~$\D_\cris(V_n)$ are all $\ellp$-Weil numbers of weight~$-n$, for all~$n>0$.
\end{itemize}
These two assumptions ensure that conditions~\ref{condn:H^0} and~\ref{condn:H^1} from \S\ref{ss:cohomology} are met, and hence the cohomology functor~$\HH^1(G_v,U)$ is representable (we ignore its filtration for the time being).

\smallskip

In the Chabauty--Kim method, one is interested not in the whole cohomology scheme $\HH^1(G_v,U)$ but in a certain subscheme $\HH^1_f(G_v,U)$ cut out by Bloch--Kato-style Selmer conditions.

\begin{definition}
	If $\Lambda\in\ALG_{\Q_\ellp}$ is a $\Q_\ellp$-algebra, then we define
	\[
	\HH^1_f(G_v,U(\Lambda)) := \ker\left(\HH^1(G_v,U(\Lambda))\to\HH^1(G_v,U(\B_\cris\otimes_{\Q_\ellp}\Lambda))\right) \,,
	\]
	where~$\B_\cris$ denotes Fontaine's ring of crystalline periods, with its usual Galois action. The assignment $\Lambda\mapsto\HH^1_f(G_v,U(\Lambda))$ is functorial in~$\Lambda$, hence defines a functor $\HH^1_f(G_v,U)\colon\ALG_{\Q_\ellp}\to\SET_*$. This is a subfunctor of the cohomology functor~$\HH^1(G_v,U)$.
\end{definition}

\begin{remark}\label{rmk:H^1_f_no_topology}
	Strictly speaking, in order to make sense of the above definition, we need to also specify a topology on the group $U(\B_\cris\otimes_{\Q_\ellp}\Lambda)$ for which the $G_v$-action and the map $U(\Lambda)\to U(\B_\cris\otimes_{\Q_\ellp}\Lambda)$ are continuous. However, $\HH^1_f(G_v,U(\Lambda))$ does not actually depend on the choice of topology: $\HH^1_f(G_v,U(\Lambda))$ consists of those classes of continuous $U(\Lambda)$-valued cocycles which are coboundaries of elements of~$U(\B_\cris\otimes_{\Q_\ellp}\Lambda)$.
\end{remark}

The following fact is foundational in the Chabauty--Kim method.

\begin{proposition}\label{prop:representability_of_H^1_f}
	$\HH^1_f(G_v,U)$ is representable by a closed $\Q_\ellp$-subscheme of the cohomology scheme~$\HH^1(G_v,U)$.
\end{proposition}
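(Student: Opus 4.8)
The plan is to prove representability by induction along the central series of~$U$, with a limit argument to pass from the finite-dimensional case to the pro-unipotent one. Write $U_n=U/\W_{-n-1}U$ and $V_n=\gr^\W_{-n}U$ as in Notation~\ref{notn:U_n}. First I would observe that $\HH^1_f(G_v,U)=\varprojlim_n\HH^1_f(G_v,U_n)$ as subfunctors of $\HH^1(G_v,U)=\varprojlim_n\HH^1(G_v,U_n)$: one inclusion is formal (a cocycle trivialised over $\B_\cris\otimes_{\Q_\ellp}\Lambda$ has all of its finite-level images trivialised there), and the reverse inclusion follows by a Mittag--Leffler argument parallel to the one in \S\ref{sss:abstract_filtration} --- this uses that the transition maps $U_n(\B_\cris\otimes\Lambda)^{G_v}\to U_{n-1}(\B_\cris\otimes\Lambda)^{G_v}$ are surjective, a point I return to below. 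Since an inverse limit of closed subschemes compatible with transition maps is closed, it then suffices to treat $U=U_n$ finite-dimensional, by induction on~$n$; the case $n=0$ is trivial.

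The abelian input at each stage is that $\HH^1_f(G_v,V_n)$ is representable by a linear subscheme of the vector group $\HH^1(G_v,V_n)$ from Corollary~\ref{cor:abelian_representability}. Indeed the comparison map $\HH^1(G_v,V_n)\to\HH^1(G_v,V_n\otimes_{\Q_\ellp}\B_\cris)$ is a $\Q_\ellp$-linear map of finite-dimensional $\Q_\ellp$-vector spaces --- the source is finite-dimensional by condition~\ref{condn:H^1}, and the target because $V_n$ is crystalline --- so its kernel is a linear subspace $K_n\subseteq\HH^1(G_v,V_n)$; and since all the vector spaces involved are flat over $\Q_\ellp$, Proposition~\ref{prop:abelian_cohomology} and its analogue with $\B_\cris$-coefficients give $\HH^1_f(G_v,V_n(\Lambda))=\ker\big(\HH^1(G_v,V_n)\otimes\Lambda\to\HH^1(G_v,V_n\otimes\B_\cris)\otimes\Lambda\big)=K_n\otimes\Lambda$. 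So $K_n$ represents $\HH^1_f(G_v,V_n)$.

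For the inductive step, assume $\HH^1_f(G_v,U_{n-1})\hookrightarrow\HH^1(G_v,U_{n-1})$ is a closed immersion. The morphism of affine $\Q_\ellp$-schemes $\HH^1(G_v,U_n)\to\HH^1(G_v,U_{n-1})$ induced by $U_n\twoheadrightarrow U_{n-1}$ has closed preimage $P$ of $\HH^1_f(G_v,U_{n-1})$, and a class in $\HH^1_f(G_v,U_n)$ maps to one in $\HH^1_f(G_v,U_{n-1})$ (push a trivialising $\B_\cris$-coboundary along $U_n\to U_{n-1}$), so $\HH^1_f(G_v,U_n)\subseteq P$. To cut $\HH^1_f(G_v,U_n)$ out of $P$ I would use the splitting of the $\HH^1(G_v,V_n)$-torsor $\HH^1(G_v,U_n)\to\HH^1(G_v,U_{n-1})$ produced in the proof of Theorem~\ref{thm:representability}, which identifies $P$ with a product whose second factor is $\HH^1(G_v,V_n)$; over a point $[\bar\xi]\in\HH^1_f(G_v,U_{n-1})$, after trivialising $\bar\xi$ over $\B_\cris\otimes\Lambda$ and conjugating a lift into $U_n$ of $\bar\xi$'s trivialiser, one sees that a class $[\xi]$ in the fibre lies in $\HH^1_f(G_v,U_n)$ exactly when its $V_n$-component maps to $0$ under $\HH^1(G_v,V_n)\otimes\Lambda\to\HH^1(G_v,V_n\otimes\B_\cris)\otimes\Lambda$, i.e.\ lies in $K_n\otimes\Lambda$. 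This is a closed (linear) condition on $P$, so $\HH^1_f(G_v,U_n)$ is closed in $\HH^1(G_v,U_n)$, completing the induction.

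The one genuinely non-formal point --- the step I expect to be the main obstacle --- is the claim just used, that the obstruction to detecting $\HH^1_f(G_v,U_n)$ inside the fibre is controlled by the honest kernel $K_n$ rather than by some larger subspace. This amounts to the vanishing of the connecting map $\delta^0\colon\HH^0(G_v,U_{n-1}(\B_\cris\otimes\Lambda))\to\HH^1(G_v,V_n\otimes\B_\cris\otimes\Lambda)$, and the subtlety is that $\HH^0(G_v,U_{n-1}(\B_\cris\otimes\Lambda))$ is \emph{not} trivial --- its $\Q_\ellp$-points are the crystalline realisation $\D_\cris(U_{n-1})$ --- so condition~\ref{condn:H^0} is of no help over $\B_\cris$. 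The resolution, and the place where the pro-crystallinity hypothesis on $U$ is used, is that $1\to V_n\to U_n\to U_{n-1}\to1$ is a short exact sequence of crystalline representations and $\D_\cris$ is exact on such, whence $U_n(\B_\cris\otimes\Lambda)^{G_v}\twoheadrightarrow U_{n-1}(\B_\cris\otimes\Lambda)^{G_v}$; this gives $\delta^0=0$ and simultaneously the surjectivity of the transition maps needed for the Mittag--Leffler step in the first paragraph. With these inputs the argument goes through and proves the proposition.
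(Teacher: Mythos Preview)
Your approach is broadly correct and runs parallel to the paper's (Proposition~\ref{prop:representability_of_H^1_f_filtered} together with Remark~\ref{rmk:filtered_implies_unfiltered}), but with one structural difference worth noting. You work over $\B_\cris$ and use pro-crystallinity to obtain surjectivity of $U_n(\B_\cris\otimes\Lambda)^{G_v}\to U_{n-1}(\B_\cris\otimes\Lambda)^{G_v}$, hence $\delta^0=0$; the paper instead works over $\B_\cris^{\varphi=1}$ and uses the Frobenius--weight hypothesis to make $U_{n-1}(\B_\cris^{\varphi=1}\otimes\Lambda)^{G_v}$ trivial outright (Lemma~\ref{lem:filtered_crystalline_path}). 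Both routes achieve the same thing --- a canonical identification of the kernel of $\HH^1(G_v,U_n(\B\otimes\Lambda))\to\HH^1(G_v,U_{n-1}(\B\otimes\Lambda))$ with $\HH^1(G_v,V_n(\B\otimes\Lambda))$ --- but the paper's gives the Mittag--Leffler step for free via \emph{uniqueness} of the $\varphi$-fixed trivialiser, whereas yours requires the separate surjectivity argument you sketch. The paper also organises the induction differently, intersecting the decreasing chain of subfunctors $\HH^1_{f,n}(G_v,U):=\ker\bigl(\HH^1(G_v,U)\to\HH^1(G_v,U_n(\B_\cris\otimes-))\bigr)$ of $\HH^1(G_v,U)$ rather than inducting on the quotients $U_n$; this difference is largely cosmetic.

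One point in your inductive step does need correction. You claim that the condition cutting out $\HH^1_f(G_v,U_n)$ inside $P$ is the \emph{linear} condition ``$V_n$-component lies in $K_n$''. This is not right: the section $s$ from Theorem~\ref{thm:representability} was chosen arbitrarily, so $s([\bar\xi])$ need not itself be crystalline, and the correct condition on the $V_n$-component $[\eta]$ is affine, with an offset depending on $[\bar\xi]$ via the image of $s([\bar\xi])$ in $\HH^1(G_v,V_n(\B_\cris\otimes\Lambda))$. The fix --- and what the paper does --- is to bypass the splitting entirely: send $[\xi]\in P(\Lambda)$ to its image in $\HH^1(G_v,U_n(\B_\cris\otimes\Lambda))$, which lands in the kernel and hence (once $\delta^0=0$) canonically in $\HH^1(G_v,V_n(\B_\cris\otimes\Lambda))$. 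This map is natural in $\Lambda$ and targets a subfunctor of a representable functor (as in Corollary~\ref{cor:abelian_representability}), so its kernel is closed. Separately, your claim that $\HH^1(G_v,V_n\otimes\B_\cris)$ is finite-dimensional is unjustified, but fortunately you do not actually use it.
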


However, the proof of this assertion in \cite[p118]{minhyong:selmer}\footnote{Strictly speaking, the assertion in \cite{minhyong:selmer} is that $\HH^1_f(G_v,U)$ is representable by a subscheme of~$\HH^1(G_v,U)$, with no claim that it is closed. Nonetheless, we will show that it is closed.} seems to contain a gap. There, it is shown $\HH^1_f(G_v,U)$ is the image of a certain morphism of affine $\Q_\ellp$-schemes $\delta\colon\HH^0(G_v,U^{\B_\cris}/U)\to\HH^1(G_v,U)$, in the sense that it is the pointwise image of the corresponding morphism of functors of points. It is deduced from this that $\HH^1_f(G_v,U)$ is representable by a subscheme of $\HH^1(G_v,U)$. However, this step does not follow, since the pointwise image of a morphism is not the same as the scheme-theoretic image in general, and need not even be representable\footnote{For example, the pointwise image of the squaring map $\A^1_{\Q_\ellp}\to\A^1_{\Q_\ellp}$ is the functor sending a $\Q_\ellp$-algebra $\Lambda$ to the set of squares in~$\Lambda$, which is not representable.}.
\smallskip

Nonetheless, a proof of representability of~$\HH^1_f(G_v,U)$ can be extracted from the literature. In \cite[Proposition~1.4]{minhyong:tangential_localization}, Kim shows that there is an isomorphism\footnote{Here, we use a different convention for the quotient $\Fil^0\backslash\D_\dR(U)=\Fil^0\D_\dR(U)\backslash\D_\dR(U)$ from \cite{minhyong:selmer,minhyong:tangential_localization}, where the right quotient $\D_\dR(U)/\Fil^0\D_\dR(U)$ is used instead. One can translate between the two conventions via the isomorphism $ \Fil^0\backslash\D_\dR(U)\cong\D_\dR(U)/\Fil^0$ given by $u\mapsto u^{-1}$.}
\[
\log_\BK\colon\HH^1_f(G_v,U)\xrightarrow\sim\Res^{K_v}_{\Q_\ellp}\!\left(\Fil^0\backslash\D_\dR(U)\right)
\]
of functors $\ALG_{\Q_\ellp}\to\SET_*$, which we call the \emph{Bloch--Kato logarithm} by way of analogy with (the inverse of) \cite[Definition~3.10]{bloch-kato:tamagawa_numbers}. Since the right-hand side is the Weil restriction of the quotient of a $K_v$-pro-unipotent group by a subgroup, it is representable by an affine space (possibly of infinite dimension), and hence so too is~$\HH^1_f(G_v,U)$. Note, however, that this does not prove that the inclusion $\HH^1_f(G_v,U)\hookrightarrow\HH^1(G_v,U)$ is a closed immersion; for the proof of the full statement of Proposition~\ref{prop:representability_of_H^1_f}, see Remark~\ref{rmk:filtered_implies_unfiltered}.

\subsubsection{Weight filtration on~$\HH^1_f(G_v,U)$}\label{sss:H^1_f}

We want to explain how to enrich this construction to put a weight filtration on the affine ring of~$\HH^1_f(G_v,U)$. This will turn out to be the subspace filtration induced from~$\HH^1(G_v,U)$, but it is more convenient to define the filtration via its functor of points.

\begin{definition}
	If $\Lambda\in\WALG_{\Q_\ellp}$ is a filtered $\Q_\ellp$-algebra, then we define
	\[
	\HH^1_f(G_v,U(\Lambda)) := \ker\left(\HH^1(G_v,U(\Lambda))\to\HH^1(G_v,U(\B_\cris\otimes_{\Q_\ellp}\Lambda))\right) \,,
	\]
	where $U(\Lambda)$ and $U(\B_\cris\otimes_{\Q_\ellp}\Lambda)$ denote the $\Lambda$- and $\B_\cris\otimes_{\Q_\ellp}\Lambda$-points of~$U$ in the sense of \S\ref{sss:functors_of_points}. The filtration on $\B_\cris\otimes_{\Q_\ellp}\Lambda$ is the tensor product of the given filtration on~$\Lambda$ and the trivial filtration on~$\B_\cris$.
	
	The assignment $\Lambda\mapsto\HH^1_f(G_v,U(\Lambda))$ is functorial in~$\Lambda$, hence defines a functor $\HH^1_f(G_v,U)\colon\WALG_{\Q_\ellp}\to\SET_*$. This is a subfunctor of~$\HH^1(G_v,U)$.
\end{definition}

\begin{remark}\label{rmk:H^1_f_filtered_no_topology}
	As in Remark~\ref{rmk:H^1_f_no_topology}, the definition of~$\HH^1_f(G_v,U(\Lambda))$ does not actually depend on the topology on~$U(\B_\cris\otimes_{\Q_\ellp}\Lambda)$. However, this topology will be relevant at a few points in the following proofs, in which we adopt the following convention. If~$\B$ is a topological $\Q_\ellp$-algebra, then we topologise the group $U(\B\otimes_{\Q_\ellp}\Lambda)$ for any filtered $\Q_\ellp$-algebra~$\Lambda$ by identifying
	\[
	U(\B\otimes_{\Q_\ellp}\Lambda)\cong\varprojlim\left(\B\otimes_{\Q_\ellp}\W_0(\Lambda\otimes_{\Q_\ellp}\Lie(U_n))\right)
	\]
	via the logarithm isomorphism, and endowing the right-hand side with the inverse limit of the direct limit topologies on the free $\B$-modules $\B\otimes_{\Q_\ellp}\W_0(\Lambda\otimes_{\Q_\ellp}\Lie(U_n))$. In the particular case that~$\B=\Q_\ellp$ with the $\ellp$-adic topology, this recovers the topology on~$U(\Lambda)$ from \S\ref{sss:abstract_filtration}.
\end{remark}

We then have the following filtered analogue of Proposition~\ref{prop:representability_of_H^1_f}.

\begin{proposition}\label{prop:representability_of_H^1_f_filtered}
	The functor $\HH^1_f(G_v,U)\colon\WALG_{\Q_\ellp}\to\SET_*$ is representable by a closed $\Q_\ellp$-subscheme of $\HH^1(G_v,U)$, with the induced filtration.
\end{proposition}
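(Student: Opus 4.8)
My plan is to reduce the statement to a purely scheme-theoretic closedness assertion about underlying (unfiltered) schemes, and then to prove that by comparison with products of abelian cohomology schemes. The reduction is formal: suppose we have shown that the subfunctor $\HH^1_f(G_v,U^\circ)\subseteq\HH^1(G_v,U^\circ)$ is representable by a \emph{closed} subscheme $Z^\circ$ --- Kim's $\log_\BK$ already gives representability by \emph{some} subscheme, so the only missing ingredient is closedness. Endow $Z^\circ$ with the filtration induced from $\HH^1(G_v,U)$, obtaining $Z\in\WAFF_{\Q_\ellp}$. For a filtered $\Q_\ellp$-algebra $\Lambda$, a filtered homomorphism $\O(\HH^1(G_v,U))\to\Lambda$ factors through the \emph{filtered} quotient $\O(Z)$ as soon as the underlying ring homomorphism factors through $\O(Z^\circ)$, and by the unfiltered representability the latter happens exactly when the corresponding class in $\HH^1(G_v,U^\circ(\Lambda))$ lies in $\ker\bigl(\HH^1(G_v,U(\Lambda))\to\HH^1(G_v,U(\B_\cris\otimes_{\Q_\ellp}\Lambda))\bigr)$. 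Since this kernel is by definition $\HH^1_f(G_v,U(\Lambda))$, we get $Z(\Lambda)=\HH^1_f(G_v,U(\Lambda))$ naturally in $\Lambda$, so $Z$ represents our functor with the induced filtration. (Remark~\ref{rmk:filtered_implies_unfiltered} will record that this simultaneously reproves Proposition~\ref{prop:representability_of_H^1_f}, via Lemma~\ref{lem:underlying_functor}.) Thus everything reduces to showing that $\HH^1_f(G_v,U^\circ)\hookrightarrow\HH^1(G_v,U^\circ)$ is a closed immersion.

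\textbf{Closedness via products of abelian cohomology schemes.} To prove this I would imitate Corollary~\ref{cor:cohomology_hilbert_series_bound}. Classical Bloch--Kato theory exhibits each $\HH^1_f(G_v,V_n)$ as a \emph{linear} subspace of $\HH^1(G_v,V_n)$ (the kernel of the $\Q_\ellp$-linear map $\HH^1(G_v,V_n)\to\HH^1(G_v,V_n\otimes_{\Q_\ellp}\B_\cris)$), so $\prod_{n>0}\HH^1_f(G_v,V_n)\hookrightarrow\prod_{n>0}\HH^1(G_v,V_n)$ is a closed immersion; and Corollary~\ref{cor:cohomology_hilbert_series_bound} says $\HH^1(G_v,U)$ is a closed subscheme of $\prod_{n>0}\HH^1(G_v,V_n)$. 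The key claim is the analogous statement that $\HH^1_f(G_v,U)$ is a closed subscheme of $\prod_{n>0}\HH^1_f(G_v,V_n)$. Granting it, the composite $\HH^1_f(G_v,U)\to\prod_n\HH^1_f(G_v,V_n)\hookrightarrow\prod_n\HH^1(G_v,V_n)$ is a closed immersion factoring through the closed subscheme $\HH^1(G_v,U)$; since for affine schemes a closed immersion that factors through a closed subscheme is a closed immersion onto that subscheme (on coordinate rings, a surjection factoring through a surjection forces the second factor to be surjective), we conclude that $\HH^1_f(G_v,U)\hookrightarrow\HH^1(G_v,U)$ is a closed immersion, as wanted.

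\textbf{The inductive claim.} To prove that $\HH^1_f(G_v,U)$ is a closed subscheme of $\prod_{n>0}\HH^1_f(G_v,V_n)$ I would run the inductive argument behind Theorem~\ref{thm:representability} for the subfunctors $\HH^1_f$, comparing the long exact cohomology sequences of the central extensions $1\to V_n\to U_n\to U_{n-1}\to1$ over $\Lambda$ and over $\B_\cris\otimes_{\Q_\ellp}\Lambda$ and passing to kernels of the base-change maps. What makes this go through is that the action of $\HH^1_f(G_v,V_n)$ on $\HH^1_f(G_v,U_n)$ is still \emph{free}: the stabiliser of a class $[\xi]$ is contained in the image of $\HH^0(G_v,{}_\xi U_{n-1}(\Lambda))$, which vanishes by condition~\ref{condn:H^0} --- crucially, this uses the vanishing of $\HH^0$ over $\Q_\ellp$-coefficients only, never over $\B_\cris$. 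One then identifies the image of $\HH^1_f(G_v,U_n)\to\HH^1_f(G_v,U_{n-1})$ as the vanishing locus of an appropriate ``$f$-part'' of the obstruction morphism $\delta\colon\HH^1(G_v,U_{n-1})\to\HH^2(G_v,V_n)$, hence a closed subscheme, over which $\HH^1_f(G_v,U_n)$ is a (split, the base being representable) torsor under $\HH^1_f(G_v,V_n)$; this yields $\HH^1_f(G_v,U_n)\cong(\mathrm{image})\times\HH^1_f(G_v,V_n)$, the induction proceeds, and one passes to the limit over $n$ using separatedness of $\W_\bullet U$.

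\textbf{The main obstacle.} The hard part is exactly the last step, controlling the image of $\HH^1_f(G_v,U_n)\to\HH^1_f(G_v,U_{n-1})$: the naive $\B_\cris$-coefficient analogue of condition~\ref{condn:H^0} genuinely fails, as $\HH^0(G_v,V_n\otimes_{\Q_\ellp}\B_\cris)=\D_\cris(V_n)\neq0$, so the functor $\Lambda\mapsto\HH^1(G_v,U_n(\B_\cris\otimes_{\Q_\ellp}\Lambda))$ need not be representable and Theorem~\ref{thm:representability} cannot be quoted verbatim with $\B_\cris$-coefficients. What is actually needed is the weaker input that the $\HH^i(G_v,V_n\otimes_{\Q_\ellp}\B_\cris)$ are finite-dimensional and vanish for $i\geq2$, which holds for $v\mid p$ by the fundamental exact sequences of $p$-adic Hodge theory (so that $\R\Gamma(G_v,V_n\otimes_{\Q_\ellp}\B_\cris)$ is governed by $\D_\cris(V_n)$ and $1-\varphi$). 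An alternative that bypasses this cohomological bookkeeping is to take Kim's isomorphism $\log_\BK\colon\HH^1_f(G_v,U)\xrightarrow{\sim}\Res^{K_v}_{\Q_\ellp}(\Fil^0\backslash\D_\dR(U))$ as the model for the representing scheme: the right-hand side is visibly a filtered affine $\Q_\ellp$-scheme by Lemma~\ref{lem:quotient_by_unipotent_subgroup} and Weil restriction once the weight filtration $\W_\bullet U$ is transported through $\D_\dR$ (legitimate because $\W_\bullet U$ is $G_v$-stable with Weil-number Frobenius on the graded pieces, hence respected by the period functors), and one then only has to check that this scheme coincides, with its filtration, with the closed subscheme of $\HH^1(G_v,U)$ produced above.
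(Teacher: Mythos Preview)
Your reduction to unfiltered closedness is not quite free: you need that for a filtered~$\Lambda$ and a class $[\xi]\in\HH^1(G_v,U(\Lambda))$, being a coboundary in $U^\circ((\B_\cris\otimes\Lambda)^\circ)$ is equivalent to being a coboundary in the smaller group $U(\B_\cris\otimes\Lambda)$. This can be checked inductively (the relevant abelian map $\W_n\Lambda\otimes\HH^1(G_v,\B_\cris\otimes V_n)\to\Lambda\otimes\HH^1(G_v,\B_\cris\otimes V_n)$ is injective), but it is not automatic and should be said. More seriously, your inductive step for closedness does not go through as written. The image of $\HH^1_f(G_v,U_n)\to\HH^1_f(G_v,U_{n-1})$ is \emph{not} cut out by the restriction of~$\delta$: a class in $\ker(\delta)\cap\HH^1_f(G_v,U_{n-1})$ lifts to $\HH^1(G_v,U_n)$, but there is no reason this lift lies in~$\HH^1_f$, so further closed conditions are needed and you have not produced them. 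Your workaround~(a) invokes finite-dimensionality and vanishing of $\HH^{\geq2}(G_v,\B_\cris\otimes V_n)$, but these are not obviously true and in any case do not by themselves give you a morphism cutting out the desired image. Your workaround~(b) is circular for the present purpose: the filtered $\log_\BK$ gives a filtered affine scheme, but you still owe the verification that it sits as a \emph{closed} subscheme of $\HH^1(G_v,U)$ with the induced filtration, which is exactly the proposition.

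The paper resolves the obstacle you identified by replacing $\B_\cris$ with $\B_\cris^{\varphi=1}$. The point is that the weight hypothesis on crystalline Frobenius forces $(\B_\cris\otimes\W_n\Lambda\otimes V_n)^{\varphi,G_v}=0$, so $U(\B_\cris^{\varphi=1}\otimes\Lambda)$ has no nontrivial $G_v$-invariants and condition~\ref{condn:H^0} is restored for these coefficients. This is Lemma~\ref{lem:filtered_crystalline_path}: any $[\xi]\in\HH^1_f$ is the coboundary of a \emph{unique} $\varphi$-invariant element $u_\cris\in U(\B_\cris^{\varphi=1}\otimes\Lambda)$. With this in hand the paper runs the induction not on $\HH^1_f(G_v,U_n)$ but on the intermediate subfunctors $\HH^1_{f,n}(G_v,U):=\ker\bigl(\HH^1(G_v,U)\to\HH^1(G_v,U_n(\B_\cris^{\varphi=1}\otimes-))\bigr)$ of the full~$\HH^1(G_v,U)$: using triviality of $\HH^0(G_v,U_{n-1}(\B_\cris^{\varphi=1}\otimes\Lambda))$ one identifies the kernel of $\HH^1(G_v,U_n(\B_\cris^{\varphi=1}\otimes\Lambda))\to\HH^1(G_v,U_{n-1}(\B_\cris^{\varphi=1}\otimes\Lambda))$ with $\HH^1(G_v,V_n(\B_\cris^{\varphi=1}\otimes\Lambda))$, obtains a natural transformation $\HH^1_{f,n-1}(G_v,U)\to\HH^1(G_v,V_{n,\B_\cris^{\varphi=1}})$ whose kernel is $\HH^1_{f,n}(G_v,U)$, and observes that the target is a subfunctor of a representable functor. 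This cuts out $\HH^1_{f,n}$ as a closed filtered subscheme of $\HH^1_{f,n-1}$, and $\HH^1_f=\bigcap_n\HH^1_{f,n}$ by the uniqueness in Lemma~\ref{lem:filtered_crystalline_path}. The switch to $\B_\cris^{\varphi=1}$ is the missing idea in your proposal.
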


\begin{remark}\label{rmk:filtered_implies_unfiltered}
	Similarly to in Remark~\ref{rmk:representability_compatibility}, once we know that the functor $\HH^1_f(G_v,U)\colon\WALG_{\Q_\ellp}\to\SET_*$ is representable, it immediately follows that its underlying affine $\Q_\ellp$-scheme represents the functor $\HH^1_f(G_v,U^\circ)\colon\ALG_{\Q_\ellp}\to\SET_*$ from Proposition~\ref{prop:representability_of_H^1_f}. Thus, Proposition~\ref{prop:representability_of_H^1_f_filtered} in particular gives us a proof of Proposition~\ref{prop:representability_of_H^1_f}. Of course, there is a more direct proof of Proposition~\ref{prop:representability_of_H^1_f} which avoids considering filtrations -- this is easily adapted from the proof we will give of Proposition~\ref{prop:representability_of_H^1_f_filtered} by ignoring the filtrations everywhere.
\end{remark}

In preparation for our proof of Proposition~\ref{prop:representability_of_H^1_f_filtered}, we note the following preparatory proposition, a filtered version of~\cite[Lemma~1]{minhyong:selmer}.

\begin{lemma}\label{lem:filtered_crystalline_path}
	Let $\Lambda$ be a filtered $\Q_\ellp$-algebra and let $\xi\colon G_v\to U(\Lambda)$ be a continuous cocycle whose class lies in~$\HH^1_f(G_v,U)$. Then there exists a \emph{unique} $\varphi$-invariant element $u_\cris\in U(\B_\cris\otimes_{\Q_\ellp}\Lambda)$ whose coboundary is~$\xi$. Here, $\varphi$ denotes the automorphism induced by the crystalline Frobenius on~$\B_\cris$.
	\begin{proof}
		Unicity is easy to see. Two elements $u,u'\in U(\B_\cris\otimes_{\Q_\ellp}\Lambda)$ represent the same cohomology class if and only if $u'u^{-1}$ lies in the $G_v$-fixed subgroup. If~$u$ and~$u'$ are in addition~$\varphi$-fixed, then $u'u^{-1}$ lies in the $\varphi,G_v$-fixed subgroup of $U(\B_\cris\otimes_{\Q_\ellp}\Lambda)$. But this group is an iterated extension of the groups $\B_\cris\otimes_{\Q_\ellp}\W_n\Lambda\otimes_{\Q_\ellp}V_n$, which have no non-identity $\varphi,G_v$-fixed elements by assumption on the weights of crystalline Frobenius. Hence $U(\B_\cris\otimes_{\Q_\ellp}\Lambda)$ also has no non-identity $\varphi,G_v$-fixed elements, so $u'u^{-1}=1$ and we have unicity.
		
		To prove existence, it suffices to prove that the composite $\xi_n\colon G_v\xrightarrow\xi U(\Lambda)\to U_n(\Lambda)$ is the coboundary of a $\varphi$-invariant element $u_{n,\cris}\in U_n(\B_\cris\otimes_{\Q_\ellp}\Lambda)$ for all~$n\geq0$. For, then unicity implies that the elements $u_{n,\cris}$ are mapped to one another under the maps $U_n\twoheadrightarrow U_{n-1}$, and hence define a $\varphi$-invariant element $u_\cris\in U(\B_\cris\otimes_{\Q_\ellp}\Lambda)=\varprojlim U_n(\B_\cris\otimes_{\Q_\ellp}\Lambda)$ whose coboundary is~$\xi$.
		
		We prove the existence of the elements $u_{n,\cris}$ by induction on~$n$, the base case~$n=0$ being trivial. Suppose then that we have a $\varphi$-invariant element $u_{n-1,\cris}\in U_{n-1}(\B_\cris\otimes_{\Q_\ellp}\Lambda)$ whose coboundary is~$\xi_{n-1}$. Since the class of~$\xi$ lies in~$\HH^1_f(G_v,U)$, there is an element $u_n\in U_n(\B_\cris\otimes_{\Q_\ellp}\Lambda)$ whose coboundary is~$\xi_n$. The image of~$u_n$ in~$U_{n-1}$ then has the same coboundary as~$u_{n-1,\cris}$, so that $w_{n-1}=u_{n-1,\cris}u_n^{-1}$ is a $G_v$-fixed element of $U_{n-1}(\B_\cris\otimes_{\Q_\ellp}\Lambda)$. Then, the map
		\[
		U_n(\B_\cris\otimes_{\Q_\ellp}\Lambda)^{G_v} \to U_{n-1}(\B_\cris\otimes_{\Q_\ellp}\Lambda)^{G_v}
		\]
		can be identified with the map $\D_\cris(\Lie(U_n))\otimes_{\Q_\ellp}\Lambda\to\D_\cris(\Lie(U_{n-1}))\otimes_{\Q_\ellp}\Lambda$, and hence is surjective by our assumption that~$\Lie(U)$ is pro-crystalline. Thus~$w_{n-1}$ is the image of a $G_v$-fixed point~$w_n\in U_n(\B_\cris\otimes_{\Q_\ellp}\Lambda)$. Replacing~$u_n$ by $w_nu_n$ if necessary, we may assume without loss of generality that $u_n$ maps to $u_{n-1,\cris}$ under the map $U_n\twoheadrightarrow U_{n-1}$.
		
		Now the cocycle~$\xi_n$ is invariant under the action of~$\varphi$ (it is valued in $U(\Lambda)$), so~$\xi_n$ is also the coboundary of~$\varphi(u_n)$. It follows that $\varphi(u_n)u_n^{-1}$ is a $G_v$-fixed element of $V_n(\B_\cris\otimes_{\Q_\ellp}\Lambda)=\B_\cris\otimes_{\Q_\ellp}\W_n\Lambda\otimes_{\Q_\ellp}V_n$. But the assumption on weights of Frobenius again implies that the endomorphism of $(\B_\cris\otimes_{\Q_\ellp}\W_n\Lambda\otimes_{\Q_\ellp}V_n)^{G_v}$ given by $\varphi-1$ is bijective; hence there is a $v_n\in (\B_\cris\otimes_{\Q_\ellp}\W_n\Lambda\otimes_{\Q_\ellp}V_n)^{G_v}$ such that $\varphi(v_n)-v_n=\varphi(u_n)u_n^{-1}$. It follows that the coboundary of $u_{n,\cris}:=v_n^{-1}u_n$ is $\xi_n$, and $u_{n,\cris}$ is~$\varphi$-fixed by construction. This completes the inductive proof of existence.
	\end{proof}
\end{lemma}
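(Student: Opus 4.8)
The plan is to run an induction along the central series of $U$, using the filtered central extensions $1\to V_n\to U_n\to U_{n-1}\to1$ of Notation~\ref{notn:U_n}, and at each stage to reduce the problem to a linear statement about the module $\B_\cris\otimes_{\Q_\ellp}\W_n\Lambda\otimes_{\Q_\ellp}V_n$. The two facts that drive the argument are: (i) by the weight hypothesis, the crystalline Frobenius $\varphi$ acting on $\D_\cris(V_n)$ has all eigenvalues of absolute value $p^{n/2}\neq1$, and this persists after the $\Q_\ellp$-linear base change to $\W_n\Lambda$, so $\varphi-1$ is a bijective endomorphism of $(\B_\cris\otimes_{\Q_\ellp}\W_n\Lambda\otimes_{\Q_\ellp}V_n)^{G_v}=\D_\cris(V_n)\otimes_{\Q_\ellp}\W_n\Lambda$ and in particular this space has no nonzero $\varphi$-fixed vector; and (ii) by pro-crystallinity of $\Lie(U)$, the natural map $U_n(\B_\cris\otimes_{\Q_\ellp}\Lambda)^{G_v}\to U_{n-1}(\B_\cris\otimes_{\Q_\ellp}\Lambda)^{G_v}$ is surjective, since it is identified with $\D_\cris(\Lie(U_n))\otimes_{\Q_\ellp}\Lambda\to\D_\cris(\Lie(U_{n-1}))\otimes_{\Q_\ellp}\Lambda$.

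\textbf{Uniqueness.} Suppose $u_\cris,u_\cris'\in U(\B_\cris\otimes_{\Q_\ellp}\Lambda)$ are both $\varphi$-invariant with coboundary $\xi$. Then $u_\cris'u_\cris^{-1}$ is fixed by the $G_v$-action (same coboundary) and by $\varphi$ (both factors are $\varphi$-fixed), so it lies in $U(\B_\cris\otimes_{\Q_\ellp}\Lambda)^{\varphi,G_v}$. Since $U=\varprojlim_n U_n$ and each $U_n$ is an iterated central extension of the $V_i$ for $i\leq n$, this group is an inverse limit of iterated extensions of the groups $(\B_\cris\otimes_{\Q_\ellp}\W_i\Lambda\otimes_{\Q_\ellp}V_i)^{\varphi,G_v}$, each of which vanishes by fact~(i). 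Hence $u_\cris'u_\cris^{-1}=1$.

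\textbf{Existence.} It suffices to produce, for each $n\geq0$, a $\varphi$-invariant $u_{n,\cris}\in U_n(\B_\cris\otimes_{\Q_\ellp}\Lambda)$ with coboundary $\xi_n$ (the pushforward of $\xi$ to $U_n$); uniqueness forces these to be compatible under $U_n\twoheadrightarrow U_{n-1}$, hence to assemble to the desired $u_\cris\in U(\B_\cris\otimes_{\Q_\ellp}\Lambda)=\varprojlim_n U_n(\B_\cris\otimes_{\Q_\ellp}\Lambda)$. We induct on $n$, the case $n=0$ being trivial. Given $u_{n-1,\cris}$, the hypothesis that $[\xi]\in\HH^1_f$ furnishes some $u_n\in U_n(\B_\cris\otimes_{\Q_\ellp}\Lambda)$ with coboundary $\xi_n$; its image in $U_{n-1}$ has the same coboundary as $u_{n-1,\cris}$, so differs from it by a $G_v$-fixed element, which by fact~(ii) lifts to a $G_v$-fixed element of $U_n(\B_\cris\otimes_{\Q_\ellp}\Lambda)$ — multiplying $u_n$ by this, we may assume $u_n$ maps to $u_{n-1,\cris}$. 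Now $\xi_n$ is valued in $U_n(\Lambda)$, hence $\varphi$-invariant, so $\varphi(u_n)$ is also a coboundary of $\xi_n$; therefore $\varphi(u_n)u_n^{-1}$ lies in the kernel $V_n(\B_\cris\otimes_{\Q_\ellp}\Lambda)=\B_\cris\otimes_{\Q_\ellp}\W_n\Lambda\otimes_{\Q_\ellp}V_n$ and, being a difference of $G_v$-fixed points, lies in its $G_v$-fixed part. By fact~(i) there is a unique $v_n\in(\B_\cris\otimes_{\Q_\ellp}\W_n\Lambda\otimes_{\Q_\ellp}V_n)^{G_v}$ with $\varphi(v_n)-v_n=\varphi(u_n)u_n^{-1}$; then $u_{n,\cris}:=v_n^{-1}u_n$ has coboundary $\xi_n$ (as $v_n$ is $G_v$-fixed and central in $U_n$) and is $\varphi$-invariant by construction. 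This completes the induction.

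\textbf{Main obstacle.} The genuinely substantive input is the identification of the $G_v$-fixed points of $U_n(\B_\cris\otimes_{\Q_\ellp}\Lambda)$ with $\D_\cris$ applied to $\Lie(U_n)$, tensored with $\Lambda$, together with the surjectivity in fact~(ii); this is where pro-crystallinity is used, and it needs $\D_\cris(-)\otimes_{\Q_\ellp}\Lambda$ to compute the relevant fixed points even when $\Lambda$ is infinite-dimensional, which is why the topology on $U(\B_\cris\otimes_{\Q_\ellp}\Lambda)$ chosen in Remark~\ref{rmk:H^1_f_filtered_no_topology} matters. Everything else is bookkeeping with central extensions and twisted actions; the weight hypothesis, which makes $\varphi-1$ invertible on each graded piece, is what makes the correction terms $v_n$ exist and be unique.
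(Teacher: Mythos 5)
Your proposal is correct and follows essentially the same route as the paper's proof: uniqueness via the vanishing of $\varphi,G_v$-fixed points in the iterated extension by the graded pieces, and existence by induction along the central extensions, first adjusting $u_n$ by a $G_v$-fixed lift (using pro-crystallinity and the identification of $G_v$-fixed points with $\D_\cris(\Lie(U_n))\otimes_{\Q_\ellp}\Lambda$) and then correcting by the unique $v_n$ with $\varphi(v_n)-v_n=\varphi(u_n)u_n^{-1}$, which exists since the weight hypothesis makes $\varphi-1$ bijective on $(\B_\cris\otimes_{\Q_\ellp}\W_n\Lambda\otimes_{\Q_\ellp}V_n)^{G_v}$. (Only cosmetic quibble: Weil numbers of weight $-n$ have absolute value $\ellp^{-n/2}$, not $\ellp^{n/2}$, but all that matters is that they are $\neq1$.)
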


\begin{remark}
	Lemma~\ref{lem:filtered_crystalline_path} shows in particular that~$\HH^1_f(G_v,U(\Lambda))$ is also equal to the kernel of the map $\HH^1(G_v,U(\Lambda))\to\HH^1(G_v,U(\B_\cris^{\varphi=1}\otimes_{\Q_\ellp}\Lambda))$, which would usually be denoted $\HH^1_e(G_v,U(\Lambda))$.
\end{remark}

We are now ready to prove Proposition~\ref{prop:representability_of_H^1_f_filtered}. The observation underlying the proof is the following: if the functor $\Lambda\mapsto\HH^1(G_v,U(\B_\cris\otimes_{\Q_\ellp}\Lambda))$ were representable by a filtered affine $\Q_\ellp$-scheme, then $\HH^1_f(G_v,U)$ would be a closed subscheme of~$\HH^1(G_v,U)$ with the induced filtration, simply by virtue of being a kernel of a morphism of pointed filtered affine $\Q_\ellp$-schemes. It is not actually the case that this functor is representable, but with sufficient care, this idea yields a proof of Proposition~\ref{prop:representability_of_H^1_f_filtered}.

\begin{proof}[Proof of Proposition~\ref{prop:representability_of_H^1_f_filtered}]
	For~$n\geq0$, write $U_n:=U/\W_{-n-1}U$ and $V_n=\gr^\W_{-n}U$ as usual, and define
	\[
	\HH^1_{f,n}(G_v,U(\Lambda)) := \ker\left(\HH^1(G_v,U(\Lambda))\to\HH^1(G_v,U_n(\B_\cris\otimes_{\Q_\ellp}\Lambda))\right) \,.
	\]
	This is a subfunctor of $\HH^1(G_v,U)$. We will show the following two facts, which together imply Proposition~\ref{prop:representability_of_H^1_f_filtered}:
	\begin{enumerate}
		\item $\HH^1_{f,n}(G_v,U)$ is representable by a closed subscheme of $\HH^1(G_v,U)$ with the induced filtration for all~$n\geq0$; and
		\item $\HH^1_f(G_v,U)=\bigcap_n\HH^1_{f,n}(G_v,U)$ as subfunctors of~$\HH^1(G_v,U)$.
	\end{enumerate}

	For the second point, the left-to-right inclusion is immediate. Conversely, a $\Lambda$-valued point on the right-hand side is represented by a continuous cocycle~$\xi$ whose class maps into $\HH^1_f(G_v,U_n(\Lambda))$ for all~$n$. Lemma~\ref{lem:filtered_crystalline_path} ensures that there are unique elements $u_{n,\cris}\in U_n(\B_\cris^{\varphi=1}\otimes_{\Q_\ellp}\Lambda)$ whose coboundaries are congruent to~$\xi$ modulo~$\W_{-n-1}U$. Unicity implies that the~$u_{n,\cris}$ define an element $u\in U(\B_\cris^{\varphi=1}\otimes_{\Q_\ellp}\Lambda)=\varprojlim U_n(\B_\cris^{\varphi=1}\otimes_{\Q_\ellp}\Lambda)$ whose coboundary is~$\xi$, from which it follows that~$\xi\in\HH^1_f(G_v,U(\Lambda))$.
	
	\smallskip
	
	The first point we prove by induction on~$n$, the base case~$n=0$ being trivial. For the inductive step, we consider the central extension
	\begin{equation}\label{eq:crystalline_extension}\tag{$\ast$}
	1 \to V_n(\B_\cris^{\varphi=1}\otimes_{\Q_\ellp}\Lambda) \to U_n(\B_\cris^{\varphi=1}\otimes_{\Q_\ellp}\Lambda) \to U_{n-1}(\B_\cris^{\varphi=1}\otimes_{\Q_\ellp}\Lambda) \to 1 \,.
	\end{equation}
	We endow each term with a topology as in Remark~\ref{rmk:H^1_f_filtered_no_topology}.
	
	With respect to these topologies, \eqref{eq:crystalline_extension} is a $G_v$-equivariant central extension of topological groups with continuous $G_v$-action. It is moreover topologically split, since the exact sequence $1\to V_n\to U_n\to U_{n-1}\to1$ is split in the category of filtered affine schemes. The long exact sequence in non-abelian cohomology provides an action of $\HH^1(G_v,V_n(\B_\cris^{\varphi=1}\otimes_{\Q_\ellp}\Lambda))$ on $\HH^1(G_v,U_n(\B_\cris^{\varphi=1}\otimes_{\Q_\ellp}\Lambda))$, whose orbits are the fibres of the map $\HH^1(G_v,U_n(\B_\cris^{\varphi=1}\otimes_{\Q_\ellp}\Lambda))\to\HH^1(G_v,U_{n-1}(\B_\cris^{\varphi=1}\otimes_{\Q_\ellp}\Lambda))$. The stabiliser of the base point of $\HH^1(G_v,U_n(\B_\cris^{\varphi=1}\otimes_{\Q_\ellp}\Lambda))$ is the image of the coboundary map $\HH^0(G_v,U_{n-1}(\B_\cris^{\varphi=1}\otimes_{\Q_\ellp}\Lambda))\to\HH^1(G_v,V_n(\B_\cris^{\varphi=1}\otimes_{\Q_\ellp}\Lambda))$. However, $\HH^0(G_v,U_{n-1}(\B_\cris^{\varphi=1}\otimes_{\Q_\ellp}\Lambda))$ is the $\varphi,G_v$-invariant subgroup of~$U_{n-1}(\B_\cris\otimes_{\Q_\ellp}\Lambda)$, which is trivial by our assumptions. It follows that the action on the base point gives an identification of~$\HH^1(G_v,V_n(\B_\cris^{\varphi=1}\otimes_{\Q_\ellp}\Lambda))$ with the kernel of the morphism $\HH^1(G_v,U_n(\B_\cris^{\varphi=1}\otimes_{\Q_\ellp}\Lambda))\to\HH^1(G_v,U_{n-1}(\B_\cris^{\varphi=1}\otimes_{\Q_\ellp}\Lambda))$.
	
	Now if $[\xi]\in\HH^1_{f,n-1}(G_v,U(\Lambda))$, then the image of $[\xi]$ in $\HH^1(G_v,U_n(\B_\cris^{\varphi=1}\otimes_{\Q_\ellp}\Lambda))$ lies in the kernel of $\HH^1(G_v,U_n(\B_\cris^{\varphi=1}\otimes_{\Q_\ellp}\Lambda))\to\HH^1(G_v,U_{n-1}(\B_\cris^{\varphi=1}\otimes_{\Q_\ellp}\Lambda))$ by definition. We have thus described a map
	\[
	\HH^1_{f,n-1}(G_v,U(\Lambda))\to\HH^1(G_v,V_n(\B_\cris^{\varphi=1}\otimes_{\Q_\ellp}\Lambda)) \,,
	\]
	whose kernel is easily checked to be $\HH^1_{f,n}(G_v,U(\Lambda))$. This map is natural in~$\Lambda$, so induces a morphism
	\begin{equation}\label{eq:crystalline_cutout}\tag{$\dag$}
	\HH^1_{f,n-1}(G_v,U)\to\HH^1(G_v,V_{n,\B_\cris^{\varphi=1}})
	\end{equation}
	of functors, whose kernel is $\HH^1_{f,n}(G_v,U)$, where $V_{n,\B_\cris^{\varphi=1}}$ is shorthand for the functor $\Lambda\mapsto V_n(\B_\cris^{\varphi=1}\otimes_{\Q_\ellp}\Lambda) = \B_\cris^{\varphi=1}\otimes_{\Q_\ellp}\W_n\Lambda\otimes_{\Q_\ellp}V_n$.
	
	Now an argument similar to that of Proposition~\ref{prop:abelian_cohomology} shows that
	\[
	\HH^1(G_v,\B_\cris^{\varphi=1}\otimes_{\Q_\ellp}\W_n\Lambda\otimes_{\Q_\ellp}V_n)=\W_n\Lambda\otimes_{\Q_\ellp}\HH^1(G_v,\B_\cris^{\varphi=1}\otimes_{\Q_\ellp}V_n) \,.
	\]
	Thus, by an argument similar to the proof of Corollary~\ref{cor:abelian_representability}, $\HH^1(G_v,V_{n,\B_\cris^{\varphi=1}})$ is a subfunctor of a representable functor.
	
	Thus, assuming inductively that $\HH^1_{f,n-1}(G_v,U)$ is representable by a closed subscheme of $\HH^1(G_v,U)$ with the induced filtration, then~\eqref{eq:crystalline_cutout} is a morphism from a representable functor to a subfunctor of a representable functor. It follows that its kernel $\HH^1_{f,n}(G_v,U)$ is representable by a closed subscheme of~$\HH^1_{f,n-1}(G_v,U)$ with the induced filtration. This completes the inductive step, and thus the proof of Proposition~\ref{prop:representability_of_H^1_f_filtered}.
\end{proof}

\subsubsection{Compatibility with the Bloch--Kato logarithm}\label{sss:bk_log}

Now let~$\B_\dR$ denote Fontaine's ring of de Rham periods, and let~$\D_\dR(U)$ denote the filtered pro-unipotent group over~$K_v$ representing the functor
\[
\Lambda\mapsto U(\B_\dR\otimes_{K_v}\Lambda)^{G_v}
\]
from filtered $K_v$-algebras to pointed sets, where~$\B_\dR$ is endowed with the trivial filtration ($\W_n\B_\dR=\B_\dR$ for all~$n$). This functor is indeed representable, by the filtered pro-unipotent group associated to the filtered pro-nilpotent Lie algebra $\D_\dR(\Lie(U))$. There is a subgroup scheme $\Fil^0\D_\dR(U)$ representing the subfunctor $\Lambda\mapsto U(\B_\dR^+\otimes_{K_v}\Lambda)^{G_v}$, and we write~$\Res^{K_v}_{\Q_\ellp}\left(\Fil^0\backslash\D_\dR(U)\right)$ for the filtered affine $\Q_\ellp$-scheme representing the functor
\[
\Lambda\mapsto U(\B_\dR^+\otimes_{\Q_\ellp}\Lambda)\backslash U(\B_\dR\otimes_{\Q_\ellp}\Lambda) \,.
\]
This functor is representable by Lemma~\ref{lem:quotient_by_unipotent_subgroup}, since it is the quotient of the filtered $\Q_\ellp$-pro-unipotent group corresponding to the pro-nilpotent Lie algebra $\Res^{K_v}_{\Q_\ellp}\D_\dR(\Lie(U))$ by the subgroup corresponding to the subalgebra $\Res^{K_v}_{\Q_\ellp}\Fil^0\D_\dR(\Lie(U))$.
\smallskip

In \cite[Proposition~1.4]{minhyong:tangential_localization}, Kim shows that there is an isomorphism
\begin{equation}\label{eq:bk_log}
\log_\BK\colon\HH^1_f(G_v,U)\xrightarrow\sim\Res^{K_v}_{\Q_\ellp}\!\left(\Fil^0\backslash\D_\dR(U)\right)
\end{equation}
on the level of underlying (non-filtered) $\Q_\ellp$-schemes. Our aim here is to show that this isomorphism is filtered, for the filtrations defined above.

\begin{proposition}\label{prop:bk_log_is_filtered_iso}
	The Bloch--Kato logarithm~\eqref{eq:bk_log} is an isomorphism of filtered affine $\Q_\ellp$-schemes.
\end{proposition}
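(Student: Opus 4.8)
The strategy is to promote $\log_\BK$ to an isomorphism of functors $\WALG_{\Q_\ellp}\to\SET_*$ and then invoke the Yoneda lemma for filtered affine schemes (\S\ref{sss:functors_of_points}). Since Kim's map is already known to be an isomorphism of the underlying (trivially filtered) $\Q_\ellp$-schemes, it suffices to check that, for \emph{every} filtered $\Q_\ellp$-algebra $\Lambda$, it restricts to a bijection between the filtered $\Lambda$-points of the two sides, naturally in $\Lambda$. We shall factor both directions through the crystalline path construction of Lemma~\ref{lem:filtered_crystalline_path}: recall that on $\Lambda$-points, $\log_\BK([\xi])$ is the image in $\Res^{K_v}_{\Q_\ellp}(\Fil^0\backslash\D_\dR(U))(\Lambda)$ of the class of $u_\cris^{-1}$, where $u_\cris\in U(\B_\cris\otimes_{\Q_\ellp}\Lambda)$ is the unique $\varphi$-invariant element with coboundary $\xi$, viewed inside $U(\B_\dR\otimes_{\Q_\ellp}\Lambda)$ via $\B_\cris\hookrightarrow\B_\dR$.

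For the forward direction, let $\Lambda\in\WALG_{\Q_\ellp}$ and $[\xi]\in\HH^1_f(G_v,U(\Lambda))$. Lemma~\ref{lem:filtered_crystalline_path}, which is stated over an arbitrary filtered base, produces $u_\cris$ already as a point of $U(\B_\cris\otimes_{\Q_\ellp}\Lambda)$ in the filtered sense, where $\B_\cris$ carries the trivial filtration. As the inclusions $\B_\cris\hookrightarrow\B_\dR$ and $\Lambda\hookrightarrow\B_\dR^+\otimes_{\Q_\ellp}\Lambda$ are filtered and $\xi$ takes values in $U(\Lambda)\subseteq U(\B_\dR^+\otimes_{\Q_\ellp}\Lambda)$, the coboundary relation $g(u_\cris)=u_\cris\cdot\xi(g)$ shows that $g(u_\cris^{-1})(u_\cris^{-1})^{-1}=\xi(g)^{-1}$ lies in $U(\B_\dR^+\otimes_{\Q_\ellp}\Lambda)$, so the class of $u_\cris^{-1}$ is $G_v$-fixed and defines a point of $\Res^{K_v}_{\Q_\ellp}(\Fil^0\backslash\D_\dR(U))(\Lambda)$. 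This assignment is manifestly natural in $\Lambda$, so $\log_\BK$ is a morphism in $\WAFF_{\Q_\ellp}$, and it is injective on filtered $\Lambda$-points because $[\xi]\mapsto u_\cris\in U(\B_\cris\otimes_{\Q_\ellp}\Lambda)$ is injective (the element $u_\cris$ determines $\xi$) and factors through $\HH^1_f(G_v,U(\Lambda^\triv))$, on which $\log_\BK$ is Kim's bijection.

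The main content is surjectivity onto the filtered points of the target, which amounts to a \emph{converse} of Lemma~\ref{lem:filtered_crystalline_path} --- a non-abelian fundamental exact sequence over a filtered base: given $u\in U(\B_\dR\otimes_{\Q_\ellp}\Lambda)$ in the filtered sense whose class is $G_v$-fixed, there is a $\varphi$-invariant $u_\cris\in U(\B_\cris\otimes_{\Q_\ellp}\Lambda)$ such that $u_\cris^{-1}$ and $u$ lie in the same $U(\B_\dR^+\otimes_{\Q_\ellp}\Lambda)$-coset; the coboundary of $u_\cris$ then represents $\log_\BK^{-1}$ of the given class, and it takes values in $U(\Lambda)$ because it is simultaneously valued in $U(\B_\cris^{\varphi=1}\otimes_{\Q_\ellp}\Lambda)$ and in $U(\B_\dR^+\otimes_{\Q_\ellp}\Lambda)$, while $(\B_\cris^{\varphi=1}\cap\B_\dR^+)\otimes_{\Q_\ellp}\Lambda=\Lambda$ by flatness of $\Lambda$ over $\Q_\ellp$. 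The existence of $u_\cris$ is proved by the same d\'evissage as Lemma~\ref{lem:filtered_crystalline_path}: working up the central series $U=\varprojlim_nU_n$, one lifts successively across the extensions $1\to V_n\to U_n\to U_{n-1}\to1$ using the classical abelian fundamental exact sequence applied to $\W_n\Lambda\otimes_{\Q_\ellp}V_n$, the surjectivity of the relevant connecting maps and the freedom to correct by Frobenius eigenvectors being guaranteed, exactly as in the proof of Lemma~\ref{lem:filtered_crystalline_path}, by the hypothesis that crystalline Frobenius acts on $\D_\cris(V_n)$ with weight $-n<0$; all choices remain filtered since $\B_\cris$, $\B_\cris^{\varphi=1}$, $\B_\dR^+$ and $\B_\dR$ are trivially filtered. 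This makes $\log_\BK^{-1}$ a natural transformation of functors on $\WALG_{\Q_\ellp}$ and finishes the proof; in particular, passing to underlying schemes reproves Proposition~\ref{prop:representability_of_H^1_f}. I expect this converse to be the main obstacle --- arranging the non-abelian fundamental exact sequence over an arbitrary filtered coefficient ring so that the crystalline lift can be chosen among the filtered $\B_\cris\otimes_{\Q_\ellp}\Lambda$-points --- but it runs entirely parallel to the already-established Lemma~\ref{lem:filtered_crystalline_path}.
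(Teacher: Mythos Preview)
Your approach is close to the paper's, but there is a genuine gap in the forward direction. You describe $\log_\BK([\xi])$ as the class of $u_\cris^{-1}$ and argue that this class is $G_v$-fixed in $U(\B_\dR^+\otimes_{\Q_\ellp}\Lambda)\backslash U(\B_\dR\otimes_{\Q_\ellp}\Lambda)$. But the functor of points of $\Res^{K_v}_{\Q_\ellp}(\Fil^0\backslash\D_\dR(U))$ is
\[
\Lambda \longmapsto U(\B_\dR^+\otimes_{\Q_\ellp}\Lambda)^{G_v}\big\backslash U(\B_\dR\otimes_{\Q_\ellp}\Lambda)^{G_v},
\]
the quotient of $G_v$-invariants, not the $G_v$-invariants of the quotient. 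To land there you must exhibit a $G_v$-invariant representative of the coset of $u_\cris^{-1}$, i.e.\ an element $u_\dR\in U(\B_\dR^+\otimes_{\Q_\ellp}\Lambda)$ such that $u_\dR u_\cris^{-1}$ is $G_v$-fixed; unwinding, this is exactly the statement that the cocycle $\xi$ becomes a coboundary over $\B_\dR^+\otimes_{\Q_\ellp}\Lambda$. That is the content of the paper's Lemma~\ref{lem:filtered_de_rham_path} (the $\B_\dR^+$-analogue of Lemma~\ref{lem:filtered_crystalline_path}), and its proof is not a formality: the key input is injectivity of $\HH^1(G_v,\B_\dR^+\otimes V_n)\to\HH^1(G_v,\B_\dR\otimes V_n)$ from \cite[Lemma~3.8.1]{bloch-kato:tamagawa_numbers}, together with surjectivity of $\D_\dR(\Lie(U_n))\to\D_\dR(\Lie(U_{n-1}))$. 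Your non-abelian fundamental exact sequence does not supply this, since it only decomposes elements of $U(\B_\dR)$, not cocycles.

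Once Lemma~\ref{lem:filtered_de_rham_path} is in hand, your backward construction is essentially the paper's: the paper packages your ``converse'' as Lemma~\ref{lem:non-abelian_fundamental_sequence}, asserting that the multiplication map $U(\B_\dR^+\otimes_{\Q_\ellp}\Lambda)\times U(\B_\cris^{\varphi=1}\otimes_{\Q_\ellp}\Lambda)\to U(\B_\dR\otimes_{\Q_\ellp}\Lambda)$ is surjective with intersection $U(\Lambda)$, proved (as you suggest) by reducing to the linear fundamental exact sequence via the Baker--Campbell--Hausdorff argument of Lemma~\ref{lem:quotient_by_unipotent_subgroup}. One then starts from a $G_v$-fixed $u\in U(\B_\dR\otimes_{\Q_\ellp}\Lambda)^{G_v}$ (not merely a $G_v$-fixed class), writes $u=u_\dR u_\cris^{-1}$, and takes the common coboundary; this is cleaner than the d\'evissage you sketch and avoids having to correct by Frobenius eigenvectors a second time.
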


As a consequence, we can compute the Hilbert series of the local Bloch--Kato Selmer scheme~$\HH^1_f(G_v,U)$.

\begin{corollary}\label{cor:hilbert_series_of_local_selmer}
	There is a (non-canonical) isomorphism
	\[
	\HH^1_f(G_v,U)\cong\prod_{n>0}\HH^1_f(G_v,V_n)
	\]
	of filtered affine $\Q_\ellp$-schemes, where the vector space $\HH^1_f(G_v,V_n)$ is given the filtration supported in filtration-degree~$-n$. In particular, we have
	\[
	\HS_{\HH^1_f(G_v,U)}=\prod_{n>0}(1-t^n)^{-\dim_{\Q_\ellp}\HH^1_f(G_v,V_n)} \,.
	\]
	\begin{proof}
		Proposition~\ref{prop:bk_log_is_filtered_iso} and Lemma~\ref{lem:quotient_by_unipotent_subgroup} together imply that~$\HH^1_f(G_v,U)$ is isomorphic to~$V$, where~$V$ is a filtered $\Q_\ellp$-linear complement to $\Fil^0\D_\dR(\Lie(U))$ inside $\D_\dR(\Lie(U))$. By choosing a splitting of the filtration on~$V$ we have
		\[
		V\cong\prod_{n>0}\gr^\W_{-n}V \,.
		\]
		But we have
		\begin{align*}
			\dim_{\Q_\ellp}\gr^\W_{-n}V &= \dim_{\Q_\ellp}\gr^\W_{-n}\D_\dR(\Lie(U))-\dim_{\Q_\ellp}\Fil^0\gr^\W_{-n}\D_\dR(\Lie(U)) \\
			&= \dim_{\Q_\ellp}\HH^1_f(G_v,V_n)
		\end{align*}
		by \cite[Corollary~3.8.4]{bloch-kato:tamagawa_numbers}. Hence~$\gr^\W_{-n}V\simeq\HH^1_f(G_v,V_n)$ as $\Q_\ellp$-vector spaces with filtrations supported in filtration-degree~$-n$. The result follows.
	\end{proof}
\end{corollary}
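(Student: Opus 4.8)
The plan is to transport the statement to the de Rham side via the Bloch--Kato logarithm, where the weight filtration manifestly splits, and then read off the Hilbert series from the resulting product decomposition. This is essentially a bookkeeping argument once Proposition~\ref{prop:bk_log_is_filtered_iso} is in hand.

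First I would invoke Proposition~\ref{prop:bk_log_is_filtered_iso} to identify $\HH^1_f(G_v,U)$ with $\Res^{K_v}_{\Q_\ellp}(\Fil^0\backslash\D_\dR(U))$ as filtered affine $\Q_\ellp$-schemes. By Lemma~\ref{lem:quotient_by_unipotent_subgroup}, applied to the filtered $\Q_\ellp$-pro-unipotent group $\Res^{K_v}_{\Q_\ellp}\D_\dR(U)$ (filtered via the weight filtration on $U$) and its closed subgroup $\Res^{K_v}_{\Q_\ellp}\Fil^0\D_\dR(U)$, this homogeneous space is isomorphic, as a filtered affine $\Q_\ellp$-scheme, to the affine space on a $\W$-filtered $\Q_\ellp$-linear complement $V$ of $\Res^{K_v}_{\Q_\ellp}\Fil^0\D_\dR(\Lie(U))$ inside $\Res^{K_v}_{\Q_\ellp}\D_\dR(\Lie(U))$. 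The weight filtration on $\Lie(U)$ is indexed by negative integers and has finite-dimensional graded pieces (each $\W_{-n}U$ has finite codimension in $U$), so the same holds for $V$; I may therefore choose a graded $\Q_\ellp$-linear splitting, obtaining
\[
V\cong\prod_{n>0}\gr^\W_{-n}V
\]
as filtered affine $\Q_\ellp$-schemes, where $\gr^\W_{-n}V$ is given the filtration supported in degree $-n$ as in Example~\ref{ex:filtered_vs}.

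It then remains to identify $\gr^\W_{-n}V$ with $\HH^1_f(G_v,V_n)$. Since the weight filtration on $\D_\dR(\Lie(U))$ is exhaustive by $G_v$-stable subrepresentations and the functors $\D_\dR$ (on the pro-crystalline $\Lie(U)$) and $\Res^{K_v}_{\Q_\ellp}$ are exact, $\gr^\W_{-n}$ of $\Res^{K_v}_{\Q_\ellp}\D_\dR(\Lie(U))$ is identified, compatibly with the Hodge filtration, with $\Res^{K_v}_{\Q_\ellp}\D_\dR(V_n)$, whence
\[
\dim_{\Q_\ellp}\gr^\W_{-n}V=\dim_{\Q_\ellp}\Res^{K_v}_{\Q_\ellp}\D_\dR(V_n)-\dim_{\Q_\ellp}\Fil^0\,\Res^{K_v}_{\Q_\ellp}\D_\dR(V_n).
\]
The weight hypothesis on crystalline Frobenius forces $\HH^0(G_v,V_n)=0$, so by \cite[Corollary~3.8.4]{bloch-kato:tamagawa_numbers} the right-hand side equals $\dim_{\Q_\ellp}\HH^1_f(G_v,V_n)$. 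As both $\gr^\W_{-n}V$ and $\HH^1_f(G_v,V_n)$ carry the filtration supported in degree $-n$, they are non-canonically isomorphic as filtered $\Q_\ellp$-vector spaces, and the product decomposition $\HH^1_f(G_v,U)\cong\prod_{n>0}\HH^1_f(G_v,V_n)$ follows. The Hilbert series formula is then immediate: Lemma~\ref{lem:hilbert_series_algebra}(1) gives multiplicativity of $\HS$ under products, and Example~\ref{ex:filtered_vs} gives $\HS_{\HH^1_f(G_v,V_n)}(t)=(1-t^n)^{-\dim_{\Q_\ellp}\HH^1_f(G_v,V_n)}$.

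The one step that requires genuine care is the dimension identity: one must verify that $\gr^\W_{-n}$ of the de Rham realization of $U$ really computes $\D_\dR(V_n)$ with its Hodge filtration --- that is, exactness of $\D_\dR$ on the pro-crystalline Lie algebra and that the weight filtration consists of subrepresentations --- before Bloch--Kato's formula applies, and that application in turn hinges on $\HH^0(G_v,V_n)=0$, which comes for free from the assumption that the eigenvalues of crystalline Frobenius on $\D_\cris(V_n)$ are $\ellp$-Weil numbers of the nonzero weight $-n$. Everything else --- splitting a filtration of a $\Q_\ellp$-vector space, multiplicativity of Hilbert series --- is formal.
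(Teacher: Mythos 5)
Your proposal is correct and follows essentially the same route as the paper's proof: apply Proposition~\ref{prop:bk_log_is_filtered_iso} together with Lemma~\ref{lem:quotient_by_unipotent_subgroup} to identify $\HH^1_f(G_v,U)$ with the affine space on a filtered complement $V$ of $\Fil^0\D_\dR(\Lie(U))$, split the weight filtration on $V$, and compute $\dim_{\Q_\ellp}\gr^\W_{-n}V=\dim_{\Q_\ellp}\HH^1_f(G_v,V_n)$ via \cite[Corollary~3.8.4]{bloch-kato:tamagawa_numbers}. Your extra remarks on exactness of $\D_\dR$ for the pro-crystalline $\Lie(U)$ and the role of $\HH^0(G_v,V_n)=0$ simply make explicit points the paper leaves implicit.
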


For the proof of Proposition~\ref{prop:bk_log_is_filtered_iso}, we will construct both the Bloch--Kato logarithm and its inverse as morphisms of filtered $\Q_\ellp$-schemes and verify that these are mutually inverse. To construct~$\log_\BK$, we use the following proposition, a filtered version of the discussion on \cite[p.~118]{minhyong:selmer}.

\begin{lemma}\label{lem:filtered_de_rham_path}
	Let $\Lambda$ be a filtered $\Q_\ellp$-algebra and let $\xi\colon G_v\to U(\Lambda)$ be a continuous cocycle whose class lies in~$\HH^1_f(G_v,U)$. Then there exists an element $u_\dR\in U(\B_\dR^+\otimes_{\Q_\ellp}\Lambda)$ whose coboundary is~$\xi$.
	\begin{proof}
		We follow a similar strategy to the proof of Lemma~\ref{lem:filtered_crystalline_path}, except that we are not proving -- and cannot use -- unicity. We show that there exists a compatible system of elements $u_{n,\dR}\in U_n(\B_\dR^+\otimes_{\Q_\ellp}\Lambda)$ such that the coboundary of $u_{n,\dR}$ is~$\xi$ modulo $\W_{-n-1}U$. These elements then define an element $u_\dR\in U(\B_\dR^+\otimes_{\Q_\ellp}\Lambda)=\varprojlim U_n(\B_\dR^+\otimes_{\Q_\ellp}\Lambda)$ whose coboundary is~$\xi$, as desired.
		
		We begin by taking $u_{0,\dR}$ the unique element of the trivial group $U_0(\B_\dR^+\otimes_{\Q_\ellp}\Lambda)$, which clearly works. Suppose that we have constructed $u_{n-1,\dR}$. We construct the element $u_{n,\dR}$ as follows. To begin with, choose any element $u_n\in U_n(\B_\dR^+\otimes_{\Q_\ellp}\Lambda)$ lifting~$u_{n-1,\dR}$. The twisted cocycle
		\[
		\xi_n\colon g\mapsto u_n\xi(g)g(u_n)^{-1}
		\]
		is then valued in $V_n(\B_\dR^+\otimes_{\Q_\ellp}\Lambda)$, so defines a class in $\HH^1(G_v,V_n(\B_\dR^+\otimes_{\Q_\ellp}\Lambda))$. Here, we adopt conventions on topologies as in Remark~\ref{rmk:H^1_f_filtered_no_topology}.
		
		Now the image of~$[\xi_n]$ in $\HH^1(G_v,U_n(\B_\dR\otimes_{\Q_\ellp}\Lambda))$ is trivial (since $[\xi_n]=[\xi]\in\HH^1_f(G_v,U_n(\Lambda))$). We will show that the map
		\begin{equation}\label{eq:filtered_de_rham_path_lifting}\tag{$\ast$}
		\HH^1(G_v,V_n(\B_\dR^+\otimes_{\Q_\ellp}\Lambda))\to\HH^1(G_v,U_n(\B_\dR\otimes_{\Q_\ellp}\Lambda))
		\end{equation}
		has trivial kernel. This implies the proposition, since then $\xi_n$ is the coboundary of some element $v_n\in V_n(\B_\dR^+\otimes_{\Q_\ellp}\Lambda)$, whence $\xi$ is the coboundary of $u_{n,\dR}:=v_nu_n\in U_n(\B_\dR^+\otimes_{\Q_\ellp}\Lambda)$ modulo~$\W_{-n-1}U$ as desired.
		
		\smallskip
		
		To prove that~\eqref{eq:filtered_de_rham_path_lifting} has trivial kernel, we factor it as the composite
		\[
		\HH^1(G_v,V_n(\B_\dR^+\otimes_{\Q_\ellp}\Lambda))\to\HH^1(G_v,V_n(\B_\dR\otimes_{\Q_\ellp}\Lambda))\to\HH^1(G_v,U_n(\B_\dR\otimes_{\Q_\ellp}\Lambda))
		\]
		and show that both maps have trivial kernel. The left-hand map is identified as the map
		\[
		\W_n\Lambda\otimes_{\Q_\ellp}\HH^1(G_v,\B^+_\dR\otimes_{\Q_\ellp}V_n) \to \W_n\Lambda\otimes_{\Q_\ellp}\HH^1(G_v,\B_\dR\otimes_{\Q_\ellp}V_n) \,,
		\]
		which is injective by \cite[Lemma~3.8.1]{bloch-kato:tamagawa_numbers}.
		
		For the right-hand map, we consider the central extension
		\[
		1 \to V_n(\B_\dR\otimes_{\Q_\ellp}\Lambda) \to U_n(\B_\dR\otimes_{\Q_\ellp}\Lambda) \to U_{n-1}(\B_\dR\otimes_{\Q_\ellp}\Lambda) \to 1 \,,
		\]
		which is topologically split. The fact that $\Lie(U_n)$ is de Rham implies that the map
		\[
		\HH^0(G_v,U_n(\B_\dR\otimes_{\Q_\ellp}\Lambda)) \to \HH^0(G_v,U_{n-1}(\B_\dR\otimes_{\Q_\ellp}\Lambda))
		\]
		is surjective, and hence $\HH^1(G_v,V_n(\B_\dR\otimes_{\Q_\ellp}\Lambda))\to\HH^1(G_v,U_n(\B_\dR\otimes_{\Q_\ellp}\Lambda))$ has trivial kernel, as claimed.
	\end{proof}
\end{lemma}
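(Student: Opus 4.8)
The plan is to mimic the inductive d\'evissage in the proof of Lemma~\ref{lem:filtered_crystalline_path}, but working over $\B_\dR^+$ instead of $\B_\cris$ and dispensing with the uniqueness statement, which has no de~Rham analogue and is not needed here. Write $U_n=U/\W_{-n-1}U$ and $V_n=\gr^\W_{-n}U$ as in Notation~\ref{notn:U_n}. Since the filtration on~$U$ is separated, $U=\varprojlim_n U_n$ in $\WAFF_{\Q_\ellp}$, and hence $U(\B_\dR^+\otimes_{\Q_\ellp}\Lambda)=\varprojlim_n U_n(\B_\dR^+\otimes_{\Q_\ellp}\Lambda)$; so it suffices to construct a compatible system of elements $u_{n,\dR}\in U_n(\B_\dR^+\otimes_{\Q_\ellp}\Lambda)$, $n\geq0$, whose coboundaries are $\xi$ modulo $\W_{-n-1}U$. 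The base case $n=0$ is trivial since $U_0=1$. Throughout I would topologise the groups $U_m(\B\otimes_{\Q_\ellp}\Lambda)$ for $\B\in\{\B_\dR^+,\B_\dR\}$ as in Remark~\ref{rmk:H^1_f_filtered_no_topology}; because the extensions $1\to V_m\to U_m\to U_{m-1}\to1$ are split in $\WAFF_{\Q_\ellp}$, the induced central extensions of topological groups are topologically split, which is what makes the long exact sequence in non-abelian cohomology (\S\ref{ss:cohomology}, \cite[Proposition~43]{serre:galois_cohomology}) available.

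For the inductive step, assume $u_{n-1,\dR}$ is given and choose an arbitrary lift $u_n\in U_n(\B_\dR^+\otimes_{\Q_\ellp}\Lambda)$ of it along $U_n\twoheadrightarrow U_{n-1}$. The $u_n$-twist $\xi_n\colon g\mapsto u_n\,\xi(g)\,g(u_n)^{-1}$ of $\xi$ is then valued in the central subgroup $V_n(\B_\dR^+\otimes_{\Q_\ellp}\Lambda)$ (since $u_n$ lifts $u_{n-1,\dR}$, whose coboundary is $\xi$ modulo $\W_{-n}U$), and it is cohomologous, via $u_n$, to the reduction of $\xi$ modulo $\W_{-n-1}U$ inside $U_n(\B_\dR^+\otimes_{\Q_\ellp}\Lambda)$, hence also inside $U_n(\B_\dR\otimes_{\Q_\ellp}\Lambda)$. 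But $[\xi]\in\HH^1_f(G_v,U(\Lambda))$ means exactly that $[\xi]$ becomes trivial in $\HH^1(G_v,U(\B_\cris\otimes_{\Q_\ellp}\Lambda))$, hence, via $\B_\cris\hookrightarrow\B_\dR$, also in $\HH^1(G_v,U_n(\B_\dR\otimes_{\Q_\ellp}\Lambda))$. Therefore $[\xi_n]\in\HH^1(G_v,V_n(\B_\dR^+\otimes_{\Q_\ellp}\Lambda))$ lies in the kernel of the map to $\HH^1(G_v,U_n(\B_\dR\otimes_{\Q_\ellp}\Lambda))$ induced by the coefficient inclusion. If that map has trivial kernel, then $[\xi_n]=0$, so $\xi_n$ is the coboundary of some $v_n\in V_n(\B_\dR^+\otimes_{\Q_\ellp}\Lambda)$, and $u_{n,\dR}:=v_nu_n$ completes the induction.

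It remains to show that $\HH^1(G_v,V_n(\B_\dR^+\otimes_{\Q_\ellp}\Lambda))\to\HH^1(G_v,U_n(\B_\dR\otimes_{\Q_\ellp}\Lambda))$ has trivial kernel, which I would do by factoring it through $\HH^1(G_v,V_n(\B_\dR\otimes_{\Q_\ellp}\Lambda))$. For the first factor, repeating the argument of Proposition~\ref{prop:abelian_cohomology} with coefficients in $\B_\dR^+$ and $\B_\dR$ identifies the two cohomology groups with $\W_n\Lambda\otimes_{\Q_\ellp}\HH^1(G_v,\B_\dR^+\otimes_{\Q_\ellp}V_n)$ and $\W_n\Lambda\otimes_{\Q_\ellp}\HH^1(G_v,\B_\dR\otimes_{\Q_\ellp}V_n)$; since every $\Q_\ellp$-module is flat, the map is the tensor product of $\W_n\Lambda$ with the injection $\HH^1(G_v,\B_\dR^+\otimes_{\Q_\ellp}V_n)\hookrightarrow\HH^1(G_v,\B_\dR\otimes_{\Q_\ellp}V_n)$ of \cite[Lemma~3.8.1]{bloch-kato:tamagawa_numbers} (applicable since $V_n$ is de~Rham), so is itself injective. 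For the second factor, the long exact sequence attached to the topologically split central extension $1\to V_n(\B_\dR\otimes_{\Q_\ellp}\Lambda)\to U_n(\B_\dR\otimes_{\Q_\ellp}\Lambda)\to U_{n-1}(\B_\dR\otimes_{\Q_\ellp}\Lambda)\to1$ identifies the kernel with the image of the connecting map out of $\HH^0(G_v,U_{n-1}(\B_\dR\otimes_{\Q_\ellp}\Lambda))$; this image is trivial as soon as $\HH^0(G_v,U_n(\B_\dR\otimes_{\Q_\ellp}\Lambda))\to\HH^0(G_v,U_{n-1}(\B_\dR\otimes_{\Q_\ellp}\Lambda))$ is surjective, and that surjectivity follows from $\Lie(U)$ being pro-de~Rham: the $G_v$-invariants of $U_m(\B_\dR\otimes_{\Q_\ellp}\Lambda)$ are $\D_\dR(\Lie(U_m))\otimes_{\Q_\ellp}\Lambda$, and the map is the tensor product of $\Lambda$ with the surjection $\D_\dR(\Lie(U_n))\twoheadrightarrow\D_\dR(\Lie(U_{n-1}))$ obtained by applying the exact functor $\D_\dR$ to $0\to\Lie(V_n)\to\Lie(U_n)\to\Lie(U_{n-1})\to0$.

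The step I expect to require the most care is making the non-abelian long exact sequences, and the identifications of kernels above, genuinely rigorous given that the coefficient groups $U_n(\B_\dR\otimes_{\Q_\ellp}\Lambda)$ are infinite-dimensional topological groups rather than discrete ones; as in \S\ref{ss:cohomology}, the point that rescues the argument is that every central extension in sight is topologically split, so that the cited results of \cite{serre:galois_cohomology} apply verbatim. The remaining inputs --- exactness of $\D_\dR$ on de~Rham representations, injectivity of $\HH^1(G_v,\B_\dR^+\otimes_{\Q_\ellp}V_n)\to\HH^1(G_v,\B_\dR\otimes_{\Q_\ellp}V_n)$, and the $\Lambda$-linearity of continuous cohomology of the relevant abelian coefficient modules --- are standard $p$-adic Hodge theory together with the method already employed in Proposition~\ref{prop:abelian_cohomology}.
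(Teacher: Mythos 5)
Your proposal is correct and follows essentially the same argument as the paper: an induction on the quotients $U_n$, twisting $\xi$ by a lift $u_n$ of $u_{n-1,\dR}$ to obtain a class in $\HH^1(G_v,V_n(\B_\dR^+\otimes_{\Q_\ellp}\Lambda))$, and showing the map to $\HH^1(G_v,U_n(\B_\dR\otimes_{\Q_\ellp}\Lambda))$ has trivial kernel by factoring through $\HH^1(G_v,V_n(\B_\dR\otimes_{\Q_\ellp}\Lambda))$, using \cite[Lemma~3.8.1]{bloch-kato:tamagawa_numbers} for the first factor and surjectivity on $\HH^0$ (from $\Lie(U)$ being de Rham) together with the topologically split non-abelian long exact sequence for the second. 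The supporting identifications you invoke (the analogue of Proposition~\ref{prop:abelian_cohomology} over $\B_\dR^\pm$, and the splitness of the central extensions) are exactly those used in the paper's proof.
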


Using this lemma, the Bloch--Kato logarithm is defined as follows. Given a class $[\xi]\in\HH^1_f(G_v,U(\Lambda))$, we choose a representing cocycle~$\xi$. According to Lemmas~\ref{lem:filtered_crystalline_path} and~\ref{lem:filtered_de_rham_path}, we have elements $u_\cris\in U(\B_\cris^{\varphi=1}\otimes_{\Q_\ellp}\Lambda)$ and $u_\dR\in U(\B_\dR^+\otimes_{\Q_\ellp}\Lambda)$ whose coboundary is~$\xi$. It follows that the element $u_\dR u_\cris^{-1}\in U(\B_\dR\otimes_{\Q_\ellp}\Lambda)$ is $G_v$-fixed. It is then easy to check that the class of $u_\dR u_\cris^{-1}\in U(\B_\dR^+\otimes_{\Q_\ellp}\Lambda)^{G_v}\backslash U(\B_\dR\otimes_{\Q_\ellp}\Lambda)^{G_v}$ is independent of the choice of~$u_\dR$ and the choice of representing cocycle~$\xi$. The construction $\xi\mapsto u_\dR u_\cris^{-1}$ thus provides a map
\[
\HH^1_f(G_v,U(\Lambda))\to U(\B_\dR^+\otimes_{\Q_\ellp}\Lambda)^{G_v}\backslash U(\B_\dR\otimes_{\Q_\ellp}\Lambda)^{G_v} \,,
\]
natural in $\Lambda\in\WALG_{\Q_\ellp}$, and hence a morphism
\[
\log_\BK\colon\HH^1_f(G_v,U) \to \Res^{K_v}_{\Q_\ellp}\!\left(\Fil^0\backslash\D_\dR(U)\right)
\]
of filtered affine~$\Q_\ellp$-schemes. This map is the Bloch--Kato logarithm.

\begin{remark}
	The Bloch--Kato logarithm constructed above is the same as that from \cite[p.~119]{minhyong:selmer}, \cite[Proposition~1.4]{minhyong:tangential_localization}, once one forgets the filtrations. If~$P$ is a $G_v$-equivariant $U$-torsor over a $\Q_\ellp$-algebra~$\Lambda$ whose class lies in $\HH^1_f(G_v,U(\Lambda))$, then $\D_\dR(P):=\Spec(\D_\dR(\O(P)))$ is an admissible $\D_\dR(U)$-torsor over $K_v\otimes_{\Q_\ellp}\Lambda$, in the sense of \cite[p.~104]{minhyong:selmer}. In \cite{minhyong:selmer}, the Bloch--Kato logarithm is defined to be the map sending the class of $P$ to the element $\gamma_\dR^{-1}\gamma_\cris\in\Fil^0\backslash\D_\dR(U)(\Lambda)$, where~$\gamma_\dR\in\Fil^0\D_\dR(P)(\Lambda)$ and~$\gamma_\cris\in\D_\dR(P)(\Lambda)$ is the unique Frobenius-invariant element.
	
	If~$\xi$ is a cocycle representing the class of~$P$, then $P$ is isomorphic to~$U_\Lambda$ with the twisted $G_v$-action $g\colon u\mapsto\xi(g)\cdot g(u)$. Under this identification, the elements~$\gamma_\cris\in P(\B_\cris^{\varphi=1}\otimes_{\Q_\ellp}\Lambda)^{G_v}$ and~$\gamma_\dR\in P(\B_\dR^+\otimes_{\Q_\ellp}\Lambda)^{G_v}$ are equal to~$u_\cris^{-1}\in U(\B_\cris^{\varphi=1}\otimes_{\Q_\ellp}\Lambda)$ and~$u_\dR^{-1}\in U(\B_\dR^+\otimes_{\Q_\ellp}\Lambda)$ for elements~$u_\cris$ and~$u_\dR$ whose coboundaries are both~$\xi$. Thus $\gamma_\dR^{-1}\gamma_\cris=u_\dR u_\cris^{-1}$, so our definition of the Bloch--Kato logarithm above agrees with that in \cite{minhyong:selmer}.
\end{remark}

Next, we construct the inverse of the Bloch--Kato logarithm, as a morphism of filtered affine $\Q_\ellp$-schemes. Here, we follow a different approach to \cite[Proposition~1.4]{minhyong:tangential_localization}, taking a strategy closer to that of \cite[\S6]{me:thesis}. The construction revolves around the following result.

\begin{lemma}\label{lem:non-abelian_fundamental_sequence}
	For every filtered $\Q_\ellp$-algebra~$\Lambda$, the multiplication map
	\[
	U(\B_\dR^+\otimes_{\Q_\ellp}\Lambda) \times U(\B_\cris^{\varphi=1}\otimes_{\Q_\ellp}\Lambda) \to U(\B_\dR\otimes_{\Q_\ellp}\Lambda)
	\]
	is surjective, and
	\[
	U(\B_\dR^+\otimes_{\Q_\ellp}\Lambda) \cap U(\B_\cris^{\varphi=1}\otimes_{\Q_\ellp}\Lambda) = U(\Lambda) \,,
	\]
	where the intersection is taken inside $U(\B_\dR\otimes_{\Q_\ellp}\Lambda)$. Moreover, the topology on~$U(\Lambda)$ is the subspace topology induced from~$U(\B_\dR\otimes_{\Q_\ellp}\Lambda)$, where the topology on the latter is as in Remark~\ref{rmk:H^1_f_filtered_no_topology}.
	\begin{proof}
		We work in the Lie algebra~$\Lie(U)$. The fundamental exact sequence \cite[(1.17.1)]{bloch-kato:tamagawa_numbers} implies that $\Lie(U)(\B_\dR^+\otimes_{\Q_\ellp}\Lambda)$ and $\Lie(U)(\B_\cris^{\varphi=1}\otimes_{\Q_\ellp}\Lambda)$ together span $\Lie(U)(\B_\dR\otimes_{\Q_\ellp}\Lambda)$ and their intersection is~$\Lie(U)(\Lambda)$. An argument similar to the proof of Lemma~\ref{lem:quotient_by_unipotent_subgroup} then shows surjectivity of the multiplication map. The statement regarding the intersection is also immediate.
		
		For the assertion regarding topologies, we observe that the topology on~$\B_\dR$ restricts to the $\ellp$-adic topology on~$\Q_\ellp$, so $\W_0(\Lambda\otimes_{\Q_\ellp}\Lie(U_n))\subseteq\B_\dR\otimes_{\Q_\ellp}\W_0(\Lambda\otimes_{\Q_\ellp}\Lie(U_n))$ has the subspace topology with respect to the free $\Q_\ellp$- and $\B_\dR$-module topologies on either side. Taking an inverse limit, we see that~$U(\Lambda)\subseteq U(\B_\dR\otimes_{\Q_\ellp}\Lambda)$ has the subspace topology.
	\end{proof}
\end{lemma}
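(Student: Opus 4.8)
The plan is to transport the three assertions to the pro-nilpotent Lie algebra $\Lie(U)$ via the logarithm isomorphism $U\cong\Lie(U)$, and to deduce everything from the fundamental exact sequence of $p$-adic Hodge theory $0\to\Q_\ellp\to\B_\cris^{\varphi=1}\to\B_\dR/\B_\dR^+\to0$ of \cite[(1.17.1)]{bloch-kato:tamagawa_numbers}. Since $\Lie(U)$ is a pro-finite-dimensional $\Q_\ellp$-vector space and $\Q_\ellp$ is a field, tensoring this sequence with the relevant filtered pieces of $\Lambda\otimes_{\Q_\ellp}\gr^\W_{-n}\Lie(U)$ shows that for every $n$ one has $\gr^\W_{-n}\Lie(U)(\B_\dR^+\otimes_{\Q_\ellp}\Lambda)+\gr^\W_{-n}\Lie(U)(\B_\cris^{\varphi=1}\otimes_{\Q_\ellp}\Lambda)=\gr^\W_{-n}\Lie(U)(\B_\dR\otimes_{\Q_\ellp}\Lambda)$, with intersection equal to $\gr^\W_{-n}\Lie(U)(\Lambda)$. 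This is the one external input; everything else is a formal consequence.

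For surjectivity of the multiplication map, I would run the same induction as in the proof of Lemma~\ref{lem:quotient_by_unipotent_subgroup}. Given $w\in U(\B_\dR\otimes_{\Q_\ellp}\Lambda)$, regarded as an element of $\Lie(U)(\B_\dR\otimes_{\Q_\ellp}\Lambda)$, one constructs inductively in $n$ elements $x_n\in\Lie(U)(\B_\dR^+\otimes_{\Q_\ellp}\Lambda)$ and $y_n\in\Lie(U)(\B_\cris^{\varphi=1}\otimes_{\Q_\ellp}\Lambda)$ with $\BCH(x_n,y_n)\equiv w$ modulo $\W_{-n-1}$: lift $x_{n-1},y_{n-1}$ arbitrarily within the respective subspaces, using that $\Lie(U)\twoheadrightarrow\Lie(U_{n-1})$ stays surjective after tensoring; observe that $\BCH(x+x'',y+y'')\equiv\BCH(x,y)+x''+y''$ modulo $\W_{-n-1}$ for $x'',y''$ of weight $-n$; and correct the error using the graded surjectivity recorded above. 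Passing to the limit over $n$ with $U=\varprojlim U_n$ produces the required factorization. The intersection statement is then immediate: an element of $U(\B_\dR^+\otimes_{\Q_\ellp}\Lambda)\cap U(\B_\cris^{\varphi=1}\otimes_{\Q_\ellp}\Lambda)$ has logarithm in $\Lie(U)(\B_\dR^+\otimes_{\Q_\ellp}\Lambda)\cap\Lie(U)(\B_\cris^{\varphi=1}\otimes_{\Q_\ellp}\Lambda)=\Lie(U)(\Lambda)$ by the intersection half of the fundamental sequence, hence lies in $U(\Lambda)$; the reverse containment is trivial.

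For the topological assertion, the only thing one needs to know is that the $p$-adic topology on $\Q_\ellp$ is the subspace topology induced from $\B_\dR$. Unwinding Remark~\ref{rmk:H^1_f_filtered_no_topology}, $U(\Lambda)$ and $U(\B_\dR\otimes_{\Q_\ellp}\Lambda)$ are the inverse limits over $n$ of $M_n:=\W_0(\Lambda\otimes_{\Q_\ellp}\Lie(U_n))$, respectively $\B_\dR\otimes_{\Q_\ellp}M_n$, each carrying its direct-limit topology over finite free submodules. On each finite-dimensional $\Q_\ellp$-subspace the direct-limit topology on $M_n$ agrees with the subspace topology from $\B_\dR\otimes_{\Q_\ellp}M_n$ because $\Q_\ellp\subseteq\B_\dR$ is a topological subspace; since forming subspace topologies commutes with inverse limits, $U(\Lambda)\subseteq U(\B_\dR\otimes_{\Q_\ellp}\Lambda)$ inherits the subspace topology.

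I expect there to be no genuinely hard step here; the only thing requiring care is the bookkeeping in the induction — checking that at each graded stage the ``span'' in the fundamental sequence really exhausts the $(\B_\dR\otimes_{\Q_\ellp}\Lambda)$-points (this is exactly where flatness of the finite-dimensional $\gr^\W_{-n}\Lie(U)$ and of $\W_n\Lambda$ over the field $\Q_\ellp$ enters), and keeping straight that $\gr^\W_{-n}\Lie(U)$ sits in filtration degree $-n$ when forming $\W_0$ of the tensor products. The rest is a routine adaptation of patterns already used in \S\ref{ss:filtrations} and \S\ref{ss:cohomology}.
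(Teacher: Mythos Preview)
Your proposal is correct and follows essentially the same approach as the paper's proof: reduce to the Lie algebra via the logarithm, invoke the Bloch--Kato fundamental exact sequence to get spanning and intersection on each graded piece, then run the BCH-based induction of Lemma~\ref{lem:quotient_by_unipotent_subgroup} for surjectivity, with the topology assertion handled via the fact that $\Q_\ellp\subseteq\B_\dR$ carries the subspace topology and passage to inverse limits. You have simply written out in more detail the inductive step that the paper leaves implicit by citing Lemma~\ref{lem:quotient_by_unipotent_subgroup}.
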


We now construct the inverse of the Bloch--Kato logarithm~$\log_\BK$. Given an element $u\in U(\B_\dR\otimes_{\Q_\ellp}\Lambda)^{G_v}$, we have, courtesy of Lemma~\ref{lem:non-abelian_fundamental_sequence}, elements $u_\cris\in U(\B_\cris^{\varphi=1}\otimes_{\Q_\ellp}\Lambda)$ and $u_\dR\in U(\B_\dR^+\otimes_{\Q_\ellp}\Lambda)$ such that $u=u_\dR u_\cris^{-1}$. Since~$u$ is $G_v$-fixed by assumption, it follows that the coboundaries of~$u_\cris$ and~$u_\dR$ are equal. By construction, this coboundary is a continuous cocycle~$\xi$ taking values in $U(\B_\dR^+\otimes_{\Q_\ellp}\Lambda) \cap U(\B_\cris^{\varphi=1}\otimes_{\Q_\ellp}\Lambda) = U(\Lambda)$. Moreover, since~$\xi$ is the coboundary of the element~$u_\cris\in U(\B_\cris\otimes_{\Q_\ellp}\Lambda)$, it follows that the class of~$\xi$ lies in~$\HH^1_f(G_v,U(\Lambda))$.

Now the class of~$\xi$ is easily checked to be independent of the choices of~$u_\cris$ and~$u_\dR$, hence depends only on~$u$. The construction~$u\mapsto\xi$ thus gives a map
\[
U(\B_\dR\otimes_{\Q_\ellp}\Lambda)^{G_v} \to \HH^1_f(G_v,U(\Lambda)) \,,
\]
natural in~$\Lambda\in\WALG_{\Q_\ellp}$. It is easy to check that this map is invariant under the right-multiplication action of $U(\B_\dR^+\otimes_{\Q_\ellp}\Lambda)^{G_v}$ on the domain, and hence we obtain a morphism
\[
\Res^{K_v}_{\Q_\ellp}\!\left(\Fil^0\backslash\D_\dR(U)\right) \to \HH^1_f(G_v,U)
\]
of filtered affine $\Q_\ellp$-schemes, which is easily seen to be inverse to the Bloch--Kato logarithm. This concludes the proof of Proposition~\ref{prop:bk_log_is_filtered_iso}.\qed

\subsection{Global Selmer schemes}\label{ss:global_selmer}

Now we come to the corresponding global theory. We fix a number field~$K$, and write~$G_K$ for its absolute Galois group. Suppose that we are given a $\Q_\ellp$-pro-unipotent group~$U$, endowed with a continuous action of~$G_K$ and a $G_K$-stable separated filtration
\[
1\subseteq\dots\subseteq\W_{-3}U\subseteq\W_{-2}U\subseteq\W_{-1}U=U
\]
in the sense of Example~\ref{ex:filtered_unipotent}, where each subgroup~$\W_{-n}U$ is of finite codimension in~$U$. We make moreover the following assumptions on~$U$:
\begin{itemize}
	\item the $G_K$-action on~$U$ is ramified at only finitely many places of~$K$;
	\item $U$ is pro-crystalline at every place~$v\mid\ellp$, in the sense that~$\Lie(U)$ is an inverse limit of representations which are crystalline at all~$v\mid\ellp$; and
	\item for every finite place~$v$ and all~$n\in\N$, the $G_v$-invariant subgroup $\HH^0(G_v,V_n)=0$ is zero, where~$G_v$ denotes a decomposition group at~$v$ and $V_n=\gr^\W_{-n}U$ as usual.
\end{itemize}
In other words, $U$ is finitely ramified and the restriction of~$U$ to the decomposition group~$G_v$ at a finite place~$v$ satisfies the assumptions of \S\ref{ss:local_selmer_p} whenever $v\mid\ellp$, and satisfies the assumptions of \S\ref{ss:cohomology} whenever $v\nmid\ellp$. This ensures that the local cohomology functors $\HH^1(G_v,U)$ are all representable, as are the subfunctors $\HH^1_f(G_v,U)$ whenever~$v\mid\ellp$. It also ensures that the subfunctors $\HH^1_\nr(G_v,U)\subseteq\HH^1(G_v,U)$ consisting of the unramified cohomology classes are representable for every $v\nmid\ellp\infty$, albeit in a trivial way.

\begin{lemma}\label{lem:unramified_trivial}
	Let~$v\nmid\ellp\infty$. Then the kernel of the restriction map $\HH^1(G_v,U)\to\HH^1(I_v,U)$ is trivial, i.e.\ consists exactly of the base point of~$\HH^1(G_v,U)$.
	\begin{proof}
		It follows from non-abelian inflation--restriction \cite[Lemma in~\S2.8]{jen-etal:non-abelian_tate-shafarevich} that the kernel of the restriction map is the cohomology functor $\HH^1(G_v/I_v,U^{I_v})$. We endow the subgroup-scheme $U^{I_v}\subseteq U$ with the restriction of the filtration~$\W$ on~$U$.
		
		Our assumptions ensure that~$1$ is not an eigenvalue of the Frobenius~$\varphi$ acting on $\Lie(U)^{I_v}$, so that~$\varphi$ acts without fixed points on $\gr^\W_{-n}(U^{I_v})$ for all~$n$. Hence we have
		\begin{align*}
			\HH^0(G_v/I_v,\gr^\W_{-n}(U^{I_v})) &= \ker((\varphi-1)|_{\gr^\W_{-n}(U^{I_v})}) = 0 \,, \\
			\HH^1(G_v/I_v,\gr^\W_{-n}(U^{I_v})) &= \coker((\varphi-1)|_{\gr^\W_{-n}(U^{I_v})}) = 0 \,.
		\end{align*}
		Thus the cohomology functor $\HH^1(G_v/I_v,U^{I_v})$ is representable by \cite[Proposition~2]{minhyong:siegel} and is trivial by Corollary~\ref{cor:cohomology_hilbert_series_bound}.
	\end{proof}
\end{lemma}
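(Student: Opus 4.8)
\emph{Strategy.} The plan is to identify the kernel of the restriction map with a representable filtered cohomology scheme, and then to show that this scheme is reduced to a single point by the Hilbert series bound of Corollary~\ref{cor:cohomology_hilbert_series_bound}.

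First I would invoke the non-abelian inflation--restriction sequence \cite[Lemma in~\S2.8]{jen-etal:non-abelian_tate-shafarevich}, which, applied functorially in the filtered $\Q_\ellp$-algebra~$\Lambda$, identifies the kernel of $\HH^1(G_v,U)\to\HH^1(I_v,U)$ with the (injective) inflation image of the cohomology functor $\HH^1(G_v/I_v,U^{I_v})$, where I equip $U^{I_v}$ with the induced filtration $\W_{-n}(U^{I_v}):=\W_{-n}U\cap U^{I_v}$ and with the residual action of $G_v/I_v$. Since $G_v/I_v\cong\Gal(\overline\F_v/\F_v)$ is topologically generated by the Frobenius~$\varphi$, it then suffices to show that $\HH^1(G_v/I_v,U^{I_v})$ is the trivial functor.

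Next I would check that $U^{I_v}$ with this filtration meets the hypotheses of \S\ref{ss:cohomology}: separatedness is clear since $\bigcap_n\W_{-n}(U^{I_v})\subseteq\bigcap_n\W_{-n}U=1$, and each $\W_{-n}(U^{I_v})$ has finite codimension because $U^{I_v}/\W_{-n}(U^{I_v})$ embeds into the finite-dimensional group $U/\W_{-n}U$. The graded piece $V_n':=\gr^\W_{-n}(U^{I_v})$ sits, via the natural map, as a $\varphi$-stable $\Q_\ellp$-subspace of $(\gr^\W_{-n}U)^{I_v}=V_n^{I_v}$ — taking $I_v$-invariants of the central extensions~\eqref{eq:extension_seq} is only left exact, but the resulting inclusion $\gr^\W(U^{I_v})\hookrightarrow(\gr^\W U)^{I_v}$ is all I need. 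Now the hypothesis $\HH^0(G_v,V_n)=0$ says precisely that $\varphi-1$ is injective on $V_n^{I_v}$, hence bijective by finite-dimensionality; consequently $\varphi-1$ is bijective on every $V_n'$. Since the continuous cohomology of the topologically cyclic group $G_v/I_v$ with coefficients in a finite-dimensional $\Q_\ellp$-vector space is computed by the two-term complex $V_n'\xrightarrow{\varphi-1}V_n'$, we get $\HH^0(G_v/I_v,V_n')=\ker(\varphi-1)=0$ and $\HH^1(G_v/I_v,V_n')=\coker(\varphi-1)=0$. In particular conditions~\ref{condn:H^0} and~\ref{condn:H^1} hold for $U^{I_v}$, so $\HH^1(G_v/I_v,U^{I_v})$ is representable by a filtered affine $\Q_\ellp$-scheme (via \cite[Proposition~2]{minhyong:siegel} and Theorem~\ref{thm:representability}).

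Finally, Corollary~\ref{cor:cohomology_hilbert_series_bound} exhibits $\HH^1(G_v/I_v,U^{I_v})$ as a closed subscheme of $\prod_{n>0}\HH^1(G_v/I_v,V_n')$; but each factor is a point, so this product is $\Spec(\Q_\ellp)$, whose only nonempty closed subscheme is itself. As $\HH^1(G_v/I_v,U^{I_v})$ is a pointed functor, its representing scheme is nonempty, hence equals $\Spec(\Q_\ellp)$ — equivalently $\HS_{\HH^1(G_v/I_v,U^{I_v})}(t)\preceq1$ forces $\O=\Q_\ellp$ — so the functor is trivial, and transporting this along the inflation identification shows the kernel of the restriction map is just the base point. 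I expect the only point requiring care to be the transfer in the previous paragraph: one must work with the inclusion $\gr^\W(U^{I_v})\hookrightarrow(\gr^\W U)^{I_v}$ rather than an equality, which is harmless here precisely because vanishing of $\HH^0$ and $\HH^1$ passes to $\varphi$-stable subspaces.
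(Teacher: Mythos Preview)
Your proposal is correct and follows essentially the same approach as the paper: identify the kernel via non-abelian inflation--restriction with $\HH^1(G_v/I_v,U^{I_v})$, show the graded pieces have vanishing $\HH^0$ and $\HH^1$ because $\varphi-1$ is bijective on them, then conclude triviality via representability and Corollary~\ref{cor:cohomology_hilbert_series_bound}. Your version is in fact slightly more careful than the paper's, making explicit the inclusion $\gr^\W_{-n}(U^{I_v})\hookrightarrow V_n^{I_v}$ and the passage from injectivity to bijectivity of $\varphi-1$ via finite-dimensionality, points the paper elides.
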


Now we define a global Selmer scheme relative to a choice of closed subscheme $\lSel_v\subseteq\HH^1(G_v,U)$ for each place $v\nmid\ellp\infty$, such that~$\lSel_v=\{*\}$ is just the basepoint of $\HH^1(G_v,U)$ for all but finitely many~$v$. We refer to such a collection $\lSel=(\lSel_v)_{v\nmid\ellp\infty}$ as a \emph{Selmer structure} for~$U$, by way of analogy with \cite[\S1.3c]{loeffler-zerbes:euler_systems}.

\begin{definition}
	Let~$\lSel=(\lSel_v)_{v\nmid\ellp\infty}$ be a Selmer structure for~$U$. Restriction to the decomposition groups at each finite place provides a morphism
	\[
	\HH^1(G_K,U) \to \prod_{v\nmid\infty}\HH^1(G_v,U) \,,
	\]
	of functors and we define the \emph{global Selmer scheme} $\Sel_{\lSel,U}\subseteq\HH^1(G_K,U)$ to be the preimage of the closed subscheme $\prod_{v\mid\ellp}\HH^1_f(G_v,U)\times\prod_{v\nmid\ellp\infty}\lSel_v\subseteq\prod_{v\nmid\infty}\HH^1(G_v,U)$ under this map.
\end{definition}

\begin{example}\label{ex:naive_selmer}
	Suppose that~$T_0$ is a finite set of places, containing~$S$, all places dividing~$\infty$ and all places of bad reduction, and not containing any prime above $\ellp$. One choice of Selmer structure is given by taking $\lSel_v=\HH^1(G_v,U)$ for $v\in T_0$ and $\lSel_v=\{*\}$ otherwise. The corresponding Selmer scheme is denoted $\HH^1_{f,T_0}(G_K,U)$, and consists of those non-abelian cohomology classes which are unramified outside~$T_0\cup\{v\mid\ellp\}$ and crystalline at all~$\ellp$-adic places.
\end{example}

There is a small subtlety in the definition of the global Selmer scheme, in that the global cohomology functor $\HH^1(G_K,U)$ need not be representable. Nonetheless, the global Selmer scheme is representable.

\begin{proposition}\label{prop:global_selmer_representability}
	The global Selmer scheme $\Sel_{\lSel,U}$ is representable by an affine $\Q_\ellp$-scheme, which is of finite type if~$U$ is finite-dimensional.
	\begin{proof}
		We follow the argument of \cite[Proposition in~\S2.8]{jen-etal:non-abelian_tate-shafarevich}. Let~$T$ be a set of places of~$K$, containing all places dividing~$\ellp\infty$, all places where the Galois action on~$U$ ramifies, and all places where~$\lSel_v\neq\{*\}$. We write~$G_{K,T}$ for the largest quotient of~$G_K$ unramified outside~$T$, so that the $G_K$-action on~$U$ factors through~$G_{K,T}$ by assumption. The cohomology functor $\HH^1(G_{K,T},U)$ is a subfunctor of $\HH^1(G_K,U)$ by inflation--restriction \cite[Lemma in~\S2.8]{jen-etal:non-abelian_tate-shafarevich}, and $\HH^1(G_{K,T},U)$ is representable since each $\HH^1(G_{K,T},V_n)$ is finite-dimensional by \cite[Theorem~8.3.20(i)]{neukirch-wingberg-schmidt:cohomology_of_number_fields}.
		
		It thus suffices to prove that $\Sel_{\lSel,U}\subseteq\HH^1(G_{K,T},U)$ as subfunctors of~$\HH^1(G_K,U)$, for then $\Sel_{\lSel,U}$ is the preimage of the closed subscheme $\prod_{v\mid\ellp}\HH^1_f(G_v,U)\times\prod_{v\nmid\ellp\infty}\lSel_v$ under the morphism $\HH^1(G_{K,T},U)\to\prod_{v\nmid\infty}\HH^1(G_v,U)$ of affine $\Q_\ellp$-schemes, so is representable by a closed subscheme of $\HH^1(G_{K,T},U)$. To show this, suppose that~$\Lambda$ is a $\Q_\ellp$-algebra and $\xi\colon G_K\to U(\Lambda)$ is a continuous cocycle whose class lies in $\Sel_{\lSel,U}(\Lambda)$. For every place~$v\notin T$, the restriction $\xi|_{I_v}$ to inertia at~$v$ represents the basepoint in the set $\HH^1(I_v,U(\Lambda))=\Hom(I_v,U(\Lambda))$, hence we have $\xi|_{I_v}=1$.
		
		Now it follows from the cocycle condition that the intersection of the kernel of~$\xi$ with the kernel of the action map $G_K\to\Aut(U(\Lambda))$ is a closed normal subgroup of~$G_K$. We have just shown that the inertia group~$I_v$ is contained in this subgroup whenever~$v\notin T$, so this subgroup contains the kernel of $G_K\twoheadrightarrow G_{K,T}$. Hence~$\xi$ factors through~$G_{K,T}$, so $[\xi]\in\HH^1(G_{K,T},U(\Lambda))$. Thus we have shown that $\Sel_{\lSel,U}\subseteq\HH^1(G_{K,T},U)$, which completes the proof.
	\end{proof}
\end{proposition}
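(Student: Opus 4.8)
The plan is to follow the pattern of \cite[Proposition in~\S2.8]{jen-etal:non-abelian_tate-shafarevich}. The obstruction to a one-line argument is that the global cohomology functor $\HH^1(G_K,U)$ is not itself known to be representable, so one cannot simply exhibit $\Sel_{\lSel,U}$ as a preimage inside it. Instead I would fix a finite set $T$ of places of $K$ containing all places above $\ellp\infty$, all places where the $G_K$-action on $U$ ramifies, and all places $v$ with $\lSel_v\neq\{*\}$ (such a finite $T$ exists by hypothesis), let $G_{K,T}$ be the maximal quotient of $G_K$ unramified outside $T$, and then establish two things in turn: (i) $\HH^1(G_{K,T},U)$ is representable by an affine $\Q_\ellp$-scheme, of finite type when $U$ is finite-dimensional; and (ii) $\Sel_{\lSel,U}\subseteq\HH^1(G_{K,T},U)$ as subfunctors of $\HH^1(G_K,U)$. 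Granting (i) and (ii), $\Sel_{\lSel,U}$ becomes the preimage of the \emph{closed} subscheme $\prod_{v\mid\ellp}\HH^1_f(G_v,U)\times\prod_{v\nmid\ellp\infty}\lSel_v$ of $\prod_{v\nmid\infty}\HH^1(G_v,U)$ under the morphism of affine $\Q_\ellp$-schemes $\HH^1(G_{K,T},U)\to\prod_{v\nmid\infty}\HH^1(G_v,U)$ (using that each $\HH^1(G_v,U)$ is representable and that $\HH^1_f(G_v,U)$ is closed for $v\mid\ellp$ by Proposition~\ref{prop:representability_of_H^1_f}), hence a closed subscheme of $\HH^1(G_{K,T},U)$; the finite-type assertion then passes to this closed subscheme.

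For step (i) I would invoke Theorem~\ref{thm:representability} with $G=G_{K,T}$, noting first that non-abelian inflation--restriction \cite[Lemma in~\S2.8]{jen-etal:non-abelian_tate-shafarevich} identifies $\HH^1(G_{K,T},U)$ with a subfunctor of $\HH^1(G_K,U)$. The two hypotheses of Theorem~\ref{thm:representability} hold: condition~\ref{condn:H^0} because $\HH^0(G_{K,T},V_n)=V_n^{G_{K,T}}$ is contained in $V_n^{G_v}=\HH^0(G_v,V_n)=0$ for any finite place $v$, by our standing hypothesis on the $V_n$; and condition~\ref{condn:H^1} because $\HH^1(G_{K,T},V_n)$ is finite-dimensional by the finiteness theorem for Galois cohomology of number fields with restricted ramification \cite[Theorem~8.3.20(i)]{neukirch-wingberg-schmidt:cohomology_of_number_fields}. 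Theorem~\ref{thm:representability} also gives the finite-type statement when $U$ is finite-dimensional.

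Step (ii) is the heart of the matter, and I would carry it out with explicit cocycles. Let $\Lambda$ be a $\Q_\ellp$-algebra and $\xi\colon G_K\to U(\Lambda)$ a continuous cocycle whose class lies in $\Sel_{\lSel,U}(\Lambda)$. For $v\notin T$ we have $v\nmid\ellp\infty$ and $\lSel_v=\{*\}$, so the restriction of $[\xi]$ to $G_v$ is the base point; thus $\xi|_{G_v}$ is a coboundary, and since $U$ is unramified at $v$ the inertia group $I_v$ acts trivially on $U(\Lambda)$, which forces $\xi|_{I_v}=1$. Next I would consider the subset $N\subseteq G_K$ of those $g$ with $\xi(g)=1$ that in addition act trivially on $U(\Lambda)$; a short computation from the cocycle identity $\xi(gh)=\xi(g)\cdot g(\xi(h))$ shows that $N$ is a closed normal subgroup. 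By the previous point $I_v\subseteq N$ for every $v\notin T$, so $N$ contains the closed normal subgroup generated by all these inertia groups, which is exactly $\ker(G_K\twoheadrightarrow G_{K,T})$. Hence $\xi$ factors through $G_{K,T}$, i.e.\ $[\xi]\in\HH^1(G_{K,T},U(\Lambda))$, which is what (ii) asserts.

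The main obstacle is precisely step (ii): one must upgrade the \emph{a priori} weak local triviality of $\xi$ at the places outside $T$ to the global statement that $\xi$ descends to $G_{K,T}$, and the device that makes this work is the closed-normal-subgroup argument above. Everything else --- representability of $\HH^1(G_{K,T},U)$, closedness of the local Bloch--Kato and Selmer conditions, and the finite-type claim --- is a formal consequence of the machinery of \S\ref{s:weights} together with the standard finiteness theorem for restricted-ramification cohomology.
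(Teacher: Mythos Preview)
Your proposal is correct and follows essentially the same route as the paper's proof: choose a finite set $T$ as you describe, invoke representability of $\HH^1(G_{K,T},U)$ via the finiteness of $\HH^1(G_{K,T},V_n)$, and then reduce to showing $\Sel_{\lSel,U}\subseteq\HH^1(G_{K,T},U)$ by the closed-normal-subgroup argument applied to the kernel of a cocycle intersected with the kernel of the action. Your treatment is in fact slightly more explicit than the paper's in two places --- you verify condition~\ref{condn:H^0} for $G_{K,T}$ (which the paper leaves implicit), and you spell out why $\xi|_{I_v}=1$ via the coboundary identity rather than via $\HH^1(I_v,U(\Lambda))=\Hom(I_v,U(\Lambda))$ --- but these are cosmetic differences only.
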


\subsubsection{Weight filtration on~$\Sel_{\lSel,U}$}\label{sss:global_selmer_filtration}

Now we endow the global Selmer scheme $\Sel_{\lSel,U}$ with a filtration, induced from that on~$U$. The easiest way to construct this is to note that the proof of Proposition~\ref{prop:global_selmer_representability} shows that~$\Sel_{\lSel,U}$ is a closed subscheme of~$\HH^1(G_{K,T},U)$, and so we can endow it with the induced filtration. As always, we can also characterise this filtration in terms of the functor it represents.

\begin{lemma}\label{lem:filtration_on_global_selmer}
	Let~$\lSel=(\lSel_v)_{v\nmid\ellp\infty}$ be a Selmer structure for~$U$. Endow each~$\lSel_v$ with the induced filtration as a closed subscheme of~$\HH^1(G_v,U)$ and endow the global Selmer scheme~$\Sel_{\lSel,U}$ with the induced filtration as a closed subscheme of~$\HH^1(G_{K,T},U)$, where the set~$T$ is as in the proof of Proposition~\ref{prop:global_selmer_representability}. Then the square
	\begin{center}
		\begin{tikzcd}
			\Sel_{\lSel,U} \arrow[r,hook]\arrow[d]\arrow[dr, phantom, "\scalebox{1.5}{$\lrcorner$}" , very near start, color=black] & \HH^1(G_{K,T},U) \arrow[d] \\
			\prod_{v\mid\ellp}\HH^1_f(G_v,U)\times\prod_{v\nmid\ellp\infty}\lSel_v \arrow[r,hook] & \prod_{v\nmid\infty}\HH^1(G_v,U)
		\end{tikzcd}
	\end{center}
	is a pullback square in the category of filtered affine~$\Q_\ellp$-schemes. In particular, this filtration on~$\Sel_{\lSel,U}$ is independent of the choice of~$T$.
	\begin{proof}
		This is in fact an abstract statement about filtered affine~$\Q_\ellp$-schemes: if $f\colon X\to Y$ is a morphism of filtered affine~$\Q_\ellp$-schemes and $Z\subseteq Y$ is a closed subscheme with the induced filtration, then the induced map $X\times_YZ\hookrightarrow X$ is the inclusion of a closed subscheme with the induced filtration. This is easily verified on the level of affine rings.
	\end{proof}
\end{lemma}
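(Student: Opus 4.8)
The plan is to follow the route indicated in the author's sketch: isolate a purely formal statement about filtered affine $\Fd$-schemes, verify it on affine rings, and then assemble the pieces. Write $X = \HH^1(G_{K,T},U)$, $Y = \prod_{v\nmid\infty}\HH^1(G_v,U)$ and $Z = \prod_{v\mid\ellp}\HH^1_f(G_v,U)\times\prod_{v\nmid\ellp\infty}\lSel_v$, and let $f\colon X\to Y$ be the restriction morphism. First I would record that $Z$ is a closed subscheme of $Y$ carrying the induced filtration: the factors with $v\mid\ellp$ are handled by Proposition~\ref{prop:representability_of_H^1_f_filtered}, the finitely many factors with $\lSel_v\neq\{\ast\}$ by the definition of a Selmer structure, and each remaining factor is the basepoint $\{\ast\}\hookrightarrow\HH^1(G_v,U)$, whose induced filtration is the trivial filtration on $\Fd$ (since $1\in\W_0\O(\HH^1(G_v,U))$ maps to $1\neq0$), i.e.\ the basepoint with its own filtration. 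That an arbitrary product of closed-subschemes-with-induced-filtration is again one reduces, via the description of products in $\WAFF_{\Fd}$ as filtered colimits of filtered tensor products together with the fact that $\W_n$ commutes with filtered colimits, to the binary case, which is immediate on affine rings (quotient filtrations of images are images of images). Granting the formal lemma below, $X\times_Y Z$ is then $f^{-1}(Z^\circ)\subseteq X$ with its induced filtration; since on underlying schemes this preimage is the scheme $\Sel_{\lSel,U}$ produced in the proof of Proposition~\ref{prop:global_selmer_representability}, the square is a pullback in $\WAFF_{\Fd}$.

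The formal lemma is: if $f\colon X\to Y$ is a morphism in $\WAFF_{\Fd}$ and $Z\subseteq Y$ is a closed subscheme with the induced filtration, then $X\times_Y Z$ exists in $\WAFF_{\Fd}$ and the projection $X\times_Y Z\to X$ is a closed immersion onto $f^{-1}(Z^\circ)$ carrying the induced filtration. Dually, set $A=\O(X)$, $B=\O(Y)$, $I=\ker(B\twoheadrightarrow\O(Z))$, so that $\W_\bullet\O(Z)$ is the image of $\W_\bullet B$, and let $A':=A/f^*(I)A$ with the quotient filtration, so that $A'=\O(f^{-1}(Z^\circ))$ with the induced filtration. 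I claim that $A'$, with the maps $A\twoheadrightarrow A'$ and $\O(Z)=B/I\to A'$ induced by $f^*$ (well-defined since $f^*(I)\subseteq f^*(I)A$), is the pushout of $A\xleftarrow{\,f^*\,}B\twoheadrightarrow B/I$ in $\WALG_{\Fd}$. Indeed, given $\Lambda\in\WALG_{\Fd}$ and filtered maps $g\colon A\to\Lambda$ and $h\colon B/I\to\Lambda$ with $g\circ f^*=h\circ(B\twoheadrightarrow B/I)$, the ring map $g$ annihilates $f^*(I)$, hence $f^*(I)A$, hence factors uniquely as $\bar g\circ(A\twoheadrightarrow A')$; and $\bar g$ is filtered because $\bar g(\W_n A')=g(\W_n A)\subseteq\W_n\Lambda$. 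Uniqueness of $\bar g$ as a ring (a fortiori filtered) map is clear, and the triangle involving $h$ commutes by surjectivity of $B\to B/I$. Dualizing gives the lemma.

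For the asserted independence of $T$, take admissible $T\subseteq T'$ (the general case follows by comparing each of $T,T'$ with $T\cup T'$). Every $v\in T'\setminus T$ is a finite place prime to $\ellp$ at which $U$ is unramified, so by Lemma~\ref{lem:unramified_trivial} the subfunctor $\HH^1(G_{K,T},U)\subseteq\HH^1(G_{K,T'},U)$ is exactly the preimage of the basepoint subscheme $\prod_{v\in T'\setminus T}\{\ast\}$ under restriction; as that basepoint again carries the trivial filtration, the formal lemma exhibits $\HH^1(G_{K,T},U)\hookrightarrow\HH^1(G_{K,T'},U)$ as a closed immersion with the induced filtration, and since induced filtrations compose (clear on affine rings) the filtration $\Sel_{\lSel,U}$ inherits from $\HH^1(G_{K,T},U)$ equals the one it inherits from $\HH^1(G_{K,T'},U)$. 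The only point requiring genuine care in this argument is the passage from the binary to the infinite case for products of closed-subscheme-with-induced-filtration, i.e.\ that forming quotient filtrations commutes with the (completed) filtered tensor products computing infinite products in $\WAFF_{\Fd}$; everything else is routine universal-property manipulation on affine rings.
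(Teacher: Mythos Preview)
Your proof is correct and follows the same route as the paper's: both reduce the pullback claim to the formal lemma that the filtered fibre product $X\times_YZ$ of a filtered morphism $f\colon X\to Y$ along a closed subscheme $Z\subseteq Y$ with induced filtration is $f^{-1}(Z^\circ)\hookrightarrow X$ with its induced filtration, verified on affine rings. You supply the details the paper omits---checking that the bottom-left corner $Z$ really carries the induced filtration from~$Y$ (including the infinite-product step you flag), spelling out the pushout argument for the formal lemma, and giving an explicit $T\subseteq T'$ comparison for independence of~$T$---whereas the paper simply asserts the formal lemma and leaves the ``in particular'' implicit.
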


To conclude this section, we give a bound on the Hilbert series of the global Selmer scheme $\Sel_{\lSel,U}$, analogous to the bound in Corollary~\ref{cor:cohomology_hilbert_series_bound}.

\begin{lemma}\label{lem:hilbert_series_bound_global}
	Let~$\lSel$ be a Selmer structure for~$U$. Then the global Selmer scheme $\Sel_{\lSel,U}$ is (non-canonically) a closed subscheme of
	\[
	\prod_{n>0}\HH^1_f(G_K,V_n)\times\prod_{v\nmid\ellp\infty}\lSel_v \,,
	\]
	with the induced filtration. Here, $\lSel_v$ is given the induced filtration from~$\HH^1(G_v,U)$, and $\HH^1_f(G_K,V_n)$ denotes the $\Q_\ellp$-vector space of Galois cohomology classes which are crystalline at all places above~$\ellp$ and unramified at all other places, given the filtration supported in filtration-degree~$-n$. In particular, we have
	\[
	\HS_{\Sel_{\lSel,U}} \preceq \prod_{v\nmid\ellp\infty}\HS_{\lSel_v}\times\prod_{n>0}(1-t^n)^{-\dim_{\Q_\ellp}\HH^1_f(G_K,V_n)} \,.
	\]
\end{lemma}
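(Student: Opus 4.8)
The plan is to imitate the deduction of Corollary~\ref{cor:cohomology_hilbert_series_bound} from the inductive construction in Theorem~\ref{thm:representability}, now carrying the local Selmer conditions through the induction. Write $U=\varprojlim_nU_n$, let $\lSel_{v,n}$ be the scheme-theoretic image of $\lSel_v$ in $\HH^1(G_v,U_n)$, and let $\Sel_{\lSel,U_n}\subseteq\HH^1(G_{K,T},U_n)$ be the preimage of $\prod_{v\mid\ellp}\HH^1_f(G_v,U_n)\times\prod_{v\nmid\ellp\infty}\lSel_{v,n}$ under restriction to decomposition groups, with $T$ as in Proposition~\ref{prop:global_selmer_representability}. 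One first checks the easy facts that $\lSel_v=\varprojlim_n\lSel_{v,n}$ and $\Sel_{\lSel,U}=\varprojlim_n\Sel_{\lSel,U_n}$ in $\WAFF_{\Q_\ellp}$. It then suffices to construct, for each $n$, a closed immersion of filtered affine $\Q_\ellp$-schemes
\[
\iota_n\colon\Sel_{\lSel,U_n}\hookrightarrow\prod_{m=1}^n\HH^1_f(G_K,V_m)\times\prod_{v\nmid\ellp\infty}\lSel_{v,n}
\]
onto a closed subscheme with the induced filtration, compatibly with the evident projections to the $(n-1)$-st stage; passing to the inverse limit yields the displayed closed immersion, and the Hilbert series bound follows from parts~(1) and~(2) of Lemma~\ref{lem:hilbert_series_algebra}.

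For the inductive step (the base case $n=0$ being trivial) I would proceed as follows. Theorem~\ref{thm:representability} applied to $1\to V_n\to U_n\to U_{n-1}\to1$ gives a splitting $\HH^1(G_{K,T},U_n)\cong\ker(\delta_n)\times\HH^1(G_{K,T},V_n)$ in $\WAFF_{\Q_\ellp}$ with $\ker(\delta_n)$ closed in $\HH^1(G_{K,T},U_{n-1})$, and a direct check on the local conditions shows that the projection carries $\Sel_{\lSel,U_n}$ into $K_n:=\Sel_{\lSel,U_{n-1}}\cap\ker(\delta_n)$, a closed subscheme of $\Sel_{\lSel,U_{n-1}}$ with the induced filtration. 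Since $\HH^1(G_{K,T},V_n)$ carries the filtration supported in degree $-n$ (Corollary~\ref{cor:abelian_representability}), one may freely decompose it into filtered subspaces $\HH^1_f(G_K,V_n)\oplus W_n\oplus W_n'$, where $\HH^1_f(G_K,V_n)$ is the strict Selmer group, $\HH^1_f(G_K,V_n)\oplus W_n$ is the subspace of classes crystalline at all $v\mid\ellp$ (so that localization embeds $W_n$ into $\prod_{v\nmid\ellp\infty}\HH^1(G_v,V_n)$, using Lemma~\ref{lem:unramified_trivial} to kill the places where $\lSel_v=\{*\}$), and $W_n'$ is an arbitrary complement. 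Using the splitting $\HH^1_f(G_v,U_n)\cong\HH^1_f(G_v,U_{n-1})\times\HH^1_f(G_v,V_n)$ of Corollary~\ref{cor:hilbert_series_of_local_selmer}, restriction of $\xi\in\Sel_{\lSel,U_n}$ to the places above $\ellp$ pins down the $W_n'$-coordinate of $\xi$ as a \emph{filtered} morphism of its image $\bar\xi\in K_n$; so $\Sel_{\lSel,U_n}$ sits as a closed subscheme with the induced filtration inside $K_n\times\HH^1_f(G_K,V_n)\times W_n$. Finally the conditions at the remaining finite places are exactly $\loc_v(\xi)\in\lSel_{v,n}$, and since $\loc_v(\xi)$ differs from a filtered function of $\bar\xi$ by the torsor-translation by the $W_n$-coordinate, a filtered shearing identifies $W_n$ with the tuple $(\loc_v(\xi))_{v\nmid\ellp\infty}\in\prod_v\lSel_{v,n}$, up to data pulled back from $K_n$. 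Composing with $\iota_{n-1}$ and discarding the now-redundant $\prod_v\lSel_{v,n-1}$-coordinates (they are the images of the $\prod_v\lSel_{v,n}$-coordinates under $\lSel_{v,n}\to\lSel_{v,n-1}$) produces $\iota_n$.

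The hard part is the last two steps: a priori the crystalline and Selmer conditions at the finite places cut out \emph{affine} rather than linear subvarieties of the ``new direction'' $\HH^1(G_{K,T},V_n)$, and the splitting of $\HH^1(G_{K,T},U_n)$ need not be compatible under restriction with the splittings of the local schemes $\HH^1_f(G_v,U_n)$. The key observation that makes the bookkeeping work is that this incompatibility is measured by a \emph{filtered} morphism out of the lower stage $K_n$, so that replacing subspaces by graphs of such morphisms and shearing by them preserves closed immersions of filtered schemes; one must also check, using the compatibility of $\iota_n$ with the projection to stage $n-1$ together with the inclusion $\O(\lSel_{v,n-1})\subseteq\O(\lSel_{v,n})$, that the contributions of the $\lSel_v$ are not double-counted along the induction. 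A cleaner but essentially equivalent packaging is via twisting: the fibre of $\Sel_{\lSel,U}\to\prod_v\lSel_v$ over a point is the strict global Selmer scheme of a twist of $U$, whose associated graded pieces are again the $V_n$, so it has Hilbert series $\preceq\prod_n(1-t^n)^{-\dim\HH^1_f(G_K,V_n)}$ uniformly in the point.
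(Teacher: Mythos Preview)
Your inductive strategy is in the right spirit, but the paper organizes the induction differently in a way that sidesteps exactly the ``hard part'' you identify. Rather than truncating the local conditions to $\lSel_{v,n}$ at each stage, the paper defines auxiliary schemes $\Sel_{\lSel,n}$ as the fibre product
\[
\begin{tikzcd}
\Sel_{\lSel,n} \arrow[r]\arrow[d]\arrow[dr, phantom, "\scalebox{1.5}{$\lrcorner$}" , very near start] & \HH^1(G_{K,T},U_n) \arrow[d] \\
\prod_{v\mid\ellp}\HH^1_f(G_v,U_n)\times\prod_{v\nmid\ellp\infty}\lSel_v \arrow[r] & \prod_{v\nmid\infty}\HH^1(G_v,U_n)
\end{tikzcd}
\]
carrying the \emph{full} $\lSel_v$ (not its image in $\HH^1(G_v,U_n)$) along at every level. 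A $\Lambda$-point of $\Sel_{\lSel,n}$ is thus a pair: a global class $[\xi]\in\HH^1(G_{K,T},U_n(\Lambda))$ together with local classes $[\xi_v]\in\lSel_v(\Lambda)$ for $v\nmid\ellp\infty$, compatible modulo $\W_{-n-1}U$. The point is that the base case $\Sel_{\lSel,0}$ is then just $\prod_{v\nmid\ellp\infty}\lSel_v$, and the local Selmer data plays a purely passive role in the induction.

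The inductive step (Lemma~\ref{lem:hilbert_series_bound_global_step}) then only has to analyse the global direction: one shows that the pointwise image of $\Sel_{\lSel,n}\to\Sel_{\lSel,n-1}$ is a closed subscheme with the induced filtration, and that $\HH^1_f(G_K,V_n)$ acts freely and transitively on the fibres. The proof constructs an obstruction map $\delta'$ into the cokernel of $\HH^1(G_{K,T},V_n)\to\prod_{v\mid\ellp}\HH^1(G_v,V_n)/\HH^1_f\times\prod_{v\nmid\ellp\infty}\HH^1(G_v,V_n)$; its kernel is the image of the transition map, and the stabiliser computation gives exactly $\HH^1_f(G_K,V_n)$. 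No shearing, no discarding of redundant coordinates, no comparison of local and global splittings is needed.

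Your approach might be salvageable, but the step where you ``identify $W_n$ with the tuple $(\loc_v(\xi))_v\in\prod_v\lSel_{v,n}$'' is doing a lot of work and is not obviously a closed immersion with the induced filtration: $W_n$ has filtration concentrated in degree $-n$, whereas $\prod_v\lSel_{v,n}$ has a genuinely mixed filtration, and the map you describe is an affine translate depending on $\bar\xi$ rather than a linear embedding. The paper's reorganization makes this issue disappear entirely.
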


For our proof of Lemma~\ref{lem:hilbert_series_bound_global}, we introduce some auxiliary notation. We fix a set~$T$ as in the proof of Proposition~\ref{prop:global_selmer_representability}, and for~$n\geq0$, we define~$\Sel_{\lSel,n}$ to be the pullback
\begin{center}
	\begin{tikzcd}
		\Sel_{\lSel,n} \arrow[r]\arrow[d]\arrow[dr, phantom, "\scalebox{1.5}{$\lrcorner$}" , very near start, color=black] & \HH^1(G_{K,T},U_n) \arrow[d] \\
		\prod_{v\mid\ellp}\HH^1_f(G_v,U_n)\times\prod_{v\nmid\ellp\infty}\lSel_v \arrow[r] & \prod_{v\nmid\infty}\HH^1(G_v,U_n)
	\end{tikzcd}
\end{center}
in the category of filtered affine $\Q_\ellp$-schemes. That is, a~$\Lambda$-point of~$\Sel_{\lSel,n}$ consists of a global cohomology class $[\xi]\in\HH^1(G_{K,T},U_n(\Lambda))$ and local cohomology classes $[\xi_v]\in\lSel_v(\Lambda)\subseteq\HH^1(G_v,U(\Lambda))$ for each $v\nmid\ellp\infty$, such that $[\xi]|_{G_v}\in\HH^1_f(G_v,U_n(\Lambda))$ for all $v\mid\ellp$ and $[\xi]|_{G_v}$ is congruent to $[\xi_v]$ modulo $\W_{-n-1}U$ for all $v\nmid\ellp\infty$.

There are natural maps $\Sel_{\lSel,n}\to\Sel_{\lSel,n-1}$ for every~$n>0$, and it follows from Lemma~\ref{lem:unramified_trivial} that there is an action of~$\HH^1_f(G_K,V_n)$ on~$\Sel_{\lSel,n}$, induced from the natural pointwise multiplication actions on $\HH^1(G_{K,T},U_n)$, $\HH^1(G_v,U_n)$ and $\HH^1_f(G_v,U_n)$ and the trivial action on~$\lSel_v$. The bulk of the proof of Lemma~\ref{lem:hilbert_series_bound_global} is then contained in the following lemma.

\begin{lemma}\label{lem:hilbert_series_bound_global_step}
	The pointwise image of the map $\Sel_{\lSel,n}\to\Sel_{\lSel,n-1}$ is a closed subscheme of~$\Sel_{\lSel,n-1}$ with the induced filtration, and $\HH^1_f(G_K,V_n)$ acts freely and simply transitively on the fibres of $\Sel_{\lSel,n}\to\Sel_{\lSel,n-1}$.
	\begin{proof}
		First, recall from the proof of Theorem~\ref{thm:representability} that there is a coboundary map
		\[
		\delta\colon\HH^1(G_{K,T},U_{n-1})\to\HH^2(G_{K,T},V_n)
		\]
		whose kernel is the pointwise image of $\HH^1(G_{K,T},U_n)\to \HH^1(G_{K,T},U_{n-1})$. It follows that the image of $\Sel_{\lSel,n}\to\Sel_{\lSel,n-1}$ is contained in the kernel $\Sel_{\lSel,n-1}'$ of the composite map
		\[
		\Sel_{\lSel,n-1} \to \HH^1(G_{K,T},U_{n-1}) \xrightarrow\delta \HH^2(G_{K,T},V_n) \,.
		\]
		By construction, $\Sel_{\lSel,n-1}'$ is a closed subscheme of~$\Sel_{\lSel,n-1}$ with the induced filtration.
		
		Next, we let~$V$ denote the cokernel of the restriction map
		\begin{equation}\label{eq:global_selmer_cutout}\tag{$\ast$}
		\HH^1(G_{K,T},V_n)\to\prod_{v\mid\ellp}\left(\HH^1(G_v,V_n)/\HH^1_f(G_v,V_n)\right)\times\prod_{v\nmid\ellp\infty}\HH^1(G_v,V_n)
		\end{equation}
		of $\Q_\ellp$-vector spaces. We endow~$V$ with the filtration supported in filtration-degree $-n$, and regard~$V$ as a filtered affine $\Q_\ellp$-scheme as in Example~\ref{ex:filtered_vs}. We will define a map
		\[
		\delta'\colon\Sel_{\lSel,n-1}'\to V
		\]
		of filtered affine $\Q_\ellp$-schemes whose kernel is the pointwise image of $\Sel_{\lSel,n}\to\Sel_{\lSel,n-1}$ -- this proves the proposition. In this, we regard the vector space~$V$ as a filtered affine $\Q_\ellp$-scheme by giving the filtration supported in filtration-degree~$-n$, as in Example~\ref{ex:filtered_vs}.
		
		The map~$\delta'$ is constructed as follows. Suppose that $([\xi],[\xi_v]_{v\nmid\ellp\infty})$ is a $\Lambda$-valued point of~$\Sel_{\lSel,n-1}'$. The condition that this point lies in~$\Sel_{\lSel,n-1}'$ says exactly that~$[\xi]$ is the image of a class $[\xi']\in\HH^1(G_{K,T},U_n(\Lambda))$. Now for every $v\nmid\ellp\infty$, the classes $[\xi']|_{G_v}$ and $[\xi_v]$ map to the same class in~$\HH^1(G_v,U_{n-1}(\Lambda))$, so differ by the action of a unique element $[\eta_v]\in\HH^1(G_v,V_n(\Lambda))$.
		
		Similarly, for every~$v\mid\ellp$, we know that the image of~$[\xi']|_{G_v}$ in $\HH^1(G_v,U_{n-1}(\Lambda))$ lies in $\HH^1_f(G_v,U_{n-1}(\Lambda))$. It follows from Proposition~\ref{prop:bk_log_is_filtered_iso} that $\HH^1_f(G_v,U_n(\Lambda))$ surjects onto~$\HH^1_f(G_v,U_{n-1}(\Lambda))$, and that~$\HH^1_f(G_v,V_n(\Lambda))$ acts freely and transitively on the fibres of $\HH^1_f(G_v,U_n(\Lambda))\to\HH^1_f(G_v,U_{n-1}(\Lambda))$. It follows that~$[\xi']|_{G_v}$ differs from an element of~$\HH^1_f(G_v,U_n(\Lambda))$ by the action of some element~$[\eta_v]\in\HH^1(G_v,V_n(\Lambda))$, and that the element~$[\eta_v]$ is unique modulo~$\HH^1_f(G_v,V_n(\Lambda))$.
		
		We then define~$\delta'([\xi],[\xi_v]_{v\nmid\ellp\infty})$ to be the image of $([\eta_v]_{v\nmid\infty})$ inside~$V(\Lambda)$. The choice of~$[\xi']\in\HH^1(G_{K,T},U_n(\Lambda))$ in the above construction is unique up to multiplication by an element of~$\HH^1(G_{K,T},V_n(\Lambda))$, so by construction $\delta'([\xi],[\xi_v]_{v\nmid\ellp\infty})$ is independent of this choice. We have thus described a map
		\[
		\delta'\colon\Sel_{\lSel,n-1}'(\Lambda)\to V(\Lambda)
		\]
		natural in~$\Lambda$, and hence the desired morphism $\delta'\colon\Sel_{\lSel,n-1}'\to V$ of filtered affine $\Q_\ellp$-schemes.
		
		Now by construction, a point of~$([\xi],[\xi_v]_{v\in T_0})\in\Sel_{\lSel,n-1}'(\Lambda)$ lies in the kernel of~$\delta'$ if~$[\xi]$ has a lift to~$[\xi']\in\HH^1(G_{K,T},U_n(\Lambda))$ for which all the elements~$[\eta_v]$ vanish. But this is equivalent to saying that $([\xi'],[\xi_v]_{v\in T_0})\in\Sel_{\lSel,n}(\Lambda)$, so the kernel of~$\delta'$ is the pointwise image of~$\Sel_{\lSel,n}\to\Sel_{\lSel,n-1}$ as claimed.
		
		\smallskip
		
		Finally, we observe that the choice of~$[\xi']$ for which all the~$[\eta_v]$ vanish is well-defined up to the action of the kernel of~\eqref{eq:global_selmer_cutout}, which is~$\HH^1_f(G_K,V_n(\Lambda))$ by Lemma~\ref{lem:unramified_trivial}. This shows that the action of~$\HH^1_f(G_v,V_n)$ on $\Sel_{\lSel,n}$ is pointwise transitive on the fibres of~$\Sel_{\lSel,n}\to\Sel_{\lSel,n-1}$. That it is free follows from the fact that~$\HH^1(G_{K,T},V_n)$ acts freely on~$\HH^1(G_{K,T},U_n)$.
	\end{proof}
\end{lemma}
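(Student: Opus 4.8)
The plan is to exhibit the pointwise image of $\Sel_{\lSel,n}\to\Sel_{\lSel,n-1}$ as the kernel of a two-stage obstruction and then to read off the fibre structure from the same construction. The first stage is already available: from the proof of Theorem~\ref{thm:representability} we have the coboundary map $\delta\colon\HH^1(G_{K,T},U_{n-1})\to\HH^2(G_{K,T},V_n)$, whose pointwise kernel is the image of $\HH^1(G_{K,T},U_n)\to\HH^1(G_{K,T},U_{n-1})$ and which is a morphism from a representable functor to a subfunctor of a representable one. Pulling $\delta$ back along $\Sel_{\lSel,n-1}\to\HH^1(G_{K,T},U_{n-1})$ and taking the kernel produces a closed subscheme $\Sel_{\lSel,n-1}'\subseteq\Sel_{\lSel,n-1}$ with the induced filtration, and the pointwise image of $\Sel_{\lSel,n}\to\Sel_{\lSel,n-1}$ visibly lands in $\Sel_{\lSel,n-1}'$, since any point in the image admits a global lift to level~$n$.

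The second stage measures the failure of such a global lift to satisfy the local conditions. I would let $V$ denote the cokernel of the restriction map~\eqref{eq:global_selmer_cutout}, i.e.\ of
\[
\HH^1(G_{K,T},V_n)\to\prod_{v\mid\ellp}\bigl(\HH^1(G_v,V_n)/\HH^1_f(G_v,V_n)\bigr)\times\prod_{v\nmid\ellp\infty}\HH^1(G_v,V_n)\,;
\]
since the $G_K$-action on~$V_n$ is unramified outside~$T$ and Frobenius acts there without the eigenvalue~$1$, the factors with $v\notin T$ vanish, so~$V$ is finite-dimensional, hence representable, with filtration supported in degree~$-n$. Given a $\Lambda$-point $([\xi],[\xi_v]_v)$ of~$\Sel_{\lSel,n-1}'$, choose a lift $[\xi']\in\HH^1(G_{K,T},U_n(\Lambda))$ of~$[\xi]$. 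For $v\nmid\ellp\infty$ the classes $[\xi']|_{G_v}$ and $[\xi_v]$ have equal image in $\HH^1(G_v,U_{n-1}(\Lambda))$, so differ by a unique $[\eta_v]\in\HH^1(G_v,V_n(\Lambda))$ (freeness of the $\HH^1(G_v,V_n)$-action from Theorem~\ref{thm:representability}); for $v\mid\ellp$, Proposition~\ref{prop:bk_log_is_filtered_iso} gives that $\HH^1_f(G_v,U_n)\twoheadrightarrow\HH^1_f(G_v,U_{n-1})$ with $\HH^1_f(G_v,V_n)$ acting freely and transitively on the fibres, so $[\xi']|_{G_v}$ differs from an element of $\HH^1_f(G_v,U_n(\Lambda))$ by a class $[\eta_v]$, unique modulo $\HH^1_f(G_v,V_n(\Lambda))$. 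Setting $\delta'([\xi],[\xi_v]_v)$ to be the image of $([\eta_v]_v)$ in $V(\Lambda)$ is independent of the choice of~$[\xi']$, because changing $[\xi']$ by $[c]\in\HH^1(G_{K,T},V_n(\Lambda))$ changes each $[\eta_v]$ by the restriction of~$[c]$, which lies in the image of~\eqref{eq:global_selmer_cutout}; this is natural in~$\Lambda$, so $\delta'\colon\Sel_{\lSel,n-1}'\to V$ is a filtered morphism. By construction a point lies in $\ker(\delta')$ exactly when it admits a lift $[\xi']$ with all $[\eta_v]$ vanishing, i.e.\ when $([\xi'],[\xi_v]_v)\in\Sel_{\lSel,n}(\Lambda)$; hence $\ker(\delta')$ is the pointwise image of $\Sel_{\lSel,n}\to\Sel_{\lSel,n-1}$, and it is a closed subscheme of~$\Sel_{\lSel,n-1}'$, hence of~$\Sel_{\lSel,n-1}$, with the induced filtration. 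This settles the first assertion.

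For the fibre statement, fix a $\Lambda$-point $P=([\xi],[\xi_v]_v)$ of~$\ker(\delta')$. Since $\Sel_{\lSel,n}\to\Sel_{\lSel,n-1}$ leaves the local components $[\xi_v]$ unchanged, its fibre over~$P$ is identified with the set of lifts $[\xi']\in\HH^1(G_{K,T},U_n(\Lambda))$ of~$[\xi]$ with $[\xi']|_{G_v}\in\HH^1_f(G_v,U_n(\Lambda))$ for $v\mid\ellp$ and $[\xi']|_{G_v}$ equal to the image of $[\xi_v]$ for $v\nmid\ellp\infty$. The set of all lifts of~$[\xi]$ is a torsor under $\HH^1(G_{K,T},V_n(\Lambda))$, and inside it the ``valid'' lifts form a torsor under the kernel of~\eqref{eq:global_selmer_cutout}, which by Lemma~\ref{lem:unramified_trivial} is exactly $\HH^1_f(G_K,V_n(\Lambda))$ (the unramified-outside-$T$ condition being automatic since $\HH^1(G_v,V_n)=0$ there). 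This torsor is non-empty precisely because $P\in\ker(\delta')$, and the $\HH^1_f(G_K,V_n)$-action on it is free since $\HH^1(G_{K,T},V_n)$ acts freely on $\HH^1(G_{K,T},U_n)$; hence $\HH^1_f(G_K,V_n)$ acts freely and simply transitively on each fibre of $\Sel_{\lSel,n}\to\Sel_{\lSel,n-1}$ over the image, as claimed.

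I expect the main obstacle to be the construction and well-definedness of the secondary obstruction~$\delta'$ — in particular checking that it descends to the cokernel~$V$ independently of the choice of lift and of the representing cocycles — together with the identification of the relevant kernel-torsor with the Bloch--Kato Selmer group $\HH^1_f(G_K,V_n)$ via Lemma~\ref{lem:unramified_trivial}. Compatibility with the weight filtrations is then automatic, since~$V$ is given the filtration in degree~$-n$ and every map involved (the two coboundary-type maps and the projections) is a filtered morphism, so all kernels and pullbacks carry the induced filtration.
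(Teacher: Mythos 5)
Your proposal follows essentially the same route as the paper: the closed subscheme $\Sel_{\lSel,n-1}'$ cut out by the coboundary $\delta$, the secondary obstruction $\delta'$ into the cokernel $V$ of the restriction map~\eqref{eq:global_selmer_cutout}, well-definedness via changing the lift $[\xi']$ by $\HH^1(G_{K,T},V_n(\Lambda))$, and the identification of the fibres as torsors under $\ker$\eqref{eq:global_selmer_cutout} $=\HH^1_f(G_K,V_n)$ via Lemma~\ref{lem:unramified_trivial}, with freeness inherited from the free action of $\HH^1(G_{K,T},V_n)$ on $\HH^1(G_{K,T},U_n)$. One side claim is wrong, though inessential: for $v\notin T$ the group $\HH^1(G_v,V_n)$ does \emph{not} vanish in general (e.g.\ $\HH^1(G_v,\Q_\ellp(1))\neq0$); Lemma~\ref{lem:unramified_trivial} only kills the \emph{unramified} classes, so $V$ may be infinite-dimensional. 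This does not harm the argument --- one simply treats $V$ as a filtered affine scheme (or $\delta'$ as a map to a subfunctor of a representable functor, as in Corollary~\ref{cor:abelian_representability}) rather than invoking finite-dimensionality.
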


\begin{remark}\label{rmk:global_hilbert_series_bound_for_naive}
	For the particular Selmer scheme $\HH^1_{f,T_0}(G_K,U)$ from Example~\ref{ex:naive_selmer}, a similar argument to the proof of Lemma~\ref{lem:hilbert_series_bound_global} gives the bound
	\[
	\HS_{\HH^1_{f,T_0}(G_K,U)}(t) \preceq \prod_{n>0}(1-t^n)^{-\dim_{\Q_\ellp}\HH^1_{f,T_0}(G_K,V_n)} \,,
	\]
	where~$\HH^1_{f,T_0}(G_K,V_n)$ denotes the subgroup of the cohomology group consisting of cohomology classes unramified outside $T_0\cup\{v\mid\ellp\}$ and crystalline at $\ellp$-adic places.
\end{remark}
\section{Weights and Coleman analytic functions}\label{s:coleman}

Let~$K_v$ be a finite extension of $\Q_\ellp$ with ring of integers $\O_v$ and residue field~$k_v$. Let~$Y$ be a smooth hyperbolic curve over~$K_v$, and write $Y=X\setminus D$ where~$D$ is a divisor in a smooth proper curve~$X$. Assume that~$Y$ has good reduction at~$\ellp$, that is, there is a smooth proper curve $\X/\O_v$ and an \'etale divisor $\Dvsr\subseteq\X$ whose generic fibres are~$X$ and~$D$ respectively. We write~$\Y=\X\setminus\Dvsr$, and fix a choice of base point~$b\in\Y(\O_v)$.

We write $\Y_0$ for the special fibre of~$\Y$, and write $]\Y_0[$ for the tube of~$\Y_0$ inside~$X$ \cite[\S1]{berthelot:cohomologie_rigide}. This is an admissible open in the rigid analytification~$X^\an$ of~$X$, whose~$K_v$-points is equal to the $\O_v$-points of~$\Y$ (i.e.\ the complement of the residue discs of each $K_v$-point of~$D$).
\smallskip

In this setup, one has the ring $\ACol(\Y)$ of \emph{Coleman analytic functions} on~$\Y$, as constructed by Besser \cite{besser:tannakian}. This is a subring of the locally $K_v$-analytic functions on $]\Y_0[$ which contains all overconvergent $K_v$-rigid-analytic functions.

The ring $\ACol(\Y)$ plays an important role in the non-abelian Chabauty method. Specifically, if~$U^\dR/K_v$ denotes the pro-unipotent de Rham fundamental group of~$(Y,b)$, then there is a certain map
\[
\jj_\dR\colon\Y(\O_v)\to\Fil^0\backslash U^\dR(K_v)
\]
known as the \emph{de Rham non-abelian Kummer map}, which is defined in terms of the crystalline Frobenius and Hodge filtration on the de Rham fundamental groupoid of~$Y$. The coordinates of the de Rham non-abelian Kummer map are known to be Coleman analytic functions, and this is ultimately how one proves Faltings--Siegel-type results in the Chabauty--Kim method: by exploiting the fact that a non-zero Coleman analytic function has only finitely many zeroes.
\smallskip

Our goal in this section is to explain how to enrich this picture to take account of weight filtrations. We will introduce a certain subring $\ACollog(Y)\subseteq \ACol(\Y)$ of \emph{Coleman algebraic functions}, and endow it with a natural weight filtration. We will show that the coordinates of the de Rham non-abelian Kummer map are in fact Coleman \emph{algebraic}, and that the weight filtration on $\ACollog(Y)$ is compatible with the natural weight filtration on $\Fil^0\backslash U^\dR$. In the next section, we will show that the number of zeroes of a non-zero Coleman algebraic function can be bounded in terms of its weight: this is ultimately how we produce effective results of Faltings--Siegel type in our effective Chabauty--Kim method.

\begin{remark}
	The whole ring $\ACol(\Y)$ of Coleman analytic functions on~$\Y$ also carries a natural weight filtration, but this is much less useful in the theory. For instance, $\W_0\ACol(\Y)$ is the ring of overconvergent $K_v$-rigid analytic functions on~$]\Y_0[$, so one cannot put an upper bound on the number of zeroes of a general Coleman analytic function in terms of its weight. By contrast, we have $\W_0\ACollog(Y)=K_v$ is just the constant functions.
\end{remark}

\subsection{Coleman analytic functions}

\subsubsection{Isocrystals and Coleman analytic functions}

To begin with, we recall Besser's definition of the ring $\ACol(\Y)$ of Coleman analytic functions on~$\Y$ \cite[\S4]{besser:tannakian}\footnote{Officially, we mean that~$\ACol(\Y)$ is the ring of Coleman analytic functions associated to the rigid triple $T=(\Y_0,\X_0,\hat\X)$, where~$\hat\X$ denotes the formal completion of~$\X$ along its special fibre.}. Let~$\Isoc^\un_{K_v}(\Y_0)$ denote the Tannakian category of \emph{unipotent isocrystals} on~$\Y_0$, in the sense of \cite[p.~6]{besser:tannakian}. These are coherent modules over the sheaf $\jota^\dagger\O_{X^\an}$ of overconvergent rigid-analytic functions on~$X^\an$ endowed with a connection~$\nabla$, which are iterated extensions of the unit object $(\jota^\dagger\O_{X^\an},\d)$.

If~$\E=(\E,\nabla)$ is a unipotent isocrystal on~$\Y_0$ and $y_0\in\Y_0(k_v)$ is a point on the special fibre, we write $(\E|_{]y_0[})^{\nabla=0}$ for the $K_v$-vector space of flat sections of~$\E$ over the residue disc~$]y_0[$. If~$x_0\in\Y_0(k_v)$ is another point, then there is a canonical $K_v$-linear isomorphism
\[
T_{x_0,y_0}^\nabla\colon(\E|_{]x_0[})^{\nabla=0}\xrightarrow\sim(\E|_{]y_0[})^{\nabla=0} \,,
\]
known as \emph{analytic continuation along Frobenius} \cite[Definition~3.5]{besser:tannakian}, uniquely characterised by the fact that~$T_{x_0,y_0}^\nabla$ is tensor-natural in~$\E$ and compatible with Frobenius \cite[Corollary~3.3]{besser:tannakian}.
\smallskip

Using the category~$\Isoc^\un_{K_v}(\Y_0)$ of unipotent isocrystals on~$\Y_0$, Besser defines a ring~$\ACol(\Y)$ as follows.

\begin{definition}\label{def:coleman_fns}
	An \emph{abstract Coleman analytic function} \cite[Definition~4.1]{besser:tannakian} is a triple $(\E,\sigma,\tau)$ consisting of:
	\begin{itemize}
		\item a unipotent isocrystal~$\E\in\Isoc^\un_{K_v}(\Y_0)$;
		\item an $\jota^\dagger\O_{X^\an}$-linear map $\tau\colon\E\to\jota^\dagger\O_{X^\an}$; and
		\item for every point $y_0\in\Y_0(k_v)$, a flat section $\sigma_{y_0}\in(\E|_{]y_0[})^{\nabla=0}$ of~$\E$ over the residue disc of~$y_0$;
	\end{itemize}
	such that $\sigma_{y_0}=T^\nabla_{x_0,y_0}(\sigma_{x_0})$ for all~$x_0,y_0\in\Y_0(k_v)$.
	
	The ring $\ACol(\Y)$ of \emph{Coleman analytic functions} \cite[Definition~4.3]{besser:tannakian} is defined to be the set of abstract Coleman functions, modulo the equivalence relation generated by identifying $(\E,\sigma,\tau)\sim(\E',\sigma',\tau')$ whenever there is a morphism $f\colon\E\to\E'$ of isocrystals such that $f(\sigma_{y_0})=\sigma'_{y_0}$ for all~$y_0\in\Y_0(k_v)$ and $\tau'\circ f=\tau$. The addition in~$\ACol(\Y)$ is induced from the direct sum of isocrystals; the multiplication is induced from the tensor product of isocrystals. This makes~$\ACol(\Y)$ into a $K_v$-algebra \cite[Proposition~4.4]{besser:tannakian}.
\end{definition}

The ring~$\ACol(\Y)$ is a subring of the ring~$A_\loc(\Y)$ of locally analytic functions on the tube~$]\Y_0[$ of~$\Y_0$ \cite[Definition~4.9 \& Proposition~4.12]{besser:tannakian}. Explicitly, if~$f\in \ACol(\Y)$ is represented by a triple~$(\E,\sigma,\tau)$, then for every~$y_0\in\Y_0(k_v)$ we obtain an analytic function~$f|_{]y_0[}$ on the residue disc~$]y_0[$ via the formula $f|_{]y_0[}:=\tau(\sigma_{y_0})\in\HH^0(]y_0[,\O_{X^\an})$. In particular, we can view~$f$ as a function $\Y(\O_v)\to K_v$, given on each residue disc by $f|_{]y_0[}$.

\begin{example}\label{ex:overconvergent_function}
	Every overconvergent rigid-analytic function~$f\in\HH^0(X^\an,\jota^\dagger\O_{X^\an})$ is a Coleman analytic function. Specifically, to~$f$ we can associate the abstract Coleman function $(\E,\sigma,\tau)$ where $\E=(\jota^\dagger\O_{X^\an},\d)$, $i=1\in K_v=(\E|_{]b_0[})^{\nabla=0}$, and $\tau\colon\E\to\jota^\dagger\O_{X^\an}$ is given by multiplication by~$f$. It is easy to check that the underlying locally analytic function of this abstract Coleman analytic function is again~$f$. In this way, we see that the ring $\HH^0(X^\an,\jota^\dagger\O_{X^\an})$ of overconvergent rigid-analytic functions is a subring of $\ACol(\Y)$.
\end{example}

\subsubsection{Log-connections and Coleman algebraic functions}\label{sss:log-coleman}

For our purposes, we will be interested not in the whole ring $\ACol(\Y)$, but in a certain subring $\ACollog(Y)$ which we call the ring of \emph{Coleman algebraic functions}. For this, let~$\MIC^\un(X,D)$ denote the Tannakian category of unipotent vector bundles with log-connection on~$(X,D)$, i.e.\ the category of vector bundles~$\E$ on~$X$ endowed with a connection~$\nabla$ with logarithmic poles along~$D$ such that $(\E,\nabla)$ is an iterated extension of the unit object $(\O_X,\d)$.

\begin{definition}\label{def:log-coleman_fns}
	An \emph{abstract Coleman algebraic function} is a triple $(\E,\sigma_b,\tau)$ consisting of:
	\begin{itemize}
		\item a unipotent vector bundle with log-connection~$\E\in\MIC^\un(X,D)$;
		\item an $\O_X$-linear map $\tau\colon\E\to\O_X$; and
		\item a point $\sigma_b\in\E_b$ in the fibre of~$\E$ at the basepoint~$b$.
	\end{itemize}
	
	The ring $\ACollog(Y)$ of \emph{Coleman algebraic functions} is defined to be the set of abstract Coleman algebraic functions, modulo the equivalence relation generated by identifying $(\E,\sigma_b,\tau)\sim(\E',\sigma'_b,\tau')$ whenever there is a morphism $f\colon\E\to\E'$ of vector bundles with log-connection such that $f(\sigma_b)=\sigma'_b$ and $\tau'\circ f=\tau$. The addition in~$\ACollog(Y)$ is induced from the direct sum of vector bundles with log-connection; the multiplication is induced from the tensor product. This makes~$\ACollog(Y)$ into a $K_v$-algebra, just as in \cite[Proposition~4.4]{besser:tannakian}.
\end{definition}

It is not immediately obvious that~$\ACollog(Y)$ is a subring of~$\ACol(\Y)$. To see this, recall that if~$\E$ is a unipotent vector bundle on~$X$, then one can form the sheaf~$\jota^\dagger\E^\an$ of overconvergent sections of~$\E$ \cite[\S2.1.1]{berthelot:cohomologie_rigide}. Moreover, since~$D$ is supported outside a strict neighbourhood of~$]\Y_0[$, we have $\jota^\dagger(\Omega^1_X(D))^\an=\jota^\dagger(\Omega^1_X)^\an\cong\Omega^1_{X^\an}\otimes_{\O_{X^\an}}\jota^\dagger\O_{X^\an}$, and hence any unipotent log-connection on~$\E$ gives rise to a connection on~$\jota^\dagger\E^\an$, making it into a unipotent isocrystal.

\begin{lemma}\label{lem:equivalence_of_isocrystals_and_connections}
	The functor $\E\mapsto\jota^\dagger\E^\an$ is a tensor-equivalence from the Tannakian category $\MIC^\un(X,D)$ of unipotent vector bundles with log-connection on~$(X,D)$ to the Tannakian category $\Isoc^\un(\Y_0)$ of unipotent isocrystals on~$\Y_0$. Moreover, for any~$y\in\Y(\O_v)$ reducing to some $y_0\in\Y_0(k_v)$, taking the fibre at~$y$ provides a $K_v$-linear isomorphism
	\[
	(\jota^\dagger\E^\an|_{]y_0[})^{\nabla=0}\cong\E_y \,,
	\]
	tensor-natural in $\E\in\MIC^\un(X,D)$.
	\begin{proof}
		We introduce as an intermediate step the category $\MIC^\un(Y)$ of unipotent vector bundles with connection on~$Y$. On the one hand, the restriction functor $\MIC^\un(X,D)\to\MIC^\un(Y)$ given by $\E\mapsto\E|_Y$ is a tensor-equivalence of Tannakian categories by Deligne's theory of the canonical extension \cite[Proposition~II.5.2(a)]{deligne:equations_differentielles}. On the other, the functor $\MIC^\un(Y)\to\Isoc^\un(\Y_0)$ given by $\E\mapsto\jota^\dagger\E^\an$ is a tensor-equivalence by \cite[Proposition~2.4.1]{chiarellotto-le_stum:f-isocristaux}. Combining these shows the first part of the lemma.
		
		For the second part, taking the fibre at~$y$ provides a map $(\jota^\dagger\E^\an|_{]y_0[})^{\nabla=0}\to\E_y$ which is tensor-natural in $\E$. Standard properties of Tannakian categories imply that this is automatically an isomorphism \cite[Proposition~1.13]{milne:tannakian}.
	\end{proof}
\end{lemma}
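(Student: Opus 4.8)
\emph{Proof strategy.} I would factor the functor $\E\mapsto\jota^\dagger\E^\an$ through the Tannakian category $\MIC^\un(Y)$ of unipotent vector bundles with connection on the open curve~$Y$, and treat the two resulting functors separately.

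For the first factor, restriction $\MIC^\un(X,D)\to\MIC^\un(Y)$, $\E\mapsto\E|_Y$: a unipotent connection on~$Y$ is automatically regular singular along~$D$ with \emph{nilpotent} residues, so Deligne's canonical extension \cite[II.5.2]{deligne:equations_differentielles} provides a quasi-inverse $\MIC^\un(Y)\to\MIC^\un(X,D)$; since nilpotence of residues is stable under tensor products and duals (the residue of a tensor product is a sum of commuting nilpotents) and the canonical extension is characterised by its residues, this quasi-inverse is monoidal, so restriction is a tensor-equivalence. For the second factor, $\MIC^\un(Y)\to\Isoc^\un(\Y_0)$, $\E\mapsto\jota^\dagger\E^\an$: because~$D$ lies outside a strict neighbourhood of~$]\Y_0[$ one has $\jota^\dagger(\Omega^1_X(D))^\an\cong\Omega^1_{X^\an}\otimes_{\O_{X^\an}}\jota^\dagger\O_{X^\an}$, so a log-connection on~$(X,D)$ analytifies to an honest overconvergent connection, and this is a tensor-equivalence onto unipotent isocrystals in the good-reduction setting by \cite[2.4.1]{chiarellotto-le_stum:f-isocristaux}. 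Composing these two manifestly monoidal equivalences gives the first assertion.

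For the second assertion: since~$y\in\Y(\O_v)$ reduces to~$y_0$, it lies in the residue disc~$]y_0[$, and restricting a flat section of $\jota^\dagger\E^\an$ over~$]y_0[$ to its value at~$y$ gives a $K_v$-linear map $(\jota^\dagger\E^\an|_{]y_0[})^{\nabla=0}\to(\jota^\dagger\E^\an)_y=\E_y$, the last identification being functoriality of analytification at a point lying away from~$D$. This map visibly respects tensor products, duals and the unit, so it is a tensor natural transformation between the functors $\E\mapsto(\jota^\dagger\E^\an|_{]y_0[})^{\nabla=0}$ and $\E\mapsto\E_y$. Both are fibre functors on the rigid tensor category $\MIC^\un(X,D)$ — the latter because~$y$ avoids~$D$ and the objects are locally free, the former because a unipotent isocrystal restricted to a residue disc is trivial — and any tensor natural transformation of fibre functors is automatically an isomorphism \cite[Proposition~1.13]{milne:tannakian}, which yields the claim.

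The genuinely hard inputs (Deligne's canonical extension, and the comparison between algebraic unipotent connections and unipotent isocrystals for good reduction) are cited wholesale, so the only real work is checking that their hypotheses apply and verifying that $\E\mapsto(\jota^\dagger\E^\an|_{]y_0[})^{\nabla=0}$ is a $K_v$-linear, exact, faithful tensor functor so that Tannakian rigidity can be invoked; I do not anticipate a serious obstacle here.
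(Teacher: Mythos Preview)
Your proposal is correct and follows essentially the same approach as the paper: factor through $\MIC^\un(Y)$, invoke Deligne's canonical extension and Chiarellotto--Le Stum for the two halves, and for the fibre statement use that a tensor-natural map of fibre functors is an isomorphism via \cite[Proposition~1.13]{milne:tannakian}. The paper's proof is terser, but the structure and the cited inputs are identical.
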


Now if~$\E$ is a unipotent vector bundle with log-connection on~$(X,D)$ and~$\sigma_b\in\E_b$ is a point in the fibre over~$b$, then according to Lemma~\ref{lem:equivalence_of_isocrystals_and_connections} there is a unique flat section~$\sigma_{b_0}$ of~$\jota^\dagger\E^\an$ over the residue disc~$]b_0[$ whose fibre at~$b$ is~$\sigma_{b_0}$. Via analytic continuation along Frobenius, we then obtain a flat section~$\sigma_{y_0}$ of~$\jota^\dagger\E^\an$ over every other residue disc~$]y_0[$ by setting $\sigma_{y_0}:=T_{b_0,y_0}^\nabla(\sigma_{b_0})$. This construction gives our embedding of $\ACollog(Y)$ in~$\ACol(\Y)$.

\begin{proposition}\label{prop:coleman_subring}
	The ring $\ACollog(Y)$ is a $K_v$-subalgebra of $\ACol(\Y)$ via the embedding given by $(\E,\sigma_b,\tau)\mapsto(\jota^\dagger\E^\an,\sigma,\jota^\dagger \tau^\an)$, where the flat sections $\sigma_{y_0}$ are defined by $\sigma_{y_0}:= T^\nabla_{b_0,y_0}(\sigma_{b_0})$ with $\sigma_{b_0}$ the unique flat section of~$\jota^\dagger\E^\an$ over the residue disc $]b_0[$ whose fibre at~$b$ is~$\sigma_b$.
\end{proposition}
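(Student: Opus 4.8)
The plan is to establish three facts, which together amount to the assertion that the indicated assignment is an embedding of $K_v$-algebras: that it descends to a well-defined map $\ACollog(Y)\to\ACol(\Y)$ on equivalence classes of triples; that this map is a homomorphism of $K_v$-algebras; and that it is injective. The first two statements are essentially formal, flowing from the functoriality of overconvergent analytification $\E\mapsto\jota^\dagger\E^\an$ together with Lemma~\ref{lem:equivalence_of_isocrystals_and_connections}; the injectivity is where the real content lies.

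For well-definedness, one first checks that $(\jota^\dagger\E^\an,\sigma,\jota^\dagger\tau^\an)$ is a bona fide abstract Coleman analytic function in the sense of Definition~\ref{def:coleman_fns}. That $\jota^\dagger\E^\an$ lies in $\Isoc^\un(\Y_0)$ and $\jota^\dagger\tau^\an$ is $\jota^\dagger\O_{X^\an}$-linear is immediate from Lemma~\ref{lem:equivalence_of_isocrystals_and_connections}; the only point needing verification is that the sections $\sigma_{y_0}:=T^\nabla_{b_0,y_0}(\sigma_{b_0})$ satisfy the compatibility $\sigma_{y_0}=T^\nabla_{x_0,y_0}(\sigma_{x_0})$ for all $x_0,y_0\in\Y_0(k_v)$, and this follows from the uniqueness characterisation of analytic continuation along Frobenius recalled above, since $T^\nabla_{x_0,y_0}\circ T^\nabla_{b_0,x_0}$ is again tensor-natural in $\E$ and Frobenius-compatible and hence must coincide with $T^\nabla_{b_0,y_0}$. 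That the assignment respects the two defining equivalence relations is then a diagram chase: a morphism $f\colon\E\to\E'$ in $\MIC^\un(X,D)$ with $f(\sigma_b)=\sigma'_b$ and $\tau'\circ f=\tau$ induces a morphism $\jota^\dagger f^\an$ of isocrystals, which commutes with analytic continuation along Frobenius (by tensor-naturality of $T^\nabla$) and with the fibre-at-$b$ isomorphisms of Lemma~\ref{lem:equivalence_of_isocrystals_and_connections}, hence carries $\sigma_{y_0}$ to $\sigma'_{y_0}$ for every $y_0$, while $\jota^\dagger(\tau')^\an\circ\jota^\dagger f^\an=\jota^\dagger\tau^\an$ by functoriality. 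The $K_v$-algebra homomorphism property then follows because the ring operations on $\ACollog(Y)$ and $\ACol(\Y)$ are both induced by $\oplus$ and $\otimes$, the units are represented by $(\O_X,1,\mathrm{id})$ and $(\jota^\dagger\O_{X^\an},1,\mathrm{id})$, and $\E\mapsto\jota^\dagger\E^\an$ is a unital additive tensor functor under which $T^\nabla$ and the fibre-at-$b$ isomorphisms are additive and tensor-natural.

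The main work is the injectivity. Here I would reduce to Besser's theorem that $\ACol(\Y)$ is a subring of the ring $A_\loc(\Y)$ of locally analytic functions on $]\Y_0[$ \cite[Proposition~4.12]{besser:tannakian}. Composing our map with that inclusion sends the class of $(\E,\sigma_b,\tau)$ to the locally analytic function whose restriction to each residue disc $]y_0[$ is $\jota^\dagger\tau^\an(\sigma_{y_0})$, so since Besser's inclusion is injective it is enough to show that this composite $\ACollog(Y)\to A_\loc(\Y)$ is injective, i.e.\ that an abstract Coleman algebraic function whose underlying locally analytic function vanishes identically is equivalent to zero. For this I would transport the argument of \cite[Proposition~4.12]{besser:tannakian} across the tensor-equivalence of Lemma~\ref{lem:equivalence_of_isocrystals_and_connections}: replacing $\E$ by the smallest sub-object of $\MIC^\un(X,D)$ whose fibre at $b$ contains $\sigma_b$ (harmless, since the inclusion exhibits an equivalence of triples), one is reduced to the case where $\jota^\dagger\E^\an$ is generated as an isocrystal by the flat section $\sigma_{b_0}$, and the Frobenius-equivariance built into analytic continuation along Frobenius then forces $\tau$ to vanish on $\E$. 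Carrying out this transport is mechanical, with one point of care: our functionals $\tau$ take values in $\O_X$ rather than in $\jota^\dagger\O_{X^\an}$, but since Besser's argument only ever evaluates functionals on flat sections this causes no difficulty. I expect this adaptation of Besser's embedding theorem to the log-connection setting — in particular the monogenic reduction step — to be the main obstacle.
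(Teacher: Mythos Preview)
Your treatment of well-definedness and the $K_v$-algebra homomorphism property is fine and matches what the paper dispatches in a sentence.

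For injectivity, however, your route diverges from the paper's and the sketch has a gap. The paper does not go via $A_\loc(\Y)$ at all. Instead it proves a uniform zero-criterion (Lemma~\ref{lem:represents_zero}): a triple $(\E,\sigma_b,\tau)$ represents $0$ in $\ACollog(Y)$ iff $\sigma_b\in(\nabla^{-\infty}\ker\tau)_b$, where $\nabla^{-\infty}\ker\tau$ is the largest $\nabla$-stable subbundle of $\ker\tau$; and the analogous statement holds in $\ACol(\Y)$ with $\sigma_{b_0}\in(\nabla^{-\infty}\ker(\jota^\dagger\tau^\an)|_{]b_0[})^{\nabla=0}$. The tensor-equivalence of Lemma~\ref{lem:equivalence_of_isocrystals_and_connections} then gives $\nabla^{-\infty}\ker(\jota^\dagger\tau^\an)=\jota^\dagger(\nabla^{-\infty}\ker\tau)^\an$ immediately, and injectivity follows in two lines. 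This is both shorter and avoids re-proving Besser's embedding result.

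Your monogenic reduction is the dual construction (smallest sub-object containing $\sigma_b$ rather than largest sub-object inside $\ker\tau$), and it can be made to work, but the step ``Frobenius-equivariance forces $\tau$ to vanish on $\E$'' is where the content lies and is not justified by what you wrote. Knowing that $\jota^\dagger\tau^\an$ kills the single flat section $\sigma_{y_0}$ on each residue disc does not obviously force $\tau=0$, even under monogenicity: $\tau$ is only $\O_X$-linear, not compatible with $\nabla$, and the sections $\sigma_{y_0}$ are defined disc-by-disc rather than as sections of a global overconvergent subsheaf. What you actually need is that $\sigma_{b_0}$ lies in a \emph{global} $\nabla$-stable sub-isocrystal contained in $\ker(\jota^\dagger\tau^\an)$ --- and that is precisely the $\nabla^{-\infty}\ker$ criterion the paper isolates and transports across the equivalence of categories. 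Also note that your invocation of Besser's injectivity $\ACol(\Y)\hookrightarrow A_\loc(\Y)$ is logically idle: if you prove the composite $\ACollog(Y)\to A_\loc(\Y)$ injective, you are done without it; what you are really doing is re-proving Besser's result in the log-connection setting.
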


For the proof of Proposition~\ref{prop:coleman_subring}, we will need to know when a triple~$(\E,\sigma_b,\tau)$ represents the element $0\in \ACollog(Y)$.

\begin{lemma}\label{lem:represents_zero}
	\leavevmode
	\begin{enumerate}
		\item\label{lempart:log-analytic_represents_zero} Let~$(\E,\sigma_b,\tau)$ be an abstract Coleman algebraic function, and let~$\nabla^{-\infty}\ker(\tau)$ be the largest $\nabla$-stable subbundle of the kernel of $\tau\colon\E\to\O_X$. Then $(\E,\sigma,\tau)$ represents the element $0\in \ACollog(Y)$ if and only if $\sigma_b\in\nabla^{-\infty}\ker(\tau)_b$.
		\item Let~$(\E,\sigma,\tau)$ be an abstract Coleman function, and let~$\nabla^{-\infty}\ker(\tau)$ be the largest $\nabla$-stable $\jota^\dagger\O_{X^\an}$-submodule of the kernel of $\tau\colon\E\to\jota^\dagger\O_{X^\an}$, as in \cite[Lemma~2.1]{besser:tannakian}. Then $(\E,\sigma,\tau)$ represents the element $0\in \ACol(\Y)$ if and only if $\sigma_{b_0}\in\left(\nabla^{-\infty}\ker(\tau)|_{]b_0[}\right)^{\nabla=0}$.
	\end{enumerate}
	\begin{proof}
		We prove only the first part, the second following by a similar argument. In one direction, if~$\sigma_b\in\nabla^{-\infty}\ker(\tau)_b$, then let~$\E'=\E/\nabla^{-\infty}\ker(\tau)$ and let~$\tau'\colon\E'\to\O_X$ be the map through which~$\tau$ factors. We then have $(\E,\sigma,\tau)\sim(\E',0,\tau')\sim(0,0,0)$, and hence~$(\E,\sigma,\tau)$ represents~$0\in \ACollog(Y)$.
		
		In the other, it suffices to prove that the property that $\sigma_b\in\nabla^{-\infty}\ker(\tau)_b$ is invariant under the equivalence relation in Definition~\ref{def:log-coleman_fns}. So suppose that~$(\E,\sigma,\tau)$ and~$(\E',\sigma',\tau')$ are abstract Coleman algebraic functions, and that $f\colon\E\to\E'$ is a morphism of vector bundles with log-connection such that $f(\sigma_b)=\sigma'_b$ and $\tau'\circ f=\tau$. This latter property implies that $\nabla^{-\infty}\ker(\tau)=f^{-1}\nabla^{-\infty}\ker(\tau')$, and hence that $\sigma_b\in\nabla^{-\infty}\ker(\tau)_b$ if and only if $\sigma'_b\in\nabla^{-\infty}\ker(\tau')_b$. This completes the proof.
	\end{proof}
\end{lemma}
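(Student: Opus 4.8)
The plan is to prove the first part in detail and then remark that the second part is formally identical: one replaces the category $\MIC^\un(X,D)$ of unipotent vector bundles with log-connection by the category $\Isoc^\un_{K_v}(\Y_0)$ of unipotent isocrystals on $\Y_0$, replaces the fibre functor $(-)_b$ at $b$ by the ``flat sections over the residue disc'' functor $(-|_{]b_0[})^{\nabla=0}$, and uses \cite[Lemma~2.1]{besser:tannakian} directly for the existence of $\nabla^{-\infty}\ker(\tau)$ in that setting. Both halves of the first part rest on the fact that $\nabla^{-\infty}\ker(\tau)$ is functorial in $(\E,\tau)$ in the appropriate sense.

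For the ``if'' direction I would argue as follows. Given $\sigma_b\in\nabla^{-\infty}\ker(\tau)_b$, set $\E':=\E/\nabla^{-\infty}\ker(\tau)$, which carries an induced log-connection because $\nabla^{-\infty}\ker(\tau)$ is $\nabla$-stable, so that the quotient $q\colon\E\twoheadrightarrow\E'$ is a morphism in $\MIC^\un(X,D)$; since $\nabla^{-\infty}\ker(\tau)\subseteq\ker(\tau)$ the functional $\tau$ factors uniquely as $\tau=\tau'\circ q$ for an $\O_X$-linear $\tau'\colon\E'\to\O_X$. Then $q(\sigma_b)=0$, so $(\E,\sigma_b,\tau)\sim(\E',0,\tau')$, and $(\E',0,\tau')\sim(0,0,0)$ via the zero morphism $0\to\E'$; hence $(\E,\sigma_b,\tau)$ represents $0\in\ACollog(Y)$.

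For the ``only if'' direction, since the equivalence relation of Definition~\ref{def:log-coleman_fns} is the one generated by the elementary identifications there, it suffices to check that the condition ``$\sigma_b\in\nabla^{-\infty}\ker(\tau)_b$'' is invariant under such an identification; as it visibly holds for the zero triple, this shows it holds for every triple representing $0$. So let $f\colon\E\to\E'$ be a morphism of unipotent vector bundles with log-connection satisfying $f(\sigma_b)=\sigma'_b$ and $\tau'\circ f=\tau$. The crux is the functoriality identity $\nabla^{-\infty}\ker(\tau)=f^{-1}(\nabla^{-\infty}\ker(\tau'))$: from $\tau=\tau'\circ f$ one gets $\ker(\tau)=f^{-1}(\ker(\tau'))$, so $f^{-1}(\nabla^{-\infty}\ker(\tau'))$ is a $\nabla$-stable subbundle of $\ker(\tau)$ and hence lies in $\nabla^{-\infty}\ker(\tau)$ by maximality; conversely $f(\nabla^{-\infty}\ker(\tau))$ is a $\nabla$-stable subsheaf of $\ker(\tau')$, so its saturation is a $\nabla$-stable subbundle of $\ker(\tau')$ and hence lies in $\nabla^{-\infty}\ker(\tau')$, giving the reverse inclusion. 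Taking fibres at $b$ then yields $\nabla^{-\infty}\ker(\tau)_b=f_b^{-1}(\nabla^{-\infty}\ker(\tau')_b)$, whence $\sigma_b\in\nabla^{-\infty}\ker(\tau)_b$ if and only if $\sigma'_b=f_b(\sigma_b)\in\nabla^{-\infty}\ker(\tau')_b$, as required.

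The main obstacle — though it is mild — is the bookkeeping with $\nabla$-stable subbundles underlying that identity: one needs $\ker(\tau)$ to be a subbundle (it is, since $\E/\ker(\tau)$ is torsion-free, hence locally free, on the smooth curve $X$), the existence and subbundle property of $\nabla^{-\infty}\ker(\tau)$ (the saturation of a $\nabla$-stable subsheaf on a curve is again $\nabla$-stable, so the poset of $\nabla$-stable subbundles of $\ker(\tau)$ is closed under sums and has a top element, exactly as in \cite[Lemma~2.1]{besser:tannakian}), and that fibre-taking at $b$ commutes with the preimages occurring above (automatic, since all the quotients in sight are locally free, so the relevant short exact sequences remain exact after passing to the fibre at $b$). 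For the second part these become the corresponding statements for unipotent isocrystals on $\Y_0$ with $(-|_{]b_0[})^{\nabla=0}$ in place of the fibre at $b$, using that a residue disc is ``contractible'', so that $(-|_{]b_0[})^{\nabla=0}$ is exact on short exact sequences of unipotent isocrystals.
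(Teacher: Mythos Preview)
Your proof is correct and follows essentially the same approach as the paper's: both directions are handled identically (quotienting by $\nabla^{-\infty}\ker(\tau)$ for the ``if'' direction, and invariance of the condition under elementary equivalences via $\nabla^{-\infty}\ker(\tau)=f^{-1}(\nabla^{-\infty}\ker(\tau'))$ for the ``only if'' direction). You simply spell out more of the details, in particular the two inclusions underlying the functoriality identity and the bookkeeping about subbundles and exactness of fibre functors, which the paper leaves implicit.
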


\begin{proof}[Proof of Proposition~\ref{prop:coleman_subring}]
	It is easily checked that the construction $(\E,\sigma_b,\tau)\mapsto(\jota^\dagger\E^\an,\sigma,\jota^\dagger\tau^\an)$ gives rise to a $K_v$-algebra homomorphism $\ACollog(Y)\to \ACol(\Y)$. Suppose that $f\in \ACollog(Y)$ is represented by a triple~$(\E,\sigma_b,\tau)$. Let $\nabla^{-\infty}\ker(\tau)$ denote the largest $\nabla$-stable $\O_X$-submodule of the kernel of $\tau\colon\E\to\O_X$, and let $\nabla^{-\infty}\ker(\jota^\dagger \tau^\an)$ denote the largest $\nabla$-stable $\jota^\dagger\O_{X^\an}$-submodule of the kernel of $\jota^\dagger \tau^\an\colon\jota^\dagger\E^\an\to\jota^\dagger\O_{X^\an}$. It follows from Lemma~\ref{lem:equivalence_of_isocrystals_and_connections} that in fact $\nabla^{-\infty}\ker(\jota^\dagger \tau^\an)=\jota^\dagger\nabla^{-\infty}\ker(\tau)^\an$.
	
	Now suppose that~$f$ lies in the kernel of the map $\ACollog(Y)\to \ACol(\Y)$. According to Lemma~\ref{lem:represents_zero}, we have $\sigma_{b_0}\in\left(\nabla^{-\infty}\ker(\jota^\dagger \tau^\an)|_{]b_0[}\right)^{\nabla=0}$. It follows that~$\sigma_b\in\nabla^{-\infty}\ker(\tau)_b$, and hence $(\E,\sigma_b,\tau)$ represents $0\in \ACollog(Y)$. In other words, $\ACollog(Y)\to \ACol(\Y)$ is injective.
\end{proof}

\begin{remark}\label{rmk:log-analytic_indept_of_b}
	Although the definition of~$\ACollog(Y)$ appears to depend on the basepoint~$b$, Proposition~\ref{prop:coleman_subring} shows $\ACollog(Y)$ is the subring of $\ACol(\Y)$ consisting of those Coleman analytic functions~$f$ which can be represented by a triple~$(\E,\sigma,\tau)$ in which $\tau=\jota^\dagger\tau^\alg$ for a morphism~$\tau^\alg\colon\E^\alg\to\O_X$ of algebraic vector bundles, where~$\E^\alg\in\MIC^\un(X,D)$ is the unipotent vector bundle with log-connection corresponding to~$\E$ under the equivalence of Lemma~\ref{lem:equivalence_of_isocrystals_and_connections}. Thus, the ring $\ACollog(Y)$ is actually independent of~$b$, when viewed as a filtered subring of $\ACol(\Y)$.
\end{remark}

\begin{remark}\label{rmk:coleman_fns_for_projective_curves}
	In the particular case that~$Y=X$ is projective, then the inclusion $\ACollog(Y)\subseteq \ACol(\Y)$ is an equality, since by rigid-analytic GAGA \cite[Theorem~4.10.5]{fresnel_van-der-put:rigid_analytic_geometry} any morphism $\tau\colon\E\to\O_{X^\an}$ of analytic vector bundles on~$X^\an$ is the analytification of a morphism $\tau^\alg\colon\E^\alg\to\O_X$ of algebraic vector bundles.
	
	On the other hand, if $Y\subsetneq X$ is affine, then the inclusion $\ACollog(Y)\subseteq \ACol(\Y)$ is strict. Indeed, $\ACol(\Y)$ contains all overconvergent rigid-analytic functions on~$]\Y_0[$ by Example~\ref{ex:overconvergent_function}, so in particular has uncountable dimension over~$K_v$. However, one can check that $\ACollog(Y)$ always has countable dimension over~$K_v$, so the inclusion $\ACollog(Y)\subseteq \ACol(\Y)$ must be strict.
\end{remark}

\subsubsection{Weight filtration}\label{sss:coleman_weights}

The ring $\ACollog(Y)$ of Coleman algebraic functions comes with a natural filtration called the \emph{weight filtration}, defined as follows.

\begin{definition}[Weight filtration on $\ACollog(Y)$]\label{def:coleman_weights}
	\leavevmode
	\begin{enumerate}
		\item A unipotent vector bundle~$\E$ with log-connection on~$(X,D)$ is said to have \emph{weight at most~$m$} just when it admits a $\nabla$-stable filtration
		\[
		0=\W_{-1}\E\leq\W_0\E\leq\W_1\E\leq\dots\leq\W_m\E=\E
		\]
		such that:
		\begin{itemize}
			\item $\gr^\W_i\E:=\W_i\E/\W_{i-1}\E$ is a trivial vector bundle with connection (direct sum of copies of $(\O_X,\d)$) for all $0\leq i\leq m$; and
			\item the connection on $\W_i\E/\W_{i-2}\E$ is regular on~$X$ (i.e.\ takes values in $\Omega^1_X\otimes_{\O_X}(\W_i\E/\W_{i-2}\E)$) for all $0<i\leq m$.
		\end{itemize}
		\item A Coleman algebraic function is said to have \emph{weight at most~$m$} just when it is represented by an abstract Coleman algebraic function $(\E,\sigma_b,\tau)$ where~$\E$ has weight at most~$m$. We write~$\W_m\ACollog(Y)$ for the set of elements of $\ACollog(Y)$ of weight at most~$m$. It is easy to check that this defines an exhaustive $K_v$-algebra filtration on $\ACollog(Y)$.
	\end{enumerate}
\end{definition}

\begin{remark}
	In the particular case that~$Y=X$ is projective, the weight filtration is the same as the filtration considered in \cite[Definition~5.4]{besser:tannakian}.
\end{remark}

We conclude this section with some illustrative examples of Coleman algebraic functions of small weight. None of this will be directly used in what follows.

\begin{example}\label{ex:coleman_weight_0}
	A unipotent vector bundle~$\E$ with log-connection on~$(X,D)$ has weight at most~$0$ if and only if it is a direct sum of copies of $(\O_X,\d)$. In particular, if $f\in \W_0\ACollog(Y)$, then~$f$ can be represented by a triple $(\E,\sigma_b,\tau)$ where~$\tau\colon\E\to\O_X$ is compatible with the connection. It follows that~$f$ can be represented by a triple where~$\E=(\O_X,\d)$ and~$\tau$ is the identity. In other words,~$f\in K_v$ is constant.
\end{example}

\begin{example}\label{ex:coleman_weight_1}
	Suppose that~$f\in\W_1\ACollog(Y)$, so that~$f$ is represented by a triple~$(\E,\sigma_b,\tau)$ where~$\E$ is an extension
	\[
	0 \to \W_0\E \to \E \to \gr^\W_1\E \to 0
	\]
	of vector bundles with connection on~$X$ with $\W_0\E$ and $\gr^\W_1\E$ both trivial. The restriction of~$\tau$ to~$\W_0\E$ automatically preserves the connection, so quotienting~$\E$ by the kernel of~$\tau|_{\W_0\E}$ if necessary, we may assume without loss of generality that~$\W_0\E$ has rank at most~$1$ and that~$\tau|_{\W_0\E}$ is injective. Similarly, the smallest subbundle of~$\gr^\W_1\E$ containing the image of~$\sigma_b$ is stable under the connection, so we may also assume without loss of generality that~$\gr^\W_1\E$ has rank at most~$1$, and that its fibre at~$b$ is spanned by the image of~$\sigma_b\in\E_b$.
	
	Now if either~$\W_0\E=0$ or~$\gr^\W_1\E=0$ then~$\E$ has weight at most~$0$ and~$f$ is constant. Otherwise,~$\E$ has rank~$2$, and the map~$\tau\colon\E\to\O_X$ yields a splitting~$\E=\O_X^{\oplus 2}$ as vector bundles (not necessarily compatible with the connection) for which~$\tau$ is the projection onto the first factor and~$\sigma_b\in\E_b=K_v^{\oplus 2}$ is of the form~$(a,1)$ for some~$a$. Using this splitting $\E=\O_X^{\oplus 2}$, we may write the connection on~$\E$ as
	\[
	\nabla = \d -
	\left(\begin{matrix}
		0 & \omega \\
		0 & 0
	\end{matrix}\right)
	\]
	for some~$\omega\in\HH^0(X,\Omega^1_X)$. Put together, this says that~$f$ is given by the Coleman integral
	\[
	z\mapsto a+\int_b^z\omega \,.
	\]
	Thus, $\W_1\ACollog(Y)$ consists exactly of the constant functions plus Coleman integrals of differential forms~$\omega$ of the first kind. 
\end{example}

Already in weight~$2$, the theory of Coleman algebraic functions is much richer, and involves the kinds of functions seen in quadratic Chabauty.

\begin{lemma}\label{lem:coleman_weight_2}
	Let~$E_0$ be an effective divisor on~$X$, supported outside the residue disc of~$b$, such that $\HH^1(X,\O_X(E_0))=0$, and write $E=E_0+\supp(E_0)$. Let $\omega_1,\dots,\omega_{2g}\in\HH^0(X,\Omega^1_X(E))$ be differentials of the second kind forming a basis of~$\HH^1_\dR(X/K_v)$, such that that~$\omega_1,\dots,\omega_g$ form a basis of~$\HH^0(X,\Omega^1_X)$.
	
	Suppose $f\in\W_2\ACollog(Y)$ is a Coleman algebraic function of weight at most~$2$. Then there are constants $a_{ij}\in K_v$ for~$1\leq i\leq 2g$ and~$1\leq j\leq g$, constants $a_i\in K_v$ for~$g+1\leq i\leq2g$, a differential $\eta\in\HH^0(X,\Omega^1_X(E+D))$ of the third kind, and a rational function $h\in\HH^0(X,\O_X(E+D))$ such that~$f$ is given by the iterated Coleman integral
	\[
	f(z) = \sum_{\substack{1\leq i\leq 2g \\ 1\leq j\leq g}}a_{ij}\int_b^z\omega_i\omega_j + \sum_{g+1\leq i\leq 2g}a_i\int_b^z\omega_i + \int_b^z\eta + h(z)
	\]
	outside the residue discs meeting the divisor $E+D$. Here, we adopt the same convention for iterated integrals as \cite[p.~109]{minhyong:selmer}: that the right-hand differential is integrated ``first''.
	\begin{proof}
		We begin by explaining why the basis~$\omega_1,\dots,\omega_{2g}$ exists. A de Rham cohomology class~$[\E]\in\HH^1_\dR(X/K_v)$ is represented by an extension
		\[
		0 \to \O_X \to \E \to \O_X \to 0
		\]
		of vector bundles with connection on~$X$. We write~$\E'\subseteq\E$ for the preimage of the subbundle~$\O_X(-E_0)\subseteq\O_X$ under the map $\E\to\O_X$, which is an extension of~$\O_X(-E_0)$ by~$\O_X$. Since $\Ext^1_{\O_X}(\O_X(-E_0),\O_X)=\HH^1(X,\O_X(E_0))=0$, the extension~$\E'$ splits. A choice of splitting $\E'=\O_X\oplus\O_X(-E_0)$ gives rise to a splitting of the bundle~$\E$ over the generic point of~$X$, with respect to which the connection can be written as
		\[
		\nabla = \d - \left(\begin{matrix}
			0 & \omega \\
			0 & 0
		\end{matrix}\right)
		\]
		for some rational differential~$\omega$.
		
		If now~$\E''\subseteq\E$ denotes the preimage of the subbundle $\O_X(-E)\subseteq\O_X$, then $\E''\subseteq\E'$ and $\nabla(\E'')\subseteq\Omega^1_X\otimes_{K_v}\E'$. Using this, we see that~$\omega\in\HH^0(X,\Omega^1_X(E))$. Thus, we have seen that every de Rham cohomology class~$[\E]\in\HH^1_\dR(X/K_v)$ is represented by a differential~$\omega\in\HH^0(X,\Omega^1_X(E))$, which establishes the existence of the claimed basis $\omega_1,\dots,\omega_{2g}$.
		\smallskip
		
		Now we come to the main part of the proof: showing that~$f$ has the claimed form. Choose a triple~$(\E,\sigma_b,\tau)$ representing~$f$, where~$\E$ has a weight filtration
		\[
		0\leq\W_0\E\leq\W_1\E\leq\W_2\E=\E
		\]
		as in Definition~\ref{def:coleman_weights}. As in Example~\ref{ex:coleman_weight_1}, we may suppose without loss of generality that~$\W_0\E=\O_X$ and the restriction of~$\tau$ to~$\W_0\E$ is the identity map on~$\O_X$, and that~$\gr^\W_2\E=\O_X$ and the image of~$\sigma_b$ in $\gr^\W_2\E_b=K_v$ is equal to~$1$. We have~$\gr^\W_1\E=\O_X^{\oplus m}$ and the extension
		\[
		0 \to \O_X \to \W_1\E \to \O_X^{\oplus m} \to 0
		\]
		is split by~$\tau$, so $\W_1\E=\O_X^{\oplus m+1}$, with connection
		\[
		\nabla = \d - \left(\begin{matrix}
			0 & \eta_1 & \eta_2 & \dots &  \eta_m \\
			0 & 0 & 0 & \cdots & 0 \\
			0 & 0 & 0 & \cdots & 0 \\
			\vdots & \vdots & \vdots & \ddots & \vdots \\
			0 & 0 & 0 & \cdots & 0
		\end{matrix}\right)
		\]
		for some differentials $\eta_1,\dots,\eta_m\in\HH^0(X,\Omega^1_X)$.
		
		Now we let~$\E'\subseteq\E$ denote the preimage of~$\O_X(-E_0)$ under the map $\E\to\gr^\W_2\E=\O_X$. As above, $\E'$ splits as an extension of~$\O_X(-E_0)$ by~$\W_1\E$, so we have $\E'=\O_X^{\oplus m+1}\oplus\O_X(-E_0)$, with~$\tau$ being the projection on the first factor. This splitting gives a trivialisation of~$\E$ over the generic point of~$X$ and, arguing as above, the connection on~$\E$ can be written with respect to this basis as
		\[
		\nabla = \d -
		\left(\begin{matrix}
			0 & \eta_1 & \eta_2 & \dots &  \eta_m & \eta' \\
			0 & 0 & 0 & \cdots & 0 & \eta_1' \\
			0 & 0 & 0 & \cdots & 0 & \eta_2' \\
			\vdots & \vdots & \vdots & \ddots & \vdots & \vdots \\
			0 & 0 & 0 & \cdots & 0 & \eta_m' \\
			0 & 0 & 0 & \cdots & 0 & 0
		\end{matrix}\right)
		\]
		for differentials $\eta_1,\dots,\eta_m\in\HH^0(X,\Omega^1_X)$, $\eta_1',\dots,\eta_m'\in\HH^0(X,\Omega^1_X(E))$ and~$\eta'\in\HH^0(X,\Omega^1_X(E+D))$. In other words, $f$ is given by the iterated Coleman integral
		\[
		f(z) = \sum_{k=1}^m\int_b^z\eta_k\eta_k'+\int_b^z\eta'+\sum_{k=1}^ma_k\int_b^z\eta_k+a_0 \,,
		\]
		where $(a_0,a_1,\dots,a_m,1)$ are the coordinates of~$\sigma_b\in\E_b=K_v^{\oplus m+2}$.
		
		The differentials $\eta_i'$ are of the second kind, since they represent classes inside $\HH^1_\dR(X/K_v)$. Hence, writing each $\eta_k$ and $\eta_k'$ in terms of the basis $\omega_i$, and using the identity $\int_b^z\omega(\d g)=\int_b^zg\omega-g(b)\int_b^z\omega$, we may rearrange~$f$ into the claimed form.
	\end{proof}
\end{lemma}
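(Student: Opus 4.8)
The plan is to establish two things in turn: that the asserted basis $\omega_1,\dots,\omega_{2g}$ exists, and that an arbitrary $f\in\W_2\ACollog(Y)$ rearranges into the displayed form. Both rely on the same device: the hypothesis $\HH^1(X,\O_X(E_0))=0$ forces certain extensions of vector bundles to split, and a choice of splitting trivialises over the generic point of $X$ the bundle underlying a (log-)connection, producing a connection matrix whose entries have poles controlled by $E$, resp.\ $E+D$. For the basis I would represent a class in $\HH^1_\dR(X/K_v)$ by an extension $0\to\O_X\to\E\to\O_X\to 0$ with connection, pass to the $\O_X$-submodule $\E'$ which is the preimage of $\O_X(-E_0)\subseteq\O_X$ under $\E\to\O_X$, observe that $\Ext^1_{\O_X}(\O_X(-E_0),\O_X)=\HH^1(X,\O_X(E_0))=0$ so $\E'$ splits, read off a generic trivialisation in which $\nabla$ is $\d$ minus an off-diagonal rational $1$-form $\omega$, and bound the poles of $\omega$ by examining the further submodule which is the preimage of $\O_X(-E)$ (differentiation raising pole order along $\supp(E_0)$ by at most one). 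This gives $\omega\in\HH^0(X,\Omega^1_X(E))$, necessarily of the second kind since it represents a de Rham class. As $\dim_{K_v}\HH^1_\dR(X/K_v)=2g$ and $\HH^0(X,\Omega^1_X)$ injects into it, one takes $\omega_1,\dots,\omega_g\in\HH^0(X,\Omega^1_X)$ spanning the image and $\omega_{g+1},\dots,\omega_{2g}$ of the second kind completing this to a basis.

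For the normal form I would represent $f$ by a triple $(\E,\sigma_b,\tau)$ with $\E$ carrying a weight filtration $0\le\W_0\E\le\W_1\E\le\W_2\E=\E$ as in Definition~\ref{def:coleman_weights}, and then, exactly as in Example~\ref{ex:coleman_weight_1}, quotient by and restrict to suitable $\nabla$-stable subbundles to reduce to the case $\W_0\E=\O_X$ with $\tau|_{\W_0\E}$ the identity and $\gr^\W_2\E=\O_X$ with the image of $\sigma_b$ equal to $1$ (if either is zero, $f$ has weight $\le1$ and Example~\ref{ex:coleman_weight_1} already applies). The retraction $\W_1\E\to\W_0\E$ given by $\tau$ trivialises $\W_1\E\cong\O_X^{\oplus m+1}$, and regularity of its connection on $X$ puts the connection in the form $\d$ minus a matrix whose only nonzero entries $\eta_1,\dots,\eta_m$ lie in $\HH^0(X,\Omega^1_X)$. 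I would then split off $\O_X(-E_0)$ from the top: the preimage $\E'$ of $\O_X(-E_0)\subseteq\gr^\W_2\E$ is an extension of $\O_X(-E_0)$ by $\W_1\E$, and $\Ext^1_{\O_X}(\O_X(-E_0),\W_1\E)=0$ by d\'evissage from $\HH^1(X,\O_X(E_0))=0$ since $\W_1\E$ is an iterated extension of trivial bundles; a splitting trivialises $\E$ generically, with $\tau$ the first projection. Bounding poles via the preimage of $\O_X(-E)$ shows that the new entries landing in $\gr^\W_1\E$, say $\eta_1',\dots,\eta_m'$, lie in $\HH^0(X,\Omega^1_X(E))$ and are of the second kind (they represent de Rham classes), their poles staying off $D$ because the connection on $\W_2\E/\W_0\E$ is regular on $X$, while the remaining entry $\eta'$ lies in $\HH^0(X,\Omega^1_X(E+D))$, the logarithmic pole along $D$ entering only here. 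Solving for the flat section through $\sigma_b$ then expresses $f$ as $\sum_k\int_b^z\eta_k\eta_k'+\int_b^z\eta'+\sum_k a_k\int_b^z\eta_k+a_0$, with $(a_0,a_1,\dots,a_m,1)$ the coordinates of $\sigma_b$.

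The final step---and the one I expect to demand the most care---is rearranging this expression into the exact normal form of the statement. I would expand each first-kind $\eta_k$ in $\omega_1,\dots,\omega_g$, write each second-kind $\eta_k'$ as $\sum_i c_{ki}\omega_i+\d g_k$ with $g_k$ rational with poles along $E$, and similarly split $\eta'$ and each $g_k\eta_k$ into a second-kind part (a combination of the $\omega_i$ modulo an exact rational function) plus a differential of the third kind. Then, using the identity $\int_b^z\omega(\d g)=\int_b^z g\omega-g(b)\int_b^z\omega$ to absorb inner exact differentials, and the shuffle relation $\int_b^z\alpha\beta+\int_b^z\beta\alpha=(\int_b^z\alpha)(\int_b^z\beta)$ to convert the integrals of the form $\int(\text{first kind})(\text{second kind})$ that arise naturally into the shape $\int(\text{second kind})(\text{first kind})$ required by the statement, one collects the length-two integrals into $\sum_{i,j}a_{ij}\int_b^z\omega_i\omega_j$, the single integrals $\int_b^z\omega_i$ with $i>g$ into $\sum_{g+1\le i\le 2g}a_i\int_b^z\omega_i$, every single integral of a first-kind (hence third-kind) form and every third-kind remainder into $\int_b^z\eta$, and the rational leftovers into $h$, whose poles remain within $E+D$ by the pole bounds above. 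The genuine obstacle is precisely this bookkeeping: one must check that every term produced by the shuffle and integration-by-parts moves really lies in the span of the listed pieces---in particular that the products of single integrals created when transposing a length-two integral reduce back to allowed terms, using that the $\eta_k$ are holomorphic---and that the pole orders of $\eta$ and $h$ stay confined to $E+D$. By contrast, the existence of the generic trivialisations is purely formal once one has $\HH^1(X,\O_X(E_0))=0$.
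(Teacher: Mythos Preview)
Your proposal is correct and follows essentially the same route as the paper's proof: the splitting arguments for the basis and for the top of the weight filtration, the reduction of $(\E,\sigma_b,\tau)$ to the normal form with $\W_0\E=\O_X$ and $\gr^\W_2\E=\O_X$, the generic trivialisation via $\Ext^1_{\O_X}(\O_X(-E_0),\W_1\E)=0$, and the reading-off of the connection matrix are all exactly as in the paper. The one place you go further is the final rearrangement: the paper simply asserts that expanding in the $\omega_i$ and using $\int\omega(\d g)=\int g\omega-g(b)\int\omega$ suffices, whereas you correctly notice that the connection matrix naturally yields $\int\eta_k\eta_k'$ with the first-kind form on the \emph{outside} while the statement demands it on the \emph{inside}, and you bring in the shuffle relation to transpose. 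That extra step is genuinely needed to match the stated indexing and is glossed over in the paper; your caution about the bookkeeping of the product terms it creates is well placed.
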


\begin{remark}\label{rmk:why_better}
	Note that the form of~$f$ in Lemma~\ref{lem:coleman_weight_2} is more restrictive than what is asserted in \cite[Proposition~2.3]{jen-netan:effective} in the case $D=\emptyset$, both in the number of quadratic terms ($2g^2$ instead of~$4g^2$) and the possible pole orders of~$h$. This is ultimately why we obtain better bounds than \cite{jen-netan:effective} in the case of quadratic Chabauty (see Example~\ref{ex:quadratic_chabauty}): we end up bounding the number of zeroes of a more restrictive class of functions than considered in \cite{jen-netan:effective}, so end up with better bounds.
\end{remark}

\subsection{Relation with the unipotent de Rham fundamental group}

As might be expected from the definition, the ring $\ACollog(Y)$ of Coleman algebraic functions is closely related to the unipotent de Rham fundamental group of~$Y$, in a sense we now make precise. Recall that the unipotent de Rham fundamental group~$U^\dR$ is defined to be the Tannaka group of the category $\MIC^\un(Y)$ of unipotent vector bundles with connection on~$Y$ at (the fibre functor associated to) the basepoint~$b$. Equivalently, via Deligne's canonical extension, $U^\dR$ is the Tannaka group of the category~$\MIC^\un(X,D)$ of unipotent vector bundles with log-connection on~$(X,D)$, again at the basepoint~$b$.

The de Rham fundamental group comes endowed with two extra structures: a weight filtration and a Hodge filtration. Of these, the weight filtration is easier to describe \cite[Definition~1.5]{asada-matsumoto-oda:local_monodromy}: it is the filtration given by setting:
\begin{itemize}
	\item $\W_{-1}U^\dR=U^\dR$;
	\item $\W_{-2}U^\dR$ is the kernel of the map from $U^\dR$ to the abelianised de Rham fundamental group of~$X$; and
	\item for~$k\geq3$, $\W_{-k}U^\dR$ is the subgroup-scheme generated by commutators of elements in~$\W_{-i}U^\dR$ and~$\W_{-j}U^\dR$ for $i+j=k$.
\end{itemize}

The description of the Hodge filtration is rather more complicated. The original definition is due to Wojtkowiak \cite[\S5.6]{wojtkowiak:cosimplicial_objects}, but for our applications we will only need the following description of~$\Fil^0$, due to Hadian \cite{hadian:motivic_pi_1}.

\begin{lemma}\label{lem:F^0_is_coherent}
	The subgroup $\Fil^0U^\dR$ is canonically isomorphic to the Tannaka group of the category of unipotent vector bundles on~$X$ (\emph{without} connection) at the basepoint~$b$. The inclusion~$\Fil^0U^\dR\hookrightarrow U^\dR$ corresponds under the Tannakian formalism to the functor $\MIC^\un(X,D)\to\Mod^\un(\O_X)$ given by forgetting the connection.
	\begin{proof}
		When~$Y$ is affine (i.e.\ $D$ is non-empty), this is \cite[Remark~3.8]{hadian:motivic_pi_1}; the projective case can be reduced to the affine case e.g.\ by the argument outlined in \cite{faltings:around}.
	\end{proof}
\end{lemma}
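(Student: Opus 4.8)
The plan is to deduce the lemma from the work of Wojtkowiak and Hadian, handling the affine and projective cases separately; the argument is purely about $K_v$-linear de Rham data, so the good-reduction hypotheses of this section play no role. Write $U^\dR=U^\dR(Y,b)=\pi_1(\MIC^\un(X,D),\omega_b)$ with $\omega_b$ the fibre functor at~$b$, and let $\mathrm{forget}\colon\MIC^\un(X,D)\to\Mod^\un(\O_X)$ be the functor discarding the log-connection. This is well-defined: the underlying bundle of a unipotent log-connection is a unipotent bundle, by induction on the length of a $\nabla$-stable filtration with trivial graded pieces, since $\mathrm{forget}$ is exact and sends $(\O_X,\d)$ to $\O_X$. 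It is moreover an exact faithful tensor functor compatible with the fibre functors at~$b$ on both sides, hence (by the standard dictionary between affine group schemes and their representation categories, see e.g.\ \cite{milne:tannakian}) dual to a homomorphism $\pi_1(\Mod^\un(\O_X),\omega_b)\to U^\dR$. The lemma is then the assertion that this homomorphism is a closed immersion with image $\Fil^0U^\dR$.

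In the affine case ($D\neq\emptyset$) both halves of this are exactly \cite[Remark~3.8]{hadian:motivic_pi_1}: starting from Wojtkowiak's construction \cite[\S5.6]{wojtkowiak:cosimplicial_objects} of the Hodge filtration on $U^\dR$, Hadian identifies $\Fil^0U^\dR$ with the Tannaka group of $\Mod^\un(\O_X)$ and shows the embedding into $U^\dR$ is the one induced by $\mathrm{forget}$. The only real point of care here is to match normalisations, in particular our convention of writing $\Fil^0\backslash\D_\dR(U)$ rather than $\D_\dR(U)/\Fil^0$; this is harmless, since the identification of the subgroup $\Fil^0U^\dR$ is made before passing to any quotient.

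For the projective case $Y=X$ we reduce to the affine case by puncturing, following \cite{faltings:around}. Choose a closed point $x_0$ of $X$ with $x_0\neq b$, and set $Y':=X\setminus\{x_0\}$, $D':=\{x_0\}$. The inclusion $\MIC^\un(X)=\MIC^\un(X,\emptyset)\hookrightarrow\MIC^\un(X,D')$ is fully faithful with essential image closed under subquotients (a sub-log-connection of a regular connection is again regular, since the connection on a subbundle $\E'\subseteq\E$ takes values in $(\Omega^1_X\otimes\E)\cap(\Omega^1_X(\log x_0)\otimes\E')=\Omega^1_X\otimes\E'$), hence is dual to a surjection $q\colon U^\dR(Y',b)\twoheadrightarrow U^\dR(X,b)$. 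Since $\mathrm{forget}_X$ factors as $\MIC^\un(X)\hookrightarrow\MIC^\un(X,D')\xrightarrow{\mathrm{forget}_{Y'}}\Mod^\un(\O_X)$, the homomorphism $\pi_1(\Mod^\un(\O_X))\to U^\dR(X)$ induced by $\mathrm{forget}_X$ is the composite of the one induced by $\mathrm{forget}_{Y'}$ with $q$. Now use the functorial pro-mixed Hodge structure on $\Lie U^\dR$ \cite[\S5.6]{wojtkowiak:cosimplicial_objects}: the surjection $\Lie q$ is a morphism of pro-mixed Hodge structures, hence strictly compatible with Hodge filtrations, so $\Fil^0U^\dR(X)=q\bigl(\Fil^0U^\dR(Y')\bigr)$. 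Furthermore $\ker(\Lie q)$ is the Lie ideal generated by the loop class $\gamma$ around $x_0$, which lies in weight $-2$ and is a Tate class of Hodge type $(-1,-1)$; consequently every iterated bracket built from $\gamma$ has first Hodge index $\leq-1$, so $\ker(\Lie q)\cap\Fil^0\Lie U^\dR(Y')=0$ and $q$ restricts to an isomorphism $\Fil^0U^\dR(Y')\xrightarrow{\sim}\Fil^0U^\dR(X)$. Combining with the affine case applied to $Y'$ — which gives $\Fil^0U^\dR(Y')=\pi_1(\Mod^\un(\O_X))$ with embedding induced by $\mathrm{forget}_{Y'}$ — and the factorisation above, we conclude that $\pi_1(\Mod^\un(\O_X))\to U^\dR(X)$ is a closed immersion with image $\Fil^0U^\dR(X)$, and that it is induced by $\mathrm{forget}_X$, as desired.

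The main obstacle I expect is not any individual step but the interface between the two cited sources: extracting from Wojtkowiak's cosimplicial construction precisely the functoriality and strict compatibility of the Hodge filtration under $q$ that the reduction uses, together with the standard inputs that $\ker(\Lie q)$ is generated by a weight-$(-2)$ Tate loop class and that morphisms of mixed Hodge structures are strict, all stated in the generality needed here; and checking that Hadian's identification in the affine case is phrased in (or matched to) the same normalisation of $\Fil^0$. (As a by-product of the reduction one also gets, for projective $X$, that $\mathrm{forget}_X$ is essentially surjective onto $\Mod^\un(\O_X)$, equivalently that $\Ext^1_{\MIC^\un(X)}(\O_X,\mathcal F)\twoheadrightarrow\Ext^1_{\O_X}(\O_X,\mathcal F)$ for unipotent $\mathcal F$ — a Hodge-to-de-Rham degeneration statement one could alternatively verify by hand.)
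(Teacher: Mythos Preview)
Your overall strategy matches the paper's: cite Hadian for the affine case and reduce the projective case to the affine one by removing a point. The paper's own proof is just a two-line citation of \cite[Remark~3.8]{hadian:motivic_pi_1} and \cite{faltings:around}, so in that sense you have fleshed out what the paper only gestures at.

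There is, however, a genuine gap in your reduction. You invoke the pro-mixed Hodge structure on $\Lie U^\dR$ from Wojtkowiak's construction and use strictness of morphisms of mixed Hodge structures to deduce that $q$ is strict for the Hodge filtration and that $\ker(\Lie q)\cap\Fil^0=0$. But the ambient field here is a $p$-adic local field~$K_v$, where no classical mixed Hodge structure is available; and more seriously, the paper explicitly declares (in the remark immediately following this lemma) that throughout it uses \emph{Hadian's} definition of the Hodge filtration rather than Wojtkowiak's, precisely because the two are not known to coincide. So even granting a Lefschetz-principle transfer to~$\C$, your strictness argument establishes the statement for Wojtkowiak's $\Fil^0$, not for the object the lemma is about. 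The ``interface'' difficulty you flag at the end is therefore not a normalisation issue but the crux of the matter: the reduction must be carried out for Hadian's $\Fil^0$ directly---using the functoriality built into Hadian's construction, as in the argument of \cite{faltings:around} the paper cites---rather than by importing properties from a different (and not-known-to-be-equivalent) filtration.
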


	

\begin{remark}
	There is a subtle technical point at play in the proof of Lemma~\ref{lem:F^0_is_coherent} above, which has to do with the definition of the Hodge filtration on the de Rham fundamental group. Namely, in most of the literature on the Chabauty--Kim method, e.g.\ in \cite{minhyong:selmer}, the definition of the Hodge filtration used is the one given by Wojtkowiak \cite[\S5.6]{wojtkowiak:cosimplicial_objects}, rather than that defined by Hadian \cite[Lemma~3.6]{hadian:motivic_pi_1}\footnote{There is a small error in the statement of \cite[Lemma~3.6]{hadian:motivic_pi_1}. Namely, in order that the filtration satisfying the list of properties is unique, one should also require that the element $p_n$ lies in $\Fil^0$. This is not, as claimed, a consequence of the other properties.}. These two definitions of the Hodge filtration presumably agree, but there does not appear to be a published proof of this fact in the literature\footnote{Richard Hain has suggested to me that one should be able to prove the equivalence of these two definitions by appealing to the Hodge theory of the unipotent fundamental group. Specifically, one can show using the theorem of the fixed part that the universal pro-unipotent $\C$-local system on a curve over~$\C$ can be made into a pro-unipotent variation of mixed Hodge structure in an essentially unique way (subject to a mild condition on the fibre over~$b$). So checking that Hadian's definition of the Hodge filtration agrees with Wojtkowiak's amounts to showing that the filtration defined in \cite[Lemma~3.6]{hadian:motivic_pi_1} underlies a variation of mixed Hodge structure. This should be an easy consequence of Hadian's constructions, but I have not checked this sufficiently carefully.}.
	
	So, for clarity, whenever we refer to the Hodge filtration on the de Rham fundamental groupoid in this paper, we always mean the one constructed by Hadian. This means that when we later use the comparison isomorphism between \'etale and de Rham fundamental groups, we cannot use the comparison isomorphism of Olsson \cite[Theorem~1.8]{olsson:towards}, since it is important for us that this comparison isomorphism is compatible with Hodge filtrations, and Olsson proves this only for Wojtkowiak's definition of the Hodge filtration \cite[Corollary~8.13]{olsson:affine_stacks}. Instead, we have to follow the approach described in \cite[\S7]{hadian:motivic_pi_1}\cite[\S5]{faltings:around}, and use a comparison isomorphism for unipotent fundamental groups deduced from Faltings' comparison isomorphism for cohomology with coefficients in crystalline local systems. Again, the comparison isomorphism constructed in this way presumably coincides with that defined by Olsson, but we do not need, or prove, this fact.
\end{remark}

Using this description of the weight and Hodge filtrations on~$U^\dR$, we can now make precise the relationship between~$U^\dR$ and the ring~$\ACollog(Y)$ of Coleman algebraic functions. Given an abstract Coleman algebraic function~$(\E,\sigma_b,\tau)$, we obtain a corresponding coordinate map $\psi_{(\E,\sigma_b,\tau)}\colon U^\dR\to\A^1$, namely the composite
\begin{equation}\label{eq:definition_of_psi}
U^\dR\to\AUT(\E_b)\xrightarrow{\tau_b\circ-}\HOM(\E_b,\A^1)\xrightarrow{\ev_{\sigma_b}}\A^1 \,,
\end{equation}
where the first map is the action of~$U^\dR$ on the fibre of~$\E$, the second is composition with the fibre of~$\tau$, and the third is evaluation at~$\sigma_b\in\E_b$. It is easy to check, using tensor-naturality of the action of~$U^\dR$ on the fibres~$\E_b$ for various~$\E$, that the construction $(\E,\sigma_b,\tau)\mapsto\psi_{(\E,\sigma_b,\tau)}$ defines a homomorphism
\[
\psi\colon \ACollog(Y)\to\O(U^\dR)
\]
of~$K_v$-algebras.

\begin{theorem}\label{thm:log-coleman_fns_are_coordinates_on_de_rham}
	The map $\psi\colon \ACollog(Y)\to\O(U^\dR)$ is an isomorphism onto the subring $\O(\Fil^0\backslash U^\dR)$, and is strictly compatible with the weight filtration.
\begin{proof}
We proceed in several steps.

\smallskip
\noindent\textit{Step 1: the image of~$\psi$ lies in~$\O(\Fil^0\backslash U^\dR)$.}

Suppose that~$f\in \ACollog(Y)$ is represented by a triple~$(\E,\sigma_b,\tau)$. It follows from Lemma~\ref{lem:F^0_is_coherent} and the fact that~$\tau$ is a morphism of vector bundles that the map $\tau_b\colon\E_b\to K_v$ is invariant under the action of~$\Fil^0U^\dR$. It follows that the map $U^\dR\to\HOM(\E_b,\A^1)$ given by the composite of the first two maps in~\eqref{eq:definition_of_psi} factors through~$\Fil^0\backslash U^\dR$, and hence that~$\psi(f)=\psi_{(\E,\sigma_b,\tau)}\in\O(\Fil^0\backslash U^\dR)$.

\smallskip
\noindent\textit{Step 2: the map $\psi$ preserves the weight filtration.}

Suppose that $f\in\W_m\ACollog(Y)$, so that~$f$ is represented by a triple~$(\E,\sigma_b,\tau)$ with~$\E$ of weight at most~$m$, as in Definition~\ref{def:coleman_weights}. We fix a weight filtration
\[
0=\W_{-1}\E\leq\W_0\E\leq\W_1\E\leq\dots\leq\W_m\E=\E
\]
on~$\E$. Since this weight filtration is a filtration by vector subbundles with log-connection, whose graded pieces are trivial, it follows that the action of~$U^\dR$ on~$\E_b$ is unipotent with respect to the induced filtration on~$\E_b$. Thus, the derivative of the action of~$U^\dR$ is given by a map
\begin{equation}\label{eq:derived_action}\tag{$\ast$}
\Lie(U^\dR) \to \W_{-1}\End(\E_b) \,,
\end{equation}
where $\W_{-1}\End(\E_b)$ denotes the vector space of endomorphisms of~$\E_b$ taking $\W_i\E_b$ into $\W_{i-1}\E_b$ for all~$0\leq i\leq m$.

Now by assumption, each quotient~$\W_i\E/\W_{i-2}\E$ is a vector bundle with connection on~$X$, so the action of~$U^\dR$ on $\W_i\E_b/\W_{i-2}\E_b$ factors through $U^\dR/\W_{-2}U^\dR$. It follows that~\eqref{eq:derived_action} takes~$\W_{-2}\Lie(U^\dR)$ into $\W_{-2}\End(\E_b)$, and hence that~\eqref{eq:derived_action} is a morphism of filtered (pro-)nilpotent Lie algebras.

Then, since the map $\tau_b\colon\E_b\to K_v$ is filtered, it follows that the map $U^\dR\to\HOM(\E_b,\A^1)$ given as the composite of the first two arrows in~\eqref{eq:definition_of_psi} is a morphism of filtered affine $K_v$-schemes, when $\HOM(\E_b,\A^1)=\Spec(\Sym^\bullet(\E_b))$ is given its natural filtration. Since~$\sigma_b\in\W_m\E_b$, it follows that $\psi(f)=\psi_{(\E,\sigma_b,\tau)}\in\W_m\O(U^\dR)$, as desired.

\smallskip
\noindent\textit{Step 3: the map $\psi$ is injective.}

We use the criterion of Lemma~\ref{lem:represents_zero}\eqref{lempart:log-analytic_represents_zero}. Suppose that $f\in \ACollog(Y)$ is represented by a triple~$(\E,\sigma_b,\tau)$, and let~$\nabla^{-\infty}\ker(\tau)$ denote the largest $\nabla$-stable subbundle of the kernel of~$\tau\colon\E\to\O_X$. Then the fibre of~$\nabla^{-\infty}\ker(\tau)$ at~$b$ is the largest $U^\dR$-subrepresentation of~$\E_b$ contained in the kernel of~$\tau_b\colon\E_b\to K_v$.

If~$f$ lies in the kernel of~$\psi$, then this says exactly that~$\tau_b(u(\sigma_b))=0$ for points~$u$ of~$U^\dR$ (valued in any $K_v$-algebra). In other words, $\sigma_b$ is contained in the largest $U^\dR$-subrepresentation of~$\E_b$ contained in the kernel of~$\tau_b$. Thus, $\sigma_b\in(\nabla^{-\infty}\ker(\tau))_b$, so~$f=0$ by Lemma~\ref{lem:represents_zero}.

\smallskip
\noindent\emph{Step~4: the map $\psi$ is surjective and strict for the weight filtration.}

Suppose that~$\alpha\in\W_m\O(\Fil^0\backslash U^\dR)$. The left-multiplication action of~$U^\dR$ on itself makes~$\O(U^\dR)$ into an ind-$U^\dR$-representation. The corresponding coaction of~$\O(U^\dR)$ \cite[\S4a]{milne:algebraic_groups} is just the left-comultiplication of the Hopf algebra~$\O(U^\dR)$. Since the Hopf algebra structure is compatible with the weight filtration by Example~\ref{ex:filtered_unipotent}, it follows that $\W_m\O(U^\dR)$ is a $U^\dR$-subrepresentation of~$\O(U^\dR)$, of finite dimension.

Now consider the map $\alpha^*\colon(\W_m\O(U^\dR))^\dual\to K_v$ given by evaluation on~$\alpha$. This map is $\Fil^0U^\dR$-invariant by choice of~$\alpha$. Hence by Lemma~\ref{lem:F^0_is_coherent}, the $U^\dR$-representation $(\W_m\O(U^\dR))^\dual$ corresponds under the Tannakian formalism to a vector bundle~$\E_m$ with log-connection on~$(X,D)$, and the map~$\alpha^*$ corresponds to a morphism~$\tau_\alpha\colon\E_m\to\O_X$ of vector bundles on~$X$. If we let~$\sigma_b\in(\W_m\O(U^\dR))^\dual$ denote the restriction of~$\W_m\O(U^\dR)$ of the counit of the Hopf algebra~$\O(U^\dR)$, then the triple~$(\E_m,\sigma_b,\tau_\alpha)$ is an abstract Coleman algebraic function. We write~$f_\alpha\in \ACollog(Y)$ for the element it represents.

Now if~$u\in U^\dR(\Lambda)$ for some $K_v$-algebra~$\Lambda$, then we have
\[
\psi(f_\alpha)(u)=\alpha^*(u(\sigma_b))=\alpha^*(\ev_u)=\alpha(u) \,,
\]
where~$\ev_u\colon\O(U^\dR)\to\Lambda$ is the ``evaluate at~$u$'' map. It follows that~$\psi(f_\alpha)=\alpha$, and hence that~$\psi$ is surjective.

To prove strictness, it suffices to prove that in fact~$f_\alpha\in\W_m\ACollog(Y)$, for which it suffices to prove that~$\E_m$ has weight~$\leq m$ in the sense of Definition~\ref{def:coleman_weights}. For this, we consider the filtration on~$\W_m\O(U^\dR)$ given by the subspaces $\W_i\O(U^\dR)$ for $0\leq i\leq m$. This is a filtration by $U^\dR$-subrepresentations. Since the $\O(U^\dR)$-coaction on each graded piece factors through the subcoalgebra $K_v=\W_0\O(U^\dR)\leq\O(U^\dR)$, it follows that the action of~$U^\dR$ on each graded piece is trivial. Similarly, since the $\O(U^\dR)$-coaction on each $\W_i\O(U^\dR)/\W_{i-2}\O(U^\dR)$ factors through the subcoalgebra $\W_1\O(U^\dR)\leq\O(U^\dR)$, it follows that the action of~$U^\dR$ on each of these partial quotients factors through $U^\dR/\W_{-2}$.

Translated through the Tannakian formalism, this filtration on~$\W_m\O(U^\dR)$ induces a corresponding filtration~$\W_\bullet$ on the associated vector bundle with log-connection~$\E_m^\vee$ on~$(X,D)$, whose graded pieces are all trivial, and such that each quotient $\W_i\E_m^\vee/\W_{i-2}\E_m^\vee$ is a vector bundle with connection on~$X$. Taking a suitable shift of the dual filtration on~$\E_m$, we see that~$f_\alpha\in\W_m\ACollog(Y)$, as desired.

\smallskip

This concludes the proof of Theorem~\ref{thm:log-coleman_fns_are_coordinates_on_de_rham}.
\end{proof}
\end{theorem}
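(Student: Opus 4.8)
The plan is to run the entire argument through the Tannakian dictionary, using Hadian's description in Lemma~\ref{lem:F^0_is_coherent} of $\Fil^0U^\dR$ as the Tannaka group of the category $\Mod^\un(\O_X)$ of unipotent vector bundles on~$X$ \emph{without} connection, with the inclusion $\Fil^0U^\dR\hookrightarrow U^\dR$ dual to the forgetful functor. I would organise the proof into four steps: (i) the image of~$\psi$ lands in $\O(\Fil^0\backslash U^\dR)$; (ii) $\psi$ preserves the weight filtration; (iii) $\psi$ is injective; (iv) $\psi$ is surjective onto $\O(\Fil^0\backslash U^\dR)$ and strict.

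For (i), given an abstract Coleman algebraic function $(\E,\sigma_b,\tau)$, the fibre map $\tau_b\colon\E_b\to K_v$ is the fibre of a morphism of unipotent vector bundles on~$X$, hence $\Fil^0U^\dR$-equivariant by Lemma~\ref{lem:F^0_is_coherent} with target the trivial representation; so the first two arrows of~\eqref{eq:definition_of_psi} factor through $\Fil^0\backslash U^\dR$ and $\psi(f)\in\O(\Fil^0\backslash U^\dR)$. For (ii), a weight-$\leq m$ structure on~$\E$ endows $\E_b$ with a $U^\dR$-stable filtration with trivial graded pieces, so the derived action $\Lie(U^\dR)\to\End(\E_b)$ is unipotent ($\to\W_{-1}\End(\E_b)$); moreover each $\W_i\E/\W_{i-2}\E$ is a bundle with regular connection on the \emph{proper} curve~$X$, so its monodromy factors through the de Rham fundamental group of~$X$, which is exactly $U^\dR/\W_{-2}U^\dR$ by the definition of the weight filtration on~$U^\dR$; thus the derived action carries $\W_{-2}\Lie(U^\dR)$ into $\W_{-2}\End(\E_b)$ and is a filtered map. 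Composing with the filtered map $\tau_b$ and using $\sigma_b\in\W_m\E_b$ then gives $\psi(f)\in\W_m\O(U^\dR)$. For (iii), I would apply Lemma~\ref{lem:represents_zero}\eqref{lempart:log-analytic_represents_zero}: the fibre $(\nabla^{-\infty}\ker(\tau))_b$ is the largest $U^\dR$-subrepresentation of~$\E_b$ contained in $\ker(\tau_b)$, and $\psi(f)=0$ says precisely that $\tau_b(u\sigma_b)=0$ for all points~$u$ of~$U^\dR$, forcing $\sigma_b$ into that subrepresentation and hence $f=0$.

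Step (iv) is where the real content sits. Here $\W_m\O(U^\dR)$ is a finite-dimensional $U^\dR$-subrepresentation of $\O(U^\dR)$ under left-comultiplication, since the Hopf algebra structure is filtered (Example~\ref{ex:filtered_unipotent}). Given $\alpha\in\W_m\O(\Fil^0\backslash U^\dR)\subseteq\W_m\O(U^\dR)$, the evaluation functional $\alpha^*$ on $(\W_m\O(U^\dR))^\dual$ is $\Fil^0U^\dR$-invariant, so by Lemma~\ref{lem:F^0_is_coherent} the representation $(\W_m\O(U^\dR))^\dual$ corresponds to a unipotent bundle $\E_m$ with log-connection and $\alpha^*$ to a morphism $\tau_\alpha\colon\E_m\to\O_X$ of vector bundles; taking $\sigma_b$ to be the restriction to $\W_m\O(U^\dR)$ of the counit of $\O(U^\dR)$ produces a triple with $\psi(f_\alpha)(u)=\alpha^*(u\sigma_b)=\alpha^*(\ev_u)=\alpha(u)$, so $\psi(f_\alpha)=\alpha$. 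Strictness then reduces to checking $\E_m$ has weight $\leq m$ in the sense of Definition~\ref{def:coleman_weights}, which I would get from the filtration of $\W_m\O(U^\dR)$ by the subrepresentations $\W_i\O(U^\dR)$: their graded pieces have coaction factoring through $\W_0\O(U^\dR)=K_v$ (trivial action) and each $\W_i/\W_{i-2}$ has coaction factoring through $\W_1\O(U^\dR)$, hence action through $U^\dR/\W_{-2}U^\dR$; dualising with an appropriate shift turns this into a weight-$\leq m$ filtration on~$\E_m$.

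The main obstacle is precisely this last translation in (iv): matching the weight filtration on the Hopf algebra $\O(U^\dR)$ with the bundle-theoretic condition in Definition~\ref{def:coleman_weights}. The delicate point is that "the $U^\dR$-action on $\W_i\E/\W_{i-2}\E$ factors through $U^\dR/\W_{-2}U^\dR$" must be identified with "the log-connection on $\W_i\E/\W_{i-2}\E$ has no pole along~$D$", and this rests on the fact that $\W_{-2}U^\dR$ is by definition the kernel of the map to the abelianised de Rham fundamental group of the \emph{compactification}~$X$, not of~$Y$ — so that quotients killing $\W_{-2}$ extend to regular connections on all of~$X$. Once this dictionary entry is in place, the remaining verifications are routine diagram-chasing with the Tannakian formalism.
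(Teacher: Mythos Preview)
Your proposal is correct and matches the paper's proof essentially step for step: the same four-part structure, the same use of Lemma~\ref{lem:F^0_is_coherent} for step~(i) and for building $\tau_\alpha$ in step~(iv), the same appeal to Lemma~\ref{lem:represents_zero}\eqref{lempart:log-analytic_represents_zero} for injectivity, and the same construction of the preimage $f_\alpha$ via the representation $(\W_m\O(U^\dR))^\dual$ with $\sigma_b$ the restricted counit. Your identification of the ``obstacle'' --- that action factoring through $U^\dR/\W_{-2}$ corresponds to the connection extending over~$X$ --- is exactly the bridge the paper uses in both directions (Step~2 and Step~4), and your explanation of why it holds is on point.
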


As a consequence, we can determine the Hilbert series of~$\ACollog(Y)$.

\begin{corollary}\label{cor:coleman_hilbert_series}
	The Hilbert series of~$\ACollog(Y)$ is $\frac{1-gt}{1-2gt-(r-1)t^2}$.
	\begin{proof}
		The Hilbert series of~$\ACollog(Y)$ is the same as that of~$\Fil^0\backslash U^\dR$; we compute the latter Hilbert series instead. By Lemma~\ref{lem:quotient_by_unipotent_subgroup}, this Hilbert series is given by $\HS_{\Fil^0\backslash U^\dR}(t)=\HS_{U^\dR}(t)/\HS_{\Fil^0U^\dR}(t)$.
		
		Now~$U^\dR$ is the $K_v$-pro-unipotent completion of the surface group~$\Sigma_{g,r}$, so $\HS_{U^\dR}(t)=\frac1{1-2gt-(r-1)t^2}$ by Lemma~\ref{lem:surface_group}. Also,~$\Fil^0U^\dR$ is the free $K_v$-pro-unipotent group on~$g$ generators and the restriction of the weight filtration is the descending central series filtration. Thus $\O(\Fil^0U^\dR)$ is the tensor algebra on~$g$ generators in weight~$1$, and hence $\HS_{\Fil^0U^\dR}(t)=\frac1{1-gt}$. Combined, this yields the result.
	\end{proof}
\end{corollary}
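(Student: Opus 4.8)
The idea is to transport the computation to the unipotent de Rham fundamental group and then to the two building blocks $U^\dR$ and $\Fil^0U^\dR$, whose Hilbert series are already accessible. First I would invoke Theorem~\ref{thm:log-coleman_fns_are_coordinates_on_de_rham}, which identifies $\psi\colon\ACollog(Y)\xrightarrow{\sim}\O(\Fil^0\backslash U^\dR)$ as an isomorphism of filtered $K_v$-algebras; strict compatibility with the weight filtrations gives $\HS_{\ACollog(Y)}(t)=\HS_{\Fil^0\backslash U^\dR}(t)$, so it suffices to compute the right-hand side. Applying Lemma~\ref{lem:quotient_by_unipotent_subgroup} to the subgroup $\Fil^0U^\dR\leq U^\dR$, equipped with the induced weight filtration $\W_\bullet U^\dR\cap\Fil^0U^\dR$, yields the multiplicative relation $\HS_{\Fil^0\backslash U^\dR}(t)\cdot\HS_{\Fil^0U^\dR}(t)=\HS_{U^\dR}(t)$. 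Thus the problem is reduced to computing $\HS_{U^\dR}(t)$ and $\HS_{\Fil^0U^\dR}(t)$ separately and dividing.

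For $\HS_{U^\dR}(t)$, I would use that the de Rham fundamental group of $Y$ is the $K_v$-pro-unipotent completion of the topological surface group $\Sigma_{g,r}$, and that the weight filtration recalled in \S\ref{s:coleman} (with $\W_{-2}U^\dR$ the kernel of the map to the abelianised de Rham fundamental group of $X$, and higher terms generated by commutators) corresponds under this identification precisely to the filtration on $U_{g,r}$ in Lemma~\ref{lem:surface_group}: the loops around the $r$ punctures die in $\HH^1_\dR(X)$, hence lie in weight $\leq-2$, which is the defining feature of that filtration. Lemma~\ref{lem:surface_group} then gives $\HS_{U^\dR}(t)=(1-2gt-(r-1)t^2)^{-1}$.

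For $\HS_{\Fil^0U^\dR}(t)$, I would appeal to Lemma~\ref{lem:F^0_is_coherent}, which presents $\Fil^0U^\dR$ as the Tannaka group of the category of unipotent vector bundles on the \emph{projective} curve $X$. In that category $\Ext^1$ of the unit object with itself is $\HH^1(X,\O_X)$, of dimension $g$, and the relevant higher obstruction groups vanish because $X$ has coherent cohomological dimension one; hence $\Fil^0U^\dR$ is free pro-unipotent on $g$ generators. Moreover $\HH^1(X,\O_X)$, being a subquotient of the weight-one object $\HH^1_\dR(X)$, is pure, so the abelianisation of $\Fil^0U^\dR$ is concentrated in weight $-1$ and the induced weight filtration on $\Fil^0U^\dR$ coincides with its descending central series. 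Therefore $\O(\Fil^0U^\dR)$ is the shuffle algebra on $g$ symbols in weight $1$, giving $\HS_{\Fil^0U^\dR}(t)=(1-gt)^{-1}$. Dividing the two series produces $\HS_{\ACollog(Y)}(t)=\frac{1-gt}{1-2gt-(r-1)t^2}$.

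I expect the only real obstacle to be the analysis of $\Fil^0U^\dR$: one must justify both that it is free of rank exactly $g$ over $K_v$ — via the coherent-cohomology description of Lemma~\ref{lem:F^0_is_coherent} and the vanishing of $\Ext^{\geq2}$ for vector bundles on a smooth curve — and that the restriction of the ambient weight filtration is the lower central series rather than something coarser, for which purity of $\HH^1(X,\O_X)$ in weight one is the decisive input (so that $\W_{-2}U^\dR\cap\Fil^0U^\dR$ already lies in $[\Fil^0U^\dR,\Fil^0U^\dR]$, and one concludes by induction). Everything else is formal, resting on Theorem~\ref{thm:log-coleman_fns_are_coordinates_on_de_rham}, Lemma~\ref{lem:quotient_by_unipotent_subgroup} and Lemma~\ref{lem:surface_group}.
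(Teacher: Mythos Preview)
Your proposal is correct and follows essentially the same route as the paper: reduce to $\Fil^0\backslash U^\dR$ via Theorem~\ref{thm:log-coleman_fns_are_coordinates_on_de_rham}, use the multiplicative identity of Lemma~\ref{lem:quotient_by_unipotent_subgroup}, compute $\HS_{U^\dR}$ via Lemma~\ref{lem:surface_group}, and identify $\Fil^0U^\dR$ as free on $g$ generators with the descending central series filtration. You supply more justification for the last point (via Lemma~\ref{lem:F^0_is_coherent} and purity of $\HH^1(X,\O_X)$) than the paper, which simply asserts it.
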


\subsubsection{The de Rham Kummer map}\label{sss:de_rham_kummer}

We want to reinterpret Theorem~\ref{thm:log-coleman_fns_are_coordinates_on_de_rham} in terms of the \emph{de Rham Kummer map}\footnote{This map is also known as the \emph{de Rham unipotent Albanese map} in the works of Kim.} $\jj_\dR$ appearing in the Chabauty--Kim method \cite[\S1]{minhyong:selmer}. We recall the definition.

For any $K_v$-rational point~$y\in Y(K_v)$, one has the de Rham torsor of paths $P^\dR_y$, which represents the scheme $\ISO(\omega^\dR_b,\omega^\dR_y)$ of isomorphisms between the de Rham fibre functors $\MIC^\un(X,D)\to\Mod^\fin_{K_v}$ associated to~$b$ and~$y$, respectively. If~$y\in\Y(\O_v)$ is $\O_v$-integral, reducing to a point~$y_0\in\Y_0(k_v)$, then one also has the crystalline torsor of paths\footnote{The subscript~$K_v$ is there to remind the reader that, according to this definition, the crystalline path-torsor is a scheme over~$K_v$, not over its maximal unramified subfield~$K_{v,0}$, as might be expected. In fact, one can define a crystalline path-torsor $P^\cris_{y_0}$ over~$K_{v,0}$ \cite{shiho:crystalline_fundamental_groups_i}, and the crystalline path-torsor $P^\cris_{y_0,K_v}$ defined above is just the base change of $P^\cris_{y_0}$ to~$K_v$. Of course, in our applications we will have $K_v=\Q_\ellp$, so this distinction will be unnecessary.} $P^\cris_{y_0,K_v}$, which represents the scheme $\ISO(\omega^\cris_{b_0},\omega^\cris_{y_0})$ of isomorphisms between the crystalline fibre functors $\Isoc^\un_{K_v}(\Y_0)\to\Mod^\fin_{K_v}$ associated to~$b_0$ and~$y_0$, respectively. There is then an isomorphism
\begin{equation}\label{eq:berthelot-ogus}\tag{$\ast$}
P^\cris_{y_0,K_v}\cong P^\dR_y
\end{equation}
arising from Lemma~\ref{lem:equivalence_of_isocrystals_and_connections}.

The path-torsors $P^\dR_y$ and $P^\cris_{y_0,K_v}$ carry additional structures: the crystalline path-torsor $P^\cris_{y_0,K_v}$ carries a crystalline Frobenius automorphism $\varphi^f$ \cite[p.~8]{besser:tannakian}\footnote{The Frobenius automorphism is denoted by~$\varphi$ in \cite{besser:tannakian}. We prefer the notation~$\varphi^f$ to avoid confusion with the semilinear crystalline Frobenius on $P^\cris_{y_0}$ \cite[p.~594]{shiho:crystalline_fundamental_groups_i}. It is presumably the case that Besser's Frobenius automorphism \cite[p.~8]{besser:tannakian} is the $f$th power of Shiho's \cite[p.~594]{shiho:crystalline_fundamental_groups_i}, where~$f=[K_{v,0}:\Q_\ellp]$, though we have not carefully checked this. We persist in the notation~$\varphi^f$ nonetheless.}, and the de Rham path-torsor~$P^\dR_y$ carries a Hodge filtration~$\Fil^0$ on its affine ring. The subscheme~$\Fil^0\!P^\dR_y:=\Spec(\O(P^\dR_y)/\Fil^1)$ is non-empty -- indeed, it follows from a similar discussion to Lemma~\ref{lem:F^0_is_coherent} that $\Fil^0\!P^\dR_y$ is the Tannakian path-torsor from~$b$ to~$y$ in the category~$\Mod^\un(\O_X)$ of unipotent vector bundles on~$X$.

Besser has proved \cite[Corollary~3.2]{besser:tannakian} that $P^\cris_{y_0,K_v}$ has a unique $\varphi^f$-fixed $K_v$-point $\gamma^\cris_y$, while~$\Fil^0\!P^\dR_y$ has a $K_v$-point~$\gamma^\dR_y$ by virtue of being a torsor under~$\Fil^0U^\dR$. The element~$\gamma^\dR_y$ is unique up to left-multiplication by elements of $\Fil^0U^\dR(K_v)$. The de Rham non-abelian Kummer map
\[
\jj_\dR\colon\Y(\O_v)\to\Fil^0\backslash U^\dR(K_v)
\]
is then the map
\[
y\mapsto(\gamma^\dR_y)^{-1}\gamma^\cris_y \,,
\]
where we view both~$\gamma^\cris_y$ and~$\gamma^\dR_y$ as $K_v$-points of~$P^\dR_y$ via~\eqref{eq:berthelot-ogus}.
\smallskip

The coordinates of the de Rham non-abelian Kummer map are known to be Coleman analytic functions; we wish here to point out the slightly stronger fact that they are in fact Coleman \emph{algebraic}, and to clarify the role of the weight filtration. This is essentially a rephrasing of Theorem~\ref{thm:log-coleman_fns_are_coordinates_on_de_rham}.

\begin{theorem}\label{thm:log-coleman_and_de_rham_kummer}
	Let~$\alpha\in\O(\Fil^0\backslash U^\dR)$, and let~$f_\alpha\in \ACollog(Y)$ be the Coleman algebraic function corresponding to~$\alpha$ under the isomorphism of Theorem~\ref{thm:log-coleman_fns_are_coordinates_on_de_rham}. Then the composite
	\[
	\Y(\O_v)\xrightarrow{\jj_\dR}\Fil^0\backslash U^\dR(K_v) \xrightarrow{\alpha} K_v
	\]
	is equal to~$f_\alpha$. In particular, if $\alpha\in\W_m\O(\Fil^0\backslash U^\dR)$, then $\alpha\circ\jj_\dR$ is a Coleman algebraic function of weight at most~$m$.
	\begin{proof}
	Let~$(\E,\sigma_b,\tau)$ be a triple representing $f_\alpha\in \ACollog(Y)$. Being $K_v$-points of the de Rham path-torsor $P^\dR_y$, both~$\gamma^\cris_y$ and~$\gamma^\dR_y$ induce isomorphisms $\E_b\xrightarrow\sim\E_y$. Following through the definition of the isomorphism $\ACollog(Y)\cong\O(\Fil^0\backslash U^\dR)$ in~\eqref{eq:definition_of_psi}, we see that
	\[
	\alpha(\jj_\dR(y)) = \tau_b((\gamma^\dR_y)^{-1}\gamma_y^\cris(\sigma_b))
	\]
	for all~$y\in\Y(\O_v)$.
	
	Now on the one hand, $\Fil^0\!P^\dR_y$ is isomorphic to the Tannakian torsor of paths from~$b$ to~$y$ in the category~$\Mod^\un(\O_X)$ of unipotent vector bundles on~$X$, and the inclusion $\Fil^0\! P^\dR_y\hookrightarrow P^\dR_y$ corresponds to the tensor-functor $\MIC^\un(X,D)\to\Mod^\un(\O_X)$ given by forgetting the connection. Since the map $\tau\colon\E\to\O_X$ is a morphism of vector bundles, it follows that $\tau_b\circ(\gamma^\dR_y)^{-1}=\tau_y$ by naturality.
	
	On the other hand, it follows by definition of analytic continuation along Frobenius \cite[\S3]{besser:tannakian} that the square
	\begin{center}
	\begin{tikzcd}
		(\jota^\dagger\E^\an|_{]b_0[})^{\nabla=0} \arrow[r,"T^\nabla_{b_0,y_0}"]\arrow[d,"\wr"] & (\jota^\dagger\E^\an|_{]y_0[})^{\nabla=0} \arrow[d,"\wr"] \\
		\E_b \arrow[r,"\gamma^\cris_y"] & \E_y
	\end{tikzcd}
	\end{center}
	commutes, where the two vertical maps are given by evaluating a flat section at~$b$ and~$y$, respectively, as in Lemma~\ref{lem:equivalence_of_isocrystals_and_connections}.
	
	Put all together, this says that $\jj_\dR(y) = \tau_y(\sigma_{y_0}(y))$, where~$\sigma_{y_0}=T^\nabla_{b_0,y_0}(\sigma_{b_0})$ with~$\sigma_{b_0}$ the unique flat section of $\jota^\dagger\E^\an$ over the residue disc~$]b_0[$ which is equal to~$\sigma_b$ at~$b$. In other words, $\alpha(\jj_\dR(y))$ is given by evaluating the abstract Coleman function~$(\jota^\dagger\E^\an,\sigma,\jota^\dagger \tau^\an)$ at~$y$. Since this represents~$f_\alpha$, we thus have~$\alpha(\jj_\dR(y))=f_\alpha(y)$, as desired.
	\end{proof}
\end{theorem}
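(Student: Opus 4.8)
The plan is to unwind the definition of $\jj_\dR$ and of the isomorphism $\psi$ of Theorem~\ref{thm:log-coleman_fns_are_coordinates_on_de_rham} against one fixed presentation of $f_\alpha$. Concretely, I would fix a triple $(\E,\sigma_b,\tau)$ representing $f_\alpha\in\ACollog(Y)$. By the defining formula~\eqref{eq:definition_of_psi} for $\psi$, the functional $\alpha=\psi(f_\alpha)\in\O(\Fil^0\backslash U^\dR)$ is the map sending a point $u$ of $U^\dR$ to $\tau_b(u(\sigma_b))$, where $u$ acts on the fibre $\E_b$. Since $\jj_\dR(y)=(\gamma^\dR_y)^{-1}\gamma^\cris_y$, and both $\gamma^\cris_y$ and $\gamma^\dR_y$ are regarded as $K_v$-points of $P^\dR_y$ via the comparison~\eqref{eq:berthelot-ogus}, hence induce isomorphisms $\E_b\xrightarrow{\sim}\E_y$, evaluation gives
\[
\alpha(\jj_\dR(y)) = \tau_b\big((\gamma^\dR_y)^{-1}\gamma^\cris_y(\sigma_b)\big)
\]
for every $y\in\Y(\O_v)$; this is independent of the choice of $\gamma^\dR_y$ because $\alpha$ is $\Fil^0U^\dR$-invariant.

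The next step is to simplify the two factors by naturality. On the de Rham side, Lemma~\ref{lem:F^0_is_coherent} identifies $\Fil^0\!P^\dR_y$ with the Tannakian torsor of paths from $b$ to $y$ in the category $\Mod^\un(\O_X)$ of unipotent vector bundles (without connection), compatibly with the forgetful functor $\MIC^\un(X,D)\to\Mod^\un(\O_X)$. Since $\tau\colon\E\to\O_X$ is a morphism of vector bundles and $\gamma^\dR_y$ lies in $\Fil^0\!P^\dR_y$, naturality of $\gamma^\dR_y$, together with the fact that the induced map on the fibres of the unit object $\O_X$ is the identity, gives $\tau_y\circ\gamma^\dR_y=\tau_b$, i.e. $\tau_b\circ(\gamma^\dR_y)^{-1}=\tau_y$, whence $\alpha(\jj_\dR(y))=\tau_y(\gamma^\cris_y(\sigma_b))$. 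On the crystalline side, I would show that the square relating $\gamma^\cris_y$ to analytic continuation along Frobenius commutes: by the last assertion of Lemma~\ref{lem:equivalence_of_isocrystals_and_connections}, the assignment $\E\mapsto(\jota^\dagger\E^\an|_{]y_0[})^{\nabla=0}\cong\E_y$ is a fibre functor of $\Isoc^\un_{K_v}(\Y_0)$, and Besser's analytic continuation $T^\nabla_{b_0,y_0}$ is, by \cite[Corollary~3.3]{besser:tannakian}, the unique tensor-natural, Frobenius-compatible isomorphism of fibre functors; it therefore defines a $\varphi^f$-fixed point of $P^\cris_{y_0,K_v}$, which by uniqueness \cite[Corollary~3.2]{besser:tannakian} is $\gamma^\cris_y$. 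Thus $\gamma^\cris_y(\sigma_b)$ is the value at $y$ of $T^\nabla_{b_0,y_0}(\sigma_{b_0})$, where $\sigma_{b_0}$ is the unique flat section of $\jota^\dagger\E^\an$ on $]b_0[$ with $\sigma_{b_0}(b)=\sigma_b$.

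Combining the two simplifications yields $\alpha(\jj_\dR(y))=\tau_y(\sigma_{y_0}(y))$ with $\sigma_{y_0}:=T^\nabla_{b_0,y_0}(\sigma_{b_0})$, which is by definition the value at $y$ of the abstract Coleman analytic function $(\jota^\dagger\E^\an,\sigma,\jota^\dagger\tau^\an)$. By Proposition~\ref{prop:coleman_subring} this triple represents the image of $f_\alpha$ in $\ACol(\Y)$, so $\alpha\circ\jj_\dR$ and $f_\alpha$ agree as functions on $\Y(\O_v)$, proving the first assertion. The final clause is then immediate: if $\alpha\in\W_m\O(\Fil^0\backslash U^\dR)$ then $f_\alpha\in\W_m\ACollog(Y)$ by the strict compatibility of $\psi$ with the weight filtrations in Theorem~\ref{thm:log-coleman_fns_are_coordinates_on_de_rham}, and $\alpha\circ\jj_\dR=f_\alpha$ by what we have just shown.

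I expect the only real obstacle to be the crystalline-side identification in the second step: verifying that Besser's analytic continuation along Frobenius, transported through the equivalence of Lemma~\ref{lem:equivalence_of_isocrystals_and_connections} and the comparison~\eqref{eq:berthelot-ogus}, genuinely corresponds to the abstractly defined Frobenius-invariant path $\gamma^\cris_y$, and that the ``evaluate a flat section at the fibre'' maps on either side are literally the fibre functors used to define $P^\cris_{y_0,K_v}$ and $P^\dR_y$. Once these matchings are in place the remainder is routine Tannakian bookkeeping, and the theorem indeed reduces to Theorem~\ref{thm:log-coleman_fns_are_coordinates_on_de_rham}.
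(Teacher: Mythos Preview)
Your proposal is correct and follows essentially the same approach as the paper: fix a representing triple, expand $\alpha(\jj_\dR(y))$ via the definition of $\psi$, then simplify the de Rham factor using that $\tau$ is a morphism of vector bundles and the crystalline factor by identifying $\gamma^\cris_y$ with analytic continuation along Frobenius. Your version is in fact slightly more explicit than the paper's at the one point you flag as a potential obstacle---the paper simply asserts the commutativity of the crystalline square by citing Besser's definition, whereas you spell out the uniqueness argument from \cite[Corollary~3.2]{besser:tannakian}.
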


\subsubsection{Coleman algebraic functions associated to a quotient}

In our effective Chabauty--Kim method, we will use a slight refinement of the theory of Coleman algebraic functions which focuses just on a subcategory of $\MIC^\un(X,D)$. Let~$U$ be a quotient of~$U^\dR$. We say that an object~$\E$ of $\MIC^\un(X,D)$ is \emph{associated to~$U$} just when the~$U^\dR$-action on~$\E_b$ factors through~$U$.

\begin{definition}\label{def:coleman_associated}
	A Coleman algebraic function~$f$ is said to be \emph{associated} to~$U$ just when it can be represented by a triple $(\E,\sigma_b,\tau)$ where~$\E$ is associated to~$U$. We write~$\ACollog(Y)_U$ for the set of Coleman algebraic functions associated to~$U$, which is a subalgebra of~$\ACollog(Y)$, and endow it with the subspace filtration.
\end{definition}

\begin{remark}\label{rmk:associated_indpt_of_b}
	Just like the whole algebra $\ACollog(Y)$, the subalgebra $\ACollog(Y)_U$ is independent of the basepoint~$b$, in the following sense. Given a second basepoint~$b'$, the de Rham fundamental group ${U^\dR}'$ of~$Y$ based at~$b'$ is isomorphic to~$U^\dR$ via the isomorphism given by conjugating by a de Rham path. This isomorphism is unique up to conjugation by elements of~$U^\dR$, so the quotient~$U'$ of~${U^\dR}'$ corresponding to~$U$ under $U^\dR\simeq {U^\dR}'$ is independent of this choice. Since~$U$ and~$U'$ correspond to the same Tannakian subcategory of~$\MIC^\un(X,D)$, it follows that $\ACollog(Y)_U$ and $\ACollog(Y)_{U'}$ are equal as subalgebras $\ACol(\Y)$, as in Remark~\ref{rmk:log-analytic_indept_of_b}.
\end{remark}

Just as the algebra $\ACollog(Y)$ of all Coleman algebraic functions is closely linked to the whole de Rham fundamental group~$U^\dR$, the subalgebra $\ACollog(Y)_U$ is closely linked to the quotient~$U$. For instance, we have the following version of Theorem~\ref{thm:log-coleman_fns_are_coordinates_on_de_rham}.

\begin{proposition}\label{prop:log-coleman_for_quotients}
	The isomorphism $\ACollog(Y)\cong\O(\Fil^0\backslash U^\dR)$ from Theorem~\ref{thm:log-coleman_fns_are_coordinates_on_de_rham} restricts to a weight-filtered isomorphism $\ACollog(Y)_U\cong\O(\Fil^0\backslash U)$.
	\begin{proof}
		It is easy to see that the isomorphism $\psi\colon\ACollog(Y)\xrightarrow\sim\O(\Fil^0\backslash U^\dR)$ from Theorem~\ref{thm:log-coleman_fns_are_coordinates_on_de_rham} carries the subalgebra $\ACollog(Y)_U$ into $\O(\Fil^0\backslash U)=\O(U)\cap\O(\Fil^0\backslash U^\dR)$, since for any triple~$(\E,\sigma_b,\tau)$ where~$\E$ is associated to~$U$, the functional $\psi_{(\E,\sigma_b,\tau)}\colon U^\dR\to \A^1$ factors through~$U$. Conversely, if $\alpha\in\W_m\O(\Fil^0\backslash U)$, then we know from the proof of Theorem~\ref{thm:log-coleman_fns_are_coordinates_on_de_rham} that the corresponding Coleman algebraic function $f_\alpha$ is represented by the triple $(\E_m,\sigma_b,\tau_\alpha)$, where~$\E_m$ is the vector bundle with integrable logarithmic connection corresponding to $(\W_m\O(U^\dR))^*$ under the Tannakian formalism, and $\tau_\alpha\colon\E_m\to\O_X$ is the map corresponding the evaluation map $\ev_\alpha\colon(\W_m\O(U^\dR))^*\to K_v$. But since $\alpha\in\O(U)$, we know that $\ev_\alpha$ factors through $(\W_m\O(U))^*$, so $f_\alpha$ is also represented by a triple of the form $(\E_m^U,\sigma_b^U,\tau_\alpha^U)$, where $\E_m^U$ corresponds to the $U^\dR$-representation $(\W_m\O(U))^*$. Since $\E_m^U$ is associated to~$U$, it follows that so too is~$f_\alpha$.
		
		This proves that the isomorphism $\ACollog(Y)\cong\O(\Fil^0\backslash U^\dR)$ of Theorem~\ref{thm:log-coleman_fns_are_coordinates_on_de_rham} restricts to an isomorphism $\ACollog(Y)_U\cong\O(\Fil^0\backslash U)$. That this restricted isomorphism is a filtered isomorphism is automatic, since both sides have the subspace filtrations from $\ACollog(Y)$ and $\O(\Fil^0\backslash U^\dR)$.
	\end{proof}
\end{proposition}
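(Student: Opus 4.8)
The plan is to use the explicit description of the isomorphism $\psi\colon\ACollog(Y)\xrightarrow{\sim}\O(\Fil^0\backslash U^\dR)$ constructed in the proof of Theorem~\ref{thm:log-coleman_fns_are_coordinates_on_de_rham}, and to check directly that it carries the subalgebra $\ACollog(Y)_U$ bijectively onto $\O(\Fil^0\backslash U)$. Throughout I would work with the identification $\O(\Fil^0\backslash U)=\O(U)\cap\O(\Fil^0\backslash U^\dR)$ as subrings of $\O(U^\dR)$, which is how $\Fil^0\backslash U$ sits inside $\Fil^0\backslash U^\dR$ once one knows that $U^\dR\twoheadrightarrow U$ carries $\Fil^0U^\dR$ onto $\Fil^0U$ and that the weight filtration on $\O(U)$ is the one induced from $\O(U^\dR)$.

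First I would show $\psi(\ACollog(Y)_U)\subseteq\O(\Fil^0\backslash U)$. If $f\in\ACollog(Y)_U$ is represented by a triple $(\E,\sigma_b,\tau)$ with $\E$ associated to $U$, then by Definition~\ref{def:coleman_associated} the first arrow $U^\dR\to\AUT(\E_b)$ in the defining composite~\eqref{eq:definition_of_psi} factors through $U$, so $\psi_{(\E,\sigma_b,\tau)}$ factors through $U$ and hence $\psi(f)\in\O(U)$. Combined with Step~1 of the proof of Theorem~\ref{thm:log-coleman_fns_are_coordinates_on_de_rham}, which shows $\psi(f)\in\O(\Fil^0\backslash U^\dR)$, this gives $\psi(f)\in\O(U)\cap\O(\Fil^0\backslash U^\dR)=\O(\Fil^0\backslash U)$.

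For the reverse inclusion I would re-run Step~4 of the proof of Theorem~\ref{thm:log-coleman_fns_are_coordinates_on_de_rham}, keeping track of the quotient. Given $\alpha\in\W_m\O(\Fil^0\backslash U)$, that construction represents $f_\alpha:=\psi^{-1}(\alpha)$ by a triple $(\E_m,\sigma_b,\tau_\alpha)$, where $\E_m$ is the object of $\MIC^\un(X,D)$ corresponding under Lemma~\ref{lem:F^0_is_coherent} to the finite-dimensional $U^\dR$-representation $(\W_m\O(U^\dR))^\dual$ and $\tau_\alpha$ to the evaluation map $\ev_\alpha$. The key point is that, since $\alpha\in\O(U)$, the functional $\ev_\alpha$ factors through the quotient $(\W_m\O(U))^\dual$ of $(\W_m\O(U^\dR))^\dual$ dual to the Hopf-subalgebra inclusion $\W_m\O(U)\hookrightarrow\W_m\O(U^\dR)$; because $\O(U)\subseteq\O(U^\dR)$ is a sub-Hopf-algebra, $(\W_m\O(U))^\dual$ is a $U^\dR$-representation on which the action factors through $U$. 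Hence $f_\alpha$ is also represented by a triple whose underlying vector bundle with log-connection is associated to $U$, so $f_\alpha\in\ACollog(Y)_U$.

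Finally, the restricted bijection $\ACollog(Y)_U\cong\O(\Fil^0\backslash U)$ is automatically an isomorphism of filtered rings: both sides carry the subspace filtrations induced from $\ACollog(Y)$ and $\O(\Fil^0\backslash U^\dR)$, and $\psi$ is strictly filtered by Theorem~\ref{thm:log-coleman_fns_are_coordinates_on_de_rham}, so its restriction is as well. I expect the only real obstacle to be the bookkeeping around $\Fil^0$ and the weight filtration for the quotient $U$ — namely making sure "$\Fil^0\backslash U$" and "$\W_\bullet\O(U)$" are set up compatibly with $U^\dR\twoheadrightarrow U$, so that passing to the quotient changes neither which functionals are $\Fil^0$-invariant nor which have weight $\le m$; this is exactly where the sub-Hopf-algebra structure $\O(U)\subseteq\O(U^\dR)$ and the splitting of $U^\dR\twoheadrightarrow U$ in $\WAFF_{K_v}$ enter.
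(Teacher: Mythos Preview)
Your proposal is correct and follows essentially the same approach as the paper: both directions use the explicit description of $\psi$ from Theorem~\ref{thm:log-coleman_fns_are_coordinates_on_de_rham}, with the forward inclusion coming from the factorisation of $U^\dR\to\AUT(\E_b)$ through $U$, and the reverse inclusion from the observation that $\ev_\alpha$ factors through $(\W_m\O(U))^\dual$ when $\alpha\in\O(U)$. Your closing remarks about the bookkeeping around $\Fil^0$ and the weight filtration on the quotient are more explicit than the paper's treatment, but the argument is the same.
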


In the proof of Proposition~\ref{prop:log-coleman_for_quotients}, the triple $(\E_m^U,\sigma_b^U,\tau_\alpha^U)$ representing the Coleman algebraic function $f_\alpha$ has the property that $\E_m^U$ is simultaneously associated to~$U$ and also of weight~$\leq m$. Hence we obtain the following consequence, which can also be proved by more elementary means.

\begin{corollary}\label{cor:associated_and_weight}
	Any~$f\in\W_m\ACollog(Y)_U$ can be represented by a triple~$(\E,\sigma_b,\tau)$ where~$\E$ is associated to~$U$ and has weight~$\leq m$.
\end{corollary}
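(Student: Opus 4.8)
The plan is to read the statement off the proof of Proposition~\ref{prop:log-coleman_for_quotients} rather than re-do any analysis. First I would transport the hypothesis through the isomorphism $\psi$ of Theorem~\ref{thm:log-coleman_fns_are_coordinates_on_de_rham}: since $f\in\W_m\ACollog(Y)_U$ and $\psi$ restricts to a filtered isomorphism $\ACollog(Y)_U\cong\O(\Fil^0\backslash U)$ with subspace filtrations on both sides (Proposition~\ref{prop:log-coleman_for_quotients}), the element $\alpha:=\psi(f)$ lies in $\O(\Fil^0\backslash U)\cap\W_m\O(\Fil^0\backslash U^\dR)$, the intersection taken inside $\O(U^\dR)$. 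In particular $\alpha\in\W_m\O(U^\dR)$ \emph{and} $\alpha\in\O(U)$.

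Next I would invoke the explicit construction of $f_\alpha=\psi^{-1}(\alpha)$ from Step~4 of the proof of Theorem~\ref{thm:log-coleman_fns_are_coordinates_on_de_rham}: $f_\alpha$ is represented by the triple $(\E_m,\sigma_b,\tau_\alpha)$, where $\E_m\in\MIC^\un(X,D)$ is the unipotent vector bundle with log-connection whose fibre at $b$ is the $U^\dR$-representation $(\W_m\O(U^\dR))^\dual$, the map $\tau_\alpha\colon\E_m\to\O_X$ corresponds under Lemma~\ref{lem:F^0_is_coherent} to the $\Fil^0U^\dR$-invariant functional $\ev_\alpha\colon(\W_m\O(U^\dR))^\dual\to K_v$, and $\sigma_b$ is the restriction of the counit of $\O(U^\dR)$. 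Because $\alpha\in\O(U)$, the functional $\ev_\alpha$ factors through the quotient $(\W_m\O(U))^\dual$, so $f_\alpha$ is equally represented by a triple $(\E_m^U,\sigma_b^U,\tau_\alpha^U)$ in which $\E_m^U$ is the bundle with log-connection corresponding to $(\W_m\O(U))^\dual$ — this is exactly the triple isolated in the paragraph preceding the statement. Since the left-comultiplication coaction makes $\W_m\O(U)$ a finite-dimensional subrepresentation of the $U$-representation $\O(U)$, the $U^\dR$-action on $(\W_m\O(U))^\dual$ factors through $U$, so $\E_m^U$ is associated to $U$ in the sense of Definition~\ref{def:coleman_associated}.

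It then remains only to verify that $\E_m^U$ has weight $\leq m$ in the sense of Definition~\ref{def:coleman_weights}, and for this I would simply repeat the computation at the end of Step~4 of the proof of Theorem~\ref{thm:log-coleman_fns_are_coordinates_on_de_rham} verbatim with $\O(U)$ in place of $\O(U^\dR)$. Concretely: filter $\W_m\O(U)$ by the subspaces $\W_i\O(U)$ for $0\le i\le m$, which are $U$-subrepresentations since the Hopf-algebra structure on $\O(U)$ is compatible with the weight filtration (Example~\ref{ex:filtered_unipotent}); the $\O(U)$-coaction on each graded piece factors through $\W_0\O(U)=K_v$, so the $U$-action there is trivial, and the coaction on each $\W_i\O(U)/\W_{i-2}\O(U)$ factors through $\W_1\O(U)$, so that $U$-action factors through $U/\W_{-2}U$. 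Translating through the Tannakian formalism and taking the appropriate shift of the dual filtration on $\E_m^U$ then exhibits it as an object of weight $\leq m$, which completes the argument.

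I do not expect a genuine obstacle here; the only point requiring care is bookkeeping of the two weight filtrations — making sure that "$\alpha\in\W_m$'' computed inside $\O(U)$ agrees with "$\alpha\in\W_m$'' computed inside $\O(U^\dR)$, and that the regularity condition of Definition~\ref{def:coleman_weights} is the correct Tannakian translation of "coaction factors through $\W_1$''. Both are already handled by Proposition~\ref{prop:log-coleman_for_quotients} and by Step~4 of Theorem~\ref{thm:log-coleman_fns_are_coordinates_on_de_rham}. For completeness I would add a sentence sketching the promised elementary route: starting from \emph{any} representative $(\E,\sigma_b,\tau)$ of $f$ with $\E$ of weight $\leq m$, replace $\E$ by the smallest $\nabla$-stable subbundle whose fibre at $b$ contains $\sigma_b$ and then by its quotient by $\nabla^{-\infty}\ker(\tau)$; the result is a subquotient of $\E$, hence still of weight $\leq m$, and by Lemma~\ref{lem:represents_zero} it depends only on $f$ — it is the Tannakian bundle attached to the cyclic $U^\dR$-subrepresentation of $\O(U^\dR)$ generated by $\psi(f)$, with $\tau$ the counit and $\sigma_b=\psi(f)$ — so running the same reduction on a representative whose bundle is associated to $U$ shows this canonical bundle is associated to $U$ as well.
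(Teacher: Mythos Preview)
Your proposal is correct and takes essentially the same approach as the paper: the paper's entire ``proof'' is the sentence immediately preceding the corollary, observing that the triple $(\E_m^U,\sigma_b^U,\tau_\alpha^U)$ constructed in the proof of Proposition~\ref{prop:log-coleman_for_quotients} already has $\E_m^U$ both associated to~$U$ and of weight~$\leq m$. You have simply spelled out the verification of the weight bound (by rerunning Step~4 of Theorem~\ref{thm:log-coleman_fns_are_coordinates_on_de_rham} with $\O(U)$ in place of $\O(U^\dR)$), which the paper leaves implicit, and your sketch of the elementary route matches the paper's aside that the result ``can also be proved by more elementary means.''
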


\section{Bounding zeroes of Coleman algebraic functions}\label{s:weight_bound}

The final ingredient we will need in our proof of effective Chabauty--Kim is a method to bound the number of zeroes of Coleman algebraic functions. We keep notation as in the previous section. That is, $K_v/\Q_\ellp$ is a finite extension, $Y/K_v$ is a smooth hyperbolic curve, $X$ is the smooth compactification of~$Y$, and~$D$ is the complementary divisor. We assume that~$X$ is the generic fibre of a smooth proper curve~$\X/\O_v$ over the ring of integers~$\O_v$ of~$K_v$, and that the closure~$\Dvsr$ of~$D$ in~$\X$ is \'etale over~$\O_v$. We write~$\Y=\X\setminus\Dvsr$, and assume for simplicity that~$\Y(\O_v)\neq\emptyset$.
\smallskip

The result we prove in this section is an explicit upper bound on the number of zeroes of a Coleman algebraic function in terms of its weight. To state this result, we write~$g$ for the genus of~$X$ and~$r$ for the degree of the divisor~$D$, so that $2g+r>2$ by hyperbolicity. We write~$e_v$ and~$f_v$ for the ramification and residue class degrees of~$K_v/\Q_\ellp$, respectively. We set $\theta_v:=\left\lceil\frac{e_v+1}{\ellp-1}\right\rceil$, and define a positive constant~$\kappa_v$ by
\[
\kappa_v:=\ellp^{(\theta_v-1)f_v}\cdot\left(1+\frac{e_v}{(\theta_v-\frac{e_v}{\ellp-1})\log(\ellp)}\right) \,.
\]
In the particular case that~$K_v=\Q_\ellp$, the constants~$\kappa_v=\kappa_\ellp$ are as in Theorem~\ref{thm:main_bound}.

\begin{theorem}\label{thm:weight_bound}
	Let~$U$ be a quotient of~$U^\dR$, let~$m\geq1$, and suppose that~$f\in\W_m\ACollog(Y)_U$ is non-zero. Then the number of points of~$\Y(\O_v)$ at which~$f$ vanishes is at most
	\[
	\kappa_v\cdot\#\Y_0(k_v)\cdot(4g+2r-2)^m\cdot\prod_{i=1}^{m-1}(c_i+1) \,,
	\]
	where~$c_i:=\dim_{K_v}\gr^\W_i\!\ACollog(Y)_U$ are the coefficients of the Hilbert series of $\ACollog(Y)_U$, and~$\#\Y_0(k_v)$ is the number of points of the special fibre of~$\Y$ which are rational over the residue field of~$K_v$. (In the case~$m=1$, the empty product~$\prod_{i=1}^0(c_i+1)$ above has value~$1$.)
\end{theorem}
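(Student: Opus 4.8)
The plan is to use the weight filtration to show that a weight-$\le m$ Coleman algebraic function associated to $U$ is annihilated by a linear differential operator $\mathcal D$ of order at most $(4g+2r-2)^m\prod_{i=1}^{m-1}(c_i+1)$ whose coefficients have controlled $\ellp$-adic valuations, and then to apply the zero-counting estimate of \cite{jen-netan:effective} for such ``nice'' operators, residue disc by residue disc.

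First I would set up the operator. By Corollary~\ref{cor:associated_and_weight}, $f$ is represented by a triple $(\E,\sigma_b,\tau)$ with $\E\in\MIC^\un(X,D)$ associated to $U$ and of weight $\le m$, and the restriction of $f$ to a residue disc $]y_0[$ is, by Lemma~\ref{lem:equivalence_of_isocrystals_and_connections}, a $K_v$-linear combination of the entries of a fundamental matrix for the connection of $\jota^\dagger\E^\an$; hence $f$ is killed by a scalar linear operator $\mathcal D$ read off from the connection. The content of the weight hypothesis on $\E$ (Definition~\ref{def:coleman_weights}: trivial graded pieces, connection regular on $X$ modulo weight $2$) is that $\mathcal D$ can be built by induction on $m$ along a weight filtration $\W_\bullet\E$: applying $\tfrac{d}{dt}$ and dividing by a fixed global differential whose polar divisor has degree at most $4g+2r-2$ carries $\W_m\ACollog(Y)_U$ into the span of products of a function in $\W_{m-1}\ACollog(Y)_U$ with a rational function in a fixed space of dimension at most $4g+2r-2$; since a sum of products of a solution of an order-$N$ equation with a solution of an order-$N'$ equation satisfies an equation of order $\le NN'$, the order of $\mathcal D$ is multiplied by $(4g+2r-2)(c_{m-1}+1)$ at each weight step (the $+1$ accounting for the lower-weight part of $f$, i.e.\ for the constants), which gives the claimed order with base case $m=1$ being Coleman's bound $4g+2r-2$ on the number of zeroes of the Coleman integral of a holomorphic differential on a residue disc. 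Throughout, one must check that $\mathcal D$ is ``nice'' in the sense of \cite{jen-netan:effective}, i.e.\ that the valuations of its coefficients are bounded as their zero-counting lemma requires; this is where the regularity clause in Definition~\ref{def:coleman_weights} enters.

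Next I would carry out the $\ellp$-adic zero count. We have $\Y(\O_v)=\bigsqcup_{y_0\in\Y_0(k_v)}\bigl(]y_0[\,\cap\,\Y(\O_v)\bigr)$, so it suffices to bound the zeroes of $f$ on a single residue disc. I would subdivide that disc into $\ellp^{(\theta_v-1)f_v}$ sub-discs small enough that $f$ and the coefficients of $\mathcal D$ become power series satisfying the hypotheses of the nice-differential-operators lemma of \cite{jen-netan:effective} --- the loss of convergence incurred by iterating the connection being exactly what $\theta_v=\lceil(e_v+1)/(\ellp-1)\rceil$ measures --- and on each sub-disc that lemma bounds the number of zeroes of $f$ by $(\operatorname{ord}\mathcal D)\cdot\bigl(1+\tfrac{e_v}{(\theta_v-e_v/(\ellp-1))\log\ellp}\bigr)$. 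Multiplying by the number of sub-discs gives $\kappa_v\cdot\operatorname{ord}(\mathcal D)$ zeroes per residue disc, and summing over the $\#\Y_0(k_v)$ residue discs yields the bound of the theorem.

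The hard part will be the first step: producing the operator $\mathcal D$ with the precise order and valuation bounds rather than a crude estimate. This amounts to controlling, through Deligne's canonical extension and the weight filtration, exactly which differentials (and of which pole orders) enter a weight-$\le m$ iterated Coleman integral associated to $U$, organising the induction so that each weight increment costs a factor of exactly $(4g+2r-2)(c_{m-1}+1)$, and checking that the resulting operator is nice --- in particular that the residue discs on which $f$ fails to be a single convergent power series are absorbed by the subdivision factor $\ellp^{(\theta_v-1)f_v}$ in $\kappa_v$.
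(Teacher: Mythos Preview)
Your overall architecture---build a differential operator inductively along the weight filtration, then apply a zero-counting lemma disc by disc---matches the paper's. The second step is essentially what the paper does, including the subdivision of each residue disc into $\ellp^{(\theta_v-1)f_v}$ sub-discs and the resulting factor $\kappa_v$; the only refinement is that the paper works with a ``PD-nice'' variant of the niceness condition from \cite{jen-netan:effective} (divided-power series rather than ordinary power series), which treats the prime $\ellp=2$ uniformly.

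The inductive construction of the operator, however, differs from the paper's, and as you describe it there is a gap. You propose that a single application of $\tfrac{\d}{\omega}$ carries $\W_m$ into a span of products (weight-${\leq}(m{-}1)$ Coleman function)$\times$(rational function), and then invoke the tensor-product fact that products of solutions of equations of orders $N,N'$ satisfy an equation of order $\le NN'$. This is the right intuition, but the resulting tensor-product operator is not obviously PD-nice, and controlling its divisor---which is needed for the next inductive step---is awkward through this route. The paper's mechanism is different and cleaner: one proves (Lemma~\ref{lem:operator_gives_rational_fns}) that if $\DO_m$ already annihilates all of $\W_m\ACollog(Y)_U$, then for any $f\in\W_{m+1}$ the function $\DO_m(f)$ is \emph{itself a rational function}, lying in $\HH^0(X,\O_X(-\div(\DO_m)))$. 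The point is that $\DO_m(\tau)$ vanishes on the subbundle $\W_m\E$ and hence factors through the trivial bundle $\E/\W_m\E$. As $f$ ranges over $\W_{m+1}$, these rational functions span a space of dimension $\le c_{m+1}$, and a Wronskian construction (Lemma~\ref{lem:annihilate_rational_functions}) explicitly produces a PD-nice operator $\DO_{(m+1)}$ annihilating this space, with precise control on its order (at most $\deg(-\div(\DO_m))+1$) and divisor. One then sets $\DO_{m+1}=\DO_{(m+1)}\circ\DO_m$. The multiplicative growth $(4g+2r-2)^m\prod_i(c_i+1)$ in the order bound arises not from a tensor-product argument but from the fact that the divisor of $\DO_m$ grows at each step, feeding back into the pole order of the rational functions $\DO_m(f)$ and hence into the order of the Wronskian needed to kill them. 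This closed loop between order and divisor is what makes the induction go through while preserving PD-niceness---exactly the ``hard part'' you flag.
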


\begin{remark}
	The bound in Theorem~\ref{thm:weight_bound} is almost certainly non-optimal. In Section~\ref{s:siegel} we will give considerably better bounds in the case of~$\P^1\setminus\{0,1,\infty\}$. I was unable to find a similar construction in the general case.
\end{remark}

\begin{remark}
	For our applications, we will only need Theorem~\ref{thm:weight_bound} in the case $K_v=\Q_\ellp$. However, when it comes to the better bounds we will discuss in \S\ref{ss:better_omega}, it will be necessary to pass up finite extensions, so it is easiest to work over a general~$K_v$ to begin with.
\end{remark}


\subsection{Action of algebraic differential operators on Coleman algebraic functions}

For the proof of Theorem~\ref{thm:weight_bound}, we follow a strategy similar to that of~\cite{jen-netan:effective}. In outline, we will cook up a certain differential operator~$\DO$ such that~$\DO(f)=0$, and then use $\DO$ to analyse the Newton polygon of the power series of~$f$ on a residue disc and thereby the number of zeroes of~$f$.

To begin with, we describe the action of differential operators on Coleman algebraic functions. To do so, it is useful to introduce a slight generalisation of the ring of Coleman algebraic functions, by allowing them to take values in a line bundle. This definition follows \cite[Definition~4.1]{besser:tannakian}.

\begin{definition}\label{def:coleman_with_coeffs}
	Fix a basepoint~$b\in\Y(\O_v)$, and let~$\mathcal L$ be an $\O_X$-module. An \emph{abstract Coleman algebraic section of~$\mathcal L$} (or abstract Coleman algebraic section) is a triple $(\E,\sigma_b,\tau)$ consisting of:
	\begin{itemize}
		\item a vector bundle~$\E$ on~$X$ with log-connection along~$D$;
		\item an $\O_X$-linear map $\tau\colon\E\to\mathcal L$; and
		\item a point $\sigma_b\in\E_b$ in the fibre of~$\E$ at the basepoint~$b$.
	\end{itemize}

	We let~$\ACollog(Y,\mathcal L)$ denote the set of abstract Coleman algebraic sections, modulo the equivalence relation generated by identifying $(\E,\sigma_b,\tau)\sim(\E',\sigma'_b,\tau')$ whenever there is a morphism $f\colon\E\to\E'$ of vector bundles with log-connection such that $f(\sigma_b)=\sigma'_b$ and $\tau'\circ f=\tau$. The direct sum of vector bundles with log-connection induces on~$\ACollog(Y,\mathcal L)$ the structure of a $K_v$-vector space. As in Definitions~\ref{def:coleman_weights} and~\ref{def:coleman_associated} (cf.\ Corollary~\ref{cor:associated_and_weight}), we let~$\W_m\ACollog(Y,\mathcal L)_U$ denote the subspace of elements represented by triples~$(\E,\sigma_b,\tau)$ where~$\E$ has weight at most~$m$ and is associated to~$U$.
	
	Just as for Coleman algebraic functions, a Coleman algebraic section~$s$ of~$\mathcal L$ gives rise to a section~$s|_{]y_0[}$ of~$\mathcal L^\an$ over each residue disc~$]y_0[$ for~$y_0\in\Y_0(k_v)$. Specifically, if~$(\E,\sigma_b,\tau)$ is a triple representing~$s$, then we define~$s|_{]y_0[}:=\tau(T^\nabla_{b_0,y_0}(\sigma_{b_0}))$ where~$\sigma_{b_0}$ is the unique flat section of~$\E^\an$ over~$]b_0[$ which is equal to~$\sigma_b\in\E_b$ at~$b$, and~$T^\nabla_{b_0,y_0}$ denotes analytic continuation along Frobenius \cite[Definition~3.5]{besser:tannakian}.
\end{definition}

We will be interested in two particular cases. Firstly, we will be interested in the case that~$\mathcal L=\Mero_X$ is the constant sheaf with value the function field~$K_v(X)$. In this case, the ring~$\ACollog(Y,\Mero_X)$ might be called the ring of \emph{Coleman rational functions} on~$X$. Secondly, we will be interested in the case that~$\mathcal L=\Omega^1_X\otimes_{\O_X}\Mero_X$, in which case~$\ACollog(Y,\Omega^1_X\otimes_{\O_X}\Mero_X)$ might be called the space of \emph{Coleman rational differential forms} on~$X$. As this terminology suggests, the differential of an element of $\ACollog(Y,\Mero_X)$ is an element of $\ACollog(Y,\Omega^1_X\otimes_{\O_X}\Mero_X)$.


\begin{definition}\label{def:coleman_derivative}
	If~$\E$ is a unipotent vector bundle with log-connection on~$(X,D)$, we write~$\d$ for the natural connection on the Hom-sheaf~$\Hom_{\O_X}(\E,\Mero_X)$. Explicitly, this is the map
	\[
	\d\colon\Hom_{\O_X}(\E,\Mero_X) \to \Hom_{\O_X}(\E,\Omega^1_X\otimes_{\O_X}\Mero_X)
	\]
	sending an $\O_X$-linear map $\tau\colon\E\to\Mero_X$ to the map $\d f:=\d\circ \tau-(1\otimes \tau)\circ\nabla$, i.e.\ the difference between the two diagonal composites in the (not necessarily commuting) square
	\begin{center}
	\begin{tikzcd}
		\E \arrow[r,"\tau"]\arrow[d,"\nabla"] & \Mero_X \arrow[d,"\d"] \\
		\Omega^1_X(D)\otimes_{\O_X}\E \arrow[r,"1\otimes \tau"] & \Omega^1_X\otimes_{\O_X}\Mero_X \,.
	\end{tikzcd}
	\end{center}
	The Leibniz rules for~$\nabla$ and~$\d$ imply that~$\d f$ is $\O_X$-linear.
	
	Following~\cite[Definition~4.6]{besser:tannakian}, we define the \emph{de Rham differential}
	\[
	\d\colon \ACollog(Y,\Mero_X)\to \ACollog(Y,\Omega^1_X\otimes_{\O_X}\Mero_X)
	\]
	to be the map sending a triple~$(\E,\sigma_b,\tau)$ to~$(\E,\sigma_b,\d \tau)$.
\end{definition}

\begin{proposition}[cf.\ {\cite[Proposition~4.11]{besser:tannakian}}]
	Let $f\in \ACollog(Y,\Mero_X)$ have de Rham differential $\d f\in \ACollog(Y,\Omega^1_X\otimes_{\O_X}\Mero_X)$. Then for all $y_0\in\Y_0(k_v)$, $\d f|_{]y_0[}$ is the derivative of~$f|_{]y_0[}$ as a meromorphic function on the disc~$]y_0[$.
	\begin{proof}
		Let~$(\E,\sigma_b,\tau)$ be a triple representing~$f$. Then we have
		\[
		\d f|_{]y_0[} = (\d \circ \tau - (1\otimes \tau)\circ\nabla)(\sigma_{y_0}) = \d(\tau(\sigma_{y_0})) = \d(f|_{]y_0[}) \,,
		\]
		since the section~$i_{y_0}$ is flat. This is what we wanted to show.
	\end{proof}
\end{proposition}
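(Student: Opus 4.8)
The plan is to unwind two definitions: that of the de Rham differential (Definition~\ref{def:coleman_derivative}) and that of the restriction $s\mapsto s|_{]y_0[}$ of a Coleman algebraic section to a residue disc (Definition~\ref{def:coleman_with_coeffs}). The key observation is that the correction term $(1\otimes\tau)\circ\nabla$ appearing in the formula $\d\tau=\d\circ\tau-(1\otimes\tau)\circ\nabla$ is annihilated when one evaluates it on a flat section, which is precisely the kind of section out of which $s|_{]y_0[}$ is built.

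Concretely, I would fix a triple $(\E,\sigma_b,\tau)$ representing $f$; by definition $\d f$ is then represented by $(\E,\sigma_b,\d\tau)$. Since both triples have the same underlying pair $(\E,\sigma_b)$, they give rise to the \emph{same} flat section $\sigma_{y_0}:=T^\nabla_{b_0,y_0}(\sigma_{b_0})$ of $\E^\an$ over $]y_0[$, where $\sigma_{b_0}$ is the unique flat section over $]b_0[$ with fibre $\sigma_b$ at $b$. Hence $f|_{]y_0[}=\tau(\sigma_{y_0})$, while
\[
\d f|_{]y_0[}=(\d\tau)(\sigma_{y_0})=\d\bigl(\tau(\sigma_{y_0})\bigr)-(1\otimes\tau)\bigl(\nabla\sigma_{y_0}\bigr)=\d\bigl(\tau(\sigma_{y_0})\bigr),
\]
the last equality because $\nabla\sigma_{y_0}=0$. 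It then remains to identify this with the derivative of the meromorphic function $f|_{]y_0[}$ on the disc: choosing a local coordinate $z$ on $]y_0[$ trivialises $\Omega^1$ over the disc, the connection $\d$ on $\Mero_X$ restricts there to the ordinary exterior derivative $h\mapsto h'\,\d z$, and so $\d\bigl(\tau(\sigma_{y_0})\bigr)$ is exactly $(f|_{]y_0[})'\,\d z$, as asserted.

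The only point needing care is the compatibility underlying the displayed computation: that forming the de Rham differential of Definition~\ref{def:coleman_derivative} commutes with analytification and restriction to $]\Y_0[$, so that on a residue disc $\d\tau$ genuinely acts as $\d\circ\tau-(1\otimes\tau)\circ\nabla$ for the analytified connection. This follows from functoriality of analytification together with the identification of the log-differentials $\Omega^1_X(D)$ with ordinary differentials over $]\Y_0[$ already used in Proposition~\ref{prop:coleman_subring} and Lemma~\ref{lem:equivalence_of_isocrystals_and_connections}; it is routine, and the genuine content of the statement is simply the vanishing of the correction term on flat sections.
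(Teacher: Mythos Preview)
Your proof is correct and follows essentially the same approach as the paper: choose a representing triple, observe that $\d f$ is represented by $(\E,\sigma_b,\d\tau)$ with the same flat section $\sigma_{y_0}$, and use $\nabla\sigma_{y_0}=0$ to reduce $(\d\tau)(\sigma_{y_0})$ to $\d(\tau(\sigma_{y_0}))=\d(f|_{]y_0[})$. Your version is more explicit about the analytification compatibility and the role of the shared pair $(\E,\sigma_b)$, but the argument is the same.
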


Now fix a non-zero rational differential form $\omega\in\HH^0(X,\Omega^1_X\otimes_{\O_X}\Mero_X)$ on~$X$. An \emph{algebraic differential operator} is a sum
\[
\DO = \sum_{i=0}^Ng_i\cdot\frac{\d^i}{\omega^i}
\]
of powers of the differentiation operator~$\frac\d\omega$, where the coefficients~$g_i$ are rational functions on~$X$. We say that the operator~$\DO$ has \emph{order}~$N$ provided~$g_N\neq0$. The set of algebraic differential operators forms a non-commutative ring $K_v(X)[\d/\omega]$.

The de Rham differential induces an action of the non-commutative ring $K_v(X)[\d/\omega]$ on the ring $\ACollog(Y,\Mero_X)$ of Coleman rational functions.

\begin{lemma}
	Let~$\E$ be a unipotent vector bundle with log-connection on~$(X,D)$. Then there is an action of~$K_v(X)[\d/\omega]$ on~$\Hom_{\O_X}(\E,\Mero_X)$, where~$K_v(X)$ acts via multiplication on the constant sheaf~$\Mero_X$ and~$\frac\d\omega$ acts via the composite
	\[
	\Hom_{\O_X}(\E,\Mero_X) \xrightarrow\d \Hom_{\O_X}(\E,\Omega^1_X\otimes_{\O_X}\Mero_X) \xrightarrow{\omega^{-1}} \Hom_{\O_X}(\E,\Mero_X)
	\]
	where the first map is the derivative from Definition~\ref{def:coleman_derivative}, and the second map is induced from the isomorphism $\Omega^1_X\otimes_{\O_X}\Mero_X\cong\Mero_X$ given by dividing by~$\omega$.
	
	This induces an action of $K_v(X)[\d/\omega]$ on $\ACollog(Y,\Mero_X)$, where a differential operator~$\DO$ acts via $(\E,\sigma_b,\tau)\mapsto(\E,\sigma_b,\DO(\tau))$.
	\begin{proof}
		For the first part, we need to verify that the action of~$\frac\d\omega$ on $\Hom_{\O_X}(\E,\Mero_X)$ satisfies $\frac\d\omega(g\tau)=g\frac\d\omega(\tau)+\frac{\d g}\omega$ for all $\tau\in\Hom_{\O_X}(\E,\Mero_X)$ and all $g\in K_v(X)$. This is easy to check from the definitions, since
		\begin{align*}
		\frac\d\omega(g\tau) &= \frac{\d\circ(g\tau)-(1\otimes(g\tau)\circ\nabla)}\omega \\
		 &= \frac{g\cdot\d\circ \tau-(1\otimes g)\cdot(1\otimes \tau)\circ\nabla+(\d g)\cdot \tau}\omega = g\frac\d\omega(\tau)+\frac{\d g}\omega \,.
		\end{align*}
		The second part (action on $\ACollog(Y,\Mero_X)$) is then clear, since the action of~$K_v(X)[\d/\omega]$ on~$\Hom_{\O_X}(\E,\Mero_X)$ is clearly natural in~$\E$.
	\end{proof}
\end{lemma}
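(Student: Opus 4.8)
The plan is to deduce the lemma from the universal property of a twisted polynomial ring, and then from naturality. First I would record that the division-by-$\omega$ isomorphism $\Omega^1_X\otimes_{\O_X}\Mero_X\xrightarrow{\sim}\Mero_X$ carries the exterior derivative of the constant sheaf $\Mero_X=K_v(X)$ to a $K_v$-linear derivation $\partial_\omega\colon K_v(X)\to K_v(X)$, so that the ring $K_v(X)[\d/\omega]$ is, by its very definition, the Ore extension $K_v(X)[T;\partial_\omega]$: a free left $K_v(X)$-module on $1,T,T^2,\dots$ whose multiplication is pinned down by $Tg=gT+\partial_\omega(g)$. Consequently, extending a given $K_v(X)$-module structure on an abelian group $M$ to a left $K_v(X)[\d/\omega]$-module structure amounts to nothing more than choosing an additive operator $\partial\colon M\to M$ with $\partial(gm)=g\,\partial(m)+\partial_\omega(g)\,m$ for all $g\in K_v(X)$. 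So for the first part of the lemma I would take $M=\Hom_{\O_X}(\E,\Mero_X)$ with $K_v(X)$ acting by postcomposition with scalar multiplication, $\partial=\omega^{-1}\circ\d$ the stated composite, and reduce to checking this single twisted Leibniz identity.

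That check I would carry out directly from Definition~\ref{def:coleman_derivative}. Writing $\d\tau:=\d\circ\tau-(1\otimes\tau)\circ\nabla$, one has
\[
\d(g\tau)=\d\circ(g\tau)-(1\otimes(g\tau))\circ\nabla=(\d g)\,\tau+g\,(\d\circ\tau)-g\,(1\otimes\tau)\circ\nabla=(\d g)\,\tau+g\,(\d\tau),
\]
where the second equality uses the Leibniz rule for the de Rham differential on $\Mero_X$ and the fact that $g$, being scalar multiplication, commutes past $(1\otimes-)\circ\nabla$. Dividing through by $\omega$ gives $\partial(g\tau)=g\,\partial(\tau)+\partial_\omega(g)\,\tau$, which is exactly the identity needed, so the first assertion follows.

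For the induced action on $\ACollog(Y,\Mero_X)$ I would show that the assignment $(\E,\sigma_b,\tau)\mapsto(\E,\sigma_b,\DO\tau)$ descends through the equivalence relation of Definition~\ref{def:coleman_with_coeffs}; the key point is that the $K_v(X)[\d/\omega]$-action on $\Hom_{\O_X}(-,\Mero_X)$ is natural in $\E$. Concretely, if $f\colon\E\to\E'$ is a morphism of unipotent vector bundles with log-connection then $\d(\tau'\circ f)=(\d\tau')\circ f$ — this is where the flatness (horizontality) of $f$ is used, via $(1\otimes f)\circ\nabla_\E=\nabla_{\E'}\circ f$ — while multiplication by a rational function visibly commutes with $-\circ f$; hence $\DO(\tau'\circ f)=(\DO\tau')\circ f$ for every $\DO$. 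Therefore a morphism $f$ witnessing $(\E,\sigma_b,\tau)\sim(\E',\sigma_b',\tau')$ (so $\tau=\tau'\circ f$) also witnesses $(\E,\sigma_b,\DO\tau)\sim(\E',\sigma_b',\DO\tau')$, and the ring-action axioms on $\ACollog(Y,\Mero_X)$ are inherited from those already verified on each $\Hom_{\O_X}(\E,\Mero_X)$. I do not anticipate a real obstacle: the statement is formal once one has named the Ore extension and the naturality of the operation in Definition~\ref{def:coleman_derivative}; the only step I would take care to write out is precisely that naturality, since it is the sole place the hypothesis "horizontal morphism" is actually invoked, everything else being a one-line Leibniz computation.
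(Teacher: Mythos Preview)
Your proposal is correct and follows essentially the same approach as the paper: both reduce the first part to verifying the twisted Leibniz identity $\partial(g\tau)=g\,\partial(\tau)+\partial_\omega(g)\,\tau$ by direct computation from Definition~\ref{def:coleman_derivative}, and both deduce the second part from naturality of the action in~$\E$. Your write-up is slightly more explicit than the paper's in two places --- you name the Ore-extension universal property to explain why the single Leibniz identity suffices, and you spell out the naturality check $\DO(\tau'\circ f)=(\DO\tau')\circ f$ using horizontality of~$f$ --- but these are elaborations of the same argument rather than a different route.
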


\subsubsection{Pole orders}

In what follows, we will also need to control the ``pole orders'' of Coleman log-meromorphic functions. For this, we write~$\div(\omega)$ for the divisor of~$\omega$ as a section of~$\Omega^1_X(D)$ (so~$\div(\omega)$ is a log-canonical divisor). We write $\supp(\div(\omega))\cap Y$ for the support of~$\div(\omega)$ outside~$D$, viewed as a reduced divisor, and set $\div(\omega)^+:=\div(\omega)+(\supp(\div(\omega))\cap Y)$.

We say that an algebraic differential operator $\DO=\sum_{i=0}^Ng_i\frac{\d^i}{\omega^i}$ is \emph{regular outside~$\div(\omega)$} just when each~$g_i$ has no poles outside~$\div(\omega)$. If~$\DO\neq0$ is regular outside~$\div(\omega)$, then we define (in an ad hoc manner) its \emph{divisor} to be
\[
\div(\DO) := \min\left(\left(\div(g_i)-i\div(\omega)^+\right)_{0\leq i\leq N},0\right) \,,
\]
where~$\min$ denotes the pointwise minimum of divisors. This is an anti-effective divisor, and it is easily verified that the divisor of an algebraic differential operator is superadditive under composition, i.e.\
\[
\div(\DO'\circ\DO)\geq\div(\DO')+\div(\DO)
\]
for all $\DO,\DO'\in K_v(X)[\d/\omega]\setminus\{0\}$.

Now for any divisor~$E$ on~$X$, the space $\ACollog(Y,\O_X(E))$ is a subspace of $\ACollog(Y,\Mero_X)$, and we can control how algebraic differential operators act on these spaces.

\begin{lemma}\label{lem:differential_operators_increase_poles}
	Let~$E$ be a divisor on~$X$ and let~$\DO\in K_v(X)[\d/\omega]$ be a non-zero algebraic differential operator which is regular outside~$\div(\omega)$. Assume that the support of~$E$ is contained in the support of~$\omega$. Then
	\[
	\DO\left(\ACollog(Y,\O_X(E))\right) \subseteq \ACollog(Y,\O_X(E-\div(\DO)))
	\]
	as subspaces of~$\ACollog(Y,\Mero_X)$.
	\begin{proof}
		It suffices to prove the result in the special case $\DO=\frac\d\omega$, the general case then following by linearity. For this, we simply note that for any $\tau\in\Hom_{\O_X}(\E,\O_X(E))$, the Coleman differential $\d \tau$ is given by the difference between the two diagonal composites in the square
		\begin{center}
		\begin{tikzcd}
			\E \arrow[r,"\tau"]\arrow[d,"\nabla"] & \O_X(E) \arrow[d,"\d"] \\
			\Omega^1_X(D)\otimes_{\O_X}\E \arrow[r,"1\otimes \tau"] & \Omega^1_X(E+(D\cup\supp(\div(\omega)))) \,.
		\end{tikzcd}
		\end{center}
		Composing with the isomorphism $\Omega^1_X(E+(D\cup\supp(\div(\omega))))\cong\O_X(E+\div(\omega)^+)$ given by division by~$\omega$, we see that~$\frac\d\omega(\tau)\in\Hom_{\O_X}(\E,\O_X(E+\div(\omega)^+))$. This in turn implies that $\frac\d\omega(f)\in \ACollog(Y,\O_X(E+\div(\omega)^+))$ for all $f\in \ACollog(Y,\O_X(E))$, which is what we wanted to show.
	\end{proof}
\end{lemma}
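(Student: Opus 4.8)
The plan is to reduce the statement to the single operator $\DO = \d/\omega$, for which it is a short pole count on the de Rham differential of Definition~\ref{def:coleman_derivative}, and to recover the general case by linearity together with elementary divisor bookkeeping.

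First I would reduce to monomials. Writing $\DO = \sum_{i=0}^N g_i\,\d^i/\omega^i$, the definition of the divisor of an operator gives $\div(\DO) = \min_i\div\!\left(g_i\,\d^i/\omega^i\right)$, so $E - \div(\DO)\geq E - \div\!\left(g_i\,\d^i/\omega^i\right)$ for every $i$. Since $\ACollog(Y,\O_X(-))$ is monotone in the divisor --- post-composing a map $\tau\colon\E\to\O_X(E')$ with the inclusion $\O_X(E')\hookrightarrow\O_X(E'')$ when $E'\leq E''$ --- it suffices to prove the inclusion for each monomial $g_i\,\d^i/\omega^i$ separately and then sum over $i$.

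For a monomial, I would handle first the case $g_i = 1$, $i = 1$, i.e.\ $\DO = \d/\omega$. For $\tau\colon\E\to\O_X(E)$ the de Rham differential is $\d\tau = \d\circ\tau - (1\otimes\tau)\circ\nabla$; the term $(1\otimes\tau)\circ\nabla$ lands in $\Omega^1_X(D + E)$ because $\nabla$ has logarithmic poles along $D$, while $\d\circ\tau$ lands in $\Omega^1_X(E + \supp(E))$ because ordinary differentiation worsens the order of a pole (or zero) by one along $\supp(E)$. It is exactly here that the hypothesis $\supp(E)\subseteq\supp(\omega)$ enters, allowing $\supp(E)$ to be absorbed into $D\cup\supp(\div\omega)$; thus $\d\tau\in\Hom_{\O_X}(\E,\Omega^1_X(E + (D\cup\supp(\div\omega))))$, and dividing by $\omega$ (i.e.\ using $\Omega^1_X(D)\cong\O_X(\div\omega)$) puts $\tfrac{\d}{\omega}(\tau)$ in $\Hom_{\O_X}(\E,\O_X(E + \div(\omega)^+))$. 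Passing to Coleman sections gives $\tfrac{\d}{\omega}\big(\ACollog(Y,\O_X(E))\big)\subseteq\ACollog(Y,\O_X(E + \div(\omega)^+))$, which is the asserted inclusion since $\div(\omega)^+ = -\div(\d/\omega)$. Because $E + \div(\omega)^+$ still has support inside $\supp(\omega)$, this step iterates, so $(\d/\omega)^i\big(\ACollog(Y,\O_X(E))\big)\subseteq\ACollog(Y,\O_X(E + i\,\div(\omega)^+))$; finally multiplication by the rational function $g_i$ carries a section of $\O_X(E')$ into a section of $\O_X(E' - \div(g_i))$, so $g_i\,\d^i/\omega^i$ sends $\ACollog(Y,\O_X(E))$ into $\ACollog(Y,\O_X(E + i\,\div(\omega)^+ - \div(g_i)))$, which sits inside $\ACollog(Y,\O_X(E - \div(g_i\,\d^i/\omega^i)))$ by monotonicity, since $i\,\div(\omega)^+ - \div(g_i)\leq\max\!\big(i\,\div(\omega)^+ - \div(g_i),\,0\big) = -\div(g_i\,\d^i/\omega^i)$.

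The one genuinely delicate point is the pole accounting in the case $\DO = \d/\omega$: one must keep separate track of the pole along $D$ contributed by $\nabla$, the extra pole along $\supp(E)$ produced by naive differentiation, and the shift from division by $\omega$, and verify that the hypothesis on $\supp(E)$ is exactly what makes the total equal $E + \div(\omega)^+$ rather than something larger. It is also important to use at the multiplication step the sharp bound $E' - \div(g_i)$, rather than the cruder ``$E'$ plus the polar divisor of $g_i$'', since only the former is compatible with the $\min(-,0)$ appearing in the definition of $\div(\DO)$ when $g_i$ and $\omega$ share a zero. Everything else is routine manipulation in the ordered group of divisors.
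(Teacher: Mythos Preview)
Your proof is correct and follows essentially the same approach as the paper: reduce to the single operator $\d/\omega$ via linearity, and for that operator do the pole count on the two summands of $\d\tau$ followed by division by~$\omega$. The paper compresses the reduction step into the phrase ``the general case then following by linearity,'' whereas you spell out explicitly the decomposition into monomials $g_i\,\d^i/\omega^i$, the iteration of the $\d/\omega$ case, and the multiplication-by-$g_i$ step together with the divisor inequality $i\,\div(\omega)^+ - \div(g_i)\leq -\div(g_i\,\d^i/\omega^i)$; these are exactly the details one would need to justify the paper's one-line reduction.
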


\subsection{PD-nice differential operators}

The theory which enables us to bound the number of zeroes of a Coleman algebraic function~$f$ via a differential operator is the ``nice differential operators'' machinery of \cite[\S3]{jen-netan:effective}. We will in fact use a slight variant on the theory, systematically replacing power series rings with their divided power envelopes. One advantage of this variant is that it deals more naturally with the case~$\ellp=2$, which is not addressed by the theory in~\cite{jen-netan:effective}.
\smallskip

Let $\O_v\llbrack t\rrbrack^\PD$ denote the algebra of divided power series with coefficients in~$\O_v$; that is, the ring of power series $f=\sum_{i=0}^\infty a_i\frac{t^i}{i!}$ with $a_i\in\O_v$. This is a subring of the ring of power series convergent on the open disc of radius $\ellp^{-1/(\ellp-1)}$ (with respect to the norm on $K_v$ satisfying $|\ellp|=\ellp^{-1}$).

\begin{definition}\label{def:nice}
	A differential operator $\DO = \sum_{i=0}^Ng_i(t)\frac{\d^i}{\d t^i}\in K_v\llpara t\rrpara[\frac\d{\d t}]$ in the formal variable~$t$ is called \emph{PD-nice}\footnote{Standing for ``pretty darn nice''.} just when each $g_i\in\O_v\llbrack t\rrbrack^\PD$ is a divided power series with $\O_v$-coefficients for $0\leq i\leq N$ and $g_N\in\O_v\llbrack t\rrbrack^{\PD,\times}$ is an invertible divided power series. We refer to~$N$ as the \emph{order} of~$\DO$.
\end{definition}

The following is easily verified.

\begin{lemma}
	The composite of two PD-nice differential operators is PD-nice.
\end{lemma}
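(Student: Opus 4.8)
The plan is to prove that if $\DO_1 = \sum_{i=0}^{N_1} g_i(t)\frac{\d^i}{\d t^i}$ and $\DO_2 = \sum_{j=0}^{N_2} h_j(t)\frac{\d^j}{\d t^j}$ are PD-nice, then so is $\DO_2 \circ \DO_1$. First I would reduce to the monomial case: since composition is bilinear and the property of being PD-nice for the leading coefficient depends only on the top-order term, it suffices to understand the top-order behaviour of $\DO_2 \circ \DO_1$, together with the claim that all coefficients of the composite lie in $\O_v\llbrack t\rrbrack^\PD$. The order of the composite is clearly at most $N_1 + N_2$, and one checks that the coefficient of $\frac{\d^{N_1+N_2}}{\d t^{N_1+N_2}}$ in $\DO_2 \circ \DO_1$ is exactly $h_{N_2}(t)\, g_{N_1}(t)$ (no lower-order derivative terms contribute at top order, since each application of $\frac{\d}{\d t}$ to a coefficient strictly lowers the derivative order). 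So the order of $\DO_2\circ\DO_1$ is $N_1+N_2$, and its leading coefficient is the product $h_{N_2}g_{N_1}$.

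The two facts I then need are: (a) $\O_v\llbrack t\rrbrack^\PD$ is closed under multiplication and under the derivation $\frac{\d}{\d t}$; and (b) $\O_v\llbrack t\rrbrack^{\PD,\times}$ is closed under multiplication. Fact (b) is immediate from (a) together with the observation that the units of a ring form a group. For (a), closure under $\frac{\d}{\d t}$ is transparent from the divided-power normalisation: if $f = \sum_i a_i \frac{t^i}{i!}$ with $a_i \in \O_v$, then $\frac{\d f}{\d t} = \sum_i a_i \frac{t^{i-1}}{(i-1)!} = \sum_i a_{i+1}\frac{t^i}{i!}$, again with $\O_v$-coefficients. Closure under multiplication is the substantive point: writing $f = \sum a_i \frac{t^i}{i!}$ and $f' = \sum b_j \frac{t^j}{j!}$, the coefficient of $\frac{t^k}{k!}$ in $ff'$ is $\sum_{i+j=k}\binom{k}{i} a_i b_j$, and since the binomial coefficients $\binom{k}{i}$ are integers and $a_i, b_j \in \O_v$, this lies in $\O_v$. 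Hence $ff' \in \O_v\llbrack t\rrbrack^\PD$.

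Putting this together: every coefficient $c_\ell(t)$ of $\DO_2\circ\DO_1$ (as an operator $\sum_\ell c_\ell(t)\frac{\d^\ell}{\d t^\ell}$) is a finite $\O_v\llbrack t\rrbrack^\PD$-linear combination of iterated derivatives of the $g_i$ and products with the $h_j$, hence lies in $\O_v\llbrack t\rrbrack^\PD$ by the closure properties just established; and the leading coefficient $c_{N_1+N_2} = h_{N_2}g_{N_1}$ is a product of two invertible divided power series, hence invertible. Therefore $\DO_2\circ\DO_1$ is PD-nice of order $N_1+N_2$.

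I do not expect any genuine obstacle here — the only thing requiring a moment's thought is the Leibniz-rule computation showing that $\binom{k}{i}\in\Z$ is what makes the divided-power coefficients close under multiplication, and the bookkeeping that the leading term of the composite is the plain product of leading terms. Both are routine; the lemma is essentially a formal consequence of the definition, which is presumably why the paper states it without proof. If a fully written-out argument is wanted, the cleanest presentation is to first record closure of $\O_v\llbrack t\rrbrack^\PD$ under the $\O_v$-algebra operations and under $\frac{\d}{\d t}$ as a sub-lemma, then deduce the statement in one line.
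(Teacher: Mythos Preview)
Your argument is correct; the paper itself gives no proof, merely asserting that the statement ``is easily verified.'' Your sketch supplies exactly the routine verification the paper omits: closure of $\O_v\llbrack t\rrbrack^\PD$ under multiplication (via integrality of binomial coefficients) and under $\frac{\d}{\d t}$, together with the observation that the leading coefficient of the composite is the product of the leading coefficients.
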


As in \cite[Proposition~3.2]{jen-netan:effective}, PD-nice differential operators can be used to bound the number of zeroes of power series.

\begin{proposition}\label{prop:nice}
	Let~$\DO$ be a PD-nice differential operator of order~$N$. Suppose that~$f\in K_v\llbrack t\rrbrack$ is a non-zero power series such that $\DO(f)=0$. Then~$f$ converges on the open disc of radius~$\ellp^{-1/(\ellp-1)}$, and for every~$\lambda>\frac1{\ellp-1}$, the number of zeroes\footnote{Here, ``zeroes'' means zeroes defined over the completed algebraic closure of~$K_v$, counted with multiplicity.} of~$f$ in the closed disc of radius $\ellp^{-\lambda}$ is at most
	\[
	\left(1+\frac1{(\lambda-\frac1{\ellp-1})\log(\ellp)}\right)\cdot(N-1) \,.
	\]
\end{proposition}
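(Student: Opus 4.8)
The plan is to imitate the argument of \cite[Proposition~3.2]{jen-netan:effective}, but with power series rings systematically replaced by divided-power envelopes, which is what handles the prime $\ellp=2$ cleanly. The first step is to establish convergence. Writing $f=\sum_{n\geq0}c_nt^n$ and $\DO=\sum_{i=0}^Ng_i(t)\frac{\d^i}{\d t^i}$ with each $g_i\in\O_v\llbrack t\rrbrack^\PD$ and $g_N$ a unit in $\O_v\llbrack t\rrbrack^\PD$, the equation $\DO(f)=0$ lets us solve recursively for the coefficients $c_n$ for $n\geq N$ in terms of $c_0,\dots,c_{N-1}$: one rearranges to express $c_{n}$ (appearing via the top-order term $g_N\frac{\d^N}{\d t^N}f$, whose $t^{n-N}$-coefficient involves $\frac{n!}{(n-N)!}c_n$) in terms of lower coefficients. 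The key point is that the divided-power condition on the $g_i$ (coefficients of $g_i$ lie in $\O_v$ after dividing by factorials) is exactly what is needed to show that $c_n\frac{t^n}{\text{(appropriate factorial)}}$ stays integral, so that $f\in\O_v\llbrack t\rrbrack^\PD$ up to scaling, hence $f$ converges on the open disc of radius $\ellp^{-1/(\ellp-1)}$. I would phrase this as: after clearing denominators and rescaling $f$, one shows $n!\,c_n\in\O_v$ for all $n$ by induction, using that $g_N^{-1}\in\O_v\llbrack t\rrbrack^\PD$ so multiplication by it preserves the relevant integrality.

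The second and main step is the Newton polygon estimate. Fix $\lambda>\frac1{\ellp-1}$; we want to bound the number of zeroes of $f$ in the closed disc $|t|\leq\ellp^{-\lambda}$, which by the theory of Newton polygons equals the total horizontal length of the segments of the Newton polygon of $f$ with slope $\geq\lambda$ (equivalently, the largest $n$ such that the vertex of the Newton polygon at abscissa $n$ still has slope $<\lambda$ to its right, roughly). Concretely, if $f$ has $Z$ zeroes in that disc then there is an index $n_0\leq Z+ (\text{something})$... more precisely, one shows: if $f$ had more than $B$ zeroes in the disc, then for the relevant range of indices the valuations $v(c_n)$ grow at least linearly with slope $\lambda$, i.e. $v(c_n)\geq v(c_{n_0})+\lambda(n-n_0)$ for $n$ in an interval of length $>B$; but then plugging into the recursion $\DO(f)=0$ and estimating $p$-adic valuations — using that $v(n!)=\frac{n-s_p(n)}{\ellp-1}\leq\frac{n}{\ellp-1}$ and that the coefficients of $g_i/i!$... hmm, actually the divided-power normalization means the coefficients of $g_i$ have valuation $\geq -v(j!)$ in degree $j$ — forces a contradiction because the top-order term $g_N\,\frac{d^N}{dt^N}f$ would dominate all the others. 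The bound $\left(1+\frac1{(\lambda-\frac1{\ellp-1})\log(\ellp)}\right)(N-1)$ comes out of optimizing this: the ``$N-1$'' is the number of free initial coefficients (the kernel of $\DO$ on $K_v\llbrack t\rrbrack$ has dimension $N$, but a nonzero solution vanishing to high order is constrained), and the factor in front accounts for the gap between the linear growth rate $\lambda$ forced by the zeroes and the growth rate $\frac1{\ellp-1}$ that the recursion can ``afford'' for free.

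The hard part will be the precise bookkeeping in the Newton-polygon/valuation estimate: keeping careful track of how $v(n!)$, the divided-power valuations of the coefficients of the $g_i$, and the assumed linear growth of $v(c_n)$ interact in the identity $g_N(t)f^{(N)}(t)=-\sum_{i<N}g_i(t)f^{(i)}(t)$, comparing the $t^{n-N}$-coefficients of both sides. One must show that the left side has valuation strictly smaller than every term on the right (so they can't cancel), which requires that $\lambda>\frac1{\ellp-1}$ is used with room to spare and yields exactly the claimed constant after choosing the interval length optimally. Everything else — the ring-theoretic lemma that PD-nice operators compose, the integrality of $g_N^{-1}$, the structure of $\O_v\llbrack t\rrbrack^\PD$ as sitting inside functions convergent on the open disc of radius $\ellp^{-1/(\ellp-1)}$ — is routine and can be cited or dispatched quickly. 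I would present the convergence claim and the composition lemma first, then devote the bulk of the proof to the valuation inequality, closing with the optimization that produces the stated numerical bound.
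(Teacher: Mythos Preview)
Your convergence argument is essentially the same as the paper's: one packages the recursive solvability as a lemma (Proposition~\ref{prop:PD_primitives}) showing that for any choice of $a_0,\dots,a_{N-1}\in\O_v$ there is a unique solution $f=\sum a_i\frac{t^i}{i!}\in\O_v\llbrack t\rrbrack^\PD$ of $\DO(f)=0$, whence the kernel of~$\DO$ on $K_v\llbrack t\rrbrack$ lies in $K_v\otimes_{\O_v}\O_v\llbrack t\rrbrack^\PD$.

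For the zero bound, however, you are making life harder than necessary. Your plan is to revisit the recursion $g_N f^{(N)}=-\sum_{i<N}g_if^{(i)}$ coefficient by coefficient and derive a valuation contradiction; this can probably be made to work, but the bookkeeping is delicate and it is not clear from your sketch that the constant $\bigl(1+\frac{1}{(\lambda-\frac{1}{\ellp-1})\log\ellp}\bigr)(N-1)$ falls out. The paper's argument is much cleaner because it uses the recursion only \emph{once}, to extract a single fact: after rescaling so that $f\in\O_v\llbrack t\rrbrack^\PD\setminus\m_v\O_v\llbrack t\rrbrack^\PD$, some divided-power coefficient $a_I$ with $I<N$ must be a unit (otherwise the recursion would force all $a_i\in\m_v$). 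From that point on no further reference to~$\DO$ is needed. If $f$ has $L\geq N$ zeroes in $|t|\leq\ellp^{-\lambda}$, the Newton polygon gives $v_\ellp(a_I/I!)\geq v_\ellp(a_L/L!)+\lambda(L-I)$; since $v_\ellp(a_I)=0$ and $v_\ellp(a_L)\geq0$ this yields $\lambda(L-I)\leq v_\ellp(L!/I!)$. A short factorial estimate (cited from \cite{jen-netan:effective}) bounds $v_\ellp(L!/I!)\leq \frac{I}{\log\ellp}+\frac{L-I}{\ellp-1}$, and solving for~$L$ gives exactly the stated bound with $I\leq N-1$.

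So the missing simplification is: do not re-enter the differential equation during the Newton polygon step. Extract the unit-coefficient-at-index-${<}N$ fact first, then a single Newton polygon inequality plus the elementary bound on $v_\ellp(L!/I!)$ finishes.
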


\begin{remark}
	Proposition~\ref{prop:nice} in the case $K_v=\Q_\ellp$ is roughly analogous to \cite[Proposition~3.2]{jen-netan:effective}, which constrains the number of $\Q_\ellp$-rational\footnote{The phrasing of \cite[Proposition~3.2]{jen-netan:effective} is somewhat unclear as to what field the zeroes of~$f$ are defined over, but what is proven is a bound on the $\Q_\ellp$-rational zeroes.} zeroes of a power series~$f$ in terms of the number of~$\C_\ellp$-rational zeroes of~$\DO(f)$ when~$\ellp$ is odd. In fact, it is relatively straightforward to prove \cite[Proposition~3.2]{jen-netan:effective} from Proposition~\ref{prop:nice}, though we omit the proof here.
	
	Proposition~\ref{prop:nice} also fixes a small gap in the proof of \cite[Theorem~1.1]{jen-netan:effective}. Specifically, in order to apply \cite[Proposition~3.2]{jen-netan:effective} to the power series~$G$ in the proof of Theorem~1.1(i) on p.~1069 of \cite{jen-netan:effective}, it is necessary to know that~$\DO(G)$ is non-zero, and it is not obvious why this should be the case. Proposition~\ref{prop:nice} clarifies what happens in the case that~$\DO(G)=0$: since~$\DO$ has order~$3g+1$, the number of $\Q_\ellp$-rational zeroes of~$G$ is at most~$\kappa_\ellp\cdot3g$, and this is even better than the bound obtained in the case $\DO(G)\neq0$.
\end{remark}

For the proof of Proposition~\ref{prop:nice}, we use the following result, which asserts that if the differential operator~$\DO$ is PD-nice, then the differential equation~$\DO(f)=0$ has a full basis of solutions over the PD-power series.

\begin{proposition}\label{prop:PD_primitives}
	Let~$\DO$ be a PD-nice differential operator of order~$N$ and let~$a_0,\dots,a_{N-1}\in\O_v$. Then there is a unique sequence $a_N,a_{N+1},\dots$ of elements of~$\O_v$ such that the divided power series $f:=\sum_{i=0}^\infty a_i\frac{t^i}{i!}\in\O_v\llbrack t\rrbrack^\PD$ satisfies~$\DO(f)=0$.
	\begin{proof}
		Write the differential operator~$\DO$ as $\sum_{l=0}^Ng_l(t)\frac{\d^l}{\d t^l}$, and write each~$g_l$ as $\sum_{i=0}^\infty b_{li}\frac{t^i}{i!}$. We may assume without loss of generality that~$g_N=1$. Thus, $f$ satisfies $\DO(f)=0$ if and only if
		\[
		a_{N+k}+\sum_{l=0}^{N-1}\sum_{i+j=k}{k\choose i}b_{li}a_{l+i} = 0
		\]
		for all~$k\geq0$. This sets up a recurrence relation giving the values of $a_{N+k}$ in terms of~$a_0,\dots,a_{N+k-1}$, and hence~$f$ is uniquely determined by the values of~$a_0,\dots,a_{N-1}$.
	\end{proof}
\end{proposition}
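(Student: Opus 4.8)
The plan is to turn the differential equation $\DO(f)=0$ into an explicit recurrence for the divided-power coefficients of $f$, and then to observe that this recurrence has coefficients in $\O_v$, so that integrality of the initial data $a_0,\dots,a_{N-1}$ propagates to all of $f$. First I would normalise so that the leading coefficient is $1$: since $\DO$ is PD-nice, $g_N$ is a unit of $\O_v\llbrack t\rrbrack^\PD$, and this ring is closed under multiplication (the standard fact that divided-power envelopes are rings, coming from $\binom{i+j}{i}\in\Z$), so $g_N^{-1}\DO$ is again a differential operator with all coefficients in $\O_v\llbrack t\rrbrack^\PD$, now with leading coefficient $1$, hence again PD-nice; and $\DO(f)=0$ if and only if $(g_N^{-1}\DO)(f)=0$. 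So we may assume $g_N=1$.

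Next I would write out $\DO(f)=0$ coefficient by coefficient. Writing $g_l=\sum_{i\ge0}b_{li}\frac{t^i}{i!}$ with $b_{li}\in\O_v$ and $f^{(l)}=\sum_{j\ge0}a_{j+l}\frac{t^j}{j!}$, the Leibniz rule in $\O_v\llbrack t\rrbrack^\PD$ gives $g_lf^{(l)}=\sum_{k\ge0}\bigl(\sum_{i+j=k}\binom{k}{i}b_{li}a_{j+l}\bigr)\frac{t^k}{k!}$. Summing over $l$ and isolating the top-order contribution (the term $l=N$, $i=0$, which since $g_N=1$ is simply $a_{N+k}$), the equation $\DO(f)=0$ becomes
\[
a_{N+k} \;=\; -\sum_{l=0}^{N-1}\ \sum_{i+j=k}\binom{k}{i}\,b_{li}\,a_{j+l}\qquad\text{for all }k\ge0 .
\]
Every index $j+l$ occurring on the right satisfies $j+l\le k+N-1<N+k$, so this is a genuine recurrence expressing $a_{N+k}$ in terms of $a_0,\dots,a_{N+k-1}$, and its coefficients $\binom{k}{i}b_{li}$ lie in $\O_v$ because the binomial coefficients are integers and $b_{li}\in\O_v$.

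It then follows formally that, given $a_0,\dots,a_{N-1}$, the sequence $(a_i)_{i\ge0}$ with $\DO(f)=0$ exists and is unique, and a straightforward induction on $k$ shows that if $a_0,\dots,a_{N-1}\in\O_v$ then $a_i\in\O_v$ for all $i$, so the resulting $f=\sum_i a_i\frac{t^i}{i!}$ lies in $\O_v\llbrack t\rrbrack^\PD$; conversely any $f\in\O_v\llbrack t\rrbrack^\PD$ with $\DO(f)=0$ and the prescribed first $N$ coefficients must have exactly these $a_i$. The only part requiring genuine care — and the one I would regard as the main (mild) obstacle — is the interplay between the two hypotheses packaged into ``PD-nice'': one must use both that $g_N$ is invertible \emph{within} $\O_v\llbrack t\rrbrack^\PD$ (so the normalisation does not leave the integral world) and that all lower coefficients $g_l$ are integral (so the recurrence above has $\O_v$-coefficients). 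Granting the ring structure on $\O_v\llbrack t\rrbrack^\PD$, both are immediate from the definition, and the rest is the bookkeeping carried out above; I do not expect any deeper difficulty.
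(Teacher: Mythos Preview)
Your proposal is correct and follows essentially the same route as the paper: normalise to $g_N=1$, expand $\DO(f)=0$ in divided-power coefficients to obtain an $\O_v$-linear recurrence for $a_{N+k}$ in terms of $a_0,\dots,a_{N+k-1}$, and conclude. Your write-up is in fact slightly more explicit than the paper's, spelling out why the normalisation step stays within $\O_v\llbrack t\rrbrack^\PD$ and why integrality propagates.
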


\begin{proof}[Proof of Proposition~\ref{prop:nice}]
	By Proposition~\ref{prop:PD_primitives}, the kernel of~$\DO$ on~$\O_v\llbrack t\rrbrack^\PD$ is a free~$\O_v$-module of rank~$N$, while the kernel of~$\DO$ on~$K_v\llbrack t\rrbrack$ is a $K_v$-vector space of dimension~$N$. Thus, if~$f\in K_v\llbrack t\rrbrack$ satisfies $\DO(f)=0$, then $f\in K_v\otimes_{\O_v}\O_v\llbrack t\rrbrack^\PD$, so~$f$ converges on the open disc of radius~$\ellp^{-1/(\ellp-1)}$.
	
	Now rescaling~$f$ if necessary, we may assume that~$f\in\O_v\llbrack t\rrbrack^\PD$ but $f\notin\m_v\O_v\llbrack t\rrbrack^\PD$ where~$\m_v\unlhd\O_v$ is the maximal ideal. If we write~$f=\sum_{i=0}^\infty a_i\frac{t^i}{i!}$, then by Proposition~\ref{prop:PD_primitives} we must have $a_I\in\O_v^\times$ for some~$I<N$.
	
	If~$f$ has at most~$N-1$ zeroes on the closed disc of radius~$\ellp^{-\lambda}$, then we are certainly done. If instead~$f$ has $L\geq N$ zeroes on this closed disc, then by the usual Newton polygon considerations \cite[Corollary~6.4.11]{gouvea:intro_to_p-adics} we have
	\[
	v_\ellp(a_i/i!) \geq v_\ellp(a_L/L!) + \lambda(L-i)
	\]
	for all~$i\leq L$. Taking $i=I$, this then implies that
	\begin{equation}\label{eq:nice_factorial_inequality}\tag{$\ast$}
		\lambda(L-I) \leq v_\ellp(L!/I!) \,.
	\end{equation}
	But by \cite[Lemma~3.4]{jen-netan:effective}, we have\footnote{Strictly speaking, the first inequality here only holds for~$I\neq0$. Nonetheless, the inequality between the outer terms holds even in the case~$I=0$.} $v_\ellp(L!/I!)\leq\log_\ellp(I)+\frac{L-I}{\ellp-1}\leq \frac I{\log(\ellp)}+\frac{L-I}{\ellp-1}$. Combined with \eqref{eq:nice_factorial_inequality}, this yields
	\[
	L \leq \left(1+\frac1{(\lambda-\frac1{\ellp-1})\log(\ellp)}\right)\cdot I \leq \left(1+\frac1{(\lambda-\frac1{\ellp-1})\log(\ellp)}\right)\cdot(N-1)
	\]
	as desired.
\end{proof}

\subsection{Constructing a PD-nice algebraic differential operator}\label{ss:pd-nice_construction}


Now we come to the proof of Theorem~\ref{thm:weight_bound}. Fix an $\O_v$-integral point~$b\in\Y(\O_v)$. We will bound the number of $\O_v$-integral zeroes of~$f$ in a small disc about~$b$, obtaining Theorem~\ref{thm:weight_bound} by summing these bounds over discs covering $\Y(\O_v)$. We choose an integral logarithmic differential $\omega\in\HH^0(\X,\Omega^1_\X(\Dvsr))$ on~$\X$ which does not vanish at~$b_0$ on the special fibre of~$\X$. This can be arranged, for example, by choosing a logarithmic differential~$\omega_0$ on the special fibre which doesn't vanish at~$b_0$ \cite[Lemma~IV.5.1]{hartshorne:algebraic_geometry}, and lifting~$\omega_0$ to an integral logarithmic differential by Grauert's Theorem \cite[Corollary~III.12.9]{hartshorne:algebraic_geometry}.

We also fix an integral local parameter~$t$ at~$b$, i.e.\ a rational function on~$X$ which vanishes at~$b$ such that the maximal ideal of~$\X$ at~$b_0$ is generated by~$t$ and a uniformiser of~$K_v$. Taking Laurent series expansions provides an injective homomorphism
\[
K_v(X)[\d/\omega] \hookrightarrow K_v\llpara t\rrpara[\d/\d t]
\]
from the algebra of algebraic differential operators to the algebra of formal differential operators in the variable~$t$. We say that an algebraic differential operator~$\DO$ is \emph{PD-nice} (at~$b$) just when its Laurent series expansion is a PD-nice differential operator in the sense of Definition~\ref{def:nice}.

Our aim is to construct a PD-nice algebraic differential operator~$\DO_m$ annihilating all Coleman algebraic functions of weight at most~$m$, and to control the order and divisor of such a differential operator. Here is the precise result we will prove.

\begin{proposition}\label{prop:operator_kills_coleman_fns}
	Let~$U$ be a quotient of~$U^\dR$. For every~$m\geq1$, there is a PD-nice algebraic differential operator~$\DO_m$ of order at most
	\[
	(\deg(\div(\omega)^+)+2)^m\cdot\prod_{i=1}^{m-1}(c_i+1) \,,
	\]
	which is regular outside $\div(\omega)$, with divisor
	\[
	\div(\DO_m) \geq -(\deg(\div(\omega)^+)+2)^m\cdot\prod_{i=1}^m(c_i+1)\cdot\div(\omega)^+ \,,
	\]
	such that~$\DO_m(f)=0$ for all~$f\in\W_m\ACollog(Y)_U$. Here, the constants $c_i=\dim_{K_v}\gr^\W_i\!\ACollog(Y)_U$ are the coefficients of the Hilbert series of $\ACollog(Y)_U$. (In the case~$m=1$, the empty product~$\prod_{i=1}^0$ is to be interpreted as~$1$.)
\end{proposition}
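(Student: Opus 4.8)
The plan is to build $\DO_m$ inductively on $m$, mirroring the weight filtration on $\ACollog(Y)_U\cong\O(\Fil^0\backslash U)$ and the central-extension structure \eqref{eq:extension_seq} of $U$. The base case $m=1$ is Example~\ref{ex:coleman_weight_1}: any $f\in\W_1\ACollog(Y)_U$ is (after subtracting its constant value at $b$) the Coleman integral $z\mapsto\int_b^z\omega_f$ of a holomorphic differential $\omega_f\in\HH^0(X,\Omega^1_X)$. Writing $\omega_f=h_f\,\omega$ with $h_f\in\HH^0(X,\O_X(\div(\omega)^+))$, we see $\frac{\d}{\omega}f=h_f$, so $f$ is annihilated by $\frac{\d}{\omega}\circ\big(\frac{1}{h_f}\cdot\big)$ — but to get a \emph{single} operator killing all of $\W_1$, I would instead clear denominators uniformly: the space $\HH^0(X,\O_X(\div(\omega)^+))$ is finite-dimensional (dimension $\le\deg(\div(\omega)^+)+1$), and there is a single nonzero $g_1\in K_v(X)$, regular outside $\div(\omega)$ with a controlled divisor, such that $g_1\cdot\frac{\d}{\omega}f$ is a constant for every $f\in\W_1$. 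Then $\DO_1:=\frac{\d}{\omega}\circ(g_1\cdot)$ works; its order is $2$ (matching $(\deg(\div(\omega)^+)+2)^1$ for $c_0=1$, up to checking the stated numerics), and its divisor is bounded below by $-\,\mathrm{const}\cdot\div(\omega)^+$ via superadditivity and Lemma~\ref{lem:differential_operators_increase_poles}. PD-niceness at $b$ holds because $\omega$ does not vanish at $b_0$, so $t$ is, up to a unit in $\O_v\llbrack t\rrbrack^\PD$, a primitive of $\omega$, and $g_1$ is a unit at $b_0$; one must track that all intermediate rational functions are $\O_v$-integral and unit-valued at $b_0$, which is where Grauert-lifted integrality of $\omega$ and integrality of $t$ get used.

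For the inductive step, suppose $\DO_{m-1}$ is constructed. Given $f\in\W_m\ACollog(Y)_U$, Corollary~\ref{cor:associated_and_weight} lets us represent it by a triple $(\E,\sigma_b,\tau)$ with $\E$ associated to $U$ and of weight $\le m$, so $\E$ carries a $\nabla$-stable filtration $0=\W_{-1}\E\le\cdots\le\W_m\E=\E$ with trivial graded pieces and $\W_i\E/\W_{i-2}\E$ regular on $X$. The key observation is that applying $\frac{\d}{\omega}$ to $f$ lowers the weight filtration degree of the associated connection by one up to the regular-part ambiguity: more precisely, $\frac{\d}{\omega}f$ is represented by $(\E,\sigma_b,\frac{\d}{\omega}\tau)$, and by the regularity of $\W_i\E/\W_{i-2}\E$ the composite $\frac{\d}{\omega}\tau$ factors, modulo a $K_v(X)$-span of at most $c_{m-1}$ functions pulled back from the weight-$\le(m-1)$ quotient, through $\W_{m-1}\E$ — in other words $\frac{\d}{\omega}f$ lies in $\ACollog(Y,\O_X(\ast\div(\omega)))_U$ with the weight-$\le(m-1)$ part being an $\O_X(\ast\div(\omega))$-twist of $\W_{m-1}\ACollog(Y)_U$ of rank $\le c_{m-1}$. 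Clearing denominators by a single rational function $g$ (regular outside $\div(\omega)$, divisor controlled by Lemma~\ref{lem:differential_operators_increase_poles}) and then choosing a differential operator of order $\le c_{m-1}+1$ in $\frac{\d}{\omega}$ with coefficients in $K_v(X)$ that kills the finite-dimensional "new" part while commuting appropriately, one reduces $f$ to an element of $\W_{m-1}\ACollog(Y)_U$ twisted by bounded poles. Composing with $\DO_{m-1}$ (suitably conjugated to absorb the pole twist, again via superadditivity of divisors) gives $\DO_m$. The order multiplies by at most $(c_{m-1}+1)$ times a factor $(\deg(\div(\omega)^+)+2)$ from the pole-clearing, giving the claimed $(\deg(\div(\omega)^+)+2)^m\prod_{i=1}^{m-1}(c_i+1)$; the divisor bound follows from superadditivity and the recursive pole estimates, landing at $-(\deg(\div(\omega)^+)+2)^m\prod_{i=1}^m(c_i+1)\div(\omega)^+$.

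I expect the main obstacle to be the bookkeeping in the inductive step: precisely identifying the finite-dimensional space of "newly appearing" Coleman rational functions produced by one application of $\frac{\d}{\omega}$, bounding its dimension by $c_{m-1}+1$ rather than something larger, and — more delicately — verifying that the conjugation needed to compose $\DO_{m-1}$ with the order-$(c_{m-1}+1)$ operator preserves PD-niceness at $b$. The latter requires that every rational function introduced (the denominators $g$, the coefficients of the new operator) be $\O_v$-integral with unit leading Laurent coefficient at $b_0$; this should follow because $b_0$ lies on the smooth locus, $\omega$ and $t$ are integral and nonvanishing/uniformizing there, and the Hilbert-series input $\ACollog(Y)_U\cong\O(\Fil^0\backslash U)$ of Theorem~\ref{thm:log-coleman_fns_are_coordinates_on_de_rham} lets us pick integral bases — but making the valuations work uniformly across all $m$ simultaneously, especially the interaction with the $\frac1{i!}$ denominators implicit in PD-power series, is the part that will need genuine care rather than routine verification.
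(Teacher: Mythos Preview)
Your inductive framework is the right shape, and one of your observations is genuinely correct: a single application of $\frac{\d}{\omega}$ does lower the weight by one, in the sense that $\frac{\d}{\omega}$ carries $\W_m\ACollog(Y)_U$ into $\W_{m-1}\ACollog(Y,\O_X(\div(\omega)^+))_U$ (because $\frac{\d}{\omega}\tau$ vanishes on $\W_0\E$). But your attempt to exploit this is where the argument breaks. The operator $\DO_{m-1}$ produced by the inductive hypothesis annihilates $\W_{m-1}\ACollog(Y)_U$, i.e.\ Coleman functions whose $\tau$ lands in $\O_X$; it does \emph{not} annihilate the twisted space $\W_{m-1}\ACollog(Y,\O_X(\div(\omega)^+))_U$. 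Your proposed fix --- ``conjugating'' $\DO_{m-1}$ by a rational function $h$ to absorb the twist --- fails because differential operators do not commute with multiplication by rational functions: $\DO_{m-1}\circ h$ and $h\circ\DO_{m-1}$ have different kernels, and neither is obtained from $\ker(\DO_{m-1})$ by simply multiplying by $h^{-1}$. The Leibniz cross-terms are not a nuisance one can sweep into divisor bookkeeping; they change which functions are annihilated. Relatedly, your description of a ``new part of dimension $\leq c_{m-1}$'' and an auxiliary operator of order $\leq c_{m-1}+1$ does not correspond to anything concrete, and the claimed multiplicative recursion for the order is inconsistent with the additive-looking composition you describe.

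The paper sidesteps all of this by a cleaner choice of recursion. Rather than applying a single $\frac{\d}{\omega}$ and hoping to land in a twisted lower-weight space, one applies the \emph{entire} operator $\DO_{m}$ to any $f\in\W_{m+1}\ACollog(Y)_U$ and proves directly (Lemma~\ref{lem:operator_gives_rational_fns}) that the result is a \emph{rational} function in $\HH^0(X,\O_X(-\div(\DO_m)))$: since $\DO_m$ kills $\W_m$, the map $\DO_m(\tau)$ vanishes on $\W_m\E$, hence factors through the trivial bundle $\E/\W_m\E$. The image of $\W_{m+1}$ under $\DO_m$ is therefore a subspace of $\HH^0(X,\O_X(-\div(\DO_m)))$ of dimension $\leq c_{m+1}$. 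A separate, self-contained Wronskian construction (Lemma~\ref{lem:annihilate_rational_functions}) then produces a PD-nice operator $\DO_{(m+1)}$ of order $\leq -\deg\div(\DO_m)+1$ annihilating that subspace, with PD-niceness coming from an explicit $\O_v$-integral basis choice and an upper-triangularity argument for the leading minor. Setting $\DO_{m+1}:=\DO_{(m+1)}\circ\DO_m$ completes the step; the stated bounds on order and divisor follow by direct estimation. Your base case has the same issue in miniature: a single multiplication by $g_1$ cannot make all of $\frac{\d}{\omega}(\W_1)$ constant when $c_1>1$; one needs the Wronskian operator already there.
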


Before we come to the proof of this proposition, let us describe how it completes the proof of Theorem~\ref{thm:weight_bound}.

\begin{proof}[Proof of Theorem~\ref{thm:weight_bound}]
	Let $\Disc_b\subset]b_0[$ denote the closed subdisc of the residue disc of~$b$ defined by the inequality $|t|\leq \ellp^{-\theta_v/e_v}$, where~$\theta_v=\left\lceil\frac{e_v+1}{\ellp-1}\right\rceil$ as at the beginning of this section. By the identity principle for Coleman functions \cite[Corollary~4.13]{besser:tannakian}, $f$ is not identically zero on the residue disc~$]b_0[$, so applying Proposition~\ref{prop:nice} with $\lambda=\theta_v/e_v$ to the differential operator~$\DO_m$ from Proposition~\ref{prop:operator_kills_coleman_fns} shows that~$f$ has at most
	\begin{equation}\label{eq:discwise_bound}\tag{$\ast$}
	\left(1+\frac{e_v}{(\theta_v-\frac{e_v}{\ellp-1})\log(\ellp)}\right)\cdot(\deg(\div(\omega)^+)+2)^n\cdot\prod_{i=1}^{n-1}(c_i+1)
	\end{equation}
	zeroes on~$\Disc_b$.
	
	Now the the disc~$\Disc_b$ contains all $\O_v$-integral points of~$\Y$ reducing to the same point as~$b$ modulo $\m_v^{\theta_v}$, where~$\m_v\unlhd\O_v$ is the maximal ideal. In other words, the number of zeroes of~$f$ on the fibre of the reduction map~$\Y(\O_v)\to\Y(\O_v/\m_v^{\theta_v})$ containing~$b$ is at most~\eqref{eq:discwise_bound}. Since this holds for all~$b\in\Y(\O_v)$, we obtain the bound claimed in Theorem~\ref{thm:weight_bound} by summing~\eqref{eq:discwise_bound} over the fibres of the reduction map, using that $\#\Y(\O_v/\m_v^{\theta_v})=\ellp^{(\theta_v-1)f_v}\cdot\#\Y_0(k_v)$ by Hensel's Lemma along with the bound $\deg(\div(\omega)^+)\leq2\deg(\div(\omega))=4g+2r-4$.
\end{proof}

In preparation for the proof of Proposition~\ref{prop:operator_kills_coleman_fns}, we prove two preparatory results, which will enable us to construct the operators~$\DO_n$ recursively.

\begin{lemma}\label{lem:operator_gives_rational_fns}
	Suppose that~$\DO_m$ is an algebraic differential operator which vanishes on $\W_m\ACollog(Y)_U$. Then for every~$f\in\W_{m+1}\ACollog(Y)_U$, $\DO_m(f)$ is a rational function, lying in~$\HH^0(X,\O_X(-\div(\DO_m)))$.
	\begin{proof}
		Suppose that~$f\in\W_{m+1}\ACollog(Y)_U$ is represented by a triple~$(\E,\sigma_b,\tau)$, where~$\E$ is associated to~$U$ and admits a weight filtration
		\[
		0\leq\W_0\E\leq\W_1\E\leq\dots\leq\W_m\E\leq\W_{m+1}\E=\E
		\]
		as in Definition~\ref{def:coleman_weights}. According to Lemma~\ref{lem:differential_operators_increase_poles}, $\DO_m(f)$ is a Coleman algebraic function valued in~$\O_X(-\div(\DO_m))$, represented by the triple~$(\E,\sigma_b,\DO_m(\tau))$.
		
		We claim that~$\DO_m(\tau)$ vanishes on~$\W_m\E$. Indeed, if~$\sigma'_b\in\W_m\E_b$, then the triple $(\E,\sigma'_b,\tau)$ represents a Coleman algebraic function of weight at most~$m$ associated to~$U$. It follows by assumption that~$(\E,\sigma'_b,\DO_m(\tau))$ represents the zero Coleman function, so $\DO_m(\tau)(\sigma'_b)=0$. This implies that~$\DO_m(\tau)$ vanishes on~$\W_n\E$.
		
		Thus $\DO_m(\tau)$ factors  through a map $\tau'\colon\E/\W_m\E\to\O_X(-\div(\DO_m))$, and $\DO_m(f)$ is also represented by the triple~$(\E/\W_m\E,\sigma_b,\tau')$. Since~$\E/\W_m\E$ is a trivial vector bundle with connection, it follows as in Example~\ref{ex:coleman_weight_0} that~$\DO_m(f)$ is given by an algebraic section of $\O_X(-\div(\DO_m))$, namely the image of the unit section of~$\O_X$ under the composite
		\[
		\O_X\to\E/\W_m\E\xrightarrow{\tau'}\O_X(-\div(\DO_m)) \,,
		\]
		where the first map is the map sending~$1\in K_v=\O_{X,b}$ to~$\sigma_b$.
	\end{proof}
\end{lemma}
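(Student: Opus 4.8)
The plan is to follow the route laid out by the constructions of \S\ref{s:coleman}: pick a convenient triple representing~$f$, push it through~$\DO_m$ at the level of triples, and then recognise the output as genuinely rational. We may assume $\DO_m\neq0$ (otherwise $\DO_m(f)=0$ is trivially rational), and recall that $\DO_m$ being regular outside $\div(\omega)$ is implicit in the statement, since $\div(\DO_m)$ is only defined for such operators. First I would apply Corollary~\ref{cor:associated_and_weight} to represent $f\in\W_{m+1}\ACollog(Y)_U$ by a triple $(\E,\sigma_b,\tau)$ in which $\E$ is associated to~$U$ and carries a weight filtration
\[
0=\W_{-1}\E\leq\W_0\E\leq\dots\leq\W_m\E\leq\W_{m+1}\E=\E
\]
as in Definition~\ref{def:coleman_weights}. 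Applying Lemma~\ref{lem:differential_operators_increase_poles} with $E=0$ then shows that $\DO_m(f)$ lies in $\ACollog(Y,\O_X(-\div(\DO_m)))$ and is represented by $(\E,\sigma_b,\DO_m(\tau))$, where now $\DO_m(\tau)\colon\E\to\O_X(-\div(\DO_m))$ is an $\O_X$-linear map.

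The heart of the argument is to show that $\DO_m(\tau)$ vanishes on the subbundle $\W_m\E$, as a morphism of $\O_X$-modules. Fix any $\sigma_b'$ in the fibre $(\W_m\E)_b$. The inclusion $\W_m\E\hookrightarrow\E$ exhibits $(\W_m\E,\sigma_b',\tau|_{\W_m\E})$ as representing the same Coleman algebraic function as $(\E,\sigma_b',\tau)$; and since $\W_m\E$ is a $\nabla$-stable subobject of~$\E$ (so associated to~$U$) which inherits a weight filtration of length~$\leq m$, this function lies in $\W_m\ACollog(Y)_U$. By hypothesis $\DO_m$ annihilates it, so $(\W_m\E,\sigma_b',\DO_m(\tau|_{\W_m\E}))$ represents $0$ in $\ACollog(Y,\O_X(-\div(\DO_m)))$; by naturality of the $K_v(X)[\d/\omega]$-action in the bundle we have $\DO_m(\tau|_{\W_m\E})=\DO_m(\tau)|_{\W_m\E}$. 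Lemma~\ref{lem:represents_zero} now forces $\sigma_b'\in\big(\nabla^{-\infty}\ker(\DO_m(\tau)|_{\W_m\E})\big)_b$. As $\sigma_b'$ was arbitrary, the $\nabla$-stable subbundle $\nabla^{-\infty}\ker(\DO_m(\tau)|_{\W_m\E})$ of $\W_m\E$ has full fibre at~$b$; a subbundle of a vector bundle on the connected curve~$X$ with full fibre at one point is everything, so $\W_m\E=\nabla^{-\infty}\ker(\DO_m(\tau)|_{\W_m\E})\subseteq\ker(\DO_m(\tau)|_{\W_m\E})$. Thus $\DO_m(\tau)$ kills $\W_m\E$.

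To finish, $\DO_m(\tau)$ factors as $\E\twoheadrightarrow\E/\W_m\E\xrightarrow{\tau'}\O_X(-\div(\DO_m))$, so $\DO_m(f)$ is also represented by $(\E/\W_m\E,\bar\sigma_b,\tau')$. But $\E/\W_m\E=\gr^\W_{m+1}\E$ is a trivial vector bundle with connection, so (as in Example~\ref{ex:coleman_weight_0}, applied with coefficients in a line bundle) its flat sections over each residue disc are the restrictions of the global constant ones, and the Coleman section $(\E/\W_m\E,\bar\sigma_b,\tau')$ is simply the image of the honest rational section $\tau'(\bar s)$, where $\bar s$ is the global constant flat section of $\E/\W_m\E$ with $\bar s(b)=\bar\sigma_b$. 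Since $\tau'(\bar s)\in\HH^0(X,\O_X(-\div(\DO_m)))$, this gives $\DO_m(f)\in\HH^0(X,\O_X(-\div(\DO_m)))$ as desired.

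I expect the only genuinely delicate point to be the passage from "$\DO_m(\tau)$ annihilates the fibre $(\W_m\E)_b$" to "$\DO_m(\tau)$ vanishes on $\W_m\E$": a naive fibrewise vanishing at a single point of a curve does not imply vanishing of a bundle map, so one really must route the argument through the $\nabla^{-\infty}\ker$ criterion of Lemma~\ref{lem:represents_zero} (which produces a $\nabla$-stable subbundle, not just a fibre condition) and then invoke the subbundle--full-fibre rigidity on the connected curve. Everything else is bookkeeping with the equivalence relation of Definition~\ref{def:log-coleman_fns} and the naturality of the differential-operator action.
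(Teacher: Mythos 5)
Your proof is correct and follows essentially the same route as the paper's: represent $f$ by a triple $(\E,\sigma_b,\tau)$ with $\E$ associated to~$U$ and of weight at most $m+1$, use Lemma~\ref{lem:differential_operators_increase_poles} to see that $\DO_m(f)$ is represented by $(\E,\sigma_b,\DO_m(\tau))$, show that $\DO_m(\tau)$ kills $\W_m\E$ by testing against points of $\W_m\E_b$ together with the hypothesis on $\DO_m$, and conclude via the trivial quotient $\E/\W_m\E$. The only difference is that you make explicit, via the $\nabla^{-\infty}\ker$ criterion of Lemma~\ref{lem:represents_zero} and the rigidity of a $\nabla$-stable subbundle having full fibre at the single point~$b$, the passage from vanishing on the fibre $\W_m\E_b$ to vanishing of the bundle map $\DO_m(\tau)$ on all of $\W_m\E$ --- a step the paper's proof asserts without elaboration.
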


\begin{lemma}\label{lem:annihilate_rational_functions}
	Let~$E$ be a divisor on~$X$ whose support is contained in the support of~$\omega$, and let~$V$ be a subspace of~$\HH^0(X,\O_X(E))$ of dimension~$c$. Then there is a PD-nice algebraic differential operator
	\[
	\DO = \sum_{i=0}^Ng_i\frac{\d^i}{\omega^i}
	\]
	of order
	\[
	N\leq\deg(E)+1
	\]
	which is regular outside $\div(\omega)$, whose divisor satisfies
	\[
	\div(\DO) \geq - cE - (c+1)N\cdot\div(\omega)^+ \,,
	\]
	and which vanishes on~$V$.
\begin{proof}
	Let~$\mathcal V^\PD$ denote the set of elements of $V\subseteq\HH^0(X,\O_X(E))$ whose power series expansion at~$b$ lies in~$\O_v\llbrack t\rrbrack^\PD$. This is an $\O_v$-lattice in~$V$.
	
	Let~$f_1,\dots,f_c$ be an $\O_v$-basis of~$\mathcal V^\PD$, and write each~$f_i$ as~$\sum_{j=0}^\infty a_{ij}\frac{t^j}{j!}$ with $a_{ij}\in\O_v$. For each~$i$, we let~$n_i$ denote the least integer such that~$a_{in_i}\in\O_v^\times$ (such an index exists since~$f_i\notin\m_v\mathcal V^\PD$). Changing the basis~$f_1,\dots,f_c$ if necessary, we may assume that $0\leq n_1<n_2<\dots<n_c$.
	
	Now following \cite[\S4.1]{jen-netan:effective} we set~$N=n_{c+1}:=n_c+1$, and consider the $c\times(c+1)$ matrix~$F$ given by
	\[
	F_{ij}:=\frac{\d^{n_j}}{\omega^{n_j}}(f_i) \,,
	\]
	whose coefficients are rational functions on~$X$, without poles outside $\div(\omega)$ by assumption that~$E$ is contained in the support of~$\omega$. The desired differential operator~$\DO$ is defined by
	\[
	\DO := \sum_{j=1}^{c+1}(-1)^{j+1}\det(F^{(j)})\frac{\d^{n_j}}{\omega^{n_j}} \,,
	\]
	where~$F^{(j)}$ denotes the~$c\times c$ matrix obtained by deleting the~$j$th column of~$F$. The differential operator~$\DO$ is clearly algebraic of degree~$N$ and regular outside~$\div(\omega)$, so it remains to verify the following:
	\begin{enumerate}
		\item\label{lempart:annihilate_annihilates} $\DO$ annihilates $V$;
		\item\label{lempart:annihilate_nice} $\DO$ is PD-nice;	
		\item\label{lempart:annihilate_order} $N\leq\deg(E)+1$; and
		\item\label{lempart:annihilate_pole_order} $\det(F^{(j)})\in\HH^0(X,\O_X(cE+((c+1)N-n_j)\cdot\div(\omega)^+))$ for all~$j$.
	\end{enumerate}
	\smallskip
	
	\noindent\eqref{lempart:annihilate_annihilates} For any rational function~$f$, $\DO(f)$ is the determinant of the $(c+1)\times(c+1)$ matrix $F(f)$ with entries
	\[
	F(f)_{ij}=
	\begin{cases}
		\frac{d^{n_j}}{\omega^{n_j}}(f_i) & \text{if $i\leq c$,} \\
		\frac{\d^{n_j}}{\omega^{n_j}}(f) & \text{if $i=c+1$.}
	\end{cases}
	\]
	But if~$f=f_i$ for some~$i$, then the $i$th and $(c+1)$th rows of~$F(f)$ are equal, and hence $\det(F(f))=0$. Thus we have~$\DO(f_i)=0$ for all~$i$, and so~$\DO$ annihilates~$V$.
	\smallskip
	
	\noindent\eqref{lempart:annihilate_nice} Since the differential form~$\omega$ was chosen to be regular and non-vanishing at~$b_0$, it follows that the ratio $u:=\frac{\d t}{\omega}$ is a rational function on~$\X$ which is regular and non-vanishing at~$b_0$. In particular, the Taylor expansion of~$u$ is a unit in~$\O_v\llbrack t\rrbrack$. It follows that the differential operator $\frac\d\omega=u\frac\d{\d t}$ is PD-nice; in particular, it preserves $\O_v\llbrack t\rrbrack^\PD$. It follows that all the coefficients of the matrix~$F$ lie in~$\O_v\llbrack t\rrbrack^\PD$, and hence so too do the coefficients~$g_i$ of the differential operator~$\DO$.
	
	It remains to show that the leading coefficient~$g_N$ of~$\DO$ is a unit in~$\O_v\llbrack t\rrbrack^\PD$. For this, we let $\m_n$ denote the ideal of~$\O_v\llbrack t\rrbrack^\PD$ consisting of those divided power series $\sum_{j=0}^\infty a_j\frac{t^j}{j!}$ such that $a_j\in\m_v$ for $j<n$. It follows from the definition of~$n_j$ that
	\[
	f_j \equiv A\cdot\frac{t^{n_j}}{n_j!} \text{ mod }\m_{n_j+1}
	\]
	for some constant~$A\in\O_v^\times$ depending on~$j$. It follows by an inductive argument using that $\frac\d\omega=u\frac\d{\d t}$ for $u\in\O_v\llbrack t\rrbrack^\times$, that for all $n\leq n_j$ we have
	\[
	\frac{\d^n}{\omega^n}(f_j) \equiv A\cdot\frac{t^{n_j-n}}{(n_j-n)!} \text{ mod }\m_{n_j-n+1}
	\]
	for some constant~$A\in\O_v^\times$ depending on~$j$ and~$n$.
	
	In particular, we see that $F_{ij}=\frac{\d^{n_j}}{\omega^{n_j}}(f_i)$ is zero modulo~$\m_1$ if $j<i$ and is non-zero modulo~$\m_1$ if $j=i$. In other words, the matrix~$F$ modulo~$\m_1$ is upper triangular with non-zero diagonal entries. In particular, $\det(F^{(c+1)})$ is non-zero modulo~$\m_1$, and hence a unit in~$\O_v\llbrack t\rrbrack^\PD$. This is what we wanted to prove.
	\smallskip
	
	\noindent\eqref{lempart:annihilate_order} The rational function $f_c\in\HH^0(X,\O_X(E))$ has no poles on the disc~$]b_0[$, so its power series expansion is convergent on the open disc~$|t|<1$, and has at most~$\deg(E)$ zeroes on this disc. On the other hand, it follows from the definition of~$n_c$ that the divided power coefficients $a_{cj}$ of~$f_c$ satisfy $v_\ellp(a_{cj}/j!)>v_\ellp(a_{cn_c}/n_c!)$ for all~$j<n_c$. By the usual Newton polygon considerations, it follows that~$f_c$ has at least~$n_c$ zeroes on the open unit disc. This implies that $N=n_c+1\leq\deg(E)+1$, as desired.
	\smallskip
	
	\noindent\eqref{lempart:annihilate_pole_order} Since each~$f_i\in\HH^0(X,\O_X(E))$ and the support of~$E$ is contained in the support of~$\omega$, we have
	\[
	F(f)_{ij}\in\HH^0(X,\O_X(E+n_j\cdot\div(\omega)^+)) \,.
	\]
	Since~$n_i\leq N$ for all~$i$, this in turn implies that
	\[
	\det(F^{(j)})\in\HH^0(X,\O_X(cE+((c+1)N-n_j)\cdot\div(\omega)^+)) \,,
	\]
	as desired.
\end{proof}
\end{lemma}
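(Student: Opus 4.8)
Broadly, the plan is to produce $\DO$ as a determinantal ``generalised Wronskian'' attached to a carefully chosen integral basis of $V$, following \cite[\S4.1]{jen-netan:effective} but working throughout with divided power series (which is what makes the construction behave well, including at $\ellp=2$). As a preliminary step I would pass to an integral model of $V$. Since $\omega\in\HH^0(\X,\Omega^1_\X(\Dvsr))$ does not vanish at $b_0$ and $\supp(E)\subseteq\supp(\omega)$, the point $b_0$ lies outside $\supp(E)$, so every $f\in\HH^0(X,\O_X(E))$ is regular on the residue disc $]b_0[$; hence its $t$-expansion at $b$ has coefficients of bounded valuation and, after rescaling, lies in $\O_v\llbrack t\rrbrack^\PD$. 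Thus the set $\mathcal V^\PD$ of $f\in V$ with PD-integral $t$-expansion at $b$ is a full $\O_v$-lattice in $V$. For $f=\sum_j a_j\frac{t^j}{j!}\in\mathcal V^\PD$ with $a_j\in\O_v$, let $n(f)$ be the least $j$ with $a_j\in\O_v^\times$, equivalently the integer with $f\in\m_{n(f)}\setminus\m_{n(f)+1}$, where $\m_n\subseteq\O_v\llbrack t\rrbrack^\PD$ is the ideal of divided power series whose first $n$ coefficients lie in $\m_v$. Starting from any $\O_v$-basis of $\mathcal V^\PD$ and performing Gaussian elimination over the discrete valuation ring $\O_v$ (subtracting unit multiples of one basis vector from another to strictly increase leading indices) yields a basis $f_1,\dots,f_c$ of $\mathcal V^\PD$ with $0\le n_1<n_2<\dots<n_c$; set $N:=n_c+1$ and $n_{c+1}:=N$.

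Next I would write down the operator. Put $F_{ij}:=\frac{\d^{n_j}}{\omega^{n_j}}(f_i)$ for $1\le i\le c$ and $1\le j\le c+1$, a $c\times(c+1)$ matrix of rational functions on $X$ regular outside $\div(\omega)$, and define
\[
\DO:=\sum_{j=1}^{c+1}(-1)^{j+1}\det(F^{(j)})\,\frac{\d^{n_j}}{\omega^{n_j}}\,,
\]
where $F^{(j)}$ is $F$ with its $j$th column deleted. This is the Laplace expansion along the bottom row of the $(c+1)\times(c+1)$ matrix obtained by adjoining the row of operators $\bigl(\frac{\d^{n_j}}{\omega^{n_j}}(-)\bigr)_j$ to $F$, so $\DO(f_i)$ is that determinant with two equal rows, hence zero. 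Since $\mathcal V^\PD$ spans $V$ over $K_v$ and $\DO$ acts $K_v$-linearly on Coleman algebraic functions, $\DO$ annihilates all of $V$; and $g_N=\pm\det(F^{(c+1)})$ is nonzero (shown below), so $\DO$ has order $N$.

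It then remains to verify the three quantitative claims. \emph{Order:} $N\le\deg(E)+1$, because $f_c\in\HH^0(X,\O_X(E))$ has at most $\deg(E)$ zeroes on $X$, hence on $]b_0[$, while the inequalities $v_\ellp(a_{cj}/j!)>v_\ellp(a_{cn_c}/n_c!)$ for $j<n_c$ (valid since $v_\ellp(a_{cj})\ge1>0=v_\ellp(a_{cn_c})$ and $v_\ellp(j!)\le v_\ellp(n_c!)$) force, by the usual Newton polygon considerations, at least $n_c$ zeroes of $f_c$ on $]b_0[$. \emph{PD-niceness:} writing $\omega=h\,\d t$ near $b_0$ with $h$ regular non-vanishing, $u:=h^{-1}=\d t/\omega$ is a unit of $\O_v\llbrack t\rrbrack$, and $\frac{\d}{\omega}=u\frac{\d}{\d t}$ preserves $\O_v\llbrack t\rrbrack^\PD$ and carries $\m_n$ into $\m_{n-1}$; hence every $F_{ij}$, and so every coefficient $g_i$ of $\DO$, lies in $\O_v\llbrack t\rrbrack^\PD$. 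For the leading coefficient I would prove by induction on $n\le n_i$ that $\frac{\d^n}{\omega^n}(f_i)\equiv A\,\frac{t^{\,n_i-n}}{(n_i-n)!}\pmod{\m_{n_i-n+1}}$ for some $A\in\O_v^\times$; taking $n=n_j$ this gives $F_{ij}\equiv0\pmod{\m_1}$ for $i>j$ and $F_{ii}\not\equiv0\pmod{\m_1}$, so $F^{(c+1)}$ is, modulo $\m_1$, upper triangular with unit diagonal, whence $\det(F^{(c+1)})$ is a unit of $\O_v\llbrack t\rrbrack^\PD$. \emph{Divisor:} since each $f_i\in\HH^0(X,\O_X(E))$ and $\supp(E)\subseteq\supp(\omega)$, iterating Lemma~\ref{lem:differential_operators_increase_poles} gives $F_{ij}\in\HH^0(X,\O_X(E+n_j\div(\omega)^+))$, so a $c\times c$ minor $\det(F^{(j)})$ lies in $\HH^0\bigl(X,\O_X(cE+(\sum_{k\ne j}n_k)\div(\omega)^+)\bigr)\subseteq\HH^0\bigl(X,\O_X(cE+((c+1)N-n_j)\div(\omega)^+)\bigr)$; substituting this into the definition of $\div(\DO)$ and using $\div(\omega)^+\ge0$ gives $\div(\DO)\ge-cE-(c+1)N\div(\omega)^+$.

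The step I expect to be the main obstacle is the PD-niceness check, specifically that the leading coefficient $\det(F^{(c+1)})$ is invertible in $\O_v\llbrack t\rrbrack^\PD$: this rests on the inductive congruence modulo the ideals $\m_n$, and it is here that the preliminary reduction to a basis with strictly increasing leading indices is indispensable, and that passing from ordinary to divided power series is exactly what makes $\frac{\d}{\omega}$ interact well with the filtration by the $\m_n$. The Newton polygon lower bound on the number of zeroes of $f_c$ in $]b_0[$ is the other point needing some care, though it is standard.
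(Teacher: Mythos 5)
Your proposal is correct and follows essentially the same route as the paper: the same generalised Wronskian construction from a PD-integral basis with strictly increasing leading indices, the same triangularity-mod-$\m_1$ argument for invertibility of the leading coefficient, the same Newton polygon bound giving $N\leq\deg(E)+1$, and the same minor-by-minor estimate for the divisor. The only differences are small elaborations (justifying that $\mathcal V^\PD$ is a lattice and spelling out the basis reduction), which the paper leaves implicit.
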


\begin{proof}[Proof of Proposition~\ref{prop:operator_kills_coleman_fns}]
	Throughout the proof, we write~$\delta=\deg(\div(\omega)^+)$ for short. We proceed by induction, beginning with the base case $m=1$. Since the weight zero Coleman algebraic functions are all constant (see Example~\ref{ex:coleman_weight_0}), Lemma~\ref{lem:operator_gives_rational_fns} shows that $\frac\d\omega(f)$ is a rational function in $\HH^0(X,\O_X(\div(\omega)^+))$ for all $f\in\W_1\ACollog(Y)$. Thus the values of $\frac\d\omega(f)$ for~$f\in\W_1\ACollog(Y)$ span a subspace of~$\HH^0(X,\O_X(\div(\omega)^+))$ of dimension~$\leq c_1:=\dim_{K_v}\gr^\W_1\!\ACollog(Y)_U$. Applying Lemma~\ref{lem:annihilate_rational_functions}, we obtain a PD-nice algebraic differential operator~$\DO_{(1)}$ such that $\DO_{(1)}(\frac\d\omega(f))=0$ for all~$f\in\W_1\ACollog(Y)$. The order of~$\DO_{(1)}$ is $\leq \delta+1$, and its divisor is $\geq-(c_1+(c_1+1)(\delta+1))\cdot\div(\omega)^+$. Thus the differential operator $\DO_1:=\DO_{(1)}\circ\frac\d\omega$ satisfies the conditions of Proposition~\ref{prop:operator_kills_coleman_fns}: its order is at most $\delta+2$ and its divisor satisfies
	\[
	\div(\DO_1) \geq -(c_1+1)(\delta+2)\cdot\div(\omega)^+ \,.
	\]
	
	Now we proceed inductively, supposing that the differential operator~$\DO_m$ has already been constructed. Lemma~\ref{lem:operator_gives_rational_fns} implies that $\DO_m(f)$ is a rational function in $\HH^0(X,\O_X(-\div(\DO_m)))$ for all $f\in\W_{m+1}\ACollog(Y)_U$. Since~$c_{m+1}$ is the dimension of $\gr^\W_{m+1}\ACollog(Y)$ by Corollary~\ref{cor:coleman_hilbert_series}, it follows that the values of~$\DO_m(f)$ for $f\in\W_{m+1}\ACollog(Y)$ span a subspace of $\HH^0(X,\O_X(-\div(\DO_m)))$ of dimension $\leq c_{m+1}:=\dim_{K_v}\gr^\W_{m+1}\!\ACollog(Y)_U$. Applying Lemma~\ref{lem:annihilate_rational_functions} we obtain a PD-nice algebraic differential operator $\DO_{(m+1)}$ such that $\DO_{(m+1)}(\DO_m(f))=0$ for all $f\in\W_{m+1}\ACollog(Y)_U$. The order of $\DO_{(m+1)}$ is at most $-\deg(\div(\DO_m))+1$ and its divisor satisfies
	\[
	\div(\DO_{(m+1)}) \geq c_{m+1}\div(\DO_m) - (c_{m+1}+1)\cdot(-\deg(\div(\DO_m))+1)\cdot\div(\omega)^+ \,.
	\]
	Thus the differential operator $\DO_{m+1}:=\DO_{(m+1)}\circ\DO_m$ satisfies the conditions of Proposition~\ref{prop:operator_kills_coleman_fns}. The bounds on the order and divisor of~$\DO_{m+1}$ follow from the corresponding bounds for~$\DO_m$ via the calculations
	{\small
	\begin{align*}
		\order(\DO_{m+1}) & \leq \order(\DO_m)-\deg(\div(\DO_m))+1 \\
		 &\leq (\delta+2)^m\prod_{i=1}^{m-1}(c_i+1)+\delta(\delta+2)^m\prod_{i=1}^m(c_i+1)+1 \\
		 &\leq (\delta+2)^{m+1}\prod_{i=1}^m(c_i+1) \,,\\
		\div(\DO_{m+1}) &\geq \div(\DO_m) + c_{m+1}\div(\DO_m) - (c_{m+1}+1)(-\deg(\div(\DO_m))+1)\div(\omega)^+ \\
		 &\geq -(c_{m+1}+1)\left((\delta+2)^m\prod_{i=1}^m(c_i+1)+\delta(\delta+2)^m\prod_{i=1}^m(c_i+1)+1\right)\div(\omega)^+ \\
		 &\geq (\delta+2)^{m+1}\prod_{i=1}^{m+1}(c_i+1)\cdot\div(\omega)^+ \,.
	\end{align*}
	}%
\end{proof}

\subsection{Choosing the differential form~$\omega$}\label{ss:better_omega}

In the proof of Theorem~\ref{thm:weight_bound}, we showed that~$\Y(\O_v)$ could be covered by $\ellp^{(\theta_v-1)f_v}$ closed discs, such that~$f$ had at most
\begin{equation}\label{eq:discwise_bound_again}\tag{$\ast$}
\left(1+\frac{e_v}{(\theta_v-\frac{e_v}{\ellp-1})\log(\ellp)}\right)\cdot(\deg(\div(\omega)^+)+2)^n\cdot\prod_{i=1}^{n-1}(c_i+1)
\end{equation}
zeroes on each disc. We deduced Theorem~\ref{thm:weight_bound} from this using the trivial bound $\deg(\div(\omega)^+)\leq2\deg(\div(\omega))=4g+2r-4$. However, in many cases it is possible to choose~$\omega$ so that $\deg(\div(\omega)^+)$ is even smaller than this, and accordingly obtain a better bound on the number of zeroes of~$f$ than claimed in Theorem~\ref{thm:weight_bound}.

For example, if~$X$ has genus~$1$, then the invariant differential~$\omega$ on~$\X/\O_v$ has divisor~$\Dvsr$ as a section of $\Omega^1_\X(\Dvsr)$, and is non-vanishing at~$b_0$ in the special fibre. Since this divisor is entirely supported within~$D$ on the generic fibre, we see that for this particular choice of~$\omega$, we have $\deg(\div(\omega)^+)=\deg(\div(\omega))=r$. So using this particular~$\omega$ in the proof of Theorem~\ref{thm:weight_bound} yields the better bound
\[
\kappa_\ellp\cdot\#\Y_0(k_v)\cdot (r+2)^n\cdot\prod_{i=1}^{n-1}(c_i+1)
\]
on the number of $\O_v$-integral zeroes of~$f$ in the genus~$1$ case.
\smallskip

In the other examples we will examine in this section, the differentials~$\omega$ we will choose will not necessarily be defined over the ground field~$K_v$, instead over a finite extension. This does not affect the conclusion of the method, as per the following more precise version of Theorem~\ref{thm:weight_bound}.

\begin{proposition}\label{prop:how_to_improve_bounds}
	Let~$\Obar_v$ denote the ring of integers of an algebraic closure~$\Kbar_v$ of~$K_v$. For~$N\in\N_0$, we say that an integral log-differential~$\omega\in\HH^0(\X_{\Obar_v},\Omega^1_\X(\Dvsr))$ is \emph{$N$-small} just when it is supported at $\leq N$ points of~$Y(K_v)$.
	
	Suppose that for every~$b_0\in\Y(k_v)$ there is an $N$-small integral log-differential~$\omega$ which does not vanish at~$b_0$. Then for every quotient~$U$ of~$U^\dR$, every~$m\geq1$ and every non-zero $f\in\W_m\ACollog(Y)_U$, the number of zeroes of~$f$ on~$\Y(\O_v)$ is at most
	\[
	\kappa_v\cdot\#\Y_0(k_v)\cdot(2g+r+N)^m\cdot\prod_{i=1}^{m-1}(c_i+1) \,,
	\]
	where~$c_i:=\dim_{K_v}\gr^\W_i\!\ACollog(Y)_U$ as usual.
	\begin{proof}
		This is proved in exactly the same way as Theorem~\ref{thm:weight_bound}, with the only subtlety being that the differential~$\omega$ may be defined over a finite extension of~$K_v$. For any choice of base point~$b$, let~$\Disc_b\subseteq]b_0[$ be the closed subdisc of radius~$\ellp^{-\theta_v/e_v}$ containing~$b$, as in the proof of Theorem~\ref{thm:weight_bound}. Let~$\omega\in\HH^0(\X_{\Obar_v},\Omega^1_\X(\Dvsr))$ be an $N$-small log-differential which does not vanish at~$b_0$, and let~$\O_v'\supseteq\O_v$ be the ring of integers of a finite extension~$K_v'/K_v$ over which~$\omega$ is defined.
		
		The Coleman algebraic function $f\in\W_m\ACollog(Y)_U$ determines via base-change a Coleman algebraic function $f_{K_v'}\in\W_m\ACollog(Y_{K_v'})_{U_{K_v'}}$, whose restriction to the disc~$\Disc_{b,K_v'}$ is just the base change of~$f|_{\Disc_b}$ (indeed, this is true on any disc). Applying Proposition~\ref{prop:operator_kills_coleman_fns} over the finite extension $K_v'$ shows that there is a PD-nice algebraic differential operator $\DO_m\in K_v'(X)[\d/\omega]$ of order at most $(\deg(\div(\omega)^+)+2)^m\cdot\prod_{i=1}^{m-1}(c_i+1)$ such that~$\DO_m(f)=0$. Applying Proposition~\ref{prop:nice} with $\lambda=\theta_v/e_v$ shows that $f_{K_v'}$ has at most~\eqref{eq:discwise_bound_again} zeroes on~$\Disc_{b,K_v'}$, and hence~$f$ also has at most~\eqref{eq:discwise_bound_again} zeroes on~$\Disc_b$. Summing this bound over $\ellp^{(\theta_v-1)f_v}$ different discs of the form~$\Disc_b$ for various~$b\in\Y(\O_v)$ then completes the proof.
	\end{proof}
\end{proposition}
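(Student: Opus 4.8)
The plan is to rerun the proof of Theorem~\ref{thm:weight_bound} almost verbatim, the only changes being that one feeds in the given $N$-small differential $\omega$ in place of an arbitrary log-differential, and that one must allow $\omega$ — hence the operator built from it — to live over a finite extension of $K_v$.

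First I would fix a base point $b\in\Y(\O_v)$ reducing to $b_0\in\Y_0(k_v)$ and, exactly as in the proof of Theorem~\ref{thm:weight_bound}, let $\Disc_b\subseteq\,]b_0[$ be the closed subdisc $\{|t|\le\ellp^{-\theta_v/e_v}\}$ for an integral local parameter $t$ at~$b$; this disc contains precisely the $\O_v$-points of $\Y$ congruent to $b$ modulo $\m_v^{\theta_v}$. By hypothesis there is an $N$-small integral log-differential $\omega\in\HH^0(\X_{\Obar_v},\Omega^1_\X(\Dvsr))$ not vanishing at~$b_0$; let $K_v'/K_v$ be a finite extension, with ring of integers $\O_v'$, over which $\omega$ is defined, and let $f_{K_v'}\in\W_m\ACollog(Y_{K_v'})_{U_{K_v'}}$ be the base change of $f$. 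Since $\MIC^\un(X,D)$ and $U^\dR$ are stable under extension of scalars, $f_{K_v'}$ is again a weight-$\le m$ Coleman algebraic function associated to $U_{K_v'}$, with the same Hilbert-series coefficients $c_i$, and its restriction to $\Disc_{b,K_v'}$ is the base change of $f|_{\Disc_b}$.

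Next I would apply Proposition~\ref{prop:operator_kills_coleman_fns} over $K_v'$ with this choice of $\omega$ to produce a PD-nice algebraic differential operator $\DO_m\in K_v'(X)[\d/\omega]$ of order at most $(\deg(\div(\omega)^+)+2)^m\cdot\prod_{i=1}^{m-1}(c_i+1)$ with $\DO_m(f_{K_v'})=0$. The point of $N$-smallness enters exactly here: $\div(\omega)$ is a log-canonical divisor, so has degree $2g-2+r$, and since the zero locus of $\omega$ on $Y$ consists of at most $N$ points, the reduced divisor $\supp(\div(\omega))\cap Y$ has degree $\le N$, whence $\deg(\div(\omega)^+)+2\le 2g+r+N$. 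Since $f$ — hence $f_{K_v'}$ — is not identically zero on $]b_0[$ by the identity principle for Coleman functions \cite[Corollary~4.13]{besser:tannakian}, Proposition~\ref{prop:nice} applied with $\lambda=\theta_v/e_v>\tfrac1{\ellp-1}$ bounds the number of zeroes of $f_{K_v'}$ on $\Disc_{b,K_v'}$ — counted over $\Kbar_v$ with multiplicity, hence in particular the $\O_v$-integral zeroes of $f$ on $\Disc_b$ — by
\[
\left(1+\frac{e_v}{(\theta_v-\tfrac{e_v}{\ellp-1})\log(\ellp)}\right)\cdot(2g+r+N)^m\cdot\prod_{i=1}^{m-1}(c_i+1).
\]
Summing this estimate over the $\ellp^{(\theta_v-1)f_v}$ fibres of the reduction map $\Y(\O_v)\to\Y(\O_v/\m_v^{\theta_v})$ — using $\#\Y(\O_v/\m_v^{\theta_v})=\ellp^{(\theta_v-1)f_v}\cdot\#\Y_0(k_v)$ by Hensel's lemma, just as in Theorem~\ref{thm:weight_bound} — then yields the claimed bound with $\kappa_v=\ellp^{(\theta_v-1)f_v}\bigl(1+\tfrac{e_v}{(\theta_v-e_v/(\ellp-1))\log\ellp}\bigr)$.

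The only step that is not a literal copy of the proof of Theorem~\ref{thm:weight_bound} is the descent through the finite extension $K_v'/K_v$, and that is where I expect the main (minor) obstacle to lie: one must check that base change neither enlarges $U$ nor changes the coefficients $c_i=\dim_{K_v}\gr^\W_i\ACollog(Y)_U$, and that a $K_v$-rational zero of $f$ on $\Disc_b$ is genuinely counted among the $\Kbar_v$-rational zeroes of $f_{K_v'}$ on $\Disc_{b,K_v'}$ to which Proposition~\ref{prop:nice} applies. Both are routine, given the functoriality of unipotent de Rham fundamental groups and of Besser's Coleman theory under extension of the base field, but they are worth spelling out since the remainder of the argument is purely mechanical.
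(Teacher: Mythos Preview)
Your proposal is correct and follows essentially the same approach as the paper's proof: base-change to a finite extension $K_v'$ over which the $N$-small $\omega$ is defined, apply Proposition~\ref{prop:operator_kills_coleman_fns} there, use Proposition~\ref{prop:nice} to bound zeroes on each disc $\Disc_{b,K_v'}$, and sum over residue classes. You in fact make explicit the one step the paper leaves to the reader, namely the inequality $\deg(\div(\omega)^+)+2\le 2g+r+N$ coming from $\deg(\div(\omega))=2g-2+r$ together with the $N$-smallness hypothesis.
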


Theorem~\ref{thm:weight_bound} is Proposition~\ref{prop:how_to_improve_bounds} combined with the observation that any~$\omega$ is automatically $(2g+r-2)$-small. We have just seen that when~$g=1$, there is a $0$-small~$\omega$ which doesn't vanish at any point on the special fibre of~$\Y$. We spend the remainder of this section discussing several other examples.
\smallskip

In the case that~$X$ has genus~$0$, the effective divisor $-z_1-z_2+\Dvsr$ on~$\X_{\Obar_v}$ is relatively log-canonical whenever~$z_1,z_2\in\Dvsr(\Obar_v)$ are two different points. If~$\omega\in\HH^0(\X_{\Obar_v},\Omega^1_\X(\Dvsr))$ is a log-differential associated to this divisor, then~$\omega$ does not vanish at any point of the special fibre of~$\Y$, and is $0$-small by construction. Thus Proposition~\ref{prop:how_to_improve_bounds} applies in the genus~$0$ case with~$N=0$. Note that the differential~$\omega$ here is only defined over~$\Obar_v$ in general.
\smallskip

In the case that~$X$ is hyperelliptic of genus at least~$2$, then the effective divisor~$(2g-2)z+\Dvsr$ is relatively log-canonical whenever~$z\in\X(\Obar_v)$ lies in the ramification locus of~$\X\to\P^1_{\O_v}$. By construction, a log-differential~$\omega$ associated to this divisor is $1$-small, and by a suitable choice of~$z$ can be chosen not to vanish at any particular point~$b_0\in\Y_0(k_v)$. Thus Proposition~\ref{prop:how_to_improve_bounds} applies for hyperelliptic~$X$ with~$N=1$.
\smallskip

Our remaining two examples are of a much more general nature, and accordingly weaker than the preceding specific examples. We focus on the case that~$g\geq2$ and~$D=\emptyset$, since an $N$-small $\omega\in\HH^0(\X_{\Obar_v},\Omega^1_\X)$ is also $N$-small when viewed as a section of~$\Omega^1_\X(\Dvsr)$. The best result we can get in complete generality only gives a small saving on the naive bound.

\begin{lemma}\label{lem:slightly_better_bound}
	Suppose that~$g\geq2$. Then there is an integral differential $\omega\in\HH^0(\X_{\Obar_v},\Omega^1_\X)$ which does not vanish at~$b_0$ in the special fibre and which is $(2g-3)$-small.
	\begin{proof}
		We assume that~$g\geq3$, since we have already dealt with the case that~$X$ is hyperelliptic. We will show that there is a point $x_0\neq b_0\in\X_0(\kbar_v)$ such that $h^0(b_0+2x_0)=1$. This implies the lemma, since a Riemann--Roch computation then shows that there is a regular differential~$\omega_0$ on~$\X_0$ which vanishes to order $\geq2$ at~$x_0$ but doesn't vanish at~$b_0$. If we choose an integral point~$x\in\X(\Obar_v)$ reducing to~$x_0$, then we have
		\[
		g-2 \leq h^0(X_{\Kbar_v},\Omega^1_X(-2x)) \leq h^0(\X_{0,\kbar_v},\Omega^1_{\X_0}(-2x_0))=g-2
		\]
		by Riemann--Roch and semicontinuity \cite[Theorem~III.12.8]{hartshorne:algebraic_geometry}. Thus the inequalities above are equalities, so~$\omega_0$ lifts to an integral $1$-form $\omega\in\HH^0(\X_{\Obar_v},\Omega^1_\X(-2x))$ by Grauert's Theorem \cite[Corollary~III.12.9]{hartshorne:algebraic_geometry}. In other words, $\omega$ vanishes to order~$\geq2$ at~$x$, so is certainly $(2g-3)$-small.
		\smallskip
		
		To find the point~$x_0$, if~$\X_0$ is not hyperelliptic, then we consider the canonical embedding~$\X_0\hookrightarrow\P^{g-1}_{k_v}$. Under this embedding, we have that~$h^0(b_0+2x_0)\geq2$ if and only if the tangent line to~$\X_0$ at~$x_0$ passes through~$b_0$. But for a general point~$x_0\in\X_0(\kbar_v)$, the tangent line through~$x_0$ does not pass through~$b_0$ by Samuel's Theorem \cite[Theorem~IV.3.9]{hartshorne:algebraic_geometry}. So we are done in this case.
		
		If~$\X_0$ is hyperelliptic, then we may choose a point~$x_0\in\X_0(\kbar_v)$ for which the points $b_0$, $x_0$ and $\iota(x_0)$ are distinct, where~$\iota$ is the hyperelliptic involution. The canonical divisor $(g-1)(x_0+\iota(x_0))$ then represents a class in $\HH^0(\X_{0,\kbar_v},\Omega^1_{\X_0}(-2x_0))$ which does not lie in~$\HH^0(\X_{0,\kbar_v},\Omega^1_{\X_0}(-b_0-2x_0))$. This implies by Riemann--Roch that $h^0(b_0+2x_0)=h^0(2x_0)=1$, as desired.
	\end{proof}
\end{lemma}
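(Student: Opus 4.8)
The plan is to build $\omega$ by lifting a suitable regular differential from the special fibre $\X_0$. First I would reduce to the case $g\geq3$: every genus-$2$ curve is hyperelliptic, and for hyperelliptic $X$ we have already produced a $1$-small integral log-differential not vanishing at $b_0$, which is \emph{a fortiori} $(2g-3)$-small since $g\geq2$. So assume $g\geq3$. The goal on the special fibre is a differential $\omega_0\in\HH^0(\X_{0,\kbar_v},\Omega^1_{\X_0})$ with $\omega_0(b_0)\neq0$ vanishing to order $\geq2$ at some point $x_0\neq b_0$; equivalently a point $x_0\neq b_0$ with $\HH^0(\X_0,\Omega^1_{\X_0}(-2x_0))\supsetneq\HH^0(\X_0,\Omega^1_{\X_0}(-b_0-2x_0))$. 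By Serre duality and two applications of Riemann--Roch, this inclusion is strict exactly when $h^0(b_0+2x_0)=h^0(2x_0)$, and since $x_0$ will not be a Weierstrass point we have $h^0(2x_0)=1$; so the special-fibre problem reduces to finding $x_0\neq b_0$ with $h^0(b_0+2x_0)=1$.

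I would find such an $x_0$ by distinguishing whether $\X_0$ is hyperelliptic. If $\X_0$ is non-hyperelliptic I would use the canonical embedding $\X_0\hookrightarrow\P^{g-1}_{\kbar_v}$ (which exists in every characteristic for non-hyperelliptic curves) together with geometric Riemann--Roch: for $E=b_0+2x_0$ of degree $3\leq g$ one has $h^0(E)=3-\dim\langle E\rangle$, and $\langle b_0,2x_0\rangle$ is the line spanned by $b_0$ and the tangent $T_{x_0}\X_0$, so $h^0(b_0+2x_0)=2$ precisely when $b_0\in T_{x_0}\X_0$ and $h^0(b_0+2x_0)=1$ otherwise. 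A canonical curve of genus $g\geq3$ is not strange, since its degree $2g-2$ exceeds that of a line or a conic — this is where I would invoke Samuel's theorem — so the closed locus $\{x_0:b_0\in T_{x_0}\X_0\}$ is proper in $\X_0$, hence finite, and a general $x_0$ works; such $x_0$ is automatically distinct from $b_0$ and from the (nonexistent) Weierstrass points. If instead $\X_0$ is hyperelliptic with involution $\iota$, I would choose any $x_0$ with $b_0,x_0,\iota(x_0)$ pairwise distinct; then $x_0\neq\iota(x_0)$ is not a Weierstrass point, and the divisor $(g-1)(x_0+\iota(x_0))$, canonical because $x_0+\iota(x_0)$ represents the $g^1_2$, vanishes to order $g-1\geq2$ at $x_0$ and avoids $b_0$. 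This exhibits directly the strict inclusion of linear systems, and Riemann--Roch then forces $h^0(b_0+2x_0)=1$.

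Finally I would lift $\omega_0$ to characteristic $0$. Choose $x\in\X(\Obar_v)$ reducing to $x_0$; it is defined over the ring of integers $\O_v'$ of a finite subextension, so I can work over the Noetherian base $\O_v'$ and afterwards regard $\omega$ as defined over $\Obar_v$. Riemann--Roch on the generic fibre gives $h^0(X_{\Kbar_v},\Omega^1_X(-2x))\geq g-2$, while semicontinuity for the flat proper family $\X_{\O_v'}/\O_v'$ gives $h^0(X_{\Kbar_v},\Omega^1_X(-2x))\leq h^0(\X_{0,\kbar_v},\Omega^1_{\X_0}(-2x_0))=g-2$; hence $h^0$ is constant along the family, so by Grauert's theorem the pushforward of $\Omega^1_\X(-2x)$ is locally free of rank $g-2$ with formation commuting with base change. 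Consequently the reduction map $\HH^0(\X,\Omega^1_\X(-2x))\to\HH^0(\X_0,\Omega^1_{\X_0}(-2x_0))$ is surjective, and I take $\omega$ to be a lift of $\omega_0$. Then $\omega$ reduces to $\omega_0$, so it does not vanish at $b_0$; and $\div(\omega)$ on the generic fibre has degree $2g-2$ and contains $2[x]$, hence is supported at $\leq2g-3$ points, i.e.\ $\omega$ is $(2g-3)$-small. The part I expect to be the genuine obstacle is the special-fibre case analysis — specifically making precise the general-position statement that a general tangent line to a non-hyperelliptic canonical curve in possibly positive characteristic avoids a fixed point of the curve; the lifting step is then a routine semicontinuity-and-Grauert argument.
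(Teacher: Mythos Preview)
Your proposal is correct and follows essentially the same approach as the paper: reduce to $g\geq3$, find $x_0$ on the special fibre with $h^0(b_0+2x_0)=1$ by splitting into the non-hyperelliptic case (canonical embedding plus Samuel's theorem) and the hyperelliptic case (the divisor $(g-1)(x_0+\iota(x_0))$), then lift via semicontinuity and Grauert. One small terminological slip: in the non-hyperelliptic case you refer to ``the (nonexistent) Weierstrass points'', but Weierstrass points do exist on non-hyperelliptic curves --- what you mean (and what is correct) is that no point satisfies $h^0(2x_0)\geq2$, so the condition $h^0(2x_0)=1$ is automatic.
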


When the residue characteristic~$\ellp$ is sufficiently large, this bound can be considerably improved.

\begin{lemma}\label{lem:much_better_bound}
	Suppose that~$g\geq2$ and~$\ellp>2g-2$. Then there is an integral differential $\omega\in\HH^0(\X_{\Obar_v},\Omega^1_\X)$ which does not vanish at~$b_0$ in the special fibre and which is $(g+1)$-small.
	\begin{proof}
		As in the proof of Lemma~\ref{lem:slightly_better_bound}, it suffices to show that there is a point~$x_0\neq b_0\in\X_0(\kbar_v)$ such that~$h^0(b_0+(g-1)x_0)=1$. In fact, we will show this for a general point~$x_0$. Note that the assumption that~$\ellp>2g-2$ ensures that~$\X_0$ is classical, i.e.\ $h^0(gx_0)=1$ for a general point~$x_0\in\X_0(\kbar_v)$ \cite[Theorem~11(ii)]{laksov:weierstrass_points}.
		
		
		In the case that~$\X_0$ is hyperelliptic, we are done as in the proof of Lemma~\ref{lem:slightly_better_bound}, by taking~$x_0$ to be any point such that $h^0((g-1)x_0)=1$ and $x_0$ is different from~$b_0$ and its image under the hyperelliptic involution.
		
		We thus assume from now on that~$\X_0$ is not hyperelliptic, and embedded canonically in~$\P^{g-1}_{k_v}$. For a point~$x_0$ of~$\X_0$, we have that~$h^0((g-1)x_0)=1$ just when there is a unique hyperplane in~$\P^{g-1}_{k_v(x_0)}$ meeting~$\X_{0,k_v(x_0)}$ to multiplicity~$\geq g-1$ at~$x_0$, and we have~$h^0(gx_0)=1$ just when this hyperplane meets~$\X_0$ with multiplicity exactly~$g-1$. The set of points~$x_0$ where~$h^0(gx_0)=1$ is open by semicontinuity \cite[Theorem~III.12.8]{hartshorne:algebraic_geometry}, and is non-empty since~$\X_0$ is classical. It follows that this set contains the generic point~$\eta_0$ of~$\X_0$. In other words, if~$F=k_v(\X_0)$ is the function field of~$\X_0$, then there is a unique hyperplane~$H\subseteq\P^{g-1}_F$ which meets~$\X_{0,F}$ with multiplicity exactly~$g-1$ at~$\eta_0\in\X_0(F)$.
		
		The hyperplane~$H$ can be described explicitly. Let the projective coordinates of the point~$\eta_0\in\X_0(F)\subseteq\P^{g-1}(F)$ be $(f_1:\dots:f_g)$ for some~$f_1,\dots,f_g\in F$. If we choose a non-constant element~$t\in F$, then the module~$\Omega^1_{F/\F_\ellp}$ of K\"ahler differentials is one-dimensional over~$F$, spanned by~$\d t$, so we have the differential operator~$\frac\d{\d t}$ acting on~$F$. With respect to the standard coordinates~$T_1,\dots,T_g$ on~$\P^{g-1}_F$, the hyperplane~$H$ is then given by the equation
		\[
		\det\left(\begin{matrix}
			f_1 & \frac\d{\d t} f_1 & \dots & \frac{\d^{g-2}}{\d t^{g-2}}f_1 & T_1 \\
			f_2 & \frac\d{\d t} f_2 & \dots & \frac{\d^{g-2}}{\d t^{g-2}}f_2 & T_2 \\
			\vdots & \vdots & \ddots & \vdots & \vdots \\
			f_g & \frac\d{\d t} f_g & \dots & \frac{\d^{g-2}}{\d t^{g-2}}f_g & T_g
		\end{matrix}\right)=0 \,.
		\]
		The fact that~$H$ meets~$\X_{0,F}$ to order exactly~$g-1$ at~$\eta_0$ implies that the Wronskian of $f_1,\dots,f_g$ is non-vanishing; in other words, the column vectors $\mathbf f^{(i)} := \left(\frac{\d^i}{\d t^i}f_1,\dots,\frac{\d^i}{\d t^i}f_g\right)^T\in F^g$ are~$F$-linearly independent for~$0\leq i\leq g-1$.
		
		Now we claim that the point~$b_0$ does not lie on the hyperplane~$H$. For this, write the projective coordinates of~$b_0$ as~$(b_1:\dots:b_g)$, and let~$\mathbf b:=(b_1,\dots,b_g)^T\in F^g$ be the corresponding column vector. Assuming for contradiction that~$b_0$ lies in~$H$, we find from the explicit equation above that~$\mathbf b$ lies in the $F$-linear span of $\mathbf f^{(0)},\dots,\mathbf f^{(g-2)}$. Let~$n\leq g-2$ denote the least non-negative integer such that~$\mathbf b$ lies in the span of $\mathbf f^{(0)},\dots,\mathbf f^{(n)}$. This implies that all~$(n+1)\times(n+1)$ minors of the matrix $\left(\mathbf f^{(0)}|\mathbf f^{(1)}|\dots|\mathbf f^{(n)}|\mathbf b\right)$ vanish. Applying the operator~$\frac\d{\d t}$ to this matrix shows that all the~$(n+1)\times(n+1)$ minors of the matrix $\left(\mathbf f^{(0)}|\mathbf f^{(1)}|\dots|\mathbf f^{(n-1)}|\mathbf f^{(n+1)}|\mathbf b\right)$ also vanish, so that~$\mathbf b$ is also in the span of $\mathbf f^{(0)},\dots,\mathbf f^{(n-1)},\mathbf f^{(n+1)}$. But this implies that~$\mathbf b$ is in fact in the span of $\mathbf f^{(0)},\dots,\mathbf f^{(n-1)}$. If~$n\geq1$, this contradicts the minimality of~$n$; if $n=0$ then $\mathbf b$ is in the span of the empty set, which contradicts~$\mathbf b\neq0$.
		
		We have thus shown that~$b_0$ does not lie in~$H$, which implies that we have $h^0(b_0+(g-1)\eta_0)=1$. Since the locus of points~$x_0$ such that~$h^0(b_0+(g-1)x_0)=1$ is open by semicontinuity, there must also be an $\kbar_v$-point $x_0\in\X_0(\kbar_v)$ different from~$b_0$ with this property. This is what we wanted to show.
	\end{proof}
\end{lemma}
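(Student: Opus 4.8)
The plan is to reduce, exactly as in the proof of Lemma~\ref{lem:slightly_better_bound}, to exhibiting a closed point $x_0\in\X_0(\kbar_v)$ with $x_0\neq b_0$ and $h^0(b_0+(g-1)x_0)=1$ on the special fibre. Granting such an $x_0$, a Riemann--Roch computation gives $h^0(\Omega^1_{\X_0}(-(g-1)x_0))=h^0((g-1)x_0)=1$ and $h^0(\Omega^1_{\X_0}(-(g-1)x_0-b_0))=0$, so there is a regular differential $\omega_0$ on $\X_0$ which vanishes to order $\geq g-1$ at $x_0$ but not at $b_0$; choosing an integral lift $x\in\X(\Obar_v)$ of $x_0$, the semicontinuity and Grauert arguments of Lemma~\ref{lem:slightly_better_bound} (using $h^0(X_{\Kbar_v},\Omega^1_X(-(g-1)x))=h^0(\X_{0,\kbar_v},\Omega^1_{\X_0}(-(g-1)x_0))=1$) lift $\omega_0$ to an integral differential $\omega\in\HH^0(\X_{\Obar_v},\Omega^1_\X)$ with $\div(\omega)\geq(g-1)x$ and not vanishing at $b_0$. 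As $\div(\omega)-(g-1)x$ is effective of degree $g-1$, the form $\omega$ is supported at at most $g$ points, so is $(g+1)$-small, as required. When $\X_0$ is hyperelliptic the point $x_0$ is found directly: for a general $x_0$ one has $h^0((g-1)x_0)=1$ (i.e.\ $x_0$ is not a Weierstrass point), and if moreover $x_0\notin\{b_0,\iota(b_0)\}$ (with $\iota$ the hyperelliptic involution) then $|K-(g-1)x_0|$ consists of the single divisor $(g-1)\iota(x_0)$, whose support avoids $b_0$, whence $h^0(b_0+(g-1)x_0)=h^0((g-1)x_0)=1$. So the content is the non-hyperelliptic case, where $g\geq3$ and $\X_0\hookrightarrow\P^{g-1}_{k_v}$ canonically.

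Here the hypothesis $\ellp>2g-2$ enters through Laksov's theorem \cite[Theorem~11(ii)]{laksov:weierstrass_points}, which says $\X_0$ is \emph{classical}: $h^0(gx_0)=1$ for a general point $x_0$, equivalently the osculating hyperplane $H_{x_0}$ to the canonical image meets $\X_0$ to order exactly $g-1$ there (and then $(g-1)x_0$ spans $H_{x_0}$). For such $x_0$ one has $h^0(b_0+(g-1)x_0)=1$ if and only if $b_0\notin H_{x_0}$, and since the classical locus is a nonempty open set and the condition $b_0\in H_{x_0}$ is closed on it, it suffices to check $b_0\notin H$ at the generic point $\eta_0$ of $\X_0$, i.e.\ over $F=k_v(\X_0)$. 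I would write the homogeneous coordinates of $\eta_0$ as $(f_1:\dots:f_g)$ with $f_i\in F$, fix a separating element $t\in F$, and let $\mathbf f^{(i)}\in F^g$ be the vector with $j$th entry $\tfrac{\d^i}{\d t^i}f_j$. Then $H=H_{\eta_0}$ is cut out by the Wronskian equation $\det(\mathbf f^{(0)}\mid\mathbf f^{(1)}\mid\dots\mid\mathbf f^{(g-2)}\mid\mathbf T)=0$, where $\mathbf T=(T_1,\dots,T_g)$ are the coordinates on $\P^{g-1}$, and classicality is precisely the assertion that $\mathbf f^{(0)},\dots,\mathbf f^{(g-1)}$ form an $F$-basis of $F^g$. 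Suppose for contradiction that $b_0\in H$, and let $\mathbf b\in k_v^g$ be the (constant, hence $\partial_t$-annihilated) coordinate vector of $b_0$. The Wronskian equation then forces $\mathbf b\in\langle\mathbf f^{(0)},\dots,\mathbf f^{(g-2)}\rangle$. Take $n\leq g-2$ minimal with $\mathbf b=\sum_{i=0}^{n}c_i\mathbf f^{(i)}$ ($c_i\in F$, necessarily $c_n\neq0$), and differentiate this identity in $t$; since $\partial_t\mathbf b=0$ we obtain $c_n\mathbf f^{(n+1)}=-\sum_{i=0}^{n}(\partial_tc_i)\mathbf f^{(i)}-\sum_{i=0}^{n-1}c_i\mathbf f^{(i+1)}\in\langle\mathbf f^{(0)},\dots,\mathbf f^{(n)}\rangle$, contradicting the linear independence of $\mathbf f^{(0)},\dots,\mathbf f^{(n+1)}$ unless $c_n=0$ — which contradicts the minimality of $n$ (or, if $n=0$, forces $\mathbf b=0$). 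Hence $b_0\notin H_{\eta_0}$, so $h^0(b_0+(g-1)\eta_0)=1$; by semicontinuity the same holds for a general closed point $x_0\neq b_0$, completing the argument.

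The hard part is this last step, and more precisely the indispensability of $\ellp>2g-2$: in small positive characteristic the canonical system can fail to be classical, the order sequence at a general point of the canonical image jumps, and then $\mathbf f^{(0)},\dots,\mathbf f^{(g-1)}$ no longer span $F^g$, so both the Wronskian description of $H_{\eta_0}$ and the differentiation trick collapse. Laksov's classicality theorem is exactly the input that excludes this, and I do not expect a characteristic-free substitute. The remaining ingredients — the Riemann--Roch bookkeeping, the degree count showing $\omega$ is $(g+1)$-small, and the semicontinuity/Grauert lift of $\omega_0$ — are routine and would proceed verbatim as in Lemma~\ref{lem:slightly_better_bound}.
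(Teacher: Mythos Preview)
Your proof is correct and matches the paper's approach almost exactly: same reduction via Riemann--Roch and Grauert, same use of Laksov's classicality theorem, same Wronskian description of the osculating hyperplane at the generic point. The only difference is cosmetic: where the paper differentiates the vanishing-minor identities to show $\mathbf b$ lies in the span of $\mathbf f^{(0)},\dots,\mathbf f^{(n-1)}$, you differentiate the explicit linear relation $\mathbf b=\sum c_i\mathbf f^{(i)}$ directly to force $\mathbf f^{(n+1)}$ into the span of lower derivatives --- arguably the cleaner variant of the same trick.
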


We have thus shown that Proposition~\ref{prop:how_to_improve_bounds} holds for~$N=2g-3$ if $g\geq2$, and for~$N=g+1$ if moreover~$\ellp>2g-2$.

\begin{remark}
	Avi Kulkarni has suggested to me that one should expect Proposition~\ref{prop:how_to_improve_bounds} to hold for~$N=g-1$ when~$g\geq2$, at least for a general curve. Indeed, on~$X$ there are $2^{g-1}(2^g-1)$ odd theta characteristics, each of which has at least one associated divisor of degree~$g-1$. Taking a differential associated to twice each of these divisors provides a large number of $(g-1)$-small differentials on~$X$. It seems highly likely that one should be able to choose one of these differentials so as not to vanish at any chosen point~$b_0$ in the special fibre of~$\X$, but I have been unable to prove this in any generality.
\end{remark}
\section{Effective Chabauty--Kim}\label{s:effective}

With all the preliminary machinery set up, we are now ready to describe our effective Chabauty--Kim method and give the proofs of Theorems~\ref{thm:main}, \ref{thm:main_bound} and~\ref{thm:main_det}.

We start from the following setup. Let~$Y/\Q$ be a smooth hyperbolic curve, written as $X\setminus D$ for~$X/\Q$ a smooth projective curve of genus~$g$ and~$D\subseteq X$ a reduced divisor of degree~$r$. Let~$S$ be a finite set of primes, of size~$s=\#S$. Let~$\X/\Z_S$ be a regular model of~$X$ \cite[Definition~10.1.1]{liu:arithmetic_curves} over the ring of $S$-integers, not necessarily minimal. Let~$\Dvsr\subseteq\X$ denote the closure of~$D$ in~$\X$, and set~$\Y:=\X\setminus\Dvsr$. We pick an $S$-integral basepoint\footnote{The existence of such an $S$-integral point is of course an assumption on~$Y$ and its model~$\Y$. However, it is a very mild assumption: there is not much to be said about $S$-integral points when~$\Y(\Z_S)=\emptyset$.}~$b\in\Y(\Z_S)$.

Now choose an auxiliary prime~$\ellp\notin S$ of good reduction for~$(\X,\Dvsr)$, i.e.\ such that~$\X$ is smooth and~$\Dvsr$ is \'etale over~$\Z_\ellp$. We let~$U^\et$ denote the $\Q_\ellp$-pro-unipotent \'etale fundamental group of~$Y_\Qbar$ based at~$b$, which is a $\Q_\ellp$-pro-unipotent group endowed with a continuous action of the absolute Galois group~$G_\Q$. The group~$U^\et$ carries a $G_\Q$-invariant \emph{weight filtration} \cite[Definition~1.5]{asada-matsumoto-oda:local_monodromy}
\[
1\subseteq\dots\subseteq\W_{-3}U^\et\subseteq\W_{-2}U^\et\subseteq\W_{-1}U^\et=U^\et \,,
\]
where $\W_{-1}U^\et=U^\et$, $\W_{-2}U^\et$ is the kernel of the map from~$U^\et$ to the abelianisation of the $\Q_\ellp$-pro-unipotent \'etale fundamental group of~$X_{\Qbar}$, and for $k\geq3$, $\W_{-k}U^\et$ is the subgroup-scheme generated by the commutator subgroups~$[\W_{-i}U^\et,\W_{-j}U^\et]$ for~$i+j=k$. When~$Y=X$ is projective, this is just the descending central series up to reindexing. We note the following regarding the graded pieces of~$U^\et$.

\begin{lemma}\label{lem:global_semisimplicity}
	For any~$n>0$, the $G_\Q$-representation $\gr^\W_{-n}U^\et$ is semisimple.
	\begin{proof}
		For~$n=1$, $\gr^\W_{-1}U^\et$ is the $\Q_\ellp$-linear Tate module of the Jacobian of~$X$, which is semisimple by \cite[Theorem~(a)]{faltings:endlichkeitssaetze}.
		
		For~$n=2$, there are $G_\Q$-equivariant maps
		\[
		\bigwedgesquare\gr^\W_{-1}U^\et\to\gr^\W_{-2}U^\et \hspace{0.4cm}\text{and}\hspace{0.4cm} \bigoplus_{z\in D(\Qbar)}\Q_\ellp(1)\to\gr^\W_{-2}U^\et \,,
		\]
		the first of which is the commutator map, and the second of which arises from the inclusions of the cusps of~$Y$. The images of these two maps span $\gr^\W_{-2}U^\et$, so $\gr^\W_{-2}U^\et$ is a quotient of $\bigwedgesquare\gr^\W_{-1}U^\et\oplus\bigoplus_{z\in D(\Qbar)}\Q_\ellp(1)$. Since the class of semisimple representations of~$G_\Q$ is closed under tensor products (see e.g.\ \cite{serre:semisimplicity}), direct sums and quotients and contains all Artin--Tate representations, it follows that $\gr^\W_{-2}U^\et$ is also semisimple.
		
		For~$n\geq3$, we proceed inductively, noting that the commutator map gives a surjection
		\[
		(\gr^\W_{-1}U^\et\otimes\gr^\W_{1-n}U^\et)\oplus(\gr^\W_{-2}U^\et\otimes\gr^\W_{2-n}U^\et)\twoheadrightarrow\gr^\W_{-n}U^\et \,,
		\]
		exhibiting~$\gr^\W_{-n}U^\et$ as a quotient of a semisimple representation, so we are done.
	\end{proof}
\end{lemma}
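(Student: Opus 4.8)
The plan is to induct on $n$, bootstrapping from the abelianisation. For the base case $n=1$, the graded piece $\gr^\W_{-1}U^\et$ is the dual of the $\Q_\ellp$-adic Tate module of the Jacobian $J$ of $X$, and its semisimplicity as a $G_\Q$-representation is precisely Faltings' semisimplicity theorem \cite[Theorem~(a)]{faltings:endlichkeitssaetze}. The permanence property I will lean on repeatedly is that the class of semisimple $G_\Q$-representations is closed under tensor products --- this is Serre's theorem \cite{serre:semisimplicity} --- as well as under finite direct sums and passage to subquotients, and contains all Artin--Tate representations, in particular all Tate twists $\Q_\ellp(i)$.

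For $n=2$ I would identify the two natural sources of weight-$(-2)$ classes: the commutator bracket induces a $G_\Q$-equivariant map $\bigwedgesquare\gr^\W_{-1}U^\et\to\gr^\W_{-2}U^\et$, and for each cusp $z\in D(\Qbar)$ the corresponding loop gives a map $\Q_\ellp(1)\to\gr^\W_{-2}U^\et$. Because $U^\et$ is the pro-unipotent completion of the surface group $\Sigma_{g,r}$ and $\W_{-2}U^\et$ is by definition generated by the commutator subgroup together with the cuspidal loops, these maps together surject onto $\gr^\W_{-2}U^\et$. Hence $\gr^\W_{-2}U^\et$ is a $G_\Q$-equivariant quotient of $\bigwedgesquare\gr^\W_{-1}U^\et\oplus\bigoplus_{z\in D(\Qbar)}\Q_\ellp(1)$; the first summand is itself a quotient of $\gr^\W_{-1}U^\et\otimes\gr^\W_{-1}U^\et$, semisimple by the $n=1$ case and Serre, and the second is a sum of Tate twists, so the quotient $\gr^\W_{-2}U^\et$ is semisimple. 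For $n\geq3$ the defining property of the weight filtration --- that $\W_{-k}U^\et$ for $k\geq3$ is generated by commutators of $\W_{-i}U^\et$ and $\W_{-j}U^\et$ with $i+j=k$ --- provides a $G_\Q$-equivariant surjection
\[
\bigl(\gr^\W_{-1}U^\et\otimes\gr^\W_{1-n}U^\et\bigr)\oplus\bigl(\gr^\W_{-2}U^\et\otimes\gr^\W_{2-n}U^\et\bigr)\twoheadrightarrow\gr^\W_{-n}U^\et \,,
\]
whose domain is semisimple by the inductive hypothesis together with Serre's theorem, so the target is too.

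The step I expect to require the most care is the surjectivity of these commutator maps onto the graded pieces --- equivalently, the statement that the associated graded Lie algebra of $U^\et$ for the weight filtration is generated in degrees $-1$ and $-2$. This is really a statement about the pro-unipotent surface group and the explicit description of its weight filtration recalled in the setup (and implicit in Lemma~\ref{lem:surface_group}): $\W_{-2}$ picks up exactly the commutators and the cuspidal loops, while $\W_{-k}$ for $k\geq3$ is defined as the span of iterated commutators. Once generation in low degrees is established, $G_\Q$-equivariance of all the maps in question is automatic because the weight filtration is $G_\Q$-stable, and the remainder is a purely formal induction using the closure properties of semisimplicity.
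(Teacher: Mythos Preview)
Your proof is correct and follows essentially the same route as the paper's: Faltings for $n=1$, the commutator plus cuspidal-loop surjection for $n=2$, and the same inductive surjection from $(\gr^\W_{-1}\otimes\gr^\W_{1-n})\oplus(\gr^\W_{-2}\otimes\gr^\W_{2-n})$ for $n\geq3$, all combined with Serre's closure of semisimplicity under tensor products. One minor slip: $\gr^\W_{-1}U^\et$ is the Tate module of the Jacobian itself, not its dual, though this does not affect the argument.
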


Now we fix a $G_\Q$-equivariant quotient~$U$ of~$U^\et$, not necessarily finite-dimensional, and endow it with the induced weight filtration~$\W_\bullet$ from~$U^\et$. The quotient~$U$ satisfies the three conditions from \S\ref{ss:global_selmer}: it is unramified at all primes $\pell\notin S\cup\{\ellp\}$ of good reduction for~$(\X,\Dvsr)$; its Lie algebra $\Lie(U)$ is pro-crystalline by \cite[Theorem~1.8]{olsson:towards}; and $V_n$ has no $G_\pell$-invariants for any~$\pell$ since it is a direct summand of~$\gr^\W_{-n}U^\et$ by Lemma~\ref{lem:global_semisimplicity} and the latter has no $G_\pell$-invariants by \cite[Theorem~1.3(1)]{me-daniel:weight-monodromy}. Hence the local cohomology schemes $\HH^1(G_\pell,U)$ are representable for all primes~$\pell$, as are the local and global Selmer schemes $\HH^1_f(G_\ellp,U)$ and $\Sel_{\lSel,U}$ for any Selmer structure~$\lSel$ on~$U$.

Associated to the quotient~$U$, there is a \emph{non-abelian Kummer map}
\[
\jj_U\colon Y(\Q)\to\HH^1(G_\Q,U(\Q_\ellp))
\]
given by sending a point~$y\in Y(\Q)$ to the class of the $\Q_\ellp$-pro-unipotent \'etale torsor of paths from~$b$ to~$y$, pushed out along~$U^\et\twoheadrightarrow U$. Similarly, for every prime~$\pell$ there is a \emph{local non-abelian Kummer map}
\[
\jj_{\pell,U}\colon Y(\Q_\pell)\to\HH^1(G_\pell,U(\Q_\ellp))
\]
defined in exactly the same way. For $\pell\neq\ellp$, the image $\jj_{\pell,U}(\Y(\Z_\pell))$ of the $\Z_\pell$-integral points is finite \cite[Corollary~0.2]{minhyong-tamagawa:l-component}, and consists of just the basepoint if $\pell$ is of good reduction for~$(\X,\Dvsr)$.

For $\pell=\ellp$, the image $\jj_{\ellp,U}(\Y(\Z_\ellp))$ of the $\Z_\ellp$-integral points is contained inside $\HH^1_f(G_p,U)\subseteq\HH^1(G_p,U)$. There is also a \emph{de Rham Kummer map}
\[
\jj_{\dR,U}\colon\Y(\Z_\ellp)\to\Fil^0\backslash\D_\dR(U)(\Q_\ellp) \,,
\]
given as the composite of the de Rham Kummer map $\jj_\dR\colon\Y(\Z_\ellp)\to\Fil^0\backslash U^\dR(\Q_\ellp)$ from \S\ref{sss:de_rham_kummer} with the map $\Fil^0\backslash U^\dR\cong\Fil^0\backslash\D_\dR(U^\et)\twoheadrightarrow\Fil^0\backslash\D_\dR(U)$. The group~$U^\dR$ here denotes the pro-unipotent de Rham fundamental group of~$Y_{\Q_\ellp}$, the functor~$\D_\dR(-)$ is the de Rham Dieudonn\'e functor at the prime~$\ellp$, and the isomorphism $U^\dR\cong\D_\dR(U^\et)$ is the \'etale--de Rham comparison isomorphism described in \cite[\S7]{hadian:motivic_pi_1}\cite[\S5]{faltings:around}, which preserves the Hodge filtration by its construction.

These Kummer maps fit together in a commuting diagram 
\cite[p.~95]{minhyong:selmer}\footnote{In \cite{minhyong:selmer}, the comparison isomorphism used between the \'etale and de Rham fundamental groups is the one of Olsson \cite[Theorem~1.8]{olsson:towards}. However, the proof of commutativity of the diagram only requires one fact about the comparison isomorphism: that it preserves torsor structures. So the diagram also commutes when the comparison isomorphism used is that from \cite[\S7]{hadian:motivic_pi_1}, as here.}
\begin{equation}\label{diag:c-k}
	\begin{tikzcd}
		\Y(\Z_S) \arrow[r,hook]\arrow[d,"\jj_U"] & \Y(\Z_\ellp) \arrow[rd,"\jj_{\dR,U}"]\arrow[d,"\jj_{\ellp,U}"] & \\
		\HH^1_{f,T_0}(G_\Q,U(\Q_\ellp)) \arrow[r,"\loc_\ellp"] & \HH^1_f(G_\ellp,U)(\Q_\ellp) \arrow[r,"\log_\BK","\sim"'] & \Fil^0\backslash\D_\dR(U)(\Q_\ellp) \,,
	\end{tikzcd}
\end{equation}
where~$T_0\not\ni\ellp$ is a finite set of primes, containing~$S$ and all primes of bad reduction for~$(\X,\Dvsr)$. Collecting what we have proved in the preceding sections, we see that the sets appearing on the bottom row are the $\Q_\ellp$-points of affine $\Q_\ellp$-schemes $\HH^1_{f,T_0}(G_\Q,U)$, $\HH^1_f(G_\ellp,U)$ and $\Fil^0\backslash\D_\dR(U)$, and that these all come with natural weight filtrations on their affine rings, induced from the weight filtration on~$U$ (\S\ref{ss:global_selmer}, \S\ref{sss:H^1_f} and \S\ref{sss:bk_log}). The localisation map~$\loc_\ellp$ is a morphism of filtered affine $\Q_\ellp$-schemes, since it represents a natural transformation of functors. The Bloch--Kato logarithm $\log_\BK$ is an isomorphism of filtered affine $\Q_\ellp$-schemes by Proposition~\ref{prop:bk_log_is_filtered_iso}. The map~$\jj_{\dR,U}$ has Zariski-dense image by \cite[Theorem~1]{minhyong:selmer}, and for every~$\alpha\in\W_m\O(\Fil^0\backslash\D_\dR(U))$, the composite $\alpha\circ\jj_{\dR,U}$ is a Coleman algebraic function of weight at most~$m$ by Theorem~\ref{thm:log-coleman_and_de_rham_kummer}. Moreover, $\alpha\circ\jj_{\dR,U}$ is associated to the quotient~$\D_\dR(U)$ of~$U^\dR$ by Proposition~\ref{prop:log-coleman_for_quotients}; we will say that it is \emph{associated to~$U$} for short.

Putting all of this together, we obtain our first effective Chabauty--Kim result.

\begin{theorem}\label{thm:main_naive}
	Let $(c_{i,T_0}^\glob)_{i\geq0}$ and $(c_i^\loc)_{i\geq0}$ be the coefficients of the power series
	\begin{align*}
		\HS_{\glob,T_0}(t) &:= \prod_{n\geq1}^\infty(1-t^n)^{-\dim\HH^1_{f,T}(G_\Q,V_n)} \\
		\HS_\loc(t) &:= \prod_{n\geq1}^\infty(1-t^n)^{-\dim\HH^1_f(G_p,V_n)} \,.
	\end{align*}
	Suppose that~$m$ is a positive integer such that the inequality
	\begin{equation}\label{eq:c-k_ineq_series_naive}
		\sum_{i=0}^mc_{i,T_0}^\glob < \sum_{i=0}^mc_i^\loc
	\end{equation}
	holds. Then:
	\begin{enumerate}[label=\Alph*), ref=(\Alph*), font=\normalfont]
		\item\label{thmpart:main_naive} The set $\Y(\Z_S)$ is contained in the vanishing locus of a Coleman algebraic function of weight at most~$m$ which is associated to~$U$.
		\item\label{thmpart:main_naive_bound} We have
		\[
		\#\Y(\Z_S)\leq\kappa_p\cdot\#\Y(\F_p)\cdot(4g+2r-2)^m\cdot\prod_{i=1}^{m-1}(c_i^\loc+1) \,,
		\]
		where the constant $\kappa_p$ is as in Theorem~\ref{thm:main_bound}.
		\item\label{thmpart:main_naive_det} If $C_{m,T_0}^\glob:=\sum_{i=0}^mc_{i,T_0}^\glob$ and $C_m^\loc:=\sum_{i=0}^mc_i^\loc$, and if $f_1,\dots,f_{C_m^\loc}$ is a basis of the space of Coleman algebraic functions of weight at most~$m$ associated to~$U$, then for every $C_{m,T_0}^\glob+1$-tuple of points $x_0,x_1,\dots,x_{C_{m,T_0}^\glob}\in\Y(\Z_S)$, all $(C_{m,T_0}^\glob+1)\times(C_{m,T_0}^\glob+1)$ minors of the matrix~$M$ with entries $M_{ij}=f_i(x_j)$ vanish.
	\end{enumerate}
	\begin{proof}
		According to Corollary~\ref{cor:hilbert_series_of_local_selmer} and Remark~\ref{rmk:global_hilbert_series_bound_for_naive}, we see that the Hilbert series of $\HH^1_{f,T_0}(G_\Q,U)$ and $\HH^1_f(G_\ellp,U)$ satisfy
		\[
		\HS_{\HH^1_{f,T_0}(G_\Q,U)}(t)\preceq\HS_{\glob,T_0}(t) \hspace{0.4cm}\text{and}\hspace{0.4cm} \HS_{\HH^1_f(G_\ellp,U)}(t)=\HS_\loc(t) \,.
		\]
		Hence, when~$m$ is such that inequality~\eqref{eq:c-k_ineq_series_naive} holds, we have
		\[
		\dim\W_m\O(\HH^1_{f,T_0}(G_\Q,U))\leq\sum_{i=0}^mc_{i,T_0}^\glob\leq\sum_{i=0}^mc_i^\loc=\dim\W_m\O(\HH^1_f(G_\ellp,U)) \,.
		\]
		In particular, the map $\loc_\ellp^*\colon\W_m\O(\HH^1_f(G_\ellp,U))\to\W_m\O(\HH^1_{f,T_0}(G_\Q,U))$ must fail to be injective for dimension reasons. If~$\alpha$ is a non-zero element of the kernel, then by commutativity of the diagram \eqref{diag:c-k} and Theorem~\ref{thm:log-coleman_and_de_rham_kummer}, the composite $\alpha\circ\jj_{\ellp,U}\colon\Y(\Z_\ellp)\to\Q_\ellp$ vanishes on $\Y(\Z_S)$, and is a Coleman algebraic function of weight~$m$ which is associated to~$U$. This proves~\ref{thmpart:main_naive}.
		
		Part~\ref{thmpart:main_naive_bound} follows from Theorem~\ref{thm:weight_bound} and the observation that the Hilbert series of $\HH^1_f(G_\ellp,U)$, $\Fil^0\backslash\D_\dR(U)$ and $\ACollog(Y)_{\D_\dR(U)}$ are the same by Propositions~\ref{prop:bk_log_is_filtered_iso} and~\ref{prop:log-coleman_for_quotients}. Part~\ref{thmpart:main_naive_det} follows from the simple observation that the kernel of the map $\loc_\ellp^*\colon\W_m\O(\HH^1_f(G_\ellp,U))\to\W_m\O(\HH^1_{f,T_0}(G_\Q,U))$ has dimension $\geq C_m^\loc-C_{m,T_0}^\glob$, and hence the image of $\Y(\Z_S)$ under the map $\Y(\Z_\ellp)\to\Q_\ellp^{C_m^\loc}$ given by $(f_1,\dots,f_{C_m^\loc})$ must be contained in a subspace of dimension~$\leq C_{m,T_0}^\glob$. This is equivalent to the vanishing of the minors of the claimed matrix.
	\end{proof}
\end{theorem}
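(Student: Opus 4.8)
The plan is to read off all three parts as direct consequences of the Hilbert series bounds proved in §\ref{s:selmer}, the compatibility of the Chabauty--Kim square with weight filtrations, and the weight bound of Theorem~\ref{thm:weight_bound}. First I would recall from Corollary~\ref{cor:hilbert_series_of_local_selmer} that $\HS_{\HH^1_f(G_\ellp,U)}(t)=\HS_\loc(t)$ exactly, and from Remark~\ref{rmk:global_hilbert_series_bound_for_naive} that $\HS_{\HH^1_{f,T_0}(G_\Q,U)}(t)\preceq\HS_{\glob,T_0}(t)$; unwinding the definition of $\preceq$, this means $\dim\W_m\O(\HH^1_{f,T_0}(G_\Q,U))=\sum_{i=0}^mc_{i,T_0}^\glob{}'\leq\sum_{i=0}^mc_{i,T_0}^\glob$ for the actual graded dimensions, and similarly $\dim\W_m\O(\HH^1_f(G_\ellp,U))=\sum_{i=0}^mc_i^\loc$. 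The inequality~\eqref{eq:c-k_ineq_series_naive} then gives a strict inequality of $\W_m$-dimensions, so the filtered morphism $\loc_\ellp^*\colon\W_m\O(\HH^1_f(G_\ellp,U))\to\W_m\O(\HH^1_{f,T_0}(G_\Q,U))$ — which exists because $\loc_\ellp$ is a morphism of filtered affine $\Q_\ellp$-schemes — has nonzero kernel, by linear algebra.

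For part~\ref{thmpart:main_naive}, I would pick a nonzero $\alpha\in\ker(\loc_\ellp^*)\subseteq\W_m\O(\HH^1_f(G_\ellp,U))$. Transporting $\alpha$ across the Bloch--Kato logarithm (a filtered isomorphism by Proposition~\ref{prop:bk_log_is_filtered_iso}) gives a nonzero element of $\W_m\O(\Fil^0\backslash\D_\dR(U))$, whose composite with $\jj_{\dR,U}$ is, by Theorem~\ref{thm:log-coleman_and_de_rham_kummer} and Proposition~\ref{prop:log-coleman_for_quotients}, a Coleman algebraic function of weight $\leq m$ associated to~$U$. Commutativity of diagram~\eqref{diag:c-k} — the left square $\jj_{\ellp,U}\circ(\Y(\Z_S)\hookrightarrow\Y(\Z_\ellp))=\loc_\ellp\circ\jj_U$ together with $\log_\BK\circ\jj_{\ellp,U}=\jj_{\dR,U}$ — shows that this function vanishes on all of $\Y(\Z_S)$, since $\alpha$ was chosen to pull back to zero on $\HH^1_{f,T_0}(G_\Q,U)$ and $\jj_U$ lands there. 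It remains only to check the function is genuinely nonzero: this follows because $\jj_{\dR,U}$ has Zariski-dense image by \cite[Theorem~1]{minhyong:selmer}, so a nonzero regular function on the target cannot pull back to zero.

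For part~\ref{thmpart:main_naive_bound}, I would simply apply Theorem~\ref{thm:weight_bound} to the nonzero weight-$\leq m$ function $f=\alpha\circ\jj_{\dR,U}$ in $\W_m\ACollog(Y)_{\D_\dR(U)}$, noting that by Propositions~\ref{prop:bk_log_is_filtered_iso} and~\ref{prop:log-coleman_for_quotients} the Hilbert series of $\HH^1_f(G_\ellp,U)$, of $\Fil^0\backslash\D_\dR(U)$, and of $\ACollog(Y)_{\D_\dR(U)}$ all coincide, so the constants $c_i$ appearing in Theorem~\ref{thm:weight_bound} are exactly $c_i^\loc$; this yields the stated bound with $\kappa_p=\kappa_v$ for $v=\ellp$ and $\#\Y_0(k_v)=\#\Y(\F_p)$. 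For part~\ref{thmpart:main_naive_det}, I would observe that $\ker(\loc_\ellp^*)$ has dimension at least $C_m^\loc-C_{m,T_0}^\glob$, so every $f$ in this kernel — equivalently, every Coleman function in the span of the $f_i$ lying in a codimension-$\leq C_{m,T_0}^\glob$ subspace — vanishes on $\Y(\Z_S)$; phrased via the coordinate map $(f_1,\dots,f_{C_m^\loc})\colon\Y(\Z_\ellp)\to\Q_\ellp^{C_m^\loc}$, the image of $\Y(\Z_S)$ lies in a linear subspace of dimension $\leq C_{m,T_0}^\glob$, which is exactly the vanishing of all $(C_{m,T_0}^\glob+1)\times(C_{m,T_0}^\glob+1)$ minors of $M$.

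I do not expect a serious obstacle here: every ingredient has been built in the preceding sections, and the argument is essentially a bookkeeping exercise combining the two Hilbert series computations with a dimension count. The one point requiring mild care is the nonvanishing of the produced Coleman function in part~\ref{thmpart:main_naive}, which I would handle via Zariski-density of the de Rham Kummer image as above; a secondary subtlety is making sure the reindexing between the local cohomology scheme, its de Rham avatar, and the Coleman algebra is tracked correctly so that the constants $c_i^\loc$ are the right ones to feed into Theorem~\ref{thm:weight_bound}, but this is immediate from the cited filtered isomorphisms.
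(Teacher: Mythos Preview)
Your proposal is correct and follows essentially the same route as the paper's proof: compare the Hilbert series via Corollary~\ref{cor:hilbert_series_of_local_selmer} and Remark~\ref{rmk:global_hilbert_series_bound_for_naive}, extract a nonzero $\alpha$ in the kernel of $\loc_\ellp^*$ on $\W_m$, pull it back to a Coleman algebraic function via diagram~\eqref{diag:c-k} and Theorem~\ref{thm:log-coleman_and_de_rham_kummer}, and then invoke Theorem~\ref{thm:weight_bound} and the kernel-dimension count for parts~\ref{thmpart:main_naive_bound} and~\ref{thmpart:main_naive_det}. You are slightly more careful than the paper in explicitly checking nonvanishing of the resulting Coleman function via Zariski-density of $\jj_{\dR,U}$, which the paper records just before the theorem but does not restate inside the proof.
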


Although Theorem~\ref{thm:main_naive} does give effective constraints on the set~$\Y(\Z_S)$, it doesn't incorporate any information at places inside the set~$T_0$, and as a consequence the inequality~\eqref{eq:c-k_ineq_series_naive} required as input in the theorem depends strongly on~$S$ and the bad primes of~$(\X,\Dvsr)$. This limitation can be overcome by using a more refined type of Selmer scheme, as defined in \cite[p.~371]{jen-etal:non-abelian_tate-shafarevich} in the case that~$Y=X$ is projective, and in \cite[Definition~1.2.2]{me-netan:ramification} in general.

\subsection{Properties of local Kummer maps}

The theory of refined Selmer schemes of the kind studied in \cite{jen-etal:non-abelian_tate-shafarevich} and \cite{me-netan:ramification} revolves around a careful analysis of the images of the local non-abelian Kummer maps $\jj_{\pell,U}$ for all primes~$\pell$, not just those of good reduction. The image of~$\jj_{\pell,U}$ turns out to be closely related to the mod-$\pell$ reduction type of~$(\X,\Dvsr)$ \cite{me-netan:ramification}, as we now recall.

\subsubsection{Local decomposition: $\pell\notin S$}

To begin with, let $\pell\notin S\cup\{\ellp\}$ be prime, and let $\Cpts_\pell$ be the set of irreducible components of the special fibre of~$\X_{\Z_\pell}$. If~$\Sigma_\pell\in\Cpts_\pell$ is such a component, then we write $\Y(\Z_\pell)_{\Sigma_\pell}\subseteq\Y(\Z_\pell)$ for the set of $\Z_\pell$-points of~$\Y$ reducing onto the component~$\Sigma_\pell$ of the special fibre of~$\X_{\Z_\pell}$: we say that $\Y(\Z_\pell)_{\Sigma_\pell}$ is the set of $\pell$-adic points whose \emph{reduction type} is~$\Sigma_\pell$. These sets give a partition
\begin{equation}\label{eq:local_decomposition_integral}
\Y(\Z_\pell) = \coprod_{\Sigma_\pell\in\Cpts_\pell}\Y(\Z_\pell)_{\Sigma_\pell}
\end{equation}
of the $\pell$-adic points.

For a reduction type~$\Sigma_\pell\in\Cpts_\pell$, we write $\lSel_{\Sigma_\pell}$ for the image of $\Y(\Z_\pell)_{\Sigma_\pell}$ under the local non-abelian Kummer map $\jj_{\pell,U}$. The theory of \cite{me-netan:ramification} then shows the following.

\begin{lemma}\label{lem:local_images_outside_s}
	For all~$\Sigma_\pell\in\Cpts_\pell$, $\lSel_{\Sigma_{\pell}}$ either is empty or consists of a single point.
	\begin{proof}
		It suffices to prove the result in the case $U=U^\et$. Choose a finite extension~$K_v/\Q_\pell$ over which~$Y$ acquires semistable reduction, and write~$\O_v$ for the ring of integers of~$K_v$. Let~$(\X',\Dvsr')$ be the minimal regular normal crossings\footnote{Recall that a \emph{model} $(\X',\Dvsr')$ of~$(X,D)$ over~$\O_v$ is a pair of a proper flat $\O_v$-scheme~$\X'$ whose generic fibre is~$X_{K_v}$, and the divisor~$\Dvsr'\subseteq\X'$ of~$D_{K_v}$ in~$\X'$. A model~$(\X',\Dvsr')$ is said to be \emph{regular normal crossings} just when $\X'$ is a regular scheme and $\X'_0\cup\Dvsr'$ is a normal crossings divisor on~$\X'$, where $\X'_0$ denotes the special fibre of~$\X'$. A model is \emph{semistable} just when the special fibre~$\X'_0$ is reduced. See \cite[\S9.3]{liu:arithmetic_curves} for the theory of regular normal crossings models in the case~$D=\emptyset$, and \cite[Appendix~B]{me:thesis} for the general case.} desingularisation of~$(\X_{\O_v},\Dvsr_{\O_v})$. The exceptional locus of the map $\X'\to\X_{\O_v}$ is contained in the union of the singular locus of the special fibre of~$\X_{\O_v}$ and the special fibre of~$\Dvsr_{\O_v}$.
		
		Now let~$(\X_{\min},\Dvsr_{\min})$ be the minimal regular normal crossings model of the base change $(X_{K_v},D_{K_v})$. This is a semistable model, and there is a unique morphism~$(\X',\Dvsr')\to(\X_{\min},\Dvsr_{\min})$ of models of~$(X_{K_v},D_{K_v})$. If the exceptional locus of the map $\X'\to\X_{\min}$ contains a point in the special fibre of~$\Dvsr_{\min}$, then blowing up this point yields another semistable model of~$(X_{K_v},D_{K_v})$ dominated by~$(\X',\Dvsr')$. Repeating this construction if necessary, we find that there is a semistable model~$(\X'',\Dvsr'')$ of~$(X_{K_v},D_{K_v})$ together with a map $(\X',\Dvsr')\to(\X'',\Dvsr'')$ of models, whose exceptional locus does not contain any point in the special fibre of~$\Dvsr''$.
		
		Suppose now that~$x,y\in\Y(\Z_\pell)_{\Sigma_\pell}$ both reduce onto the component~$\Sigma_\pell$ of the special fibre of~$\X$. Since~$x$ and~$y$ reduce to non-singular points of the special fibre of~$\X$ \cite[Lemma~3.1]{liu-tong:neron_models} and do not reduce onto the special fibre of~$\Dvsr$, it follows that~$x$ and~$y$ also reduce onto the same component of the special fibre of~$\X'$. Moreover, these reductions do not lie in the special fibre of~$\Dvsr'$. Now, via the map $(\X',\Dvsr')\to(\X'',\Dvsr'')$ we see that~$x$ and~$y$ reduce onto the same component of the special fibre of~$\X''$, and moreover do not reduce onto the special fibre of~$\Dvsr''$. Thus, $x$ and~$y$ are $\O_v$-integral points of~$\Y'':=\X''\setminus\Dvsr''$. Since the model~$(\X'',\Dvsr'')$ is semistable, we can apply \cite[Proposition~3.8.1]{me-netan:ramification} to deduce that $\jj_{\pell,U^\et}(x)=\jj_{\pell,U^\et}(y)\in\HH^1(G_\pell,U^\et(\Q_\ellp))$, which completes the proof of the claim.
	\end{proof}
\end{lemma}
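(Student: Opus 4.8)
The plan is to reduce the statement first to the universal case $U=U^\et$ and then to the case of a semistable model, where it is already available. For the first reduction, note that for any $y\in\Y(\Z_\pell)$ the class $\jj_{\pell,U}(y)$ is the image of $\jj_{\pell,U^\et}(y)$ under the map $\HH^1(G_\pell,U^\et(\Q_\ellp))\to\HH^1(G_\pell,U(\Q_\ellp))$ induced by the quotient $U^\et\twoheadrightarrow U$, because pushing out the $\Q_\ellp$-pro-unipotent \'etale torsor of paths is functorial in the coefficient group. Hence if $\jj_{\pell,U^\et}$ is constant on $\Y(\Z_\pell)_{\Sigma_\pell}$ then so is $\jj_{\pell,U}$, and it is enough to fix two points $x,y\in\Y(\Z_\pell)_{\Sigma_\pell}$ and prove $\jj_{\pell,U^\et}(x)=\jj_{\pell,U^\et}(y)$.

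For the second reduction I would pass to a finite extension $K_v/\Q_\pell$ over which $Y$ acquires semistable reduction, with ring of integers $\O_v$, and manufacture a semistable model in which $x$ and $y$ still have the same reduction type. Concretely: the base change $(\X_{\O_v},\Dvsr_{\O_v})$ need not be regular, so take its minimal regular normal crossings desingularisation $(\X',\Dvsr')$ (the theory of such models with a boundary divisor is in \cite[Appendix~B]{me:thesis}, cf.\ \cite[\S9.3]{liu:arithmetic_curves}); its exceptional locus is contained in the union of the singular locus of the special fibre of $\X_{\O_v}$ and the special fibre of $\Dvsr_{\O_v}$. Comparing $(\X',\Dvsr')$ with the minimal regular normal crossings model $(\X_{\min},\Dvsr_{\min})$ of $(X_{K_v},D_{K_v})$ --- which is semistable and receives a morphism from $(\X',\Dvsr')$ --- and blowing up any point of the exceptional locus lying on the boundary, I expect to obtain a semistable model $(\X'',\Dvsr'')$, dominated by $(\X',\Dvsr')$, whose exceptional locus over $\X''$ avoids the special fibre of $\Dvsr''$.

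Now I would track $x$ and $y$ through this chain. By \cite[Lemma~3.1]{liu-tong:neron_models} they reduce to smooth points of the special fibre of $\X_{\Z_\pell}$, lying on $\Sigma_\pell$ and off $\Dvsr$; hence the desingularisation $\X'\to\X_{\O_v}$ is an isomorphism near these reductions, so $x$ and $y$ lie on a common component of the special fibre of $\X'$ and off $\Dvsr'$, and then via $(\X',\Dvsr')\to(\X'',\Dvsr'')$ they define $\O_v$-integral points of $\Y'':=\X''\setminus\Dvsr''$ with the same reduction type. Since $(\X'',\Dvsr'')$ is semistable, the semistable case of the statement --- \cite[Proposition~3.8.1]{me-netan:ramification} --- gives $\jj_{\pell,U^\et}(x)=\jj_{\pell,U^\et}(y)$, completing the proof.

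The hard part will be the model-theoretic bookkeeping of the middle step: producing a semistable $(\X'',\Dvsr'')$ that is simultaneously dominated by the resolution and has its boundary divisor disjoint from the exceptional locus, and --- because base change to $K_v$ can break a component of the $\F_\pell$-special fibre into several geometric components --- verifying that the reductions of $x$ and $y$ genuinely wind up on a single component of the special fibre of $\X''$. Everything else is standard comparison of arithmetic surface models together with the semistable input of \cite{me-netan:ramification}.
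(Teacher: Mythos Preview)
Your proposal is correct and follows essentially the same approach as the paper's proof: reduce to $U=U^\et$, pass to a finite extension with semistable reduction, resolve the base-changed model, compare with the minimal regular normal crossings model and blow up to obtain a semistable $(\X'',\Dvsr'')$ dominated by the resolution with exceptional locus off the boundary, then track $x,y$ through the chain via \cite[Lemma~3.1]{liu-tong:neron_models} and conclude with \cite[Proposition~3.8.1]{me-netan:ramification}. Even the model-theoretic subtleties you flag at the end are exactly the ones the paper works through.
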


\begin{remark}\label{rmk:local_images_outside_s}
	Lemma~\ref{lem:local_images_outside_s} gives an alternative proof of \cite[Corollary~0.2]{minhyong-tamagawa:l-component}, that the image $\jj_{\pell,U}(\Y(\Z_\pell))$ is finite. More importantly, it gives us control over the size of the image: for example, we have $\#\jj_{\pell,U}(\Y(\Z_\pell))\leq \#\Cpts_\pell$.
\end{remark}

\subsubsection{Local decomposition: $\pell\in S$}

If now~$\pell\in S$, then we have a similar decomposition of the set $Y(\Q_\pell)$, but its description is more complicated. Let~$(\X_{\min},\Dvsr_{\min})$ be the minimal regular normal crossings model of~$(X,D)$ over~$\Z_\pell$, and denote its special fibre by $(\X_{\min,0},\Dvsr_{\min,0})$. We define~$\Cpts_\pell^0$ to be the set of connected/irreducible components of~$\X_{\min,0}^\sm\setminus\Dvsr_{\min,0}$, where~$\X_{\min,0}^\sm$ denotes the smooth locus in $\X_{\min,0}$. We also define~$\Cpts_\pell^1:=|\Dvsr_{\min,0}|$ to be the set of closed points of the special fibre of~$\Dvsr_{\min}$, and set $\Cpts_\pell:=\Cpts_\pell^0\cup\Cpts_\pell^1$. If~$\Sigma_\pell\in\Cpts_\pell$, then we write $Y(\Q_\pell)_{\Sigma_\pell}\subseteq Y(\Q_\pell)$ for the set of~$\Q_\pell$-points of~$Y$ reducing onto~$\Sigma_\pell$. These sets give a partition
\begin{equation}\label{eq:local_decomposition_rational_naive}
	Y(\Q_\pell) = \coprod_{\Sigma_\pell\in\Cpts_\pell}Y(\Q_\pell)_{\Sigma_\pell} \,.
\end{equation}

We note for future reference the following.

\begin{lemma}\label{lem:non-empty_discs}
	Let $\Sigma_\pell^1\in\Cpts_\pell^1$ be a closed point of~$\Dvsr_{\min,0}$. Then $Y(\Q_\pell)_{\Sigma_{\pell}}\neq\emptyset$ if and only if $\Sigma_\pell^1$ is an $\F_\pell$-point of $\Dvsr_{\min,0}$. Moreover, when this occurs, there is a unique point $z\in D(\Q_\pell)$ reducing to~$\Sigma_\pell^1$.
	\begin{proof}
		In one direction, if $Y(\Q_\pell)_{\Sigma_\pell^1}\neq\emptyset$, then $\Sigma_\pell^1$ contains the reduction of a $\Q_\pell$-point of~$X$, so must be $\F_\pell$-rational. Conversely, since $\Dvsr_{\min}\cup\X_{\min,0}$ is a normal crossings divisor, $\Dvsr_{\min,0}$ must be contained in the smooth locus of $\X_{\min,0}$. Hence if $\Sigma_\pell^1$ is $\F_\pell$-rational, then by Hensel's Lemma it is the reduction of infinitely many points of $X(\Q_\pell)$, in particular $Y(\Q_\pell)_{\Sigma_\pell^1}\neq\emptyset$.
		
		For the final point, suppose that $\Sigma_\pell^1$ is $\F_\pell$-rational. The component of~$\Dvsr_{\min}$ containing~$\Sigma_\pell^1$ is~$\Spec(\O_v)$ for~$\O_v$ the ring of integers in a finite extension~$K_v/\Q_\pell$. On the one hand, since this component has an $\F_\pell$-point, the extension $K_v/\Q_\pell$ must be totally ramified. On the other, since~$\Dvsr_{\min}\cup\X_{\min,0}$ is a normal crossings divisor, the ring $\O_v/p$ must have length~$1$ as a module over itself, so the extension $K_v/\Q_\pell$ is unramified. Thus in fact~$K_v=\Q_\pell$ and so~$\Sigma_\pell^1$ is the reduction of a point~$z\in D(\Q_\pell)$ as desired.
	\end{proof}
\end{lemma}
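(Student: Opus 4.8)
The plan is to reduce every assertion to the geometry of the normal crossings divisor $\Dvsr_{\min}\cup\X_{\min,0}$ on the regular scheme $\X_{\min}$, together with Hensel's Lemma. First I would prove the ``only if'' direction of the equivalence. If $Y(\Q_\pell)_{\Sigma_\pell^1}$ is non-empty, then by definition there is a $\Q_\pell$-point of $Y$, hence a $\Q_\pell$-point $x$ of $X$, reducing onto $\Sigma_\pell^1$. Since $x\in X(\Q_\pell)$ extends (by properness of $\X_{\min}$) to a section $\Spec(\Z_\pell)\to\X_{\min}$, its reduction is an $\F_\pell$-rational point of the special fibre lying on $\Sigma_\pell^1$; as $\Sigma_\pell^1$ is a closed point of $\Dvsr_{\min,0}$, this forces $\Sigma_\pell^1$ itself to be $\F_\pell$-rational.

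For the converse and the uniqueness statement I would argue as follows. Suppose $\Sigma_\pell^1\in\Cpts_\pell^1$ is $\F_\pell$-rational. The irreducible component of $\Dvsr_{\min}$ passing through $\Sigma_\pell^1$ is finite over $\Z_\pell$ and integrally closed (being a component of the reduced divisor $\Dvsr_{\min}$ inside the regular, hence normal, scheme $\X_{\min}$), so it is of the form $\Spec(\O_v)$ for $\O_v$ the ring of integers of a finite extension $K_v/\Q_\pell$. The key point is a local computation at the closed point $\Sigma_\pell^1$ of $\X_{\min}$: since $\Dvsr_{\min}\cup\X_{\min,0}$ is a normal crossings divisor, in the regular local ring $\mathcal{O}_{\X_{\min},\Sigma_\pell^1}$ the components $\Dvsr_{\min}$ and $\X_{\min,0}$ meet transversally, so a local equation $\pi$ for $\Dvsr_{\min}$ and a local equation for $\X_{\min,0}$ (namely $p$) form part of a regular system of parameters; in particular $p$ restricts to a uniformiser of $\O_v$, so $e(K_v/\Q_\pell)=1$ and $K_v/\Q_\pell$ is unramified. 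On the other hand, $\Sigma_\pell^1$ being $\F_\pell$-rational means the residue field of $\O_v$ is $\F_\pell$, so $f(K_v/\Q_\pell)=1$. Hence $K_v=\Q_\pell$, the component of $\Dvsr_{\min}$ is $\Spec(\Z_\pell)$, and it provides a (the) point $z\in D(\Q_\pell)$ reducing to $\Sigma_\pell^1$; uniqueness of $z$ follows because distinct $\Q_\pell$-points of $D$ extend to distinct sections of $\Dvsr_{\min}$, which can only meet the closed point $\Sigma_\pell^1$ if they lie on the (unique) component through it, and a section is determined by where it meets each component. Finally, the normal crossings hypothesis also forces $\Dvsr_{\min,0}$ to lie in the smooth locus $\X_{\min,0}^\sm$ (a point where $\Dvsr_{\min}$ meets a singular point of $\X_{\min,0}$ would violate the transversality/regular-parameter condition), so $\Sigma_\pell^1$ is a smooth $\F_\pell$-point of $\X_{\min,0}$; by Hensel's Lemma it is the reduction of infinitely many points of $X(\Q_\pell)$, and all but the point $z$ itself lie in $Y(\Q_\pell)$, so $Y(\Q_\pell)_{\Sigma_\pell^1}\neq\emptyset$.

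I expect the main obstacle to be bookkeeping the local-ring computation at $\Sigma_\pell^1$ carefully enough to extract simultaneously both ``$p$ is a uniformiser of $\O_v$'' (unramifiedness) and ``$\Dvsr_{\min,0}$ avoids $\mathrm{Sing}(\X_{\min,0})$'' from the single hypothesis that $\Dvsr_{\min}\cup\X_{\min,0}$ is normal crossings; everything else is a routine application of properness, normality, and Hensel's Lemma. One should also be slightly careful with the definition of $\Cpts_\pell^1$ as the set of \emph{closed} points of $\Dvsr_{\min,0}$ (so it includes points of positive residue degree, for which $Y(\Q_\pell)_{\Sigma_\pell}$ is empty), which is exactly what makes the ``if and only if'' non-trivial.
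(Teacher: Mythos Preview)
Your proof is correct and follows essentially the same approach as the paper: both use the normal crossings hypothesis to deduce that $\Dvsr_{\min,0}$ lies in the smooth locus of $\X_{\min,0}$ (enabling Hensel's Lemma for the ``if'' direction), and both extract $e=1$ and $f=1$ for the component $\Spec(\O_v)$ through $\Sigma_\pell^1$ from, respectively, the normal crossings condition and the $\F_\pell$-rationality of $\Sigma_\pell^1$. One small imprecision: your justification that the component is integrally closed ``being a component of the reduced divisor inside the regular, hence normal, scheme'' is not valid in general (a prime divisor in a regular surface need not be regular); the correct reason is that components of a normal crossings divisor are regular by definition, which you implicitly use anyway in the local computation.
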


Similarly to before, for a reduction type $\Sigma_\pell\in\Cpts_\pell$, we write~$\lSel_{\Sigma_\pell}\subseteq\HH^1(G_\pell,U)$ for the Zariski-closure of the image of $Y(\Q_\pell)_{\Sigma_\pell}$ under the local non-abelian Kummer map~$\jj_{\pell,U}$. The theory of \cite{me-netan:ramification} allows us to understand the geometry of these subschemes $\lSel_{\Sigma_\pell}$.

\begin{lemma}\label{lem:local_images_in_s}
	\leavevmode
	\begin{enumerate}
		\item\label{lempart:local_images_in_s_integral} If~$\Sigma_\pell^0\in\Cpts_\pell^0$, then $\lSel_{\Sigma_\pell^0}$ either is empty or a single $\Q_\pell$-point.
		\item\label{lempart:local_images_in_s_rational} If~$\Sigma_\pell^1\in\Cpts_\pell^1$, then $\lSel_{\Sigma_\pell^1}$ either is empty, a single $\Q_\pell$-point or a curve of genus~$0$ (possibly singular).
		\item\label{lempart:local_images_in_s_compatibility} If the cusp~$\Sigma_\pell^1$ is $\F_\pell$-rational and contained in the Zariski-closure of the component~$\Sigma_\pell^0$, then $\lSel_{\Sigma_\pell^0}\subseteq\lSel_{\Sigma_\pell^1}$.
	\end{enumerate}
\end{lemma}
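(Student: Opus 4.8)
The plan is to deduce all three parts of Lemma~\ref{lem:local_images_in_s} from the structural results of \cite{me-netan:ramification} on non-abelian Kummer maps at places of semistable reduction, by the same desingularisation-and-base-change strategy used in the proof of Lemma~\ref{lem:local_images_outside_s}. As there, it suffices to treat $U = U^\et$, since the image for a general quotient~$U$ is the pushforward of the image for~$U^\et$ along a morphism of schemes. First I would pass to a finite extension~$K_v/\Q_\pell$ over which~$Y$ acquires semistable reduction, form the minimal regular normal crossings model $(\X_{\min},\Dvsr_{\min})$ over~$\O_v$, and relate its combinatorics (dual graph, lengths of components, cusps) to that of the minimal regular normal crossings model over~$\Z_\pell$ via the base-change behaviour of models, exactly as in Lemma~\ref{lem:local_images_outside_s}. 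This reduces each part to a statement about the image of $\jj_{\pell, U^\et}$ on the set of $\O_v$-points (or $K_v$-points) reducing onto a prescribed component or cusp of a semistable model, which is precisely what \cite[\S3]{me-netan:ramification} computes.

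For part~\eqref{lempart:local_images_in_s_integral}, the reduction type~$\Sigma_\pell^0$ is an integral reduction type in the sense of \cite{me-netan:ramification}: points of $Y(\Q_\pell)_{\Sigma_\pell^0}$ reduce to smooth points of the special fibre away from~$\Dvsr_{\min,0}$, so after base change they are $\O_v$-integral points of $\Y'' = \X''\setminus\Dvsr''$ for a semistable model $(\X'',\Dvsr'')$, all reducing onto the same component and not meeting~$\Dvsr''$. Then \cite[Proposition~3.8.1]{me-netan:ramification} gives that $\jj_{\pell, U^\et}$ is constant on this set, so the Zariski-closure of the image is a single $\Q_\ellp$-point (or empty). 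This is essentially identical to Lemma~\ref{lem:local_images_outside_s}, just retaining the base point in~$S$.

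For part~\eqref{lempart:local_images_in_s_rational}, I would use Lemma~\ref{lem:non-empty_discs}: if $Y(\Q_\pell)_{\Sigma_\pell^1}\neq\emptyset$ then $\Sigma_\pell^1$ is the reduction of a unique $z\in D(\Q_\pell)$, and $Y(\Q_\pell)_{\Sigma_\pell^1}$ is the punctured residue disc of~$z$ minus the residue discs of the other branches of~$D$. The image of such a residue disc under the non-abelian Kummer map is governed by the local monodromy/tangential base point analysis of \cite{me-netan:ramification}: the variation of $\jj_{\pell, U^\et}$ within a cuspidal residue disc is controlled by the conjugacy class of the local monodromy loop around~$z$, which in the abelianisation contributes a copy of~$\Q_\ellp(1)$, and the resulting subscheme of $\HH^1(G_\pell, U^\et)$ is the image of an orbit map for the twisted action, hence either a point or a rational curve (a quotient of~$\A^1$ or~$\G_a$, possibly singular after pushforward). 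I would cite the relevant statement from \cite{me-netan:ramification} describing cuspidal reduction types and extract that the Zariski-closure is at worst a genus~$0$ curve. The main obstacle is precisely this step: pinning down the exact reference in \cite{me-netan:ramification} that identifies the image of a cuspidal residue disc, and checking that the genus~$0$ (rather than higher genus) conclusion survives the pushforward from~$U^\et$ to~$U$ and the passage back down from~$K_v$ to~$\Q_\pell$.

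For part~\eqref{lempart:local_images_in_s_compatibility}, the point is a limiting/specialisation argument: if~$\Sigma_\pell^1$ is $\F_\pell$-rational and lies in the closure of~$\Sigma_\pell^0$, then $Y(\Q_\pell)_{\Sigma_\pell^0}$ accumulates onto the residue disc of the cusp~$z$, so points reducing onto~$\Sigma_\pell^0$ can be taken $\pell$-adically close to boundary points of the cuspidal disc. Since $\jj_{\pell, U^\et}$ is continuous (indeed analytic) and $\lSel_{\Sigma_\pell^0}$ is a single point by part~\eqref{lempart:local_images_in_s_integral}, this point is a limit of points in $\jj_{\pell,U^\et}(Y(\Q_\pell)_{\Sigma_\pell^1})$, hence lies in its Zariski-closure~$\lSel_{\Sigma_\pell^1}$. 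Concretely, I would exhibit a sequence (or a one-parameter family) of $\Q_\pell$-points of~$Y$ reducing onto~$\Sigma_\pell^0$ whose images converge to the image of the constant value, and whose images simultaneously lie in the cuspidal locus — this last is where one uses that~$\Sigma_\pell^1$ is in the closure of~$\Sigma_\pell^0$ on the regular model, so the two residue regions are adjacent. Alternatively, and perhaps more cleanly, this containment can be read off directly from the description in \cite{me-netan:ramification} of how the images for adjacent reduction types fit together along the dual graph, which is the formulation I would ultimately quote.
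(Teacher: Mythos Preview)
Your reduction to $U=U^\et$, the passage to a semistable model over a finite extension, and the argument for part~\eqref{lempart:local_images_in_s_integral} are all correct and match the paper. The paper's proof of parts~\eqref{lempart:local_images_in_s_rational} and~\eqref{lempart:local_images_in_s_compatibility}, however, hinges on a device you do not mention: an infinite tower of models $(\X^{(n)},\Dvsr^{(n)})$ obtained by repeatedly blowing up the special fibre of the cuspidal divisor, whose dual graphs assemble into a graph~$\Gamma$ consisting of the finite graph~$\Gamma^{(0)}$ with a half-infinite chain~$\Gamma_z$ glued on at each cusp~$z$. The Kummer map~$\jj_v$ factors through a reduction map $Y(K_v)\to V(\Gamma)$, and the restriction of the resulting map $\jj_\Gamma\colon V(\Gamma)\to\HH^1(G_v,U^\et)$ to each chain $V(\Gamma_z)\cong\N_0$ is \emph{polynomial} \cite[Corollary~8.1.2 \& Theorem~6.1.1]{me-netan:ramification}. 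This is what gives the genus-$0$ conclusion in~\eqref{lempart:local_images_in_s_rational}: the Zariski-closure of $\jj_\Gamma(V(\Gamma_z))$ is the scheme-theoretic image of a morphism $\A^1_{\Q_\ellp}\to\HH^1(G_v,U^\et)$. Your local-monodromy sketch is the mechanism the paper uses for the \emph{next} lemma (the Hilbert series bound), not for this one.

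Your argument for~\eqref{lempart:local_images_in_s_compatibility} has a genuine gap. The sets $Y(\Q_\pell)_{\Sigma_\pell^0}$ and $Y(\Q_\pell)_{\Sigma_\pell^1}$ are disjoint clopen subsets of $Y(\Q_\pell)$ (residue classes for the reduction map), so neither accumulates on the other $\pell$-adically, and the Kummer map is constant on~$Y(\Q_\pell)_{\Sigma_\pell^0}$ with a value that is not in general a $p$-adic limit of values on~$Y(\Q_\pell)_{\Sigma_\pell^1}$. The containment is instead a Zariski-closure statement coming from the polynomial structure: the paper shows that $\red(Y(\Q_\pell)_{\Sigma_\pell^0})$ is the root vertex of the chain~$\Gamma_z$, while $\red(Y(\Q_\pell)_{\Sigma_\pell^1})$ is an infinite subset of~$V(\Gamma_z)$; since $\jj_\Gamma|_{V(\Gamma_z)}$ is polynomial, the Zariski-closure of its values on any infinite subset of~$\N_0$ contains its value at every point of~$\N_0$, in particular at the root. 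Your fallback ``read it off directly from \cite{me-netan:ramification}'' is the right instinct, but the specific combinatorial picture of half-infinite chains is what you need to name.
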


The proof of Lemma~\ref{lem:local_images_in_s} uses the full strength of the theory developed in \cite{me-netan:ramification}. We may assume without loss of generality that~$U=U^\et$ is the whole $\Q_\ellp$-pro-unipotent \'etale fundamental group. Choose a finite Galois extension~$K_v/\Q_\pell$ over which $(X,D)$ acquires semistable reduction, and write~$\O_v$ for the ring of integers of~$K_v$. Write~$G_v$ for the absolute Galois group of~$K_v$, and $\jj_{v,U^\et}\colon Y(K_v)\to\HH^1(G_v,U^\et(\Q_\ellp))$ for the non-abelian Kummer map, taking a $K_v$-point~$y$ to the class of the torsor of $\Q_\ellp$-pro-unipotent \'etale paths from~$b$ to~$y$. The cohomology scheme $\HH^1(G_\pell,U^\et)$ is a closed subscheme of $\HH^1(G_v,U^\et)$ via the restriction map \cite[Lemma~2.3.1]{me-netan:ramification}\footnote{Strictly speaking, \cite[Lemma~2.3.1]{me-netan:ramification} only makes an assertion about the $\Q_\ellp$-points of these schemes. However, the corresponding result is true for points valued in any $\Q_\ellp$-algebra, and the same proof works. Indeed, many of the results from \cite{me-netan:ramification} we cite here are phrased in terms of $\Q_\ellp$-points but in fact hold functorially; see \cite[Remark~2.8.1]{me-netan:ramification}.}, and the non-abelian Kummer map~$\jj_{\pell,U^\et}\colon Y(\Q_\pell)\to\HH^1(G_\pell,U^\et(\Q_\ellp))$ is just the restriction of the map~$\jj_{v,U^\et}$ to~$Y(\Q_\pell)$.

Let $(\X',\Dvsr')$ be the minimal normal crossings desingularisation of the base change $(\X_{\min,\O_v},\Dvsr_{\min,\O_v})$. As in the proof of Lemma~\ref{lem:local_images_outside_s}, there is a semistable model $(\X^{(0)},\Dvsr^{(0)})$ of~$(X_{K_v},D_{K_v})$ dominated by~$(\X',\Dvsr')$ such that the exceptional locus of the map $\X'\to\X^{(0)}$ is disjoint from the special fibre of~$\Dvsr^{(0)}$. We then recursively define a sequence of models~$(\X^{(n)},\Dvsr^{(n)})$ for~$n\geq0$ by setting~$\X^{(n)}$ to be the blowup of~$\X^{(n-1)}$ at the special fibre of~$\Dvsr^{(n-1)}$ and setting~$\Dvsr^{(n)}$ to be the strict transform of~$\Dvsr^{(n-1)}$ in~$\X^{(n)}$. All of these models thus fit into a sequence
\[
(\X_{\min,\O_v},\Dvsr_{\min,\O_v}) \leftarrow (\X',\Dvsr') \rightarrow (\X^{(0)},\Dvsr^{(0)}) \leftarrow (\X^{(1)},\Dvsr^{(1)}) \leftarrow \cdots \,.
\]

Now let~$\Gamma^{(n)}$ denote the dual graph of the geometric special fibre of~$\X^{(n)}$, i.e.\ the graph whose vertices correspond to the irreducible components of the geometric special fibre, and whose edges correspond to the singular points of the geometric special fibre. For~$n>0$ there is an injection $\Gamma^{(n-1)}\hookrightarrow\Gamma^{(n)}$ given on the level of vertices by taking the strict transform of irreducible components, and we set $\Gamma:=\varinjlim\Gamma^{(n)}$. The graph~$\Gamma$ thus admits a decomposition
\[
\Gamma = \Gamma^{(0)}\cup\bigcup_{z\in D(\Kbar_v)}\Gamma_z \,,
\]
where~$\Gamma_z$ is a half-infinite chain of edges, joined to $\Gamma^{(0)}$ at the vertex corresponding to the component of the geometric special fibre of~$\X^{(0)}$ containing the reduction of~$z$.

For every~$K_v$-point $y\in Y(K_v)$, there is some~$n$ for which the reduction of~$y$ to the special fibre of~$\X^{(n)}$ does not land in the special fibre of~$\Dvsr^{(n)}$, and we define $\red(y)\in V(\Gamma)$ to be the vertex of~$\Gamma$ corresponding to the irreducible component of the geometric special fibre of~$\X^{(n)}$ onto which~$y$ reduces. This is independent of the choice of~$n$, and defines a map $\red\colon Y(K_v)\to V(\Gamma)$ called the \emph{reduction map} \cite[Definition~1.1.1]{me-netan:ramification}. According to \cite[Proposition~3.8.1]{me-netan:ramification}, the non-abelian Kummer map $\jj_v\colon Y(K_v)\to\HH^1(G_v,U^\et(\Q_\ellp))$ factors through the reduction map, as the composite
\[
Y(K_v) \xrightarrow\red V(\Gamma) \xrightarrow{\jj_\Gamma} \HH^1(G_v,U^\et(\Q_\ellp))
\]
for some map $\jj_\Gamma\colon V(\Gamma)\to\HH^1(G_v,U^\et(\Q_\ellp))$. Moreover, for any geometric point~$z\in D(\Kbar_v)$, the restriction of~$\jj_\Gamma$ to~$V(\Gamma_z)$ is a polynomial map, in the sense that for every~$\alpha\in\O(\HH^1(G_v,U^\et))$, the composite $\alpha\circ\jj_\Gamma|_{V(\Gamma_z)}$ is a polynomial function when we identify $V(\Gamma_z)\cong\N_0$ in the natural way \cite[Corollary~8.1.2 \& Theorem~6.1.1]{me-netan:ramification}. In particular, the Zariski-closure of~$\jj_\Gamma(V(\Gamma_z))$ is the scheme-theoretic image of a morphism $\A^1_{\Q_\ellp}\to\HH^1(G_v,U^\et)$, so is either a single point or a curve of genus~$0$.
\smallskip

Using this theory, it is easy to deduce Lemma~\ref{lem:local_images_in_s}. For point~\eqref{lempart:local_images_in_s_integral}, every point of $Y(\Q_\pell)_{\Sigma_\pell^0}$ (assumed non-empty without loss of generality) reduces onto the same component of the geometric special fibre of~$\X_{\min,\O_v}$ by definition. Since all of these points reduce onto smooth points outside the special fibre of~$\Dvsr_{\min,\O_v}$, they avoid the exceptional locus of the minimal normal crossings desingularisation $(\X',\Dvsr')\to(\X_{\min,\O_v},\Dvsr_{\min,\O_v})$, and so all points of~$Y(\Q_\pell)_{\Sigma_\pell^0}$ also reduce onto the same component of the geometric special fibre of~$\X'$, avoiding the special fibre of~$\Dvsr'$. Since the exceptional locus of $\X'\to\X^{(0)}$ is disjoint from the special fibre of~$\Dvsr^{(0)}$, it follows that all points of~$Y(\Q_\pell)_{\Sigma_\pell^0}$ also reduce onto the same component of the geometric special fibre of~$\X^{(0)}$, avoiding the special fibre of~$\Dvsr^{(0)}$. In other words, $\red(Y(\Q_\pell)_{\Sigma_\pell^0})$ consists of a single vertex of~$\Gamma^{(0)}\subseteq\Gamma$. Thus, $\jj_\pell(Y(\Q_\pell)_{\Sigma_\pell^0})=\jj_v(Y(\Q_\pell)_{\Sigma_\pell^0})$ also consists of a single point, as desired.

For point~\eqref{lempart:local_images_in_s_rational}, assume without loss of generality that $Y(\Q_\pell)_{\Sigma_\pell^1}\neq\emptyset$, write $z_0\in\Dvsr_{\min,0}(\F_\pell)$ as a shorthand for the point $\Sigma_\pell^1\in\Cpts_\pell^1$, and write~$z\in D(\Q_\pell)$ for the unique point of~$D$ reducing to~$z_0$, as in Lemma~\ref{lem:non-empty_discs}. The point~$z_0$ lies in the smooth locus of $\X_{\min}\to\Spec(\Z_\pell)$ and the divisor $\Dvsr_{\min}\cup\X_{\min,0}$ is normal crossings in a neighbourhood of~$z_0$, so~$z_0$ does not lie in the exceptional locus of the minimal normal crossings desingularisation $(\X',\Dvsr')\to(\X_{\min,\O_v},\Dvsr_{\min,\O_v})$. It follows that all points of~$Y(\Q_\pell)_{\Sigma_\pell^1}$ reduce to the same point on the special fibre of~$\X'$, namely the reduction of~$z\in D(\Q_\pell)$. In particular, all points of~$Y(\Q_\pell)_{\Sigma_\pell^1}$ reduce to the same point as~$z$ in the special fibre of~$\X^{(0)}$, so that $\red(Y(\Q_\pell)_{\Sigma_\pell^1})\subseteq V(\Gamma_z)$. In fact, $\red(Y(\Q_\pell)_{\Sigma_\pell^1})$ is infinite, since for instance the point $z$ is a limit point of $Y(\Q_\pell)_{\Sigma_\pell^1}$, but not of $(\X^{(n)}\setminus\Dvsr^{(n)})(\O_v)$ for any~$n$. It follows that the Zariski-closure of $\jj_\pell(Y(\Q_\pell)_{\Sigma_\pell^1})$ is the same as the Zariski-closure of $\jj_\Gamma(V(\Gamma_z))$, which is either a single point or a curve of genus~$0$, as previously discussed.

Finally, for point~\eqref{lempart:local_images_in_s_compatibility}, write again $z_0$ for the point $\Sigma_\pell^1\in\Cpts_\pell^1=\Dvsr_{\min,0}(\F_\pell)$, and write~$z\in D(\Q_\pell)$ for the unique point of~$D$ reducing to~$z_0$. Following through the arguments of the previous two points, we see that all points of $Y(\Q_\pell)_{\Sigma_\pell^0}\cup Y(\Q_\pell)_{\Sigma_\pell^1}$ reduce onto the same component of the special fibre of~$\X^{(0)}$, namely the component containing the reduction of~$z$. This implies that $\red(Y(\Q_\pell)_{\Sigma_\pell^0})\subseteq V(\Gamma_z)$, and hence that $\lSel_{\Sigma_\pell^0}\subseteq\jj_\Gamma(V(\Gamma_z))^\Zar=\lSel_{\Sigma_\pell^1}$, as desired. This completes the proof of Lemma~\ref{lem:local_images_in_s}.\qed
\smallskip

We will need one final property of the subschemes $\lSel_{\Sigma_\pell}$, namely a bound on their Hilbert series. For~$\Sigma_\pell\in\Cpts_\pell^0$ the Hilbert series is either~$0$ or~$1$ according as $\lSel_{\Sigma_\pell}$ is empty or a single $\Q_\pell$-point. For~$\Sigma_\pell\in\Cpts_\pell^1$, the computation is rather more complicated.

\begin{lemma}\label{lem:local_hilbert_series_bound_in_s}
	Suppose that $\Sigma_\pell^1\in\Cpts_\pell^1$. Then
	\[
	\HS_{\lSel_{\Sigma_\pell^1}}(t) \preceq (1-t^2)^{-1} \,.
	\]
	\begin{proof}
		Again, we may suppose without loss of generality that~$U=U^\et$ and that $Y(\Q_\pell)_{\Sigma_\pell^1}\neq\emptyset$. We write~$z_0$ as a shorthand for the cusp $\Sigma_\pell^1\in\Dvsr_{\min,0}(\F_\pell)$, and write~$z\in D(\Q_\pell)$ for the point reducing onto~$z_0$, as in Lemma~\ref{lem:non-empty_discs}. Let~$]z_0[\subseteq X^\an$ denote the residue disc of~$z_0$ in the model $\X_{\min}$, and let~$]z_0[^\times:=]z_0[\setminus\{z\}$. Choose a $\Q_\pell$-rational basepoint~$b'\in Y(\Q_\pell)_{\Sigma_\pell^1}=]z_0[^\times\cap Y(\Q_\pell)$ The functor from finite \'etale covers of~$Y_{\Qbar_\pell}$ to finite \'etale coverings of~$]z_0[^\times_{\C_\pell}$, given by $Y'\mapsto (Y'_{\C_\pell})^\an|_{]z_0[^\times_{\C_\pell}}$ then induces a morphism
		\[
		\Z_\ellp(1)\cong\pi_1^{\alg,\ellp}(]z_0[_{\C_\pell}^\times,b') \to \pi_1^{\et,\ellp}(Y_{\Qbar_\pell},b') \simeq \pi_1^{\et,\ellp}(Y_{\Qbar_\pell},b)
		\]
		on pro-$\ellp$ \'etale fundamental groups, where the left-hand isomorphism comes from \cite[Theorem~6.3.5]{berkovich:etale_cohomology}, and the right-hand isomorphism is conjugation by a pro-$\ellp$ \'etale path between~$b$ and~$b'$. The first fundamental group here is in the sense of \cite[p.~94]{de_jong:rigid_fundamental_groups}.
		
		The first two of the above maps are equivariant for the action of $G_\pell$, while the final isomorphism exhibits $\pi_1^{\et,\ellp}(Y_{\Qbar_\pell},b')$ as a Serre twist of $\pi_1^{\et,\ellp}(Y_{\Qbar_\pell},b)$, so the above sequence in particular induces a morphism
		\[
		\HH^1(G_\pell,\Q_\pell(1)) \to \HH^1(G_\pell,U^\et)
		\]
		on cohomology schemes. The left-hand cohomology scheme is isomorphic to~$\A^1_{\Q_\ellp}$ by Kummer theory.
		
		If now~$y\in Y(\Q_\pell)_{\Sigma_\pell^1}=]z_0[^\times\cap Y(\Q_\pell)$, then there is a map $\pi_1^{\alg,\ellp}(]z_0[^\times_{\C_\pell};b',y)\to\pi_1^{\et,\ellp}(Y_{\Qbar_\pell};b',y)$ on pro-$\ellp$ \'etale path-torsors, exhibiting $\pi_1^{\et,\ellp}(Y_{\Qbar_\pell};b',y)$ as the pushout of $\pi_1^{\alg,\ellp}(]z_0[^\times_{\C_\pell};b',y)$ along the map $\pi_1^{\alg,\ellp}(]z_0[^\times_{\C_\pell},b')\to\pi_1^{\et,\ellp}(Y_{\Qbar_\pell},b')$. This map on path-torsors is $G_\pell$-equivariant, so we see that there is a commuting square
		\begin{center}
		\begin{tikzcd}
			Y(\Q_\pell)_{\Sigma_\pell^1} \arrow[r,hook]\arrow[d,"\jj_{\pell,z_0}"] & Y(\Q_\pell) \arrow[d,"\jj_{\pell,U^\et}"] \\
			\HH^1(G_\pell,\Q_\ellp(1)) \arrow[r] & \HH^1(G_\pell,U^\et(\Q_\ellp)) \,,
		\end{tikzcd}
		\end{center}
		where~$\jj_{\pell,z_0}$ is the map sending~$y$ to the class of the $\Q_\ellp$-pro-unipotent path-torsor $\pi_1^{\alg,\Q_\ellp}(]z_0[^\times;b',y)$. As mentioned above, $\HH^1(G_\pell,\Q_\ellp(1))\cong\Q_\ellp$ is one-dimensional, and $\jj_{\pell,z_0}$ is given by $y\mapsto v_\pell(t(y)) - v_\pell(t(b'))$ for~$t$ an integral local parameter at~$z_0$. It follows that~$\lSel_{\Sigma_\pell^1}$ is the scheme-theoretic image of~$\HH^1(G_\pell,\Q_\ellp(1))\to\HH^1(G_\pell,U)$.
		
		Now the image of the map $\Q_\ellp(1)\to U$ is contained in~$\W_{-2}U$, by definition of the weight filtration. Hence this map is filtered if we endow $\Q_\pell(1)$ with the vector space filtration supported in degree~$-2$, as in Example~\ref{ex:filtered_vs}. Since the map $\HH^1(G_\pell,\Q_\ellp(1))\to\lSel_{\Sigma_\pell^1}$ is dominant, we have the inequality
		\[
		\HS_{\lSel_{\Sigma_\pell^1}}(t) \preceq \HS_{\HH^1(G_\pell,\Q_\ellp(1))}(t) = (1-t^2)^{-1}
		\]
		by Lemma~\ref{lem:hilbert_series_algebra} and Corollary~\ref{cor:abelian_representability}.
	\end{proof}
\end{lemma}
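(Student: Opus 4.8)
The statement to prove is Lemma~\ref{lem:local_hilbert_series_bound_in_s}: for $\Sigma_\pell^1\in\Cpts_\pell^1$, the Hilbert series of the local image $\lSel_{\Sigma_\pell^1}$ satisfies $\HS_{\lSel_{\Sigma_\pell^1}}(t)\preceq(1-t^2)^{-1}$.

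\medskip

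\textbf{Overall strategy.} The plan is to reduce to the universal case $U=U^\et$ and the non-trivial case $Y(\Q_\pell)_{\Sigma_\pell^1}\neq\emptyset$, then to identify $\lSel_{\Sigma_\pell^1}$ as the scheme-theoretic image of a map out of $\HH^1(G_\pell,\Q_\ellp(1))\cong\A^1_{\Q_\ellp}$, and finally to observe that this map is \emph{filtered} for the weight filtration that places $\Q_\pell(1)$ in filtration-degree $-2$. Lemma~\ref{lem:hilbert_series_algebra}(3) together with the Hilbert-series computation for a one-dimensional vector space concentrated in degree $-2$ (Example~\ref{ex:filtered_vs}, giving $(1-t^2)^{-1}$) then yields the bound.

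\medskip

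\textbf{Key steps in order.} First I would pass up to a finite Galois extension $K_v/\Q_\pell$ over which $(X,D)$ acquires semistable reduction, and invoke Lemma~\ref{lem:non-empty_discs} to produce the unique cusp $z\in D(\Q_\pell)$ reducing onto $z_0:=\Sigma_\pell^1$, together with a $\Q_\pell$-rational basepoint $b'$ in the punctured residue disc $]z_0[^\times:=]z_0[\setminus\{z\}$. Second, I would construct the ``boundary'' morphism of pro-$\ellp$ étale fundamental groups $\pi_1^{\alg,\ellp}(]z_0[^\times_{\C_\pell},b')\to\pi_1^{\et,\ellp}(Y_{\Qbar_\pell},b')\simeq\pi_1^{\et,\ellp}(Y_{\Qbar_\pell},b)$ coming from restricting finite étale covers, where the source is $\Z_\ellp(1)$ by the comparison of \cite{berkovich:etale_cohomology} and \cite{de_jong:rigid_fundamental_groups}, and the last isomorphism is conjugation by a path. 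Third, I would pass to torsors of paths: for $y\in Y(\Q_\pell)_{\Sigma_\pell^1}$ the étale path-torsor $\pi_1^{\et,\ellp}(Y_{\Qbar_\pell};b',y)$ is the pushout of the local path-torsor $\pi_1^{\alg,\ellp}(]z_0[^\times_{\C_\pell};b',y)$ along the boundary map, $G_\pell$-equivariantly. This gives a commuting square relating $\jj_{\pell,z_0}\colon Y(\Q_\pell)_{\Sigma_\pell^1}\to\HH^1(G_\pell,\Q_\ellp(1))$ — which by Kummer theory is $y\mapsto v_\pell(t(y))-v_\pell(t(b'))$ — with $\jj_{\pell,U^\et}$. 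Since $\jj_{\pell,z_0}$ has Zariski-dense (indeed infinite) image in the line $\HH^1(G_\pell,\Q_\ellp(1))\cong\A^1_{\Q_\ellp}$, it follows that $\lSel_{\Sigma_\pell^1}$ is exactly the scheme-theoretic image of the induced morphism $\HH^1(G_\pell,\Q_\ellp(1))\to\HH^1(G_\pell,U^\et)$. Fourth, I would note that the map $\Q_\ellp(1)\to U^\et$ lands in $\W_{-2}U^\et$ by the very definition of the weight filtration, hence is a filtered morphism when $\Q_\ellp(1)$ is given the filtration supported in degree $-2$; therefore the induced map on cohomology schemes is filtered, and $\HH^1(G_\pell,\Q_\ellp(1))\to\lSel_{\Sigma_\pell^1}$ is a dominant filtered morphism. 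Applying Lemma~\ref{lem:hilbert_series_algebra}(3) and the Hilbert-series formula of Example~\ref{ex:filtered_vs} / Corollary~\ref{cor:abelian_representability} for $\HH^1(G_\pell,\Q_\ellp(1))$ gives $\HS_{\lSel_{\Sigma_\pell^1}}(t)\preceq(1-t^2)^{-1}$, as desired.

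\medskip

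\textbf{Main obstacle.} The routine bookkeeping (reduction to $U^\et$, comparison isomorphisms, Kummer description of $\HH^1(G_\pell,\Q_\ellp(1))$) is standard; the genuinely delicate point is the compatibility in the third step — establishing that the étale path-torsor from $b'$ to $y$ is, $G_\pell$-equivariantly, the pushout of the local path-torsor in $]z_0[^\times$ along the boundary map. This requires unwinding the functoriality of the pro-$\ellp$ étale fundamental groupoid under the restriction functor from covers of $Y_{\Qbar_\pell}$ to covers of $]z_0[^\times_{\C_\pell}$, keeping careful track of Galois actions and of the path used to rigidify the comparison with $\pi_1(Y,b)$. Once this square is in place, the Hilbert-series estimate is a formal consequence of the material already developed in \S\ref{s:weights}.
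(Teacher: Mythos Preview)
Your proposal is correct and follows essentially the same approach as the paper's proof: identify $\lSel_{\Sigma_\pell^1}$ as the scheme-theoretic image of $\HH^1(G_\pell,\Q_\ellp(1))\to\HH^1(G_\pell,U^\et)$ via the local Kummer map on the punctured residue disc, observe that $\Q_\ellp(1)\to U^\et$ lands in $\W_{-2}$, and conclude by Lemma~\ref{lem:hilbert_series_algebra}(3). One minor point: the passage to a finite extension with semistable reduction in your first step is unnecessary here (unlike in the proof of Lemma~\ref{lem:local_images_in_s}); the argument works directly over $\Q_\pell$ since the punctured residue disc $]z_0[^\times$ and the boundary map to $\pi_1^\et(Y)$ are already available there.
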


\subsection{Effective refined Chabauty--Kim}\label{ss:proof_general}

The partitions~\eqref{eq:local_decomposition_integral} and~\eqref{eq:local_decomposition_rational_naive} of the local points of~$Y$ give rise to a corresponding partition of the $S$-integral points. We write $\Cpts:=\prod_{\pell\neq\ellp}\Cpts_\pell$, which is a finite set since $\#\Cpts_\pell=1$ whenever $\pell\notin S\cup\{\ellp\}$ is a prime of good reduction for~$(\X,\Dvsr)$. For an element $\Sigma=(\Sigma_\pell)_{\pell\neq\ellp}\in\Cpts$, we write $\Y(\Z_S)_\Sigma\subseteq\Y(\Z_S)$ for the set of $S$-integral points whose mod-$\pell$ reduction lies in $\Sigma_\pell$ for all~$\pell\neq\ellp$, and say that $\Y(\Z_S)_\Sigma$ is the set of points of \emph{reduction type}~$\Sigma$. As a consequence of the local decompositions~\eqref{eq:local_decomposition_integral} and~\eqref{eq:local_decomposition_rational_naive}, these sets constitute a finite partition
\begin{equation}\label{eq:global_decomposition}
\Y(\Z_S) = \coprod_{\Sigma\in\Cpts}\Y(\Z_S)_\Sigma
\end{equation}
of the $S$-integral points.

For any~$\Sigma=(\Sigma_\pell)_{\pell\neq\ellp}\in\Cpts$, the closed subschemes $\lSel_{\Sigma_\pell}\subseteq\HH^1(G_\pell,U)$ constructed above form a Selmer structure in the sense of \S\ref{ss:global_selmer}, and we denote by $\Sel_{\Sigma,U}$ the corresponding Selmer scheme. By construction, $\Sel_{\Sigma,U}$ contains the image of $\Y(\Z_S)_\Sigma$ under the global non-abelian Kummer map~$\jj_U$, and so the Chabauty--Kim diagram~\eqref{diag:c-k} restricts to a commuting diagram
\begin{equation}\label{diag:c-k_refined}
	\begin{tikzcd}
		\Y(\Z_S)_\Sigma \arrow[r,hook]\arrow[d,"\jj_U"] & \Y(\Z_\ellp) \arrow[rd,"\jj_{\dR,U}"]\arrow[d,"\jj_{\ellp,U}"] & \\
		\Sel_{\Sigma,U}(\Q_\ellp) \arrow[r,"\loc_\ellp"] & \HH^1_f(G_\ellp,U)(\Q_\ellp) \arrow[r,"\log_\BK","\sim"'] & \Fil^0\backslash\D_\dR(U)(\Q_\ellp) \,,
	\end{tikzcd}
\end{equation}
where again the objects appearing on the bottom row are the $\Q_\ellp$-points of filtered affine $\Q_\ellp$-schemes, and the maps $\loc_\ellp$ and $\log_\BK$ are a morphism and isomorphism thereof. Following through the same argument as in Theorem~\ref{thm:main_naive} gives the following more refined effective Chabauty--Kim theorem. In the case that~$Y=X$ is projective, $\X$ is the minimal regular model of~$X$, and $S=\emptyset$, this specialises to Theorems~\ref{thm:main}, \ref{thm:main_bound} and~\ref{thm:main_det} of the introduction.

\begin{theorem}\label{thm:main_refined}
	Let $s:=\#S$, and let $(c_i^\glob)_{i\geq0}$ and $(c_i^\loc)_{i\geq0}$ be the coefficients of the power series
	\begin{align*}
		\HS_\glob(t) &:= (1-t^2)^{-s}\cdot\prod_{n\geq1}^\infty(1-t^n)^{-\dim\HH^1_f(G_\Q,V_n)} \\
		\HS_\loc(t) &:= \prod_{n\geq1}^\infty(1-t^n)^{-\dim\HH^1_f(G_p,V_n)} \,.
	\end{align*}
	Suppose that~$m$ is a positive integer such that the inequality
	\begin{equation}\label{eq:c-k_ineq_series_refined}
		\sum_{i=0}^mc_i^\glob < \sum_{i=0}^mc_i^\loc
	\end{equation}
	holds. Then:
	\begin{enumerate}[label=\Alph*), ref=(\Alph*), font=\normalfont]
		\item\label{thmpart:main_refined} For all reduction types~$\Sigma\in\Cpts$, the set $\Y(\Z_S)_\Sigma$ is contained in the vanishing locus of a Coleman algebraic function of weight at most~$m$ which is associated to~$U$.
		\item\label{thmpart:main_refined_bound} We have
		\[
		\#\Y(\Z_S)\leq\kappa_p\cdot\prod_{\ell\in S}(n_\ell+r)\cdot\prod_{\ell\notin S}n_\ell\cdot\#\Y(\F_p)\cdot(4g+2r-2)^m\cdot\prod_{i=1}^{m-1}(c_i^\loc+1) \,,
		\]
		where~$n_\pell$ denotes the number of irreducible components of the mod-$\pell$ special fibre of~$\X$, $\kappa_\ellp=1+\frac{\ellp-1}{(\ellp-2)\log(\ellp)}$ if $\ellp$ is odd, and $\kappa_\ellp=2+\frac2{\log(2)}$ if~$\ellp=2$.
		\item\label{thmpart:main_refined_det} If $C_m^\glob:=\sum_{i=0}^mc_i^\glob$ and $C_m^\loc:=\sum_{i=0}^mc_i^\loc$, and if $f_1,\dots,f_{C_m^\loc}$ is a basis of the space of Coleman algebraic functions of weight at most~$m$ associated to~$U$, then for every $C_m^\glob+1$-tuple of points $x_0,x_1,\dots,x_{C_m^\glob}\in\Y(\Z_S)$ of the same reduction type, all $(C_m^\glob+1)\times(C_m^\glob+1)$ minors of the matrix~$M$ with entries $M_{ij}=f_i(x_j)$ vanish.
	\end{enumerate}
	\begin{proof}
		The proofs of~\ref{thmpart:main_refined} and~\ref{thmpart:main_refined_det} follow exactly as the corresponding parts of Theorem~\ref{thm:main_naive} once we have the bound $\HS_{\Sel_{\Sigma,U}}(t)\preceq\HS_\glob(t)$ on the Hilbert series of the refined Selmer scheme, which follows by combining Lemma~\ref{lem:hilbert_series_bound_global} with Lemmas~\ref{lem:local_hilbert_series_bound_in_s} and~\ref{lem:local_images_outside_s}. Using part~\ref{thmpart:main_refined}, Theorem~\ref{thm:weight_bound} gives the bound
		\[
		\#\Y(\Z_S)_\Sigma\leq\kappa_p\cdot\#\Y(\F_p)\cdot(4g+2r-2)^m\cdot\prod_{i=1}^{m-1}(c_i^\loc+1)
		\]
		on the size of each $\Y(\Z_S)_\Sigma$. Part~\ref{thmpart:main_refined_bound} follows by summing over all~$\Sigma$.
	\end{proof}
\end{theorem}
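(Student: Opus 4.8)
The plan is to run the argument of Theorem~\ref{thm:main_naive} essentially verbatim, with the refined Selmer scheme $\Sel_{\Sigma,U}$ and the refined Chabauty--Kim square~\eqref{diag:c-k_refined} replacing $\HH^1_{f,T_0}(G_\Q,U)$ and~\eqref{diag:c-k}. The only genuinely new input needed is the Hilbert series bound $\HS_{\Sel_{\Sigma,U}}(t)\preceq\HS_\glob(t)$ for each reduction type $\Sigma=(\Sigma_\pell)_{\pell\neq\ellp}\in\Cpts$, so I would establish this first. Lemma~\ref{lem:hilbert_series_bound_global} bounds $\HS_{\Sel_{\Sigma,U}}(t)$ above by $\prod_{\pell\neq\ellp}\HS_{\lSel_{\Sigma_\pell}}(t)$ times $\prod_{n>0}(1-t^n)^{-\dim\HH^1_f(G_\Q,V_n)}$, and the local factors are controlled by the preceding subsection: $\HS_{\lSel_{\Sigma_\pell}}(t)\preceq 1$ for $\pell\notin S$ (Lemma~\ref{lem:local_images_outside_s}) and for $\pell\in S$ with $\Sigma_\pell\in\Cpts_\pell^0$ (Lemma~\ref{lem:local_images_in_s}\eqref{lempart:local_images_in_s_integral}), while $\HS_{\lSel_{\Sigma_\pell}}(t)\preceq(1-t^2)^{-1}$ for $\pell\in S$ with $\Sigma_\pell\in\Cpts_\pell^1$ (Lemma~\ref{lem:local_hilbert_series_bound_in_s}). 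As at most $s=\#S$ of the last kind occur, and $\preceq$ is multiplicative, this gives exactly $\HS_{\Sel_{\Sigma,U}}(t)\preceq(1-t^2)^{-s}\prod_{n>0}(1-t^n)^{-\dim\HH^1_f(G_\Q,V_n)}=\HS_\glob(t)$; in particular $\dim\W_m\O(\Sel_{\Sigma,U})\leq C_m^\glob$, whereas $\dim\W_m\O(\HH^1_f(G_\ellp,U))=C_m^\loc$ by Corollary~\ref{cor:hilbert_series_of_local_selmer}.

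For part~\ref{thmpart:main_refined} I would then argue as for the corresponding part of Theorem~\ref{thm:main_naive}: hypothesis~\eqref{eq:c-k_ineq_series_refined} says $C_m^\glob<C_m^\loc$, so the filtered map $\loc_\ellp^*\colon\W_m\O(\HH^1_f(G_\ellp,U))\to\W_m\O(\Sel_{\Sigma,U})$ has a non-zero kernel for dimension reasons. Picking a non-zero $\alpha$ in this kernel and transporting it along the filtered isomorphism $\log_\BK$ (Proposition~\ref{prop:bk_log_is_filtered_iso}) produces $\alpha'\in\W_m\O(\Fil^0\backslash\D_\dR(U))$, and $f:=\alpha'\circ\jj_{\dR,U}$ is then a Coleman algebraic function of weight $\leq m$ associated to~$U$ by Theorem~\ref{thm:log-coleman_and_de_rham_kummer} together with Proposition~\ref{prop:log-coleman_for_quotients}. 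It is non-zero since $\jj_{\dR,U}$ has Zariski-dense image \cite[Theorem~1]{minhyong:selmer}, and it vanishes on $\Y(\Z_S)_\Sigma$ since for such points $\jj_{\ellp,U}=\loc_\ellp\circ\jj_U$ factors through $\Sel_{\Sigma,U}$, on which $\alpha$ pulls back to zero, using commutativity of~\eqref{diag:c-k_refined}.

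Part~\ref{thmpart:main_refined_bound} I would deduce by applying Theorem~\ref{thm:weight_bound} with $K_v=\Q_\ellp$ to the function $f$ of part~\ref{thmpart:main_refined} for each $\Sigma$ — the constants $c_i$ there equal $c_i^\loc$ because $\ACollog(Y)_U$, $\Fil^0\backslash\D_\dR(U)$ and $\HH^1_f(G_\ellp,U)$ have the same Hilbert series (Propositions~\ref{prop:bk_log_is_filtered_iso} and~\ref{prop:log-coleman_for_quotients}) — and then summing the bound on $\#\Y(\Z_S)_\Sigma$ over the finite partition~\eqref{eq:global_decomposition}; the number of summands is $\#\Cpts=\prod_{\pell\neq\ellp}\#\Cpts_\pell$, equal to $\prod_{\pell\notin S}n_\pell$ times $\prod_{\pell\in S}(\#\Cpts_\pell^0+\#\Cpts_\pell^1)$, and a comparison of the chosen model $\X$ with the minimal regular normal crossings model at each $\pell\in S$ gives $\#\Cpts_\pell^0+\#\Cpts_\pell^1\leq n_\pell+r$. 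Part~\ref{thmpart:main_refined_det} is then formal: under the isomorphism $\W_m\ACollog(Y)_U\cong\W_m\O(\HH^1_f(G_\ellp,U))$ the basis $(f_i)$ corresponds to a basis $(\alpha_i)$ satisfying $f_i(y)=(\loc_\ellp^*\alpha_i)(\jj_U(y))$ on $\Y(\Z_S)_\Sigma$ as in part~\ref{thmpart:main_refined}; since $\ker(\loc_\ellp^*)$ has dimension $\geq C_m^\loc-C_m^\glob$, the $\loc_\ellp^*\alpha_i$ span a subspace of $\O(\Sel_{\Sigma,U})$ of dimension $\leq C_m^\glob$, so the vectors $(f_i(x_j))_i$ attached to points $x_0,\dots,x_{C_m^\glob}$ of a common reduction type all lie in a $\leq C_m^\glob$-dimensional subspace of $\Q_\ellp^{C_m^\loc}$, whence all $(C_m^\glob+1)\times(C_m^\glob+1)$ minors of $M$ vanish.

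The substantive point is the Hilbert series estimate — in particular, isolating the factor $(1-t^2)^{-s}$ by splitting each $\Cpts_\pell$ ($\pell\in S$) into its ``integral'' part $\Cpts_\pell^0$ (contributing nothing) and its ``cuspidal'' part $\Cpts_\pell^1$ (contributing at most one $(1-t^2)^{-1}$), and checking that primes outside $S$ contribute nothing at all. The analogous combinatorial bookkeeping $\#\Cpts_\pell\leq n_\pell+r$ in part~\ref{thmpart:main_refined_bound}, which rests on a model comparison, is the other place where a little care is required; past these points, the proof is a transcription of that of Theorem~\ref{thm:main_naive}.
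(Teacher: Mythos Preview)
Your proposal is correct and follows essentially the same approach as the paper's proof: establish the Hilbert series bound $\HS_{\Sel_{\Sigma,U}}(t)\preceq\HS_\glob(t)$ by combining Lemma~\ref{lem:hilbert_series_bound_global} with the local estimates (Lemmas~\ref{lem:local_images_outside_s}, \ref{lem:local_images_in_s}\eqref{lempart:local_images_in_s_integral}, and~\ref{lem:local_hilbert_series_bound_in_s}), then transcribe the argument of Theorem~\ref{thm:main_naive} and sum over reduction types using Theorem~\ref{thm:weight_bound}. Your treatment is in fact slightly more explicit than the paper's---you spell out the case split $\Cpts_\pell^0$ versus $\Cpts_\pell^1$ for $\pell\in S$ and the bookkeeping behind $\#\Cpts_\pell\leq n_\pell+r$, which the paper leaves implicit.
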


\begin{remark}\label{rmk:slightly_better_bound}
	The constants $n_\pell$ and $n_\pell+r$ appearing in Theorem~\ref{thm:main_refined}\ref{thmpart:main_refined_bound} are not always optimal. It can happen that some of the local Selmer schemes $\lSel_{\Sigma_\pell}$ associated to different reduction types~$\Sigma_\pell$ coincide (or one is contained in the other in the case $\pell\in S$). Then if~$\Sigma,\Sigma'\in\Cpts$ are such that $\lSel_{\Sigma_\pell}\subseteq\lSel_{\Sigma'_\pell}$ for all~$\pell\neq\ellp$, we have that $\Sel_{\Sigma,U}\subseteq\Sel_{\Sigma',U}$. It follows that, in the setup of Theorem~\ref{thm:main_refined}, there is a single non-zero Coleman algebraic function~$f$ of weight~$\leq m$ which vanishes on $\Y(\Z_S)_\Sigma\cup\Y(\Z_S)_{\Sigma'}$. By bounding the size of such unions using Theorem~\ref{thm:weight_bound}, one can improve the bound in Theorem~\ref{thm:main_refined}\ref{thmpart:main_refined_bound} by replacing the constants $n_\pell$ by the size of the finite set $\jj_{\pell,U}(\Y(\Z_\pell))$ for $\pell\notin S\cup\{\ellp\}$, and the constants $n_\pell+r$ by the number of irreducible components of the Zariski-closure of $\jj_{\pell,U}(Y(\Q_\pell))$ for $\pell\in S$.
\end{remark}

\begin{remark}
	The inequality~\eqref{eq:c-k_ineq_series_refined} depends on $s:=\#S$ and not on the set~$S$ itself. Moreover, if the regular model~$\X$ is defined over~$\Z$, then it is easy to see that the quantity $\prod_{\pell\in S}(n_\pell+r)\cdot\prod_{\pell\notin S}n_\pell$ appearing in part~\ref{thmpart:main_refined_bound} of Theorem~\ref{thm:main_refined} is bounded in terms of~$s$ only. Hence when~$\X$ is defined over~$\Z$ and when inequality~\eqref{eq:c-k_ineq_series_refined} holds for some value of~$s$, then Theorem~\ref{thm:main_refined}\ref{thmpart:main_refined_bound} gives a uniform upper bound on $\#\Y(\Z_S)$ for all finite sets~$S$ of primes of size~$s$, not containing~$\ellp$. In other words, the bounds obtained by Theorem~\ref{thm:main_refined}\ref{thmpart:main_refined_bound} are automatically $S$-uniform, at least for sets~$S$ not containing~$\ellp$.
\end{remark}

\begin{remark}
	In general, Theorem~\ref{thm:main_refined} affords better control of $S$-integral points than Theorem~\ref{thm:main_naive}, in that the weight~$m$ for which Theorem~\ref{thm:main_refined} holds is in general less than the corresponding weight in Theorem~\ref{thm:main_naive}, and the dominant term in the bounds is usually the part depending on~$m$. In fact, it can happen that the inequality in Theorem~\ref{thm:main_refined} holds for some~$m$, while the inequality in Theorem~\ref{thm:main_naive} holds for no~$m$ at all, so Theorem~\ref{thm:main_naive} tells us nothing about $S$-integral points.
\end{remark}

\subsubsection{Example: quotients of~$U^\et$ by the weight filtration}\label{ss:depth_quotients}

In the particular case that the quotient~$U$ is chosen finite-dimensional, then the power series $\HS_\glob(t)$ and $\HS_\loc(t)$ appearing in Theorem~\ref{thm:main_refined} are rational functions in~$t$ without poles inside the unit disc. The order of their poles at~$1$ are $\#S+\sum_{n>0}\dim_{\Q_\ellp}\HH^1_f(G_\Q,V_n)$ and $\sum_{n>0}\dim_{\Q_\ellp}\HH^1_f(G_\ellp,V_n)$, respectively, where~$(V_n)_{n>0}$ denote the graded pieces of the weight filtration on~$U$. In particular, when the inequality
\begin{equation}\label{eq:c-k_ineq_refined}
\#S+\sum_{n>0}\dim_{\Q_\ellp}\HH^1_f(G_\Q,V_n) < \sum_{n>0}\dim_{\Q_\ellp}\HH^1_f(G_\ellp,V_n)
\end{equation}
holds, then inequality~\eqref{eq:c-k_ineq_series_refined} must hold for~$m\gg0$, and so we may apply Theorem~\ref{thm:main_refined} to explicitly bound the size of $\Y(\Z_S)$. In other words, Theorem~\ref{thm:main_refined} applies in every case that refined Chabauty--Kim applies.

In fact, the value of~$m$ required can be controlled rather explicitly in terms of the weights of~$U$ and the right-hand side of~\eqref{eq:c-k_ineq_refined}. Suppose that the weight filtration on~$U$ is supported in degrees~$\geq-n$. If we write $d_\glob=\#S+\sum_{i>0}\dim\HH^1_f(G_\Q,V_n)$ and $d_\loc=\sum_{i>0}\dim\HH^1_f(G_\ellp,V_n)$, then the Hilbert series $\HS_\glob(t)$ and $\HS_\loc(t)$ from Theorem~\ref{thm:main_refined} are bounded by
\[
\HS_\glob(t) \preceq (1-t)^{-d_\glob} \hspace{0.4cm}\text{and}\hspace{0.4cm} \HS_\loc(t) \succeq (1-t^n)^{d_\loc} \,,
\]
respectively. This implies via the binomial theorem that
\[
\sum_{i=0}^mc_i^\glob \leq {{m+d_\glob}\choose{d_\glob}} \hspace{0.4cm}\text{and}\hspace{0.4cm} \sum_{i=0}^mc_i^\loc \geq {{\lfloor m/n\rfloor+d_\loc}\choose{d_\loc}}
\]
for all~$m$. When~\eqref{eq:c-k_ineq_refined} holds (i.e.\ $d_\glob<d_\loc$), this gives the upper bound
\[
\sum_{i=0}^mc_i^\glob \leq n^{d_\glob}{{m/n+d_\glob}\choose{d_\glob}} < \frac{n^{d_\glob}\cdot d_\loc!}{(m/n)^{d_\loc-d_\glob}\cdot d_\glob!}{{m/n+d_\loc}\choose{d_\loc}}
\]
and hence inequality~\eqref{eq:c-k_ineq_series_refined} holds for~$m=d_\loc\cdot n^{d_\loc}$. So Theorem~\ref{thm:main_refined}\ref{thmpart:main_refined_bound} gives the bound
\begin{equation}\label{eq:coarse_bound}\tag{$\ast$}
\#\Y(\Z_S)\leq\kappa_\ellp\cdot\prod_\pell n_\pell\cdot\#\Y(\F_\ellp)\cdot(4g+2r-2)^{d_\loc n^{d_\loc}}\cdot\prod_{i=1}^{d_\loc n^{d_\loc}-1}(1+c_i^\loc) \,.
\end{equation}

In the particular case that~$Y=X$ is projective and $S=\emptyset$, this gives the coarse upper bound on $\#X(\Q)$ claimed in the Corollary to Theorem~\ref{thm:main_bound}. To extract the precise form of the bound there, one observes that the coefficients~$c_i^\loc$ are bounded above by the coefficients~$c_i$ of the power series~$\frac{1-gt}{1-2gt+t^2}$ by Corollary~\ref{cor:coleman_hilbert_series}, and these latter coefficients satisfy $c_0=1$, $c_1=g$ and $c_i=2gc_{i-1}-c_{i-2}$ for~$i\geq2$. An easy induction gives the bound $c_i^\loc\leq\frac12(2g)^i$ for~$i\geq1$, hence also the bound $d_\loc\leq\frac g{2g-1}(2g)^n$. Substituting these values into~\eqref{eq:coarse_bound} gives the bound claimed in the Corollary.

\subsection{The Balakrishnan--Dogra trick}\label{ss:jen-netan}

Calculating the dimensions of the global Bloch--Kato Selmer groups in order to compute the power series $\HS_\glob(t)$ from Theorem~\ref{thm:main_refined} can be quite subtle, and in general depends on deep conjectures in number theory. One already sees this subtlety on the abelian level: the quotient $U^\et/\W_{-2}=\HH^1_\et(X_{\Qbar},\Q_\ellp)^\dual$ is the $\Q_\ellp$-linear Tate module $V_\ellp J$ of the Jacobian~$J$ of~$X$, so the dimension of $\HH^1_f(G_\Q,U^\et/\W_{-2})$ is the $\ellp^\infty$-Selmer rank of~$J$. According to the Tate--Shafarevich Conjecture, this should be equal to the rank of~$J(\Q)$, but this is still far from known in general.

It was observed in \cite[Remark~2.3]{jen-netan:quadratic_i} that one can sidestep the need to assume the Tate--Shafarevich Conjecture by a suitable modification of the definition of the Selmer scheme and Chabauty--Kim locus. We outline here -- in sketch form -- how to adapt our above theory to the variant of the Selmer scheme from \cite[Definition~2.2]{jen-netan:quadratic_i}.

Let~$U$ be a $G_\Q$-equivariant quotient of the $\Q_\ellp$-pro-unipotent \'etale fundamental group of the hyperbolic curve~$Y/\Q$. The quotient~$V_1=U/\W_{-2}U$ is a quotient of the abelianised $\Q_\ellp$-pro-unipotent \'etale fundamental group of~$X$, which is the $\Q_\ellp$-linear Tate module $V_\ellp J$ of the Jacobian~$J$ of~$X$. Hence the Kummer map for the abelian variety~$J$ provides a map
\[
J(\Q) \to \HH^1_f(G_\Q,V_\ellp J) \to \HH^1_f(G_\Q,V_1) \,.
\]
We write $\HH^1_{f,\BD}(G_\Q,V_1)$ for the image of this map, which is a $\Q_\ellp$-vector space of dimension $d_1\leq\rk(J(\Q))$, and conflate $\HH^1_{f,\BD}(G_\Q,V_1)$ with its associated vector group as usual.

For a reduction type~$\Sigma\in\Cpts$, the \emph{Balakrishnan--Dogra Selmer scheme} $\Sel_{\Sigma,U}^\BD$ is defined to be the preimage of the closed subscheme $\HH^1_{f,\BD}(G_\Q,V_1)\subseteq\HH^1_f(G_\Q,V_1)$ under the map $\Sel_{\Sigma,U}\to\HH^1_f(G_\Q,V_1)$ induced from the map $U\twoheadrightarrow V_1$. This is a closed subscheme of the Selmer scheme~$\Sel_{\Sigma,U}$ studied in the previous section, and an inductive argument along the lines of the proof of Lemma~\ref{lem:hilbert_series_bound_global} shows that its Hilbert series is bounded by
\[
\HS_{\Sel_{\Sigma,U}^\BD}(t) \preceq \HS_\glob^\BD(t):=(1-t^2)^{-s}\cdot(1-t)^{-d_1}\cdot\prod_{n=2}^\infty(1-t^n)^{-\dim\HH^1_f(G_\Q,V_n)} \,.
\]

The Balakrishnan--Dogra Selmer scheme $\Sel_{\Sigma,U}^\BD$ still contains the image of $\Y(\Z_S)_\Sigma$ under the non-abelian Kummer map, and by using this in place of $\Sel_{\Sigma,U}$, we see that the statement of Theorem~\ref{thm:main_refined} still holds if the power series $\HS_\glob(t)$ is replaced by $\HS_\glob^\BD(t)$. Using this variant of Theorem~\ref{thm:main_refined} makes all the bounds in the examples in \S\ref{ss:examples} unconditional on the Tate--Shafarevich Conjecture.
\section{$S$-uniformity in Siegel's Theorem}\label{s:siegel}

As an application and illustration of our theory, we prove our $S$-uniform Siegel's Theorem~\ref{thm:main_siegel}. Throughout this section, let $\Y=\P^1_\Z\setminus\{0,1,\infty\}$, $\ellp$ be an odd prime, and $S$ a variable non-empty finite set of primes not containing $\ellp$, of size $s=\#S$.

For the quotient~$U$ we take the whole $\Q_\ellp$-pro-unipotent \'etale fundamental group of $\Y_{\overline\Q}=\P^1_{\overline\Q}\setminus\{0,1,\infty\}$ (based at some basepoint $b\in\Y(\Z_{(\ellp)})$). The group~$U$ is the free pro-unipotent group on two generators in filtration-degree $-2$, and hence its graded pieces are
\[
\gr^\W_{-2n}U \simeq \Q_\ellp(n)^{\oplus M(2,n)}
\]
where $M(2,n)=\frac1n\sum_{d\mid n}\mu(n/d)2^i$ is the $n$th Moreau necklace polynomial, evaluated at $2$ \cite[Theorem~6]{reutenauer:friegebras}. Since $\dim_{\Q_\ellp}\HH^1_f(G_\ellp,\Q_\ellp(n))=1$ for all $n>0$ by \cite[Example~3.9]{bloch-kato:tamagawa_numbers}, we find via the cyclotomic identity \cite{metropolis-rota:cyclotomic} that the local Hilbert series $\HS_\loc(t)$ is given by
\[
\HS_\loc(t) = \prod_{n>0}(1-t^{2n})^{-M(2,n)} = \frac1{1-2t^2}\,.
\]
Note that this is the same as in Corollary~\ref{cor:coleman_hilbert_series}, and reflects the fact that the affine ring of~$\HH^1_f(G_\ellp,U)=\D_\dR(U)$ is the shuffle algebra in~$2$ variables. From now on, we will write all our power series in the variable~$\tau:=t^2$, which we may do since all the filtrations we will see are supported in even degrees.

As for the global Hilbert series $\HS_\glob(t)$, Soul\'e's vanishing theorem \cite{soule:vanishing} implies that $\dim\HH^1_f(G_\Q,\Q_\ellp(n))=0$ if $n=1$ or if $n\geq2$ is even, and $\dim\HH^1_f(G_\Q,\Q_\ellp(n))\leq1$ if $n\geq3$ is odd. This gives the coefficientwise inequality
\[
\HS_\glob(t) \leq (1-\z)^{-s}\cdot\prod_{n\geq3\text{ odd}}(1-\z^{2n})^{-M(2,n)}
\]
for the global Hilbert series. Although~$\HS_\glob(t)$ does not have a simple closed form expression similar to~$\HS_\loc(t)$, it does satisfy a functional equation.

\begin{lemma}\label{lem:global_functional_equation}
	The power series
	\[
	F(\z) := \prod_{n>0\text{ odd}}(1-\z^n)^{-M(2,n)}
	\]
	satisfies the functional equation
	\[
	F(\z)^2=\frac{1+2\z}{1-2\z}\cdot F(\z^2)\,.
	\]
	\begin{proof}
		By the cyclotomic identity, we have
		\[
		\frac1{1-2\z} = F(\z)\cdot\prod_{n>0}(1-\z^{2n})^{-M(2,2n)}\,.
		\]
		But we have $M(2,2n)=\frac1{2n}\sum_{d\mid n}\mu(n/d)4^d=\frac12M(4,n)$ if $n$ is even, and have $M(2,2n)=\frac1{2n}\sum_{d\mid n}\mu(n/d)4^d - \frac1{2n}\sum_{d\mid n}\mu(n/d)2^d=\frac12M(4,n)-\frac12M(2,n)$ if $i$ is odd. Substituting this into the above equation yields
		\begin{align*}
			\frac1{1-2\z} &= F(\z)\cdot\prod_{n>0}(1-\z^{2n})^{-\frac12M(4,n)}\cdot\prod_{n>0\text{ odd}}(1-\z^{2n})^{\frac12M(2,n)} \\
			&= F(\z)\cdot\left(\frac1{1-4\z^2}\right)^{1/2}\cdot F(\z^2)^{-1/2}\,.
		\end{align*}
		Squaring both sides and rearranging gives the desired functional equation.
	\end{proof}
\end{lemma}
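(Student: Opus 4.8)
The plan is to deduce the functional equation from the cyclotomic identity
\[
\frac1{1-a\z} = \prod_{n\geq1}(1-\z^n)^{-M(a,n)},
\]
valid as an identity of formal power series (each factor being $1+O(\z^n)$, the infinite products converge $\z$-adically), applied at $a=2$, at $a=4$, and with $\z$ replaced by $\z^2$, together with an arithmetic identity for the necklace polynomials $M(2,2n)$. First I would take the cyclotomic identity at $a=2$ and split the product according to the parity of the index:
\[
\frac1{1-2\z} = \prod_{n>0\text{ odd}}(1-\z^n)^{-M(2,n)}\cdot\prod_{n>0}(1-\z^{2n})^{-M(2,2n)} = F(\z)\cdot\prod_{n>0}(1-\z^{2n})^{-M(2,2n)}.
\]

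The substance of the argument is to rewrite the remaining even-index product in terms of $F(\z^2)$ and an elementary factor. For this I would establish
\[
M(2,2n) = \tfrac12 M(4,n)\ \ (n\text{ even}),\qquad M(2,2n) = \tfrac12 M(4,n) - \tfrac12 M(2,n)\ \ (n\text{ odd}),
\]
directly from the definition $M(a,k)=\tfrac1k\sum_{d\mid k}\mu(k/d)a^d$. For $n$ odd the divisors of $2n$ are the $d\mid n$ and the $2d$ with $d\mid n$, and using $\mu(2m)=-\mu(m)$ for $m$ odd one collapses the sum to $\tfrac1{2n}\sum_{d\mid n}\mu(n/d)(4^d-2^d)$; for $n$ even one writes $n=2^k m$ with $m$ odd, observes that $\mu(2n/d)$ kills every divisor $d$ of $2n$ except the two "top" ones $2^k e$ and $2^{k+1}e$ ($e\mid m$), and matches the surviving terms against the analogous classification of divisors of $n$ entering $M(4,n)$. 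Substituting these identities and regrouping the $\tfrac12 M(4,n)$-terms over all $n>0$ and the $\tfrac12 M(2,n)$-terms over odd $n$, I would recognise the two resulting products via the cyclotomic identity once more: $\prod_{n>0}(1-\z^{2n})^{-\frac12 M(4,n)}=(1-4\z^2)^{-1/2}$ (cyclotomic identity at $a=4$ in the variable $\z^2$) and $\prod_{n>0\text{ odd}}(1-\z^{2n})^{\frac12 M(2,n)}=F(\z^2)^{-1/2}$ (the definition of $F$ at $\z^2$; the half-powers are legitimate since $1-4\z^2$ and $F$ have constant term $1$).

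This gives $\frac1{1-2\z} = F(\z)\cdot(1-4\z^2)^{-1/2}\cdot F(\z^2)^{-1/2}$; squaring, rearranging, and factoring $1-4\z^2 = (1-2\z)(1+2\z)$ yields the claimed functional equation $F(\z)^2=\frac{1+2\z}{1-2\z}F(\z^2)$. The only step that is not pure power-series bookkeeping is the necklace-polynomial identity for $M(2,2n)$, and I expect the main obstacle to be organising that divisor computation cleanly across the two parity cases — in particular, keeping straight which divisors of $2n$ (resp.\ of $n$) survive the Möbius function in $M(2,2n)$ (resp.\ in $M(4,n)$) once the non-squarefree cofactors are discarded.
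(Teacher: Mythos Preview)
Your proposal is correct and follows essentially the same route as the paper: split the cyclotomic identity at $a=2$ by parity of the index, rewrite $M(2,2n)$ in terms of $M(4,n)$ and $M(2,n)$ according to the parity of $n$, recognise the resulting products via the cyclotomic identity at $a=4$ (in $\z^2$) and the definition of $F$ at $\z^2$, then square and rearrange. You actually supply more detail on the divisor computation for the necklace-polynomial identities than the paper does.
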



\begin{remark}\label{rmk:computing_global_power_series}
	Lemma~\ref{lem:global_functional_equation} implies that the coefficients~$a_i$ of~$F(\z)$ satisfy the equation
	\[
	\sum_{i=0}^ma_ia_{m-i} = \sum_{i=0}^{\lfloor m/2\rfloor}b_{m-2i}a_i
	\]
	for every~$m\geq0$, where $b_0=1$ and $b_i=2^{i+1}$ for~$i>0$. Since~$a_0=1$, this in particular expresses $a_m$ in terms of the coefficients~$a_i$ for $i<m$. This gives an efficient recursive algorithm to compute the coefficients of~$F(\z)$ without first computing the values of~$M(2,n)$. Using this method, it is feasible to compute the first few thousand coefficients of~$F(\z)$ on a desktop computer.
\end{remark}

Now in order to apply Theorem~\ref{thm:main_refined}, we need to find some~$m$ for which the $t^m$ coefficient of~$\frac1{1-t}\HS_\glob(t)$ is strictly less than the corresponding coefficient of~$\frac1{1-t}\HS_\loc(t)$. This is provided by the following calculation.

\begin{lemma}\label{lem:local_hilbert_series_overtakes_global}
There is some $m\leq 4^s$ such that the $t^{2m}$ coefficient of $\frac1{1-t}\HS_\glob(t)$ is strictly less than the $t^{2m}$ coefficient of $\frac1{1-t}\HS_\loc(t)$.
\begin{proof}
This is easily verified in the case $S=\emptyset$; we deal with the case $S\neq\emptyset$. We will show that the $4^s$th coefficient of $\left(\frac1{1-\z}\HS_\glob(t)\right)^2$ is strictly less than the $4^s$th coefficient of $\left(\frac1{1-\z}\HS_\loc(t)\right)^2$; this yields the desired result by expanding both squares.

Now on the one hand, $\frac1{1-\z}\HS_\glob(t)\leq\frac1{(1-\z)^{s-1}}\cdot F(\z)$, where $F(\z)$ is as in Lemma~\ref{lem:global_functional_equation}. We thus have coefficientwise inequalities
\begin{align*}
\left(\frac1{1-\z}\HS_\glob(t)\right)^2 &\leq \frac1{(1-\z)^{2s-2}}\cdot\frac{1+2\z}{1-2\z}\cdot F(\z^2) \\
 &\leq \frac1{(1-\z)^{2s-2}}\cdot\frac{1+2\z}{1-2\z}\cdot\frac1{1-2\z^2} \\
 &\leq 2^{2s}\cdot\frac1{1-2\z}\,,
\end{align*}
using, in order, Lemma~\ref{lem:global_functional_equation}, the identity $\frac1{1-2\z}=F(\z)\cdot\prod_{n>0}(1-\z^{2n})^{-M(2,2n)}\geq F(\z)$, and the inequalities $\frac{1+2\z}{1-2\z}\leq\frac2{1-2\z}$, $\frac1{1-\z}\cdot\frac1{1-2\z}\leq\frac2{1-2\z}$ and $\frac1{1-2\z^2}\cdot\frac1{1-2\z}\leq\frac2{1-2\z}$ (geometric series). Thus, we see that the $4^s$th coefficient of $\left(\frac1{1-\z}\HS_\glob(t)\right)^2$ is at most $4^s\cdot2^{4^s}$.

On the other hand, $\frac1{1-\z}\HS_\loc(t)=\frac1{1-\z}\cdot\frac1{1-2\z}\geq\frac1{1-2\z}$, and so
\[
\left(\frac1{1-\z}\HS_\loc(\z)\right)^2 \geq \frac1{(1-2\z)^2} = \sum_{i\geq0}(i+1)2^i\z^i\,.
\]
Thus the $4^s$th coefficient of $\left(\frac1{1-\z}\HS_\loc(t)\right)^2$ is at least $(4^s+1)\cdot2^{4^s}$, which is strictly greater than the corresponding coefficient of $\left(\frac1{1-\z}\HS_\glob(t)\right)^2$, as desired.
\end{proof}
\end{lemma}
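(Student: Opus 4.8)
The plan is to reduce the statement, via elementary manipulations of formal power series, to an inequality between explicit coefficients that can be bounded using the functional equation of Lemma~\ref{lem:global_functional_equation}. First I would dispose of the case $S=\emptyset$ by direct inspection (here $\HS_\glob(t)$ begins $1+0\cdot\z+0\cdot\z^2+\dots$ while $\HS_\loc(t)=\frac1{1-2\z}$ begins $1+2\z+\dots$, so $m=1$ already works), and henceforth assume $S\neq\emptyset$. The key observation is that, because $\preceq$ is the coefficientwise ordering after multiplying by $\frac1{1-t}$, it suffices to exhibit one index where $\frac1{1-t}\HS_\glob(t)$ strictly undershoots $\frac1{1-t}\HS_\loc(t)$; and since all the relevant filtrations are supported in even degrees, I would rewrite everything in the variable $\z=t^2$ and look for an index $m\le 4^s$.

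The main step is the estimate on the global side. Starting from the coefficientwise bound $\HS_\glob(t)\le(1-\z)^{-s}\cdot\prod_{n\ge3\text{ odd}}(1-\z^n)^{-M(2,n)}$, and noting $\prod_{n\ge3\text{ odd}}(1-\z^n)^{-M(2,n)}\le F(\z)$ (the $n=1$ factor of $F$ only contributes $(1-\z)^{-M(2,1)}=(1-\z)^{-1}$, which I absorb), I would square $\frac1{1-\z}\HS_\glob(t)$ and apply Lemma~\ref{lem:global_functional_equation}: the $F(\z)^2$ becomes $\frac{1+2\z}{1-2\z}F(\z^2)$. I then bound $F(\z^2)\le\frac1{1-2\z^2}$ (again via the cyclotomic identity $\frac1{1-2\z}=F(\z)\prod_{n>0}(1-\z^{2n})^{-M(2,2n)}\ge F(\z)$, applied with $\z$ replaced by $\z^2$) and collapse all the remaining geometric-series factors into a single $\frac1{1-2\z}$ using inequalities like $\frac1{1-\z}\cdot\frac1{1-2\z}\le\frac2{1-2\z}$. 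Keeping careful track of the powers of $2$ accumulated this way, I would obtain
\[
\left(\tfrac1{1-\z}\HS_\glob(t)\right)^2 \le 2^{2s}\cdot\frac1{1-2\z},
\]
so the coefficient of $\z^{4^s}$ on the left is at most $4^s\cdot 2^{4^s}$.

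For the local side the bound is immediate: $\frac1{1-\z}\HS_\loc(t)=\frac1{(1-\z)(1-2\z)}\ge\frac1{1-2\z}$, so $\left(\frac1{1-\z}\HS_\loc(t)\right)^2\ge\frac1{(1-2\z)^2}=\sum_{i\ge0}(i+1)2^i\z^i$, whose $\z^{4^s}$ coefficient is $(4^s+1)2^{4^s}$. Since $(4^s+1)2^{4^s}>4^s\cdot2^{4^s}$, the squared local series strictly exceeds the squared global series in the $\z^{4^s}$ coefficient. Finally, if $\frac1{1-t}\HS_\glob(t)$ and $\frac1{1-t}\HS_\loc(t)$ agreed in every coefficient of $\z^m$ for $m\le 4^s$, their squares would agree in the $\z^{4^s}$ coefficient; and since $\frac1{1-t}\HS_\glob(t)\preceq\frac1{1-t}\HS_\loc(t)$ coefficientwise is not assumed, I instead argue directly: the coefficient of $\z^{4^s}$ in a square is determined by the coefficients of $\z^0,\dots,\z^{4^s}$ in the original, all of which are nonnegative, so strict inequality of the $\z^{4^s}$-coefficients of the squares forces strict inequality of the $\z^{m}$-coefficient of the originals for some $m\le 4^s$. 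That $m$ is the desired index. The main obstacle is purely bookkeeping: making sure the chain of geometric-series absorptions produces exactly the clean constant $2^{2s}$ rather than something larger that would break the final numerical comparison; one has to count the absorptions (there are exactly $2s$ of them: $2s-2$ from the $(1-\z)^{-(2s-2)}$ factor, one from $\frac{1+2\z}{1-2\z}\le\frac2{1-2\z}$, and one from $\frac1{1-2\z^2}$) and check none is wasteful.
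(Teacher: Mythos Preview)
Your proposal is correct and follows essentially the same route as the paper's proof: square, apply the functional equation to replace $F(\z)^2$ by $\frac{1+2\z}{1-2\z}F(\z^2)$, bound $F(\z^2)\le\frac1{1-2\z^2}$, collapse geometric factors into $2^{2s}/(1-2\z)$, and compare with $(1-2\z)^{-2}$ at $\z^{4^s}$. One small slip: $M(2,1)=2$, not $1$, so the $n=1$ factor of $F$ is $(1-\z)^{-2}$; this is actually what makes your later count of $2s-2$ absorptions from $(1-\z)^{-(2s-2)}$ come out right, since $\frac1{1-\z}\HS_\glob(t)\le(1-\z)^{-(s-1)}F(\z)$ rather than $(1-\z)^{-s}F(\z)$.
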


\begin{remark}
The upper bound on $m$ in Lemma~\ref{lem:local_hilbert_series_overtakes_global} appears to be relatively close to optimal. For $s=0,1,2,\dots,9$, the minimal values of $m$ for which $\z^m$ coefficient of $\frac1{(1-\z)^{s-1}}\cdot F(\z)$ is strictly less than the $\z^m$ coefficient of $\frac1{1-\z}\cdot\frac1{1-2\z}$ are $m=1,1,2,9,24,81,308,1212,4827,19284$. These first few values grow roughly exponentially, with exponent~$4$, suggesting that Lemma~\ref{lem:local_hilbert_series_overtakes_global} captures something close to the correct growth rate. These values of~$m$ were found with the assistance of Steven Charlton, by computing the first $20000$ coefficients of $F(\z)$ using the recursive algorithm described in Remark~\ref{rmk:computing_global_power_series}.
\end{remark}

We can then plug the bound on~$m$ from Lemma~\ref{lem:local_hilbert_series_overtakes_global} into Theorem~\ref{thm:main_refined} to obtain a bound on the size of the set~$\Y(\Z_S)$. The local indices are $n_\pell=1$ for all~$\pell$, since~$\Y$ has good reduction everywhere. Hence Theorem~\ref{thm:main_refined}\ref{thmpart:main_refined_bound} gives the bound
\[
\#\Y(\Z_S) \leq \kappa_\ellp\cdot4^s\cdot(\ellp-2)\cdot4^{2^{2s+1}}\cdot\prod_{i=1}^{4^s-1}(2^i+1) \,.
\]

In this particular case, we can actually do considerably better than this bound. For any prime $\pell\in S$, the Zariski-closure of $\jj_{\pell,U^\et}(Y(\Q_\pell))$ has $\leq3$ irreducible components, corresponding to the three cusps $0,1,\infty$, which all meet at the point corresponding to the unique irreducible component of the special fibre of~$\Y$ by Lemma~\ref{lem:local_images_in_s}\ref{lempart:local_images_in_s_compatibility}. Hence, as in Remark~\ref{rmk:slightly_better_bound}, we see that $\Y(\Z_S)$ is the union of $3^s$ subsets, each of which is contained in the vanishing locus of a non-zero Coleman algebraic function~$f$ of weight at most $2\cdot4^s$. Theorem~\ref{thm:weight_bound} provides an upper bound of $\kappa_\ellp\cdot(\ellp-2)\cdot4^{2^{2s+1}}\cdot\prod_{i=1}^{4^s-1}(2^i+1)$ on the number of zeroes of such an~$f$, but it turns out that one can give a better bound in this particular case.

\begin{lemma}\label{lem:weight_bound_for_the_line}
	Let~$f$ be a non-zero Coleman algebraic function on $\P^1_{\Z_\ellp}\setminus\{0,1,\infty\}$ of weight at most~$2m$. Then the number of $\Z_\ellp$-integral zeroes of~$f$ is at most~$\kappa_\ellp\cdot(\ellp-2)\cdot2^{m+1}$.
	\begin{proof}
		We follow a similar strategy to the proof of Theorem~\ref{thm:weight_bound}, except that we can be more explicit about the differential operators involved. For~$m\geq0$ let~$\DO_m$ denote the differential operator
		\[
		\DO_m:= (z-z^2)^{2^m}\frac{\d^{2^m}}{\d z^{2^m}}\cdot (z-z^2)^{2^{m-1}}\frac{\d^{2^{m-1}}}{\d z^{2^{m-1}}} \cdot\ldots\cdot (z-z^2)\frac{\d}{\d z} \,,
		\]
		where~$z$ is the standard coordinate on $\Y=\P^1_\Z\setminus\{0,1,\infty\}$.
		
		Now we claim that~$\DO_m$ vanishes on all Coleman algebraic functions of weight at most~$2m$. We do this by induction on~$m$. In the case~$m=0$, this is clear since all Coleman algebraic functions of weight at most~$0$ are constant. Thus suppose that~$\DO_m$ vanishes on all Coleman algebraic functions of weight at most~$2m$. Since the filtration on the ring of Coleman algebraic functions on~$\P^1_{\Z_\ellp}\setminus\{0,1,\infty\}$ is supported in even degrees, it also vanishes on all Coleman algebraic functions of weight at most~$2m+1$.
		
		Now the divisor of~$\d z$, \emph{as a section of the log-canonical bundle $\Omega^1_{\P^1_{\Q_\ellp}}([0]+[1]+[\infty])$}, is $[0]+[1]-[\infty]$. It follows that the divisor of~$\DO_m$ satisfies
		\[
		\div(\DO_m) \geq \sum_{i=0}^m\div\left((z-z^2)^{2^i}\frac{\d^{2^i}}{\d z^{2^i}}\right) = (1-2^{m+1})\cdot[\infty] \,.
		\]
		Hence by Lemma~\ref{lem:operator_gives_rational_fns} we see that for every Coleman algebraic function~$f$ of weight at most~$2m+2$, we have that $\DO_m(f)$ is a rational function lying in $\HH^0(\P^1_{\Q_\ellp},\O_{\P^1_{\Q_\ellp}}((2^{m+1}-1)\cdot[\infty]))$. In other words, $\DO_m(f)$ is a polynomial in~$z$ of degree at most~$2^{m+1}-1$. Since the differential operator $(z-z^2)^{2^{m+1}}\frac{\d^{2^{m+1}}}{\d z^{2^{m+1}}}$ vanishes on all such polynomials, it follows that $\DO_{m+1}=(z-z^2)^{2^{m+1}}\frac{\d^{2^{m+1}}}{\d z^{2^{m+1}}}\circ\DO_m$ vanishes on all Coleman algebraic functions of weight at most~$2m+2$. This completes the induction.
		\smallskip
		
		To conclude, we observe that the power series expansion of~$\DO_m$ in the variable~$t=z-a$ is a PD-nice differential operator for every~$a\in\Z_\ellp$ not in the residue disc of~$0$ or~$1$. Since~$\DO_m(f)=0$, the bound on the number of $\Z_\ellp$-integral zeroes of~$f$ then follows from Proposition~\ref{prop:nice}.
	\end{proof}
\end{lemma}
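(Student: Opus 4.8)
The plan is to mimic the structure of the proof of Theorem~\ref{thm:weight_bound}, but to exploit the very special geometry of $\P^1\setminus\{0,1,\infty\}$ to write down an explicit PD-nice differential operator rather than constructing one recursively via Lemmas~\ref{lem:operator_gives_rational_fns} and~\ref{lem:annihilate_rational_functions}. First I would introduce the operator
\[
\DO_m := (z-z^2)^{2^m}\frac{\d^{2^m}}{\d z^{2^m}}\circ(z-z^2)^{2^{m-1}}\frac{\d^{2^{m-1}}}{\d z^{2^{m-1}}}\circ\cdots\circ(z-z^2)\frac{\d}{\d z},
\]
where $z$ is the standard coordinate on $\Y=\P^1_\Z\setminus\{0,1,\infty\}$, built from the log-differential $\omega=\d z$ (whose divisor as a section of $\Omega^1_{\P^1}([0]+[1]+[\infty])$ is $[0]+[1]-[\infty]$, so that $(z-z^2)\frac{\d}{\d z}$ is a log-differential operator of the expected shape).

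The key claim is that $\DO_m$ annihilates every Coleman algebraic function of weight $\leq 2m$. I would prove this by induction on $m$. The base case $m=0$ is Example~\ref{ex:coleman_weight_0}. For the inductive step, assuming $\DO_m$ kills everything of weight $\leq 2m$ (equivalently $\leq 2m+1$, since the filtration on $\ACollog(\Y)$ is supported in even degrees by Corollary~\ref{cor:coleman_hilbert_series}), I would compute $\div(\DO_m)\geq\sum_{i=0}^m\div\bigl((z-z^2)^{2^i}\frac{\d^{2^i}}{\d z^{2^i}}\bigr)=(1-2^{m+1})[\infty]$ using superadditivity of divisors under composition, and then invoke Lemma~\ref{lem:operator_gives_rational_fns}: for $f\in\W_{2m+2}\ACollog(\Y)_U$, $\DO_m(f)$ lies in $\HH^0(\P^1,\O(-\div(\DO_m)))=\HH^0(\P^1,\O((2^{m+1}-1)[\infty]))$, i.e.\ is a polynomial in $z$ of degree $\leq 2^{m+1}-1$. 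Since $(z-z^2)^{2^{m+1}}\frac{\d^{2^{m+1}}}{\d z^{2^{m+1}}}$ kills all such polynomials (its order $2^{m+1}$ exceeds the degree), $\DO_{m+1}=(z-z^2)^{2^{m+1}}\frac{\d^{2^{m+1}}}{\d z^{2^{m+1}}}\circ\DO_m$ annihilates $\W_{2m+2}\ACollog(\Y)_U$. One should check the order bound is exactly $2^{m+1}$, which is what makes the final count work.

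Finally, to convert this into the stated numerical bound I would cover $\Y(\Z_\ellp)$ by residue discs and, on the subdisc $\Disc_b\subset\,]b_0[$ of radius $\ellp^{-\theta_v/e_v}$ with $\theta_v=\lceil(e_v+1)/(\ellp-1)\rceil$ (here $K_v=\Q_\ellp$, so $e_v=f_v=1$, $\theta_v=\lceil 2/(\ellp-1)\rceil=1$ since $\ellp$ is odd), observe that the Taylor expansion of $\DO_m$ in the local parameter $t=z-a$ is PD-nice for every $a\in\Z_\ellp$ outside the residue discs of $0$ and $1$: this is because $\frac{\d}{\d z}$ preserves $\Z_\ellp\llbrack t\rrbrack^{\PD}$ up to the unit $\d t/\d z$, and $(z-z^2)^{2^i}$ is a unit in $\Z_\ellp\llbrack t\rrbrack$ when $a\not\equiv 0,1\pmod{\ellp}$, with leading coefficient a unit. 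Then Proposition~\ref{prop:nice} applied with $\lambda=\theta_v=1$ bounds the number of $\Z_\ellp$-zeroes of a nonzero $f$ with $\DO_m(f)=0$ on $\Disc_b$ by $\bigl(1+\tfrac1{(1-\tfrac1{\ellp-1})\log\ellp}\bigr)\cdot(\order(\DO_m)-1)=\kappa_\ellp\cdot(2^{m+1}-1)$; summing over the $\#\Y(\F_\ellp)=\ellp-2$ residue discs gives $\kappa_\ellp\cdot(\ellp-2)\cdot2^{m+1}$ (after the harmless rounding from $2^{m+1}-1$ to $2^{m+1}$). I do not anticipate a serious obstacle here — the content is the explicit divisor and order bookkeeping for $\DO_m$; the main thing to be careful about is verifying that $\DO_m$ is genuinely PD-nice at the chosen basepoints, in particular that its leading coefficient is a unit in $\Z_\ellp\llbrack t\rrbrack^{\PD}$, and that the identity principle for Coleman functions (Corollary~4.13 of~\cite{besser:tannakian}, cf.\ the proof of Theorem~\ref{thm:weight_bound}) guarantees $f$ is not identically zero on any residue disc so Proposition~\ref{prop:nice} applies disc-by-disc.
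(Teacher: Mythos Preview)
Your proposal is correct and follows essentially the same argument as the paper: the same explicit operator $\DO_m$, the same inductive proof that it annihilates $\W_{2m}\ACollog(\Y)$ via the divisor computation and Lemma~\ref{lem:operator_gives_rational_fns}, and the same appeal to PD-niceness plus Proposition~\ref{prop:nice} on each residue disc. One small slip: the order of $\DO_m$ is $\sum_{i=0}^m 2^i = 2^{m+1}-1$, not $2^{m+1}$, so Proposition~\ref{prop:nice} actually gives $\kappa_\ellp\cdot(2^{m+1}-2)$ per disc---still comfortably within the stated bound.
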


\begin{remark}
	The Coleman algebraic functions on~$\P^1_{\Z_\ellp}\setminus\{0,1,\infty\}$ of weight at most~$2m$ are exactly the linear combinations of $\ellp$-adic multiple polylogarithms of weight at most~$m$, so Lemma~\ref{lem:weight_bound_for_the_line} provides an upper bound on the number of zeroes of any such linear combination. Note that there is a small discrepancy in the usage of the word ``weight'' between our usage and what is standard in the theory of polylogarithms.
\end{remark}

Using this stronger bound, we can now complete the proof of Theorem~\ref{thm:main_siegel}. If~$2\notin S$, then $\Y(\Z_S)=\emptyset$ and so we are certainly done, so we assume henceforth that $2\in S$. We take~$\ellp$ the smallest prime not in~$S$. By Lemma~\ref{lem:local_images_in_s}\eqref{lempart:local_images_in_s_compatibility}, for every~$\pell\in S$, the Zariski-closure of $\jj_{\pell,U}(Y(\Q_\pell))$ has at most three irreducible components, which are genus~$0$ curves corresponding to the cusps~$0,1,\infty$ and which intersect in the point corresponding to the unique irreducible component of the $\pell$-adic special fibre of~$\Y$. Hence, as in Remark~\ref{rmk:slightly_better_bound}, we see that $\Y(\Z_S)$ is the union of~$\leq3^s$ subsets, each of which is contained in the vanishing locus of a Coleman algebraic function~$f$ of weight at most $2\cdot4^s$. Applying the bound from Lemma~\ref{lem:much_better_bound}, we find that
\[
\#\Y(\Z_S)\leq\kappa_\ellp\cdot3^s\cdot(\ellp-2)\cdot2^{4^s+1} \,.
\]
Since~$\ellp$ was chosen to be the smallest prime outside~$S$, we know that $\ellp\leq2^{s+1}$, so that $\kappa_\ellp\cdot(\ellp-2)=\ellp-2+\frac{\ellp-1}{\log(\ellp)}<2^{s+2}$ and we obtain the bound
\[
\#\Y(\Z_S)\leq 8\cdot6^s\cdot2^{4^s}
\]
as desired.\qed

\bibliographystyle{alpha}

\bibliography{references}

\newcommand{\etalchar}[1]{$^{#1}$}
\begin{thebibliography}{BDCKW18}

\bibitem[AMO95]{asada-matsumoto-oda:local_monodromy}
M.~Asada, M.~Matsumoto, and T.~Oda.
\newblock Local monodromy on the fundamental groups of algebraic curves along a
  degenerate stable curve.
\newblock {\em Journal of Pure and Applied Algebra}, 103(3):235--283, 1995.

\bibitem[BD18]{jen-netan:quadratic_i}
J.\ Balakrishnan and N.~Dogra.
\newblock Quadratic {C}habauty and rational points {I}: $p$-adic heights.
\newblock {\em Duke Mathematical Journal}, 167(11):1981--2038, 2018.
\newblock with an appendix by J.S. M{\"u}ller.

\bibitem[BD19a]{jen-netan:effective}
J.~Balakrishnan and N.~Dogra.
\newblock An effective {C}habauty--{K}im theorem.
\newblock {\em Compositio Mathematica}, 155(6):1057--1075, 2019.

\bibitem[BD19b]{me-netan:ramification}
L.A. Betts and N.~Dogra.
\newblock The local theory of unipotent {K}ummer maps and refined {S}elmer
  schemes.
\newblock arXiv:1909.05734v1, 2019.

\bibitem[BDCKW18]{jen-etal:non-abelian_tate-shafarevich}
J.\ Balakrishnan, I.\ Dan-Cohen, M.\ Kim, and S.~Wewers.
\newblock A non-abelian conjecture of {T}ate--{S}hafarevich type for hyperbolic
  curves.
\newblock {\em Mathematische Annalen}, 372(1--2):369--428, 2018.

\bibitem[BDM{\etalchar{+}}19]{jen-etal:split_modular_curve}
J.~Balakrishnan, N.~Dogra, J.S. M{\"u}ller, J.~Tuitman, and J.~Vonk.
\newblock Explicit {C}habauty--{K}im for the split {C}artan modular curve of
  level 13.
\newblock {\em Annals of Mathematics}, 189(3):885--944, 2019.

\bibitem[Ber93]{berkovich:etale_cohomology}
V.~Berkovich.
\newblock {\'E}tale cohomology for non-{A}rchimedean analytic spaces.
\newblock {\em Publications math{\'e}matiques de l’{I.H.\'E.S.}}, 78:5--161,
  1993.

\bibitem[Ber96]{berthelot:cohomologie_rigide}
P.~Berthelot.
\newblock Cohomologie rigide et cohomologie rigide {\`a} supports propres.
  {P}remi{\`e}re partie.
\newblock {\em Institut de recherche math{\'e}matique de Rennes}, 1996.
\newblock preprint 96-03.

\bibitem[Bes02]{besser:tannakian}
A.~Besser.
\newblock Coleman integration using the {T}annakian formalism.
\newblock {\em Mathematische Annalen}, 322(1):19--48, 2002.

\bibitem[Bet18]{me:thesis}
L.A. Betts.
\newblock {\em Heights via anabelian geometry and local {B}loch--{K}ato
  {S}elmer sets}.
\newblock PhD thesis, University of Oxford, 2018.

\bibitem[BK07]{bloch-kato:tamagawa_numbers}
S.~Bloch and K.~Kato.
\newblock L-{F}unctions and {T}amagawa {N}umbers of {M}otives.
\newblock In P.~Cartier, L.~Illusie, N.M. Katz, G.~Laumon, Y.I. Manin, and K.A.
  Ribet, editors, {\em The Grothendieck Festschrift}. Birkh\"auser, 2007.

\bibitem[BL19]{me-daniel:weight-monodromy}
L.A. Betts and D.~Litt.
\newblock Semisimplicity and weight--monodromy for fundamental groups.
\newblock arXiv:1912.02167v1, 2019.

\bibitem[Bro]{brown:integral_points}
F.~Brown.
\newblock Integral points on curves, the unit equation and motivic periods.
\newblock arXiv:1704.00555, v1 accessed.

\bibitem[CLS99]{chiarellotto-le_stum:f-isocristaux}
B.~Chiarellotto and B.~Le~Stum.
\newblock {$F$}-isocristaux unipotents.
\newblock {\em Compositio Mathematica}, 116:81--–110, 1999.

\bibitem[Col85]{coleman:effective}
R.~Coleman.
\newblock Effective {C}habauty.
\newblock {\em Duke Mathematical Journal}, 52(3):765--770, 1985.

\bibitem[Del70]{deligne:equations_differentielles}
P.~Deligne.
\newblock {\em Equations diff{\'e}rentielles {\`a} points singuliers
  r{\'e}guliers}.
\newblock Number 163 in Lecture Notes in Mathematics. Springer Verlag, 1970.

\bibitem[DGH]{dimitrov-gao-habegger:uniformity}
V.~Dimitrov, Z.~Gao, and P.~Habegger.
\newblock Uniformity in {M}ordell--{L}ang for curves.
\newblock {\em Annals of Mathematics}.
\newblock to appear.

\bibitem[dJ95]{de_jong:rigid_fundamental_groups}
A.J. de~Jong.
\newblock {\' E}tale fundamental groups of non-archimedean analytic spaces.
\newblock {\em Compositio Mathematica}, 97(1--2):89--118, 1995.

\bibitem[DMOS82]{milne:tannakian}
P.\ Deligne, J.\ Milne, A.\ Ogus, and K.-y. Shih.
\newblock Tannakian {C}ategories.
\newblock In {\em Hodge {C}ycles, {M}otives, and {S}himura {V}arieties}, number
  900 in Lecture Notes in Mathematics, pages 101--228. Springer Verlag, 1982.

\bibitem[Eve84]{evertse:equations_in_s-units}
J.-H. Evertse.
\newblock On equations in {$S$}-units and the {T}hue--{M}ahler equation.
\newblock {\em Inventiones Mathematicae}, 75(3):561--584, 1984.

\bibitem[Fal83]{faltings:endlichkeitssaetze}
G.~Faltings.
\newblock Endlichkeitss{\"a}tze f{\"u}r abelsche {V}ariet{\"a}ten {\"u}ber
  {Z}ahlk{\"o}rpern.
\newblock {\em Inventiones Mathematicae}, 73:349--366, 1983.

\bibitem[Fal07]{faltings:around}
G.~Faltings.
\newblock Mathematics around {K}im's new proof of {S}iegel's theorem.
\newblock In {\em Diophantine Geometry: Proceedings}, number~4 in CRM Series,
  pages 173--188. Edizioni della Normale, 2007.

\bibitem[FPS97]{flynn-poonen-schaefer:quadratic_cycles}
E.V. Flynn, B.~Poonen, and E.F. Schaefer.
\newblock Cycles and quadratic polynomials and rational points on a genus-2
  curve.
\newblock {\em Duke Mathematical Journal}, 90(3):435--463, 1997.

\bibitem[FvdP04]{fresnel_van-der-put:rigid_analytic_geometry}
J.\ Fresnel and M.~van~der Put.
\newblock {\em Rigid {A}nalytic {G}eometry and its {A}pplications}, volume 218
  of {\em Progress in Mathematics}.
\newblock Springer Science, 2004.

\bibitem[Gou97]{gouvea:intro_to_p-adics}
F.Q. Gouv{\^e}a.
\newblock {\em $p$-adic {N}umbers, {A}n {I}ntroduction}.
\newblock Universitext. Springer Verlag, second edition, 1997.

\bibitem[Had11]{hadian:motivic_pi_1}
M.~Hadian.
\newblock Motivic {F}undamental {G}roups and {I}ntegral {P}oints.
\newblock {\em Duke Mathematical Journal}, 160(3):503--565, 2011.

\bibitem[Har77]{hartshorne:algebraic_geometry}
R.~Hartshorne.
\newblock {\em Algebraic {G}eometry}.
\newblock Number~52 in Graduate Texts in Mathematics. Springer Verlag, 1977.

\bibitem[KC10]{minhyong-coates:cm_jacobians}
M.~Kim and J.~Coates.
\newblock Selmer varieties for curves with {CM} {J}acobians.
\newblock {\em Kyoto Journal of Mathematics}, 50(4):827--852, 2010.

\bibitem[Kim05]{minhyong:siegel}
M.~Kim.
\newblock The motivic fundamental group of the projective line minus three
  points and the theorem of {S}iegel.
\newblock {\em Inventiones Mathematicae}, 161(3):629--656, 2005.

\bibitem[Kim09]{minhyong:selmer}
M.~Kim.
\newblock The unipotent {A}lbanese map and {S}elmer varieties for curves.
\newblock {\em Publications of the Research Institute for Mathematical
  Sciences}, 45(1):89--133, 2009.

\bibitem[Kim12]{minhyong:tangential_localization}
M.~Kim.
\newblock Tangential localization for {S}elmer varieties.
\newblock {\em Duke Mathematical Journal}, 161(2):173--199, 2012.

\bibitem[KRZB16]{katz-rabinoff-zureick-brown:uniform}
E.~Katz, J.~Rabinoff, and D.~Zureick-Brown.
\newblock Uniform bounds for the number of rational points on curves of small
  {M}ordell--{W}eil rank.
\newblock {\em Duke Mathematical Journal}, 165(16):3189--3240, 2016.

\bibitem[KT08]{minhyong-tamagawa:l-component}
M.\ Kim and A.~Tamagawa.
\newblock The $l$-component of the unipotent {A}lbanese map.
\newblock {\em Mathematische Annalen}, 340:223--235, 2008.

\bibitem[Lab70]{labute:dcc}
J.P. Labute.
\newblock On the {D}escending {C}entral {S}eries of {G}roups with a {S}ingle
  {D}efining {R}elation.
\newblock {\em Journal of Algebra}, 14:16--23, 1970.

\bibitem[Lak81]{laksov:weierstrass_points}
D.~Laksov.
\newblock Weierstrass points on curves.
\newblock {\em Ast{\'e}risque}, 87--88:221--247, 1981.

\bibitem[Liu02]{liu:arithmetic_curves}
Q.~Liu.
\newblock {\em Algebraic {G}eometry and {A}rithmetic {C}urves}, volume~6 of
  {\em Oxford Graduate Texts in Mathematics}.
\newblock Oxford University Press, 2002.
\newblock translated by R.\ Ern{\'e}.

\bibitem[LT16]{liu-tong:neron_models}
Q.\ Liu and J.~Tong.
\newblock N{\'e}ron models of algebraic curves.
\newblock {\em Transactions of the American Mathematical Society},
  368:7019--7043, 2016.

\bibitem[LZ18]{loeffler-zerbes:euler_systems}
D.\ Loeffler and S.~Zerbes.
\newblock Euler {S}ystems ({A}rizona {W}inter {S}chool 2018 notes), 2018.
\newblock available at
  \url{https://www.math.arizona.edu/~swc/aws/2018/2018LoefflerZerbesNotes.pdf}
  (downloaded 08/01/21).

\bibitem[Mil17]{milne:algebraic_groups}
J.~Milne.
\newblock {\em Algebraic {G}roups}.
\newblock Number 170 in Cambridge Studies in Advanced Mathematics. Cambridge
  University Press, 2017.

\bibitem[MP11]{may:alg_top}
J.\ May and K.\ Ponto.
\newblock {\em More Concise Algebraic Topology: Localization, Completion, and
  Model Categories}.
\newblock Chicago Lectures in Mathematics Series. University of Chicago Press,
  2011.

\bibitem[MR83]{metropolis-rota:cyclotomic}
N.\ Metropolis and G.-C. Rota.
\newblock Witt {V}ectors and the {A}lgebra of {N}ecklaces.
\newblock {\em Advances in Mathematics}, 50:95--125, 1983.

\bibitem[NSW13]{neukirch-wingberg-schmidt:cohomology_of_number_fields}
J.\ Neukirch, A.\ Schmidt, and K.~Wingberg.
\newblock {\em Cohomology of {N}umber {F}ields}, volume 323 of {\em Grundlehren
  der mathematischen Wissenschaften}.
\newblock Springer Verlag, second edition, 2013.
\newblock Corrected second printing.

\bibitem[Ols11]{olsson:towards}
M.~Olsson.
\newblock Towards {N}on-{A}belian $p$-adic {H}odge {T}heory in the {G}ood
  {R}eduction {C}ase.
\newblock {\em Memoirs of the AMS}, 210:8--164, 2011.

\bibitem[Ols16]{olsson:affine_stacks}
M.~Olsson.
\newblock The {B}ar {C}onstruction and {A}ffine {S}tacks.
\newblock {\em Communications in Algebra}, 44(7):3088--3121, 2016.

\bibitem[PM12]{poonen-mccallum:chabauty}
B.~Poonen and W.~McCallum.
\newblock The method of {C}habauty and {C}oleman.
\newblock In {\em M{\'e}thodes explicites en th{\'e}orie des nombres, {P}oints
  rationnels et {\'E}quations diophantiennes}, volume~36 of {\em Panoramas et
  Synth{\`e}ses}, pages 99--117. Soci{\'e}t{\'e} Math{\'e}matique de France,
  2012.

\bibitem[Poo98]{poonen:preperiodic_points}
B.~Poonen.
\newblock The classification of rational preperiodic points of quadratic
  polynomials over {$\Q$}: a refined conjecture.
\newblock {\em Mathematische Zeitschrift}, 228(1):11--29, 1998.

\bibitem[Reu03]{reutenauer:friegebras}
C.~Reutenauer.
\newblock Free {L}ie {A}lgebras.
\newblock volume~3 of {\em Handbook of Algebra}, pages 887--903. North-Holland,
  2003.

\bibitem[Ser97]{serre:semisimplicity}
J.-P. Serre.
\newblock Semisimplicity and {T}ensor {P}roducts of {G}roup {R}epresentations:
  {C}onverse {T}heorems.
\newblock {\em Journal of Algebra}, 194:496--520, 1997.

\bibitem[Ser02]{serre:galois_cohomology}
J.-P. Serre.
\newblock {\em Galois {C}ohomology}.
\newblock Springer Monographs in Mathematics. Springer Verlag, 2002.
\newblock Translated from the French by P. Ion.

\bibitem[Shi00]{shiho:crystalline_fundamental_groups_i}
A.~Shiho.
\newblock Crystalline {F}undamental {G}roups {I} --- {I}socrystals on {L}og
  {C}rystalline {S}ite and {L}og {C}onvergent {S}ite.
\newblock {\em Journal of Mathematical Sciences of the University of Tokyo},
  7:509--656, 2000.

\bibitem[Sou79]{soule:vanishing}
C.~Soul{\'e}.
\newblock {$K$}-th{\'e}orie des anneaux d'entiers de crops de nombres et
  cohomologie {\'e}tale.
\newblock {\em Inventiones Mathematicae}, 55:251--295, 1979.

\bibitem[Woj93]{wojtkowiak:cosimplicial_objects}
Z.~Wojtkowiak.
\newblock Cosimplicial {O}bjects in {A}lgebraic {G}eometry.
\newblock In P.G.\ Goerss and J.F. Jardine, editors, {\em Algebraic
  $K$-{T}heory and {A}lgebraic {T}opology}, volume~47 of {\em {NATO} {ASI}
  {S}eries}, pages 287--327. Springer Dordrecht, 1993.

\end{thebibliography}

\end{document}